\pgfplotsset{width=7cm,compat=1.8}
\DeclareFontFamily{OT1}{pzc}{}
\DeclareFontShape{OT1}{pzc}{m}{it}{<-> s * [1.10] pzcmi7t}{}
\DeclareMathAlphabet{\mathpzc}{OT1}{pzc}{m}{it}
\DeclareRobustCommand\SDE{SDE($b$,$X_0$)}
\definecolor{db}{RGB}{0, 0, 130}
\newcommand{\cond}[3]{\|#3\|_{L^{#1}|\mathcal{F}_{#2}}}
\newcommand{\simp}[2]{[#1,#2]^2_{\leq}}
\newcommand{\simptwo}[2]{[#1,#2]^3_{\leq}}
\newcommand{\Dminus}[2]{\overline{[#1,#2]}^2_{\leq}}
\newcommand{\DDminus}[2]{\overline{[#1,#2]}^3_{\leq}}
\definecolor{rp}{rgb}{0.25, 0, 0.75}
\definecolor{dg}{rgb}{0, 0.5, 0}
\newcommand{\R}{\mathbb{R}}
\newcommand{\N}{\mathbb{N}}
\newcommand{\EE}{\mathbb{E}}
\newcommand{\PP}{\mathbb{P}}
\newcommand{\cA}{\mathcal{A}}
\newcommand{\cB}{\mathcal{B}}
\newcommand{\cC}{\mathcal{C}}
\newcommand{\cE}{\mathcal{E}}
\newcommand{\cF}{\mathcal{F}}
\newcommand{\cL}{\mathscr{L}}
\newcommand{\cS}{\mathcal{S}}
\newcommand{\cT}{\mathcal{T}}
\newcommand{\fkb}{\mathfrak{b}}
\newcommand{\vep}{\varepsilon}
\newcommand{\eps}{\varepsilon}
\newcommand{\dd}{{\rm d}}   
\newcommand{\bqn}{\begin{equation}}
\newcommand{\eqn}{\end{equation}}
\newcommand{\bqne}{\begin{equation*}}
\newcommand{\eqne}{\end{equation*}}
\newcommand{\Cat}{\star}
\renewcommand{\leq}{\leqslant}
\renewcommand{\geq}{\geqslant}
\newcommand{\Volt}{\mathfrak{G}}
\DeclareMathOperator{\var}{Var}
\DeclareMathOperator{\law}{\mathcal{L}}
\numberwithin{equation}{section}
\newcommand{\customlabel}[2]{%
   \protected@write \@auxout {}{\string\newlabel {#1}{{#2}{\thepage}{#2}{#1}{}}}%
   \hypertarget{#1}{#2\hspace{-0.14cm}}
}
\newtheorem*{acknowledgement}{Acknowledgements}
\newtheorem*{funding}{Funding}
\theoremstyle{definition}
\newtheorem{definition}{Definition}[section]
\theoremstyle{remark}
\newtheorem{remark}[definition]{Remark}
\theoremstyle{plain}
\newtheorem{corollary}[definition]{Corollary}
\newtheorem{lemma}[definition]{Lemma}
\newtheorem{proposition}[definition]{Proposition}
\newtheorem{theorem}[definition]{Theorem}
\newtheorem{assumption}[definition]{Assumption}
\author{Lukas Anzeletti\footnote{University of Technology Vienna;  
 \texttt{lukas.anzeletti@asc.tuwien.ac.at}} \and
Lucio Galeati \footnote{Universit\`a degli Studi dell'Aquila; \texttt{lucio.galeati@univaq.it}}
 \and 
Alexandre Richard\footnote{Universit\'e Paris-Saclay, CentraleSup\'elec and CNRS FR-3487;~   \texttt{alexandre.richard@centralesupelec.fr}.} 
\and
Etienne Tanr\'e \footnote{Universit\'e C\^ote d'Azur, Inria, CNRS, LJAD, France; 
\texttt{Etienne.Tanre@inria.fr}}
}
\title{On the density of singular SDEs with fractional noise and applications to McKean--Vlasov equations}
\begin{document}

\maketitle

\begin{abstract}
We investigate properties of the (conditional) law of the solution to SDEs driven by fractional Brownian noise with a singular, possibly distributional, drift. Our results on the law are twofold: i) we quantify the spatial regularity of the law, while keeping track of integrability in time, and ii) we prove that it has a density with Gaussian tails. Then the former result is used to establish novel results on existence and uniqueness of solutions to McKean--Vlasov equations of convolutional type.
\end{abstract}

\noindent\textit{\textbf{Keywords and phrases:} Stochastic Differential Equations, Gaussian tails, McKean--Vlasov Equations, Regularization by noise, Fractional Brownian motion.} 

\medskip

\noindent\textbf{MSC2020 subject classification: } 60H50, 60H10, 60G22, 34A06.

\setcounter{tocdepth}{2}
\renewcommand\contentsname{}
\vspace{-1cm}

\tableofcontents

\section{Introduction}
We are interested in the properties of the law of multidimensional SDEs of the form 
\begin{align} \label{eq:SDEintro}
X_t=X_0+\int_0^t b_s(X_s)\, \dd s + B_t, \tag*{\SDE}
\end{align}
driven by an $\R^d$-valued fractional Brownian motion (fBm) $B$ with an irregular, possibly distributional, time-dependent drift $b$ and a possibly random initial condition $X_0$.
When $b$ is a genuine distribution, or lacks good integrability properties, even making sense of \ref{eq:SDEintro} is challenging, and at this stage the integral in the equation above must be interpreted in a purely formal way. Nevertheless, leveraging on the statistical and pathwise properties of $B$, it is possible to show existence and uniqueness of solutions to \ref{eq:SDEintro}, for a large class of irregular drifts.
This phenomenon is commonly referred to as ``regularization by noise'', see e.g. the monograph by Flandoli~\cite{Flandoli}.
When the driving noise $B$ is sampled as fBm, early contributions in this direction are due to Nualart and Ouknine~\cite{NualartOuknine}; a far-reaching generalization was then established by Catellier and Gubinelli~\cite{CatellierGubinelli} employing nonlinear Young integrals and allowing for distributional drifts in some cases.
With the introduction of the stochastic sewing lemma (SSL) by L\^e~\cite{Le}, the field enjoyed lots of progress in recent years; we refer to \cite{Atetal, GaleatiGerencser, Gerencser, MatPer2024} for several variants of the SSL and to \cite{AnRiTa, GaleatiGerencser, Gerencser, ButLeMyt,ButGal2023} for applications to singular SDEs.
Investigating \ref{eq:SDEintro} is still an active field of study, in particular because existing well-posedness results do not fully match the predictions coming from scaling arguments -- see \cite{GaleatiGerencser,ButLeMyt,butkovsky2024weak} and the discussion in Remark~\ref{rk:scalingMcKV} below.
SSL techniques have also been used to establish regularization by noise results with multiplicative noise, see \cite{DareiotisGerencser,CatellierDuboscq,MatsudaMayorcas,dareiotis2024regularisation}.

Compared to the above references, in this work we restrict ourselves to (ir)regularity regimes where well-posedness for \ref{eq:SDEintro} is already known to hold, and we rather focus on properties of the law $\mathcal{L}(X_t)$ of the unique solution $X$.
This is a non-trivial problem, as the non-Markovianity of the driving noise (fBm) and the singularity of the drift $b$ prevent us from applying classical techniques. Specifically:
\begin{itemize}
\item[a)] When $B$ is sampled as a Brownian motion or L\'evy process, regularity estimates for $\mathcal{L}(X_t)$ (or more generally for the associated Markov transition semigroup) can be obtained by studying the associated Fokker--Planck PDE, which is a purely analytical problem.
Since fBm is not a semimartingale nor a Markov process, a similar strategy is not possible in our setting.
\item[b)] When $b$ is regular, thanks to the Gaussian nature of fBm, one can employ Malliavin calculus techniques to deduce the existence of smooth densities for $\mathcal{L}(X_t)$ with $t>0$; see \cite{CassFriz,HairerPillai} for far-reaching results in the context of rough differential equations. Establishing Gaussian or (sub-)Gaussian upper and lower bounds is also possible, see \cite{tindel,BNOT}.
However, in order to obtain regularity of $\mathcal{L}(X_t)$, Malliavin calculus requires an iterated application of the integration-by-parts formula, which in turns requires higher regularity of $b$ at each step to make sense of the resulting linear SDE associated to the $n$-th order Malliavin derivative. 
\end{itemize} 

The goal of the present work is to develop alternative techniques, compared to the aforementioned ones from a) and b), which are sufficiently robust to cover \ref{eq:SDEintro} with singular drift.
Our main results can be summarized as follows:
\begin{enumerate}[label=\roman*)]
\item two results on the spatial regularity of 
a conditional version of $\mathcal{L}(X_t)$, i.e. conditioning on an earlier time point, while keeping track of its behavior/integrability in time, see Theorem~\ref{mainresult:regularity}; 

\item upper and lower Gaussian bounds for the density of $\mathcal{L}(X_t)$, see Theorem \ref{mainresult:gaussianbound};

\item as a byproduct of i), we obtain novel existence and uniqueness results for McKean--Vlasov equations of convolutional type  (Theorems \ref{Mainresult-McKV} and \ref{Mainresult-McKV-uniqueness}).

\end{enumerate}
A more precise explanation of our main results, their contextualization in the existing literature, and the main ideas behind their proofs is provided in the upcoming subsections.

\subsection{Regularity of the law for ``linear" SDEs}

We consider here \ref{eq:SDEintro} for a given prescribed function (or distribution) $b$ of limited regularity. Following the vocabulary from the Markovian/PDE literature, we refer to this as a ``linear'' SDE, as opposed to the ``nonlinear'' case where $b$ may depend on the law of the solution itself, like in the upcoming Section~\ref{subsec:McKeanVlasov}.
Already in this simpler setting, when $b$ is irregular and the noise is a fractional Brownian motion with $H\neq 1/2$, the literature on regularity of the law of the solution is rather sparse.
In \cite{Oliv}, existence of a density of positive Besov regularity for $\mathcal{L}(X_t)$ is shown for merely H\"older continuous $b$, but its quantitative behavior in time is left unspecified. %in space uniformly in time.
For a class of singular, possibly distributional $b$, \cite[Prop. 3.20]{GHM} proves some joint time-integrability, space-regularity results for $t\mapsto \mathcal{L}(X_t)$, by employing duality arguments and Girsanov transform.
The upcoming Theorem~\ref{mainresult:regularity} sharpens both these references.

We will consider drifts as in~\cite{GaleatiGerencser}, namely time-dependent velocity fields in Lebesgue-Besov spaces fullfilling the following assumption.

\begin{assumption} \label{ass:strong}
Let $b\in L^q \left([0,T]; \mathcal{B}^\gamma_{\infty}\right)$ with parameters satisfying:
	\begin{align*}
		H\in (0,\infty)\setminus \N, \quad q \in (1,\infty], \quad 1-\frac{1}{H (q^\prime\vee 2)} < \gamma<1,
	\end{align*}
where \(\frac{1}{q} + \frac{1}{q^\prime} = 1\).
\end{assumption}

In the above, $\mathcal{B}^\gamma_\infty$ refers to a Besov space, which coincides with the space of $\gamma$-H\"older continuous functions for $\gamma \in (0,1)$ and which contains genuine distributions when $\gamma<0$.
Similarly to \cite{Gerencser, GaleatiGerencser}, we allow $B$ to be sampled as a fBm of Hurst parameter $H\in (0,\infty)\setminus \N$, rather than just the more classical $H\in (0,1)$; this allows to consider driving signals which are differentiable in time.
Moreover, we consider an underlying filtration $\mathbb{F} = (\mathcal{F}_{t})_{t\geq0}$ such that $B$ is an $\mathbb{F}$-fBm; this allows for instance for random, $\cF_0$-measurable initial data (thus necessarily independent of $B$).
We refer to Section~\ref{subsec:notdef} for more details on both Besov spaces and fBm.

Our first main results can be summarized as follows.

\begin{theorem} \label{mainresult:regularity}
Let $b \in L^q([0,T];\mathcal{B}^\gamma_\infty)$ fulfilling Assumption~\ref{ass:strong} and let $X$ be the unique solution to \ref{eq:SDEintro}, starting from an $\mathcal{F}_{0}$-measurable random variable $X_{0}$. 
\begin{enumerate}[label=(\alph*)]
\item \label{en:rega}For any $0<\eta<\gamma-1+\frac{1}{Hq'}$, any $0\leq u < t\leq T$, then almost surely the conditional law $\law(X_t \vert \mathcal{F}_{u})$ admits a density, which satisfies
\begin{equation}\label{eq:intro_main_pointwise_reg}
\Big\lVert \lVert \law(X_t \vert \mathcal{F}_{u})\rVert_{\cB^\eta_1} \Big\rVert_{L^\infty_{\Omega}} \leqslant C( 1 + (t-u)^{-\eta H}). 
\end{equation}
\item \label{en:regb}Moreover, let $(\tilde q, \eta)$ be parameters satisfying
\begin{equation}\label{eq:intro_main_spacetime_reg}
\tilde{q} \in (1,2],\qquad 0<\eta< \min \left\{\gamma-1 + \frac{1}{H},\frac{1}{\tilde{q}H}\right\};
\end{equation}
then for any $u\in [0,T)$, the map $t\mapsto \law(X_t \vert \mathcal{F}_{u})$ belongs almost surely to $L^{\tilde q}([u,T]; \cB^\eta_1)$ and satisfies
\begin{equation}\label{eq:intro_main_spacetime_reg2}
	\Big\lVert \lVert \law(X_\cdot \vert \mathcal{F}_{u})\rVert_{L^{\tilde q}([u,T];\cB^\eta_1)} \Big\rVert_{L^\infty_{\Omega}} \leqslant C (T-u)^{\frac{1}{\tilde q}-\eta H}. 
\end{equation}
\end{enumerate}
\end{theorem}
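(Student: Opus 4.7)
The plan is to combine Besov duality with a conditional stochastic sewing argument that exploits the local non-determinism of the fBm $B$. By the duality pairing between $\mathcal{B}^{\eta}_{1}$ and $\mathcal{B}^{-\eta}_{\infty}$,
$$\|\mathcal{L}(X_{t}\mid \mathcal{F}_{u})\|_{\mathcal{B}^{\eta}_{1}} \;=\; \sup_{\|f\|_{\mathcal{B}^{-\eta}_{\infty}}\leq 1} \bigl|\mathbb{E}[f(X_{t})\mid\mathcal{F}_{u}]\bigr|,$$
so it suffices to control the right-hand side uniformly in test functions $f\in\mathcal{B}^{-\eta}_{\infty}$. The main smoothing input is the following: decomposing $B_{t}-B_{u} = m_{u,t} + \hat{B}_{u,t}$ with $m_{u,t} := \mathbb{E}[B_{t}-B_{u}\mid\mathcal{F}_{u}]$ and $\hat{B}_{u,t}$ being $\mathcal{F}_{u}$-conditionally centered Gaussian with variance bounded from below by $c(t-u)^{2H}$ (local non-determinism of fBm), for any $\mathcal{F}_{u}$-measurable random variable $Y$ the map $f\mapsto \mathbb{E}[f(Y+\hat{B}_{u,t})\mid\mathcal{F}_{u}]$ is a Gaussian convolution at spatial scale $(t-u)^{H}$, and therefore satisfies the heat-kernel smoothing bound
$$\bigl\|\mathbb{E}[f(Y+\hat{B}_{u,t})\mid\mathcal{F}_{u}]\bigr\|_{L^{\infty}_{\Omega}} \;\lesssim\; (t-u)^{-\eta H}\,\|f\|_{\mathcal{B}^{-\eta}_{\infty}}.$$

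First I would mollify $b$ to smooth drifts $b^{n}$, work with the classical strong solutions $X^{n}$, and aim for bounds uniform in $n$ so as to pass to the limit via the well-posedness theory available under Assumption~\ref{ass:strong}. For part~\ref{en:rega}, I would apply the stochastic sewing lemma to a germ encoding the increment of $\mathbb{E}[f(X_{t})\mid\mathcal{F}_{s}]$ when the drift is frozen at time $s$, so that the difference between $\mathbb{E}[f(X_{t})\mid\mathcal{F}_{u}]$ and the Gaussian-smoothed quantity $\mathbb{E}[f(X_{u}+B_{t}-B_{u})\mid\mathcal{F}_{u}]$ emerges as a sewing remainder of strictly higher order in $t-u$. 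The threshold $\eta<\gamma-1+1/(Hq')$ appears precisely when, under Assumption~\ref{ass:strong} and using the heat-kernel bound above to control the Besov pairing against $b_{s}$, the $L^{2}\mid\mathcal{F}_{s}$-increment of this germ is H\"older of exponent strictly greater than $1/2$, as required by the SSL.

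For part~\ref{en:regb}, simply integrating the pointwise bound from~\ref{en:rega} would only deliver the range $\eta<\gamma-1+1/(Hq')$, strictly worse than the claim. Instead, I would carry out the sewing construction directly in the time norm $L^{\tilde{q}}([u,T];\mathcal{B}^{\eta}_{1})$, so that the full $L^{q}_{t}$ integrability of $b$ enters into the sewing bound rather than only its pointwise behaviour; this effectively replaces $1/(Hq')$ by $1/H$ in the admissible range and yields $\eta<\gamma-1+1/H$. The auxiliary constraint $\eta<1/(\tilde{q}H)$ is then exactly the integrability threshold for the pointwise singularity $(t-u)^{-\eta H}$ to lie in $L^{\tilde{q}}$ near $u$. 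The hardest part of the argument, in both statements, will be the implicit dependence of $X_{s}$ on the drift: this forces a buckling/Gr\"onwall-type argument to close the estimate on $\|\mathcal{L}(X_{s}\mid\mathcal{F}_{u})\|_{\mathcal{B}^{\eta}_{1}}$ uniformly in the mollification $n$, while simultaneously requiring the $\mathcal{F}_{s}$-conditional moments of the germ to be sharp enough to sit exactly at the boundary of the SSL regime dictated by Assumption~\ref{ass:strong}.
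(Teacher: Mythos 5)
Your plan for part~\ref{en:regb} is essentially the paper's: the proof of Theorem~\ref{thm:Besovregularity_better} also argues by duality in the mixed norm, bounding $\mathbb{E}[\int_s^t f_r(X_r)\,\dd r\mid\mathcal{F}_u]$ against $\|f\|_{L^{\tilde q'}_{[s,t]}\mathcal{B}^{-\eta}_\infty}$ via a sewing whose germ freezes the drift part $\varphi=X-B$, and the improvement from $1/(Hq')$ to $1/H$ does come, exactly as you say, from the $L^q$-in-time norm of $b$ entering the sewing as a control raised to the power $1/q$. Two specifics you omit: the sewing used there is a \emph{deterministic} sewing lemma with a \emph{shift} (the germ conditions on $\mathcal{F}_{s-(t-s)}$ rather than $\mathcal{F}_s$, Lemma~\ref{lem:deterministic_shifted_sewing}), which is what tames the heat-kernel singularity at the left endpoint; and the input is not a self-referential estimate on $\|\mathcal{L}(X_s\mid\mathcal{F}_u)\|_{\mathcal{B}^\eta_1}$ but the a priori conditional bound $\|\,\|\varphi_t-\mathbb{E}^s\varphi_t\|_{L^m|\mathcal{F}_s}\|_{L^\infty_\Omega}\lesssim (t-s)^{\gamma H+1/q'}$ from Section~\ref{sec:apriori}. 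The buckling happens there, on the increments of $\varphi$, not on the law; no Gr\"onwall on the Besov norm of the law is needed in the linear case (that device only appears for McKean--Vlasov).

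For part~\ref{en:rega} your route genuinely differs from the paper's, and it is the part with a real soft spot. The paper does not use duality plus SSL at a fixed terminal time: it proves a conditional version of Romito's lemma (Lemma~\ref{lem:smoothinglemma}), testing against $m$-th order finite differences $\Delta^m_h\phi$ of $\mathcal{C}^1_b$ functions, and then compares $X_t$ with $Y^\varepsilon_t=\mathbb{E}^{t-\varepsilon}[\theta_t]+B_t$, optimizing $\varepsilon=|h|^{m/(\beta+mH)}$ between the Gaussian term $(|h|/\varepsilon^H)^m$ and the drift-correction term $\varepsilon^\beta$ with $\beta=\gamma H+1/q'$; the threshold $\eta<\beta/H-1$ is exactly this balance, and a scaling argument then removes an $\varepsilon$-loss in the time exponent. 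Your alternative---telescoping $s\mapsto\mathbb{E}^s[f(\varphi_s+B_t)]$ over $[u,t]$ and sewing---is a known device, but as stated it has a gap: the germ increments carry $\|\nabla G_{c(t-s')^{2H}}f\|_{L^\infty_x}\lesssim (t-s')^{-(\eta+1)H}\|f\|_{\mathcal{B}^{-\eta}_\infty}$, which blows up at the terminal time $t$, so the plain SSL does not apply; you need a weighted or singular version and must check that the resulting exponent condition reproduces, rather than worsens, the range $\eta<\gamma-1+1/(Hq')$. The constraint is not simply ``increment H\"older exponent $>1/2$''. You would also need the standard separability argument to turn the $\PP$-a.s.\ bound for each fixed $f$ into an a.s.\ bound on the supremum over the unit ball of the correct predual ($\mathcal{B}^{-\eta}_{\infty,1}$, not $\mathcal{B}^{-\eta}_{\infty,\infty}$), since $\mathcal{L}(X_t\mid\mathcal{F}_u)$ is a random measure.
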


For more precise and detailed statements, partly with looser assumptions on $b$, see Theorems~\ref{thm:Besovregularity} and~\ref{thm:Besovregularity_better}, corresponding respectively to parts~\ref{en:rega} and~\ref{en:regb} of Theorem~\ref{mainresult:regularity}.

Let us comment on the result.
Estimates~\eqref{eq:intro_main_pointwise_reg}-\eqref{eq:intro_main_spacetime_reg2} are almost surely uniform in $\omega\in\Omega$: even though the conditional law $\law(X_t \vert \mathcal{F}_{u})$ is a random measure, we obtain deterministic upper bounds on its norm.
For any $\eta>0$, estimate \eqref{eq:intro_main_spacetime_reg} explodes as $t-u\to 0$, as expected since $\law(X_t \vert \mathcal{F}_{t})=\delta_{X_t}\notin \mathcal{B}^\eta_1$. If $X$ were Markovian, then one would have
\begin{align*}
	\law(X_t \vert \mathcal{F}_{u})(\dd y)
	= \law(X_t \vert X_u)(\dd y)
	= p_{u,t}(X_u,\dd y)
\end{align*}
where $\{p_{u,t}(x,\dd y)\}_{u\leq t,x\in\R^d}$ is the Markov transition semigroup associated to $X$.
Since in our case the solution to \ref{eq:SDEintro} needs not to be Markovian, whenever conditioning we need to keep track of the whole history $\mathcal{F}_{u}$. Still, our results can be interpreted as a non-Markovian analogue of Besov estimates for $p_{u,t}(x,\dd y)$, uniformly with respect to $x\in\R^d$.

In relation to this, let us mention the recent work \cite{dareiotis2024regularisation}.
Therein, motivated by singular SDEs with multiplicative noise, the authors need to develop estimates on the conditional law of the solution to driftless rough differential equations with smooth coefficients; they do so by means of partial Malliavin calculus.
Our paper provides (in a different context) alternative methods to get statements on conditional laws, without resorting to these techniques.

\begin{remark}
Whenever the pointwise estimate \eqref{eq:intro_main_pointwise_reg} is available, Minkowski's inequality readily implies the validity of \eqref{eq:intro_main_spacetime_reg2} for any $\tilde q$ such that $\eta<\frac{1}{\tilde q H}$. In particular, when $q=\infty$, \ref{en:rega} applies for any $\eta<\gamma-1+\frac{1}{H}$ and thus implies \ref{en:regb} for any $\tilde{q}$ satisfying \eqref{eq:intro_main_spacetime_reg}.
However, when $q<\infty$, the values $\eta$ allowed in \ref{en:regb} can be significantly larger than those in \ref{en:rega}.
To illustrate this, note that when $q\in (1,2]$, as $\gamma$ approaches the threshold $1-\frac{1}{Hq^\prime}$, the condition on $\eta$ in \ref{en:rega} deteriorates to $\eta\sim 0$.
In contrast, we can apply \ref{en:regb} for $\tilde{q}=q\wedge 2$ to deduce that
\begin{equation*}
	\big(\mathcal{L}(X_t)\big)_{t\in [0,T]} \in L^1([0,T];\mathcal{B}^{\frac{1}{H(q\wedge 2)}}_1)\cap L^2([0,T];\mathcal{B}^{\frac{1}{2H}-}_1)
\end{equation*}
even when $\gamma$ is very close to $1-\frac{1}{Hq^\prime}$.
Roughly speaking, \ref{en:rega} is most effective when $q$ is large ($q\sim \infty$), while \ref{en:regb} performs better for $q$ small ($q\sim 1$).
\end{remark}

\begin{remark}
Parts \ref{en:rega} and \ref{en:regb} in Theorem~\ref{mainresult:regularity} are sharp, in the following sense: for fixed $\eta$ as therein, setting ${b\equiv 0}$, using self-similarity of fractional Brownian motion and scaling in Besov spaces, 
one can check that the power of $t-u$ in the estimates \eqref{eq:intro_main_pointwise_reg}-\eqref{eq:intro_main_spacetime_reg2} is optimal; see Remark \ref{rem:optimality_sec4} for more details. However, we do not know whether the allowed range of values for $\eta$ can be further extended.
\end{remark}

The proofs of Theorem~\ref{mainresult:regularity}\ref{en:rega} and Theorem~\ref{mainresult:regularity}\ref{en:regb} are respectively developed in Sections~\ref{sec:Besovregularity} and~\ref{sec:regularity.density}. For both, we interpret the solution to \ref{eq:SDEintro} as a perturbed fractional Brownian motion, $X=\theta+B$, where the perturbation $\theta$ (the drift part of the equation) satisfies some a priori estimates.
This viewpoint naturally leads us to consider a larger class of processes, and we actually prove more general results on the conditional law of processes of the form $\theta + \Volt$, where $\Volt$ is a Gaussian-Volterra process which is $H$-locally nondeterministic and $\theta$ fulfills certain a priori estimates.
In this setting, the abstract version of Theorem~\ref{mainresult:regularity}\ref{en:rega} for $\theta+\Volt$ is given by Proposition~\ref{prop:E1E2}; its proof follows along the lines of Olivera and Tudor \cite{Oliv}, making use of a new conditional version of Romito's Lemma \cite{Romito}, see Lemma~\ref{lem:smoothinglemma}.
As for Theorem~\ref{mainresult:regularity}\ref{en:regb}, its counterpart for $\theta+\Volt$ is given by Corollary~\ref{cor:besov_regularity_general_2}; the proof is based on an alternative viewpoint via a duality argument. 
To do so most efficiently, we introduce a shifted version of the (deterministic) sewing lemma, see Lemma~\ref{lem:deterministic_shifted_sewing}. Both the core of the duality argument and this version of the sewing lemma are outlined in Section~\ref{sec:shiftedsewing}.

Eventually, to apply these abstract results to solutions to~\ref{eq:SDEintro}, we need a priori estimates of the drift part of the solution; these are presented in Section~\ref{sec:apriori} and are adaptations of similar estimates from \cite{GaleatiGerencser}, based on SSL techniques. 
By relying on them, even without explicitly using it, the stochastic sewing lemma is also crucial in our arguments.

\subsection{Gaussian bounds}

In the Brownian case, Gaussian bounds for the density of solutions to SDEs with smooth drifts and additive noise (or more generally, non-degenerate diffusion coefficients) are classical; see moreover \cite{HaoZhang2023,PerkowskiVZ,ZhangZhao} for extensions to singular and distributional drifts.

As for the fractional Brownian case, Besal\'{u} et al. \cite{tindel} proved upper and lower Gaussian bounds for fBm with Hurst parameter $H \in (0,1)$, for one-dimensional equations with additive noise and smooth drift, using the Nourdin-Viens formula \cite{NourdinViens} derived from Malliavin calculus. Similar results are available for multiplicative noise if a Doss-Sussman-type transformation is possible~\cite{tindel} or by considering the equation as a rough differential equation for $H>1/4$, see \cite{BNOT} for upper sub-Gaussian bounds. 

In the case of~\ref{eq:SDEintro} with irregular drift, Li et al.~\cite{LiPanloupSieber} obtained Gaussian upper and lower bounds for $\mathcal{L}(X_t)$ whenever $b\in\mathcal{C}^\gamma(\R^d)$ with $\gamma>\max\{0,1-1/(2H)\}$; when $H<1/2$, they can even allow for drifts $b$ which are merely measurable functions with at most linear growth.
See also~\cite{ButhenhoffSonmez} for similar results for mixed SDEs driven by correlated fBms.
The contribution of our paper for~\ref{eq:SDEintro} is to reconcile condition $\gamma>1-1/(2H)$ with the regime $H \in (0,1/2)$; i.e. replacing the condition on $b$ being locally bounded by $b$ being a distribution in a Besov space $\mathcal{B}^\gamma_\infty$ of negative regularity $\gamma$.
For the fBm-case $H\neq 1/2$, to our knowledge this is the first result on Gaussian bounds for genuine distributional drifts.

\begin{theorem} \label{mainresult:gaussianbound}
Let $T>0$, $H\in (0,1/2)$ and $b \in L^\infty([0,T];\mathcal{B}^\gamma_\infty)$ for $\gamma>1-1/(2H)$. There exists $C=C_T>0$ such that
 at any time $t\in (0,T]$, the law of the unique solution $X$ to \ref{eq:SDEintro}, starting from any deterministic initial condition $X_{0} = x_{0}\in \R^d$, has a density $p(t,\cdot)$ w.r.t. Lebesgue measure and 
\begin{equation}\label{eq:intro_main_gaussian_tails}
C^{-1} t^{-Hd} \exp\{-C t^{-2H}(x-x_0)^2\} \leqslant p(t,x)\leqslant C t^{-Hd} \exp\{-C^{-1}  t^{-2H}(x-x_0)^2\},\quad \forall x\in \R^d.
\end{equation}
\end{theorem}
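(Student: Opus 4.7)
The strategy is to combine a regularization argument with the Volterra--Girsanov transform for fBm. First, mollify $b$ to obtain smooth drifts $b^n$ with $\sup_n \|b^n\|_{L^\infty([0,T];\mathcal{B}^\gamma_\infty)} \leq C$ and $b^n \to b$ in $L^\infty([0,T];\mathcal{B}^{\gamma-\varepsilon}_\infty)$; standard stability for \ref{eq:SDEintro} then implies that the solutions $X^n$ converge uniformly on $[0,T]$ in probability to $X$, and $\law(X^n_t) \to \law(X_t)$ weakly. For each smooth $b^n$, invoke the Volterra--Girsanov theorem: writing $B_t = \int_0^t K_H(t,s)\,dW_s$ with $W$ a standard Brownian motion, there exists $Q^n \sim P$ under which $X^n - x_0$ is an $H$-fBm, with density
\begin{equation*}
\frac{dQ^n}{dP} = M^n_T = \exp\left(-\int_0^T u^n_s \cdot dW_s - \tfrac{1}{2}\int_0^T |u^n_s|^2\,ds\right),
\end{equation*}
where $u^n$ is adapted and determined by $\int_0^t K_H(t,s) u^n_s\,ds = -\int_0^t b^n_s(X^n_s)\,ds$.

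The key technical estimate is $\sup_n \EE[\exp(\lambda \int_0^T |u^n_s|^2\,ds)] < \infty$ for every $\lambda > 0$. For $H < 1/2$, $K_H^{-1}$ acts essentially as a fractional derivative of order $1/2 - H$, so $u^n$ amounts schematically to such a fractional derivative of $\psi^n_t := \int_0^t b^n_s(X^n_s)\,ds$. The a priori estimates of Section~\ref{sec:apriori} provide H\"older--Besov regularity of $\psi^n$ strictly greater than $1/2$, uniformly in $n$ and with Gaussian tails on its norm --- and this holds precisely under the threshold $\gamma > 1 - 1/(2H)$, which is therefore what buys exponential integrability of $\|u^n\|_{L^2([0,T])}^2$.

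Once exponential moments are secured, Girsanov yields the factorization $p^n(t,x) = g_H(t, x-x_0) \cdot h^n(t,x)$, where $g_H(t,y) = (2\pi t^{2H})^{-d/2} \exp(-|y|^2/(2t^{2H}))$ is the fBm heat kernel and $h^n(t,x) = \EE_{Q^n}[(M^n_T)^{-1} \mid X^n_t = x]$. The announced Gaussian bounds reduce to showing $0 < c_1 \leq h^n(t,x) \leq c_2$ uniformly in $n$, $x$, $t$, via Jensen's inequality ($\EE[Y^{-p} \mid \mathcal{G}]^{1/p}$ for the upper bound, $\EE[Y \mid \mathcal{G}]^{-1}$ for the lower) together with the moment bounds on $M^n_T$ and $(M^n_T)^{-1}$. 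Passing to the limit $n \to \infty$ transfers the bounds to the density $p(t,\cdot)$ of $X_t$, whose existence is ensured by Theorem~\ref{mainresult:regularity}\ref{en:rega}.

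The main obstacle is producing \emph{pointwise}-in-$x$ bounds on $h^n(t,x)$ rather than integrated ones: while $\EE_{Q^n}[(M^n_T)^{\pm p}]$ is uniformly bounded by the previous moment estimate, the conditional moments $\EE_{Q^n}[(M^n_T)^{\pm p} \mid X^n_t = x]$ demand more refined analysis. The remedy is to exploit the local non-determinism of fBm under $Q^n$: conditioning on $X^n_t - x_0 = y$ still leaves Gaussian fluctuations of variance comparable to $(s(t-s)/t)^{2H}$ in the bridge $(X^n_s - x_0)_{s<t}$, so the conditional law of $M^n_T$ retains tail behavior essentially independent of the conditioning value $y$, a fact already central to the strategy employed in Theorem~\ref{mainresult:regularity}.
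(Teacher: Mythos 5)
Your overall architecture coincides with the paper's: Girsanov, conditioning on the terminal value of the noise so that the density factorizes as a fractional Gaussian kernel times a conditional expectation $\Psi$, exponential moments of the Girsanov exponent, and Jensen for the lower bound (cf. Proposition~\ref{prop:reformulation}). You also correctly identify the decisive obstacle, namely that the bounds on the conditional expectation must be uniform in the conditioning value. However, your proposed remedy for that obstacle is asserted rather than proved, and the assertion hides the two points where all the real work of Section~\ref{sec:GaussianBounds} lies. First, the local nondeterminism you invoke is that of the fractional Brownian \emph{bridge}, not of fBm itself: conditioning on $B_T=y$ turns $B$ into a bridge $P^{T,y}$, and its conditional variance degenerates near the terminal time as
\begin{equation*}
\var\big(P^{T,y}_t\,\big|\,\mathcal{F}_\xi\big)\;\gtrsim\;(t-\xi)^{2H}\,\frac{(T-t)^{2H}}{(T-\xi)^{2H}},
\end{equation*}
(Proposition~\ref{prop:LND}, Corollary~\ref{cor:LND-bridge}). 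This is \emph{not} the fact used in Theorem~\ref{mainresult:regularity}, which relies on LND of the unconditioned process; it must be proved separately, and the degeneration $(T-t)^{2H}$ propagates into the stochastic-sewing construction of $\int_0^\cdot b_r(P^{T,y}_r)\,\dd r$ as a weight $(T-t)^{2\gamma H}$ with $\gamma<0$. Showing that the resulting singular exponential-moment bounds (Lemmas~\ref{lem:expsewing}--\ref{lem:expmoments1}) remain integrable is exactly where the threshold $\gamma>1-1/(2H)$ is consumed a second time; your plan attributes that threshold only to the unconditional Novikov estimate.

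Second, under the conditioning the underlying Brownian motion is no longer a Brownian motion but the semimartingale $\dd Y^{T,y}_t=V^{T,y}_t\,\dd t+\dd W_t$ of \eqref{eq:semimartingale-underlying-bridge}, so the Girsanov exponent acquires a cross term $\int_0^T (K_H^{-1}\tilde Z^{T,y})_s\cdot V^{T,y}_s\,\dd s$ absent from your factorization. The drift $V^{T,y}$ fails to lie in $L^2([0,T])$ — it has Gaussian tails only in $L^p$ for $p<2$ (Lemma~\ref{lem:expmoments2}) — so a plain Cauchy--Schwarz pairing with $K_H^{-1}\tilde Z^{T,y}\in L^2$ does not close. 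One needs the strictly stronger statement $K_H^{-1}\tilde Z^{T,y}\in L^{2+\varepsilon}$, obtained from the mapping properties of $K_H^{-1}$ on fractional Sobolev spaces $H^{H+1/2+\delta}$ (Lemma~\ref{lem:relation-sobolev-cameron-martin}), which in turn forces the sewing estimate to deliver Sobolev regularity strictly above the Cameron--Martin threshold. Neither the bridge LND with its terminal-time degeneration nor the $L^{2+\varepsilon}$ versus $L^{(2+\varepsilon)/(1+\varepsilon)}$ duality for the cross term appears in your plan, so as written the proposal does not yet constitute a proof of the pointwise bounds on $\Psi(T,y)$ uniformly in $y$.
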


The double-sided estimate~\eqref{eq:intro_main_gaussian_tails} is clearly sharp. 
The strategy of its proof is similar to~\cite{LiPanloupSieber}. First, using Girsanov's Theorem and conditioning on the endpoint of $B$, one finds an explicit expression of $p(t,x)$ in terms of a fractional Brownian bridge, see Proposition~\ref{prop:reformulation}.

However, following the rest of the argument from~\cite{LiPanloupSieber} leads to the following major obstacle. One needs to make sense and investigate an integral of a distribution $b$ along a fractional Brownian bridge $P$; i.e. treat a term of the form $\smallint_0^\cdot b_r(P_r) \dd r$. Such integrals are well understood via stochastic sewing when being evaluated along fBm due to its local nondeterminism property, giving a lower bound for $Var(B_r\vert \mathcal{F}_s)$ for $s<r$. However, a fractional Brownian bridge becomes ``more and more deterministic" as it approaches its terminal time $t$; hence, such a lower bound is expected to degenerate as $r$ approaches $t$. %become worse close to the terminal time of the bridge.
Section~\ref{sec:Gaussianbridges} is dedicated to quantifying this intuition, by proving a variant of local nondeterminism for a class of Gaussian-Volterra bridges with the fractional Brownian bridge as a special case thereof. In particular, this enables us to make sense of the
process $\smallint_0^\cdot b_r(P_r) \dd r$ and estimate its Cameron-Martin norm, 
% above integral and specify its behavior around the terminal time,
which proves to be sufficient to conclude.

\subsection{McKean--Vlasov equations} \label{subsec:McKeanVlasov}

As an application of Theorem~\ref{mainresult:regularity}(b),
we establish new well-posedness results for McKean--Vlasov equations of convolutional type: 
\begin{equation}\label{eq:McKV}
\begin{cases}
{\displaystyle Y_{t} = Y_{0} + \int_{0}^t \mathfrak{b}_s\ast \mu_{s}(Y_{s})\, \dd s + B_{t}},\\
\mu_{t} = \law(Y_{t}),~t\geq 0,
\end{cases}
\end{equation}
where the one-time marginal law $\mu_t$ of the solution $Y$ itself enters the equation via a convolution with the drift $\mathfrak{b}$.
Such equations are often called McKean--Vlasov, or nonlinear SDEs, and they arise as mean field limits of systems of particles with pairwise interaction kernel given by $\mathfrak{b}_s$. See for instance the lecture notes of Sznitman~\cite{Sznitman} in the Brownian case, the work \cite{CoghiDeuschelFrizMaurelli} for general additive noise, \cite{galeati2024quantitative} for additive fractional noise with singular interaction kernel and \cite{BailleulCatellierDelarue,BailleulCatellierDelarue2} in the context of rough paths.

In the case of irregular $\fkb$, there are several existence and uniqueness results for \eqref{eq:McKV}. We refer to \cite{Pascucci_Rondelli_2025,pascucci2024existenceuniquenessresultsstrongly,PECDR_2020,Veretennikov2024,RocknerZhang} and references therein in the Brownian case; and to \cite{GHM,GaleatiGerencser,Han2025,HaoRocknerZhang} in the fBm case. Still, most available well-posedness results with fBm can only handle drifts $\mathfrak{b}$ of spatial regularity at least $1-1/(2H)$, see in particular \cite{galeati2024quantitative}. In terms of (strong) existence, we are able to lower this threshold to (almost) $1-1/H$.

We now present our first result on McKean--Vlasov equations driven by fBm, which states an existence criterion. 

\begin{theorem}\label{Mainresult-McKV}
Let $H\in (0,+\infty)\setminus \N$, $Y_{0}$ be an $\mathcal{F}_{0}$-measurable random variable and let
\begin{equation}\label{eq:intro_existence_MKV}
	\mathfrak{b} \in L^\infty([0,T];\mathcal{B}^\theta_\infty)\quad \text{ with } \theta>1-\frac{1}{H}.
\end{equation}
Then there exists a strong solution to the McKean--Vlasov equation \eqref{eq:McKV} starting from $Y_{0}$.
\end{theorem}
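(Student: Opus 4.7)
The plan is a Schauder fixed-point argument applied to the solution map $\Phi(\mu) := \law(X^\mu_\cdot)$, where $X^\mu$ denotes the unique strong solution of the linear SDE
\[ X^\mu_t = Y_0 + \int_0^t (\fkb_s\ast\mu_s)(X^\mu_s)\,\dd s + B_t. \]
The key input will be Theorem~\ref{mainresult:regularity}\ref{en:regb}, whose deterministic bound on $\|\law(X^\mu_\cdot)\|_{L^{\tilde q}([0,T];\cB^\eta_1)}$ for suitable $(\tilde q,\eta)$ is precisely what permits us to close the fixed point on a ball of measure flows.

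First, since $\theta > 1 - 1/H$, I would choose $\tilde q = 2$ and any $\eta$ in the non-empty interval $\bigl(\max\{0,\,1-\tfrac{1}{2H}-\theta\},\,\tfrac{1}{2H}\bigr)$. The convolution embedding $\cB^\theta_\infty\ast\cB^\eta_1\hookrightarrow\cB^{\theta+\eta}_\infty$ gives
\[ \|\fkb_s\ast\mu_s\|_{\cB^{\theta+\eta}_\infty}\leq \|\fkb_s\|_{\cB^\theta_\infty}\|\mu_s\|_{\cB^\eta_1}, \]
so that for $\mu\in L^{\tilde q}([0,T];\cB^\eta_1)$ the drift $b^\mu := \fkb\ast\mu$ lies in $L^{\tilde q}([0,T];\cB^{\theta+\eta}_\infty)$ with $\theta+\eta>1-\tfrac{1}{2H}$, i.e.~Assumption~\ref{ass:strong} holds with parameters $(\tilde q,\theta+\eta)$. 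The corresponding linear SDE then admits a unique strong solution $X^\mu$ by~\cite{GaleatiGerencser}, so $\Phi$ is well-defined. Moreover the range conditions $\eta<\tfrac{1}{\tilde q H}$ and $\eta < (\theta+\eta)-1+\tfrac{1}{H}$ required in Theorem~\ref{mainresult:regularity}\ref{en:regb} are met; taking expectation of~\eqref{eq:intro_main_spacetime_reg2} and using convexity of the Besov norm gives
\[ \|\Phi(\mu)\|_{L^{\tilde q}([0,T];\cB^\eta_1)} \leq C\bigl(T,\|b^\mu\|_{L^{\tilde q}(\cB^{\theta+\eta}_\infty)}\bigr). \]

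On a sufficiently small horizon $T_0\leq T$ the time pre-factor $T_0^{1/\tilde q-\eta H}$ allows the choice of $M>0$ such that $\Phi$ leaves invariant the convex set
\[ K_M := \Bigl\{\mu:[0,T_0]\to\mathcal{P}(\R^d)\text{ measurable}\,:\,\|\mu\|_{L^{\tilde q}([0,T_0];\cB^\eta_1)}\leq M\Bigr\}. \]
Endowed with a weaker topology, for instance pointwise narrow convergence together with the uniform tightness coming from the a priori estimates on the drift part $\theta^\mu := X^\mu - B$ (cf.~Section~\ref{sec:apriori}), $K_M$ is convex and compact. Continuity of $\Phi$ in this topology reduces to a stability result for the linear SDE under perturbations of its distributional drift: if $\mu^n\to\mu$ narrowly then $b^{\mu^n}=\fkb\ast\mu^n\to b^\mu$ in a suitable sense, and one may pass to the limit in the nonlinear Young / SSL formulation of the drift integral. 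Schauder's theorem then delivers a fixed point $\mu^\star$ on $[0,T_0]$; iterating on intervals $[kT_0,(k+1)T_0]$ extends the solution to $[0,T]$, and $Y := X^{\mu^\star}$ is the desired strong solution to~\eqref{eq:McKV}.

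The principal obstacle is the continuity of $\Phi$ on $K_M$: one must pass to the limit in a distributional convolution and in a singular linear SDE simultaneously, which requires combining the a priori bounds of Section~\ref{sec:apriori} with stability estimates for nonlinear Young integrals (or SSL-type arguments) along each $X^{\mu^n}$. A possible simplification is to first mollify $\fkb$, obtain smooth-drift approximations of \eqref{eq:McKV} (for which classical theory applies), and then pass to the limit using the uniform bounds provided by Theorem~\ref{mainresult:regularity}\ref{en:regb}, identifying any limit point as a strong solution.
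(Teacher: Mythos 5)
Your overall strategy (Schauder fixed point for $\Phi(\mu)=\law(X^\mu)$) is genuinely different from the paper's, and the bookkeeping in your first paragraph is correct: the interval for $\eta$ is non-empty precisely because $\theta>1-1/H$, the convolution inequality puts $b^\mu=\fkb\ast\mu$ in $L^2([0,T];\cB^{\theta+\eta}_\infty)$ with $\theta+\eta>1-\tfrac{1}{2H}$, and the positive power of $T_0$ in the a priori bound does let you arrange invariance of a ball on a short horizon. However, the proof as written has a genuine gap exactly where you flag it, and it is not a removable formality. First, $K_M$ is \emph{not} compact for narrow convergence: a uniform $L^{\tilde q}(\cB^\eta_1)$ bound on densities does not prevent mass from escaping to infinity, so you must intersect $K_M$ with a uniformly tight set (which is available, since the drift part of $X^\mu$ is uniformly bounded on $K_M$ by the a priori estimates, but this has to be said and the invariance re-checked). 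Second, and more seriously, continuity of $\Phi$ on $K_M$ requires a stability theorem for the singular linear SDE when the drift converges only in a weak sense induced by narrow convergence of $\mu^n$; with your choice $\tilde q=2$ the effective drift lives merely in $\cB^{\theta+\eta}_\infty$ with $\theta+\eta$ possibly well below $1$, so this is a stability statement for genuinely distributional drifts, which you invoke but do not prove. Your closing ``possible simplification'' (mollify $\fkb$, get uniform bounds, pass to the limit) is in fact the paper's actual proof, not a variant of yours.

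The paper avoids both difficulties by a different arrangement of the same ingredients. It first proves a closed a priori estimate (Lemma~\ref{lem:globalapriori}) for \emph{solutions} of \eqref{eq:McKV} with smooth $\fkb$, via a short-time self-improving inequality of the form $\|\mu\|\lesssim T_0^{\delta H}+\|\mu\|^{1+\eta}T_0^{\delta H}$ rather than a fixed-point invariance; crucially it takes $\rho=\tilde q$ close to $1$, so that $\alpha$ can be taken close to $1/H$ and the effective drift $\fkb\ast\mu$ lands in $L^\rho(\cB^\gamma_\infty)$ with $\gamma>1$. This extra regularity is what makes the limit identification elementary: the limiting linear SDE has a $\cC^1_b$-in-space drift, so uniqueness is classical and the convergence $Y^n\to Y$ follows from a pathwise Gr\"onwall estimate, with the drift convergence handled by the convolution inequality and an $\varepsilon$-approximation of $\fkb$ by smooth kernels. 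If you want to salvage the Schauder route, you should likewise take $\tilde q\sim 1$ and $\eta\sim 1/H$ so that $b^\mu\in L^{\tilde q}(\cC^1_b)$ for every $\mu$ in your ball; then continuity of $\Phi$ reduces to stability of an SDE with (time-integrably) Lipschitz drift, which is a Gr\"onwall argument, and compactness can be restored by adding the tightness constraint. As it stands, though, the continuity step is asserted, not proved, and that is the load-bearing step of a Schauder argument.
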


%\begin{remark}
Condition \eqref{eq:intro_existence_MKV} in Theorem~\ref{Mainresult-McKV} identifies a \emph{subcritical regime} for the McKean--Vlasov equation \eqref{eq:McKV}, which can be derived by a scaling argument, see Remark~\ref{rk:scalingMcKV} for more details.
In view of this fact, we do not expect strong existence and uniqueness results for \eqref{eq:McKV} to hold for general supercritical $b\in L^\infty([0,T];\mathcal{B}^\theta_\infty)$ with $\theta<1-1/H$. 
In the Brownian case ($H=1/2$), it is noteworthy that the threshold found in the theorem above is $\theta>-1$. In terms of general Besov-valued drifts this matches the current literature, see Theorem 1 in \cite{Menozzietal} (where strong uniqueness is  also obtained).

The intuition why Theorem~\ref{mainresult:regularity} is useful to treat \eqref{eq:McKV} is the following, where we ignore integrability in time for sake of simplicity: 
Assume that we know a priori that $\mu_t$ is of positive regularity. Then $\mathfrak{b}_t \ast \mu_t$ is of higher regularity than $\mathfrak{b}_{t}$. 
In particular one can hope for looser conditions on $\mathfrak{b}$ than for $b$ in the linear \ref{eq:SDEintro}. The procedure is to take a smooth approximation of $\mathfrak{b}$, then
a priori estimates can be given using Theorem~\ref{mainresult:regularity} and the limit can be identified as a solution.  This kind of bootstrap argument has already been explored in the Markovian setting; see \cite{Han} for the case of a genuine function $\mathfrak{b}$, by using techniques from \cite{Romito} and Malliavin calculus, and \cite{Menozzietal} for distributional $\mathfrak{b}$ by investigating the corresponding Fokker-Planck equation.

\medskip

Under additional assumptions on $\mathfrak{b}$ and $\law(Y_0)$, we are also able to deduce a uniqueness result for \eqref{eq:McKV}.

\begin{theorem}\label{Mainresult-McKV-uniqueness}
Let $H\in (0,+\infty)\setminus \N$ and $\mathfrak{b}\in L^\infty([0,T];\cB^\theta_p)$ for some $\theta\in (-\infty,1)$, $p\in [1,\infty]$ satisfying
	\begin{equation}\label{eq:condition-uniqueness}
		\theta>1-\frac{1}{2H}, \quad \theta-\frac{d}{p}>1-\frac{1}{H}.
	\end{equation}
Further assume that $\law(Y_0)\in L^\infty_x$.
Then there exists a strong solution to the McKean--Vlasov equation \eqref{eq:McKV} starting from $Y_{0}$, such that
\begin{equation}\label{eq:condition_uniqueness2}
	\mathfrak{b}\ast \mu\in L^1([0,T];\cC^1_b);
\end{equation}
moreover, pathwise uniqueness and uniqueness in law hold in the class of solutions satisfying \eqref{eq:condition_uniqueness2}.
\end{theorem}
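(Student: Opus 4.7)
The plan has three main parts: construction of a solution satisfying (\ref{eq:condition_uniqueness2}), pathwise uniqueness within that class, and promotion to uniqueness in law via Yamada--Watanabe.

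\textbf{Existence and regularity.} The Besov embedding $\cB^\theta_p \hookrightarrow \cB^{\theta - d/p}_\infty$ combined with $\theta - d/p > 1 - 1/H$ places us in the setting of Theorem~\ref{Mainresult-McKV}, yielding a strong solution $Y$. To upgrade to $\fkb * \mu \in L^1([0,T];\cC^1_b)$, I would use Theorem~\ref{mainresult:regularity}\ref{en:regb} in a bootstrap. A coarse initial estimate, exploiting that $\law(Y_0) \in L^\infty_x$ propagates (through Girsanov applied to smooth approximations of the drift) to $\mu_s$ having bounded density, gives the drift $b := \fkb * \mu \in L^\infty([0,T];\cB^\theta_\infty)$ via Young's convolution $\cB^\theta_p * L^{p'} \subset \cB^\theta_\infty$. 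The hypothesis $\theta > 1 - 1/(2H)$ makes Assumption~\ref{ass:strong} hold with $q = \infty$, so Theorem~\ref{mainresult:regularity}\ref{en:regb} delivers $\mu \in L^{\tilde q}([0,T];\cB^\eta_1)$ for $\tilde q \in (1,2]$ and $\eta$ close to $\min(\theta - 1 + 1/H,\, 1/(\tilde q H))$. Combining with $\cB^\eta_1 \hookrightarrow \cB^{\eta - d/p}_{p'}$ and Besov Young convolution gives $\fkb * \mu \in L^{\tilde q}([0,T];\cB^{\theta + \eta - d/p}_\infty)$; the assumption $\theta - d/p > 1 - 1/H$ guarantees that $\eta$ can be chosen with $\eta > 1 - (\theta - d/p)$, so $\theta + \eta - d/p > 1$, delivering the required $\cC^1_b$ regularity.

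\textbf{Pathwise uniqueness.} Let $Y^1, Y^2$ be two solutions with $\fkb * \mu^i \in L^1([0,T];\cC^1_b)$, realised on the same probability space with common $B$ and initial datum $Y_0$. Splitting
\begin{align*}
	Y^1_t - Y^2_t = \int_0^t \bigl[(\fkb * \mu^1_s)(Y^1_s) - (\fkb * \mu^1_s)(Y^2_s)\bigr]\dd s + \int_0^t \bigl[\fkb * (\mu^1_s - \mu^2_s)\bigr](Y^2_s)\, \dd s,
\end{align*}
the Lipschitz property of $\fkb * \mu^1$, a pathwise Gronwall argument and taking expectations give
\begin{align*}
	\EE|Y^1_t - Y^2_t| \leq C \int_0^t \|\fkb_s * (\mu^1_s - \mu^2_s)\|_\infty \, \dd s.
\end{align*}
Besov duality gives $\|\fkb_s * (\mu^1_s - \mu^2_s)\|_\infty \leq \|\fkb_s\|_{\cB^\theta_p}\, \|\mu^1_s - \mu^2_s\|_{\cB^{-\theta}_{p'}}$, while Kantorovich--Rubinstein applied to the synchronous coupling yields $\|\mu^1_s - \mu^2_s\|_{(\mathrm{Lip})^*} \leq \EE|Y^1_s - Y^2_s|$. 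Interpolating this Lipschitz-dual bound against the uniform a priori regularity $\mu^i \in L^{\tilde q}([0,T];\cB^\eta_1)$ from the previous step produces an estimate of the form $\|\mu^1_s - \mu^2_s\|_{\cB^{-\theta}_{p'}} \leq C(\EE|Y^1_s - Y^2_s|)^\lambda$ for some $\lambda \in (0,1]$ dictated by $(\theta, p, \eta)$.

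\textbf{Main obstacle.} The most delicate step is to close the resulting nonlinear Gronwall $\EE|Y^1_t - Y^2_t| \leq C \int_0^t (\EE|Y^1_s - Y^2_s|)^\lambda \dd s$ when $\lambda < 1$ (the linear case $\lambda = 1$ is typical when $p = \infty$). The margin $\theta > 1 - 1/(2H)$, together with the sharp parameters provided by Theorem~\ref{mainresult:regularity}\ref{en:regb}, should supply enough slack either to reinforce the left-hand norm so that $\lambda = 1$, or to run the argument as a short-time contraction iterated across $[0,T]$. Pathwise uniqueness and weak existence then yield uniqueness in law by Yamada--Watanabe.
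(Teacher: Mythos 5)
Your overall architecture (existence via Theorem~\ref{Mainresult-McKV} and the embedding $\cB^\theta_p\hookrightarrow\cB^{\theta-d/p}_\infty$, then pathwise uniqueness by a Gr\"onwall argument, then uniqueness in law by a coupling/Yamada--Watanabe step) matches the paper's, and you correctly identify the two key ingredients: propagation of $\law(Y_0)\in L^\infty_x$ along the flow and a Wasserstein-type control of $\mu^1-\mu^2$. (For existence, note that the $\cC^1_b$ regularity of $\fkb\ast\mu$ already comes out of Theorem~\ref{thm:mckeanvlasov}\ref{en:reg2} with no need for the $L^\infty_x$ assumption; your bootstrap there is redundant but harmless.)

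The genuine gap is in the pathwise uniqueness step, and you have flagged it yourself without resolving it. Your splitting forces you to estimate $\|\fkb_s\ast(\mu^1_s-\mu^2_s)\|_{L^\infty_x}$, which by duality requires $\|\mu^1_s-\mu^2_s\|_{\cB^{-\theta}_{p'}}$ with $\theta<1$; interpolating this against the Kantorovich--Rubinstein bound (a $\cB^{-1}$-type dual norm) necessarily produces an exponent $\lambda=(\eta+\theta)/(\eta+1)<1$, and the resulting inequality $f(t)\leq C\int_0^t f(s)^\lambda\,\dd s$ with $f(0)=0$ does \emph{not} imply $f\equiv 0$: the comparison ODE has the nontrivial solution $f(t)=(C(1-\lambda)t)^{1/(1-\lambda)}$, and a short-time contraction does not rescue this since the constant does not improve as the interval shrinks fast enough. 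The paper closes the loop differently: instead of bounding the drift difference in $L^\infty_x$, it invokes the stability estimate for singular drifts from \cite[Thm.~1.4-ii)]{GaleatiGerencser}, which controls $\EE[\sup_s|Y^1_s-Y^2_s|^{p'}]^{2/p'}$ by $\int_0^t\|\fkb_s\ast(\mu^1_s-\mu^2_s)\|_{\cB^{\theta-1}_\infty}^2\dd s$ --- one full derivative weaker. That weaker norm is then bounded \emph{linearly} in $\mathbb{W}_{p'}(\mu^1_s,\mu^2_s)\leq\EE[|Y^1_s-Y^2_s|^{p'}]^{1/p'}$ via the convolution--Wasserstein inequality of Corollary~\ref{cor:convol_wasserstein}, whose constant involves $\|\mu^i_s\|_{L^\infty_x}^{1/p}$; this is exactly where the propagated $L^\infty_x$ bound of Lemma~\ref{lem:propagation_integrability} enters, and where the hypothesis $\theta>1-1/(2H)$ is consumed (it is the condition for the stability estimate to apply with $q=2$). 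With that substitution the Gr\"onwall is linear and uniqueness follows; without it, your argument does not close.
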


Condition \eqref{eq:condition-uniqueness} is a special subcase of \eqref{eq:intro_existence_MKV}, in view of the Besov embedding $\cB^\theta_p\hookrightarrow \cB^{\theta-d/p}_\infty$. In particular, whenever
\begin{align*}
	H \frac{d}{p}>\frac{1}{2},
\end{align*}
condition \eqref{eq:condition-uniqueness} reduces to $\theta-d/p>1-1/H$, which is again scaling subcritical.
 For $H\in (0,1)$, Theorem \ref{Mainresult-McKV-uniqueness} includes a relevant class of Coulomb and Riesz-type kernels, see Remark~\ref{rk:Coulomb-kernel} for details.

In order to achieve uniqueness, we currently need to additionally enforce $\law(Y_0)\in L^\infty_x$. In this case, thanks to the regularity condition \eqref{eq:condition_uniqueness2}, we can show that any $L^p_x$-regularity of $\mathcal{L}(Y_0)$ is propagated at positive times (cf. Lemma \ref{lem:propagation_integrability}); in turn, leveraging on this information, we can use functional inequalities and stability estimates to close a contraction argument in Wasserstein distances.
We do not know whether condition $\law(Y_0)\in L^\infty_x$ is merely technical, and whether the result can be extended to general $Y_0$; we leave this problem for the future.

\paragraph{Organization of the paper.} 
Section~\ref{sec:preliminaries} includes preliminaries and three results that are of use throughout the paper -- namely, an analysis of the behavior of \ref{eq:SDEintro} under rescaling of $b$, a priori estimates of the increments of its solution, and a shifted deterministic sewing lemma. 
Sections~\ref{sec:Besovregularity} and~\ref{sec:regularity.density} are dedicated to the proof of Theorem~\ref{mainresult:regularity}.
In the former, we give quantitative estimates at any fixed positive time; in the latter, we provide estimates measuring jointly integrability in time and regularity in space. 
In Section~\ref{sec:McKeanVlasov}, we exploit the previous results
to prove Theorems ~\ref{Mainresult-McKV} and~\ref{Mainresult-McKV-uniqueness}.
Section~\ref{sec:GaussianBounds} is devoted to the proof of the Gaussian bounds from Theorem~\ref{mainresult:gaussianbound}; to derive them, we first investigate the local nondeterminism property of Gaussian bridges in Section~\ref{sec:Gaussianbridges}.
In Appendices~\ref{app:besov} and~\ref{app:fBm} we state and prove some tailor-made results on Besov spaces and fBm that are needed throughout the paper. Appendix~\ref{app:sewing} proves the deterministic shifted sewing lemma (Lemma~\ref{lem:deterministic_shifted_sewing}).
Appendix~\ref{app:bridge} gives a representation of Gaussian bridges which is used in Section~\ref{sec:GaussianBounds}.

\begin{acknowledgement}
We warmly thank Khoa L\^e for fruitful discussions and insightful suggestions, especially concerning the use of a priori estimates from rough path theory, that allowed us to strongly improve the results of the paper.
\end{acknowledgement}

\begin{funding}
LA acknowledges the support of the Labex de Math\'ematique Hadamard and of the Austrian Science Fund (FWF) via program P34992. LG was supported by the SNSF Grant 182565 and by the Swiss State Secretariat for Education,
Research and Innovation (SERI) under contract number MB22.00034, and by the Istituto
Nazionale di Alta Matematica (INdAM) through the project GNAMPA 2025 ``Modelli stocastici
in Fluidodinamica e Turbolenza''. AR acknowledges the financial support of the French National Research Agency through the SDAIM project ANR-22-CE40-0015. This research was funded in whole or in part by the Austrian Science Fund (FWF) \href{https://www.fwf.ac.at/en/research-radar/10.55776/P34992}{10.55776/P34992}. For open access purposes, the authors have applied a CC BY public copyright license to any author-accepted manuscript version arising from this submission.
\end{funding}

\section{Preliminaries}\label{sec:preliminaries}

\subsection{Notations and definitions}\label{subsec:notdef}

\paragraph{Various notations.} 
Throughout the paper, we use the following notations and conventions:
\begin{compactitem}
	\item Constants $C$ might vary from line to line. 
	\item We write  \(U \lesssim V\) if there exists a constant \(C\) such that \( U \leqslant C V\).
\item Denote by \(\cC_{[0,T]}\) the space of continuous functions from $[0,T]$ to $\R^d$, and by $\cC_{b}^k$ the space of $k$ times continuously differentiable and bounded functions from $\R^d$ to $\R^d$. 

\item 
Let \(s < t\) be two real numbers and \(\Pi = (s = t_0 < t_1 < \cdots < t_n =t)\) be a 
partition of \([s,t]\), we denote $|\Pi| = \sup_{i=1,\cdots,n}(t_{i} - t_{i-1})$ the mesh 
of $\Pi$. 
\item For $\cS,\cT \in \mathbb{R}$ with $\cS\leqslant \cT$, we denote $\simp{\cS}{\cT}\coloneqq\{(s,t):\cS\leqslant s \leqslant t \leqslant \cT\}$ and $\simptwo{\cS}{\cT}\coloneqq\{(s,u,t):\cS\leqslant s \leq u \leqslant t \leqslant \cT\}$ .
\item For $\cS,\cT \in \mathbb{R}$ with $\cS\leqslant \cT$, we also define the restricted simplexes
\begin{equation}\label{eq:restricted_simplexes}\begin{split}
	\Dminus{\cS}{\cT} & \coloneqq\{(s,t) \in \simp{\cS}{\cT}:s-(t-s)\geqslant \cS\},\\
	\DDminus{\cS}{\cT} & \coloneqq\{(s,u,t) \in \simptwo{\cS}{\cT}:(s,t)\in \Dminus{\cS}{\cT}, (u-s) \wedge (t-u) \geq (t-s)/3\}.
\end{split}\end{equation}
\item For any function $f$ defined on $[\cS,\cT]$, we denote $f_{s,t}\coloneqq f_t-f_s$ for $(s,t) \in \simp{\cS}{\cT}$. 
\item For any function \(g\) defined on $\simp{\cS}{\cT}$ and 
$(s, u, t) \in \simptwo{\cS}{\cT}$, we denote $\delta 
g_{s,u,t}\coloneqq g_{s,t}-g_{s,u}-g_{u,t}$.

\item For a probability space $\Omega$ and $m \in [1,\infty]$, the norm on $L^m(\Omega)$ is denoted by $\|\cdot\|_{L^m}$ or $\|\cdot\|_{L^m_{\Omega}}$. The conditional $L^m$ norm $(\EE[|\cdot|^m\vert\mathcal{G}])^{1/m}$ w.r.t. a $\sigma$-algebra $\mathcal{G}$ is denoted by $\|\cdot\|_{L^m\vert \mathcal{G}}$.

\item Let $E$ be a Banach space. For $f \colon [\cS,\cT]\rightarrow E$, we denote $\|f\|_{L^q_{[\cS,\cT]}E}\coloneqq  \left(\int_{\cS}^{\cT} \|f_r\|_E^q\, \dd r\right)^{1/q}$ and denote the corresponding Lebesgue space as $L^q([\cS,\cT];E)$. In the case $[\cS,\cT] = [0,T]$, we also use the notation $\|f\|_{L^q_{T}E}$.
%For $S=0$, we write $\|\cdot\|_{L^q_T E}$.
\item For $p\in[1,\infty]$, $L^p_{x}$ is used to denote $L^p(\R^d)$.
\item For a filtration $(\mathcal{F}_t)_{t\geqslant 0}$, the conditional expectation $\EE[\cdot\, \vert \mathcal{F}_t]$ is denoted by 
$\EE^t[\cdot]$.
\item For $q \in [1,\infty]$, let $L^q([\cS,\cT];\mathcal{C}^\infty_b) \coloneqq \bigcap_{k \in \mathbb{N}} L^q([\cS,\cT];\mathcal{C}^k_b)$.

\item For $p \in [1,\infty]$, $p^\prime \in [1,\infty]$ is such that $1/p+1/{p^\prime}=1$.

\item All filtrations are assumed to fulfill  the usual conditions. 
\end{compactitem}

\paragraph{Fractional Brownian motion.} We refer to \cite{NualartOuknine} and the references therein for the definition of fractional Brownian motion (fBm) and its basic properties.
It is well-known that, given a one-dimensional fBm $B$ with Hurst parameter $H\in (0,1)$, one can construct a Brownian motion 
$W$ on the same probability space, which generates the same filtration and such that 
\begin{align}\label{eq:fBmrepresentation}
B_{t} = \int_{0}^t K_{H}(t,s)\, \dd W_{s};
\end{align}
$K_{H}$ is a deterministic kernel, whose explicit formula in terms of special functions can be found in \cite[p. 106]{NualartOuknine}.
In the sequel, with a slight abuse of notation, $B$ will refer to a $d$-dimensional fBm, namely an $\R^d$-valued process with 
independent components distributed as one-dimensional fractional Brownian motions. 

For a given filtration $\mathbb{F} = (\mathcal{F}_{t})_{t\geq0}$, we will say that $B$ is an $\mathbb{F}$-fBm if $W$ is an $\mathbb{F}$-Brownian motion.
The following local nondeterminism (LND) property will be useful, see \cite[Lemma 7.1]{Pitt}:
\begin{align}\label{eq:fBm_cond_var}
\var(B_t \vert \mathcal{F}_s)=\int_s^t K_H(t,u)^2\, \dd u ~ \text{I}_{d}=C_H (t-s)^{2H}~ \text{I}_{d}.
\end{align}

The definition of fBm can be extended to $H>1$: by induction, if $B^H$ denotes an fBm defined for some $H\in \R_{+}^* \setminus \N$, one sets
$B^{H+1}_{t} = \int_{0}^t B^H_{s}\, \dd s$ to be an fBm of Hurst parameter $H+1$. Accordingly, the LND property \eqref{eq:fBm_cond_var} extends to $H>1$, see \cite{GaleatiGerencser}.

\paragraph{Gaussian-Volterra processes.}

Some of the results obtained in Sections~\ref{sec:Besovregularity}, \ref{sec:regularity.density} and \ref{sec:GaussianBounds} hold for a broader class of processes than the fractional Brownian motion presented in the previous paragraph. Namely, we consider Gaussian processes $\Volt$ with the following Volterra representation:
\begin{align} \label{eq:defGaussianVolterra}
\Volt_t=\int_0^t K(t,s) \, \dd W_s,
\end{align}
where $W$ is an $\mathbb{F}$-Brownian motion, and the Volterra kernel $K$ is
a function
\begin{align}\label{eq:defK}
\begin{array}{ccc}
K\colon [0,T]^2 & \rightarrow & \mathbb{R}\\
(t,s) & \mapsto & K(t,s)
\end{array}
\end{align}
such that 
\begin{equation}\label{eq:propertiesK}
%\begin{split}
\begin{cases}\int_{[0,T]} K(t,s)^2 \, \dd s<\infty, ~\text{for all}~ t\in [0,T], \\
K(t,s)= 0, ~\text{for}~ s>t.
\end{cases}
%\end{split}
\end{equation}
For $H\in (0,+\infty)$, we will say that $\Volt$ is $H$-locally nondeterministic if there exists $c_{H}>0$ such that for any $s< t$,
\begin{align}\label{eq:LND}
\var(\Volt_t \vert \mathcal{F}_s)=\int_s^t K(t,u)^2\, \dd u ~ \text{I}_{d} \geq c_H (t-s)^{2H}~ \text{I}_{d}.
\end{align}

\paragraph{Besov spaces.}

For $s \in \mathbb{R}$ and $1\leqslant p,q \leqslant \infty$, we denote the nonhomogeneous Besov space with these parameters by $\mathcal{B}_{p,q}^s(\R^d)$, or  simply $\mathcal{B}_{p,q}^s$. We refer to \cite[Section 2.7]{BaDaCh} for the precise definition.
If $q=\infty$, we simply write $\mathcal{B}_p^s$ instead of $\mathcal{B}_{p,\infty}^s$. For various properties including important embeddings, see Appendix~\ref{app:besov}.

In addition to the Besov norms, the H\"older seminorms will be convenient to work with when $\gamma\in (0,1)$:
% $\llbracket f \rrbracket_{\cC^\gamma}$ denotes the H\"older seminorm, namely
\begin{align}\label{eq:defHolder}
	\llbracket f \rrbracket_{\cC^\gamma} = \sup_{x\neq y} \frac{|f(x)-f(y)|}{|x-y|^\gamma}.
\end{align}
We denote by $\cC^\gamma_b$, or simply $\cC^\gamma$ whenever clear, the space of bounded H\"older functions with norm $\| f\|_{\cC^\gamma_b}=\sup_{x\in\R^d} |f(x)|+\llbracket f \rrbracket_{\cC^\gamma}$; as usual, the definition can be extended to $\cC^\gamma_b$ with $\gamma \in \mathbb{R}_{+}\setminus \mathbb{N}$ (note the slight difference with $\cC^k_b$ for $k\in\N$).
For $s \in \mathbb{R}_{+}\setminus \mathbb{N}$ and $p=q=\infty$, Besov and H\"older spaces coincide: $\cB^s_\infty=\cC^s_b$, see \cite[p.99]{BaDaCh}.

\paragraph{Gaussian semigroup.} 
For any $t>0$ and $x \in \mathbb{R}^d$, let $g_t(x)\coloneqq (2\pi 
t)^{-d/2} e^{-\tfrac{|x|^2}{2t}}$. For any tempered distribution $\phi$ on 
\(\mathbb{R}^d\),
denote
\begin{align} \label{Gaussiansemigroup}
    G_t\phi(x)\coloneqq g_t * \phi (x).
\end{align}

\begin{definition} \label{def:beta-}
Let $\gamma \in \mathbb{R}$ and $q \in [1,\infty]$. We say that $(f^n)_{n \in \mathbb{N}}$ converges to $f$ in $L^q([0,T];\mathcal{B}_{\infty}^{\gamma-})$ as $n \rightarrow \infty$ if $\sup_{n \in \mathbb{N}} \|f^n\|_{L^q_{[0,T]} \mathcal{B}_{\infty}^\gamma}<\infty$ and 
\begin{align} \label{eq:approximation}
\forall \gamma^\prime<\gamma, \quad    \lim_{n \rightarrow 
\infty}\|f^n-f\|_{L^q_{[0,T]} \mathcal{B}_{\infty}^{\gamma^\prime}}=0.
\end{align}
\end{definition}

For a given $f\in L^q([0,T];\mathcal{B}_{\infty}^{\gamma})$, one can always find smooth approximations $f^n\in L^q([0,T]; \cC^\infty_b)$ such that $f^n\to f$ in $L^q([0,T];\cB^{\gamma-}_\infty)$; for instance, one can take $f^{n}_{t}=G_{1/n} f(t)$, as guaranteed by Lemma \ref{A.3}\ref{A.3.2}-\ref{A.3.3} in Appendix \ref{app:besov}.

\paragraph{Control functions.} A continuous function $w: \simp{0}{T}\rightarrow [0,\infty)$ is a control function if for all $(s,u,t) \in \simptwo{0}{T}$,
\begin{align*}
w(s,u)+w(u,t)\leqslant w(s,t) \text{ and } w(s,s)=0.
\end{align*} 
For $q \in [1,\infty)$, $E$ a Banach space and $\phi \in L^q([0,T];E)$, the mapping $(s,t) \mapsto \|\phi\|^q_{L^q_{[s,t]} E}$ is a control function.
If $w_{1}$ and $w_{2}$ are control functions and $a,b>0$ are such that $a+b\geqslant 1$, then $w_{1}^a w_{2}^b$ is also a control function, see e.g. \cite[Exercise 1.9]{FrizVictoir}.

\paragraph{Solution concepts.} %Previous title: Definition of solution.
\begin{definition} \label{def:solution}
Let $T>0$, $\gamma \in \mathbb{R}$, $q \in [1,\infty]$, $b \in L^q([0,T];\mathcal{B}_{\infty}^\gamma)$. Let $B$ be an $\mathbb{F}$-fBm and $X_0$ be an $\mathcal{F}_{0}$-measurable random variable in $\mathbb{R}^d$.

\begin{itemize}
\item \emph{Admissible solution}: We call $(X_t)_{t \in [0,T]}$ an admissible solution to \ref{eq:SDEintro} if there exists a sequence $(b^n)_{n\in \N}$ in $L^q([0,T];\mathcal{C}^\infty_b)$ that converges to $b$ in $L^q([0,T];\mathcal{B}_{\infty}^{\gamma-})$ such that $X^n$ converges in $L^2(\Omega;\mathcal{C}_{[0,T]})$ to $X$, where $X^n$ denotes the classical unique strong solution of {\renewcommand\SDE{SDE($b^n$,$X_0$)}\ref{eq:SDEintro}\renewcommand\SDE{SDE($b$,$X_0$)}}.

\item \emph{Uniqueness}: we say that uniqueness holds among admissible solutions if whenever $(b^{i,n})_{n\in \N}$ in $L^q([0,T];\mathcal{C}^\infty_b)$, $i=1,2$, converge to $b$ in $L^q([0,T];\mathcal{B}_{\infty}^{\gamma-})$ and $X^{i,n}$ converges in $L^2(\Omega;\mathcal{C}_{[0,T]})$ to a limit $X^{i}$, then $X^1$ and $X^2$ are indistinguishable.
\end{itemize}
\end{definition}

\begin{remark}
Let us provide a few important comments on this notion of solution and what is already known from the literature.
\begin{itemize}
\item Under Assumption~\ref{ass:strong}, \ref{eq:SDEintro} is known to have a pathwise unique strong solution \cite[Theorem 5.6]{GaleatiGerencser} in the sense of nonlinear Young equations introduced in \cite{CatellierGubinelli}. Since we will not need nonlinear Young integrals in this paper, we introduced the above notion of solution by regularization of the drift, which turns out to produce the same solutions as via nonlinear Young integrals under Assumption~\ref{ass:strong}. Hence, as a consequence of the well-posedness and stability result from \cite{GaleatiGerencser}, we get existence and uniqueness in the sense of the previous definition when Assumption~\ref{ass:strong} is fulfilled. The previous discussion holds when the initial condition $X_{0}$ is deterministic, but extends without difficulty when $X_{0}$ is an $\mathcal{F}_0$-measurable random variable, see e.g. \cite[Remark 4.7]{GaleatiGerencser}.

\item From its definition, it is clear that any admissible solution is $\mathbb{F}$-adapted.

\item When $\gamma>0$, the drift is a genuine function and any admissible solution is a classical solution, in the sense that both sides in the equality \ref{eq:SDEintro} must be equal and defined without regularization.
\end{itemize}
\end{remark}

\subsection{Scaling} \label{sec:scaling}

We recall here the scaling argument from \cite{GaleatiGerencser}.
In multiple situations throughout the paper, this simplifies the problem of studying the density of $\mathcal{L}(X_t)$ by looking at a fixed time point, say $t=T$; the precise dependence in time of the estimates then follows by rescaling.

\begin{lemma} \label{lem:scaling}
Let $b$ satisfy Assumption~\ref{ass:strong} and let $X$ be the unique admissible solution to \ref{eq:SDEintro} driven by a fractional Brownian motion $B$.
Then for any $\lambda \in (0,1]$ the process $X^\lambda_t = \lambda^{-H} X_{\lambda t}$ solves the SDE with the fractional Brownian motion $B^\lambda = (\lambda^{-H}B_{\lambda t})_{t\geq0}$ and drift
	\begin{align*}
		b^\lambda_{s}(x) = \lambda^{1-H} b_{\lambda s}(\lambda^H x).
	\end{align*}
	Moreover,
		\begin{align}
	\label{eq:scalingdrift}
	\text{if}\ \gamma<0, \ \text{then}\ \quad \|b^{\lambda}\|_{L^q_{[0,T]}\mathcal{B}^\gamma_\infty}&\lesssim \|b\|_{L^q_{[0,\lambda T]} \mathcal{B}^\gamma_\infty};\\
	\label{eq:scalingdrift>0}
	\text{if}\ \gamma\in (0,1), \ \text{then}\ \quad \left(\int_0^T \llbracket b_r^{\lambda} \rrbracket_{\cC^\gamma}^q\, \dd r \right)^{\frac{1}{q}} 
	&\lesssim \left(\int_0^{\lambda T} \llbracket b_r \rrbracket_{\cC^\gamma}^q\, \dd r \right)^{\frac{1}{q}}.
\end{align}
\end{lemma}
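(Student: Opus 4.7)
The plan is to first identify the scaling by direct substitution in the smooth case, then extend to the singular setting via the admissible-solution framework; the two drift estimates \eqref{eq:scalingdrift}--\eqref{eq:scalingdrift>0} are standalone computations in Besov/H\"older spaces.

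For the smooth case, suppose $b\in L^q([0,T];\cC^\infty_b)$, so that $X$ is a classical solution. Evaluating \ref{eq:SDEintro} at time $\lambda t$, multiplying both sides by $\lambda^{-H}$, performing the change of variables $s = \lambda u$ in the drift integral, and substituting $X_{\lambda u} = \lambda^H X^\lambda_u$, one obtains
\[
X^\lambda_t \,=\, \lambda^{-H}X_0 + \int_0^t \lambda^{1-H} b_{\lambda u}(\lambda^H X^\lambda_u)\,\dd u + B^\lambda_t \,=\, \lambda^{-H}X_0 + \int_0^t b^\lambda_u(X^\lambda_u)\,\dd u + B^\lambda_t.
\]
Self-similarity of fractional Brownian motion ensures that $B^\lambda$ is again an fBm of parameter $H$, with respect to the time-changed filtration $\cF^\lambda_t := \cF_{\lambda t}$.

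For $b$ merely satisfying Assumption~\ref{ass:strong}, I would pick a smooth approximating sequence $(b^n)\subset L^q([0,\lambda T];\cC^\infty_b)$ with $b^n\to b$ in $L^q([0,\lambda T];\cB^{\gamma-}_\infty)$ whose classical solutions $X^n$ converge to $X$ in $L^2(\Omega;\cC_{[0,\lambda T]})$. The smooth case applies pathwise to each $X^n$, so that $(X^n)^\lambda$ is the classical solution associated with $(b^n)^\lambda$. Convergence $(X^n)^\lambda\to X^\lambda$ in $L^2(\Omega;\cC_{[0,T]})$ is immediate from the definition. Applying \eqref{eq:scalingdrift} or \eqref{eq:scalingdrift>0} to the differences $b^n-b^m$ then yields $(b^n)^\lambda\to b^\lambda$ in $L^q([0,T];\cB^{\gamma-}_\infty)$, so $X^\lambda$ is by Definition~\ref{def:solution} the admissible solution associated with $(b^\lambda, B^\lambda, \lambda^{-H}X_0)$.

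For the drift estimates, both start from $b^\lambda_s(x)=\lambda^{1-H}b_{\lambda s}(\lambda^H x)$. When $\gamma\in(0,1)$, a direct change of variables in the supremum defining \eqref{eq:defHolder} gives $\llbracket b_{\lambda s}(\lambda^H\cdot)\rrbracket_{\cC^\gamma}=\lambda^{H\gamma}\llbracket b_{\lambda s}\rrbracket_{\cC^\gamma}$; combining with the $\lambda^{1-H}$ prefactor and the change of variable $u=\lambda s$ in the time integral produces
\[
\int_0^T \llbracket b^\lambda_r\rrbracket_{\cC^\gamma}^q\,\dd r \,=\, \lambda^{(1-H(1-\gamma))q-1}\int_0^{\lambda T}\llbracket b_u\rrbracket_{\cC^\gamma}^q\,\dd u.
\]
The exponent is non-negative exactly when $\gamma\geq 1-1/(Hq')$, a condition implied by Assumption~\ref{ass:strong} in both regimes $q'\geq 2$ and $q'<2$; since $\lambda\leq 1$, \eqref{eq:scalingdrift>0} follows. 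For \eqref{eq:scalingdrift} with $\gamma<0$, one uses the standard Besov dilation estimate $\|f(\lambda^H\cdot)\|_{\cB^\gamma_\infty}\lesssim \lambda^{H\gamma}\|f\|_{\cB^\gamma_\infty}$ valid for $\lambda^H\leq 1$; the same algebraic check on the identical exponent then concludes, the potentially ``bad'' factor $\lambda^{H\gamma}\geq 1$ being absorbed by the subcriticality gained from the temporal change of variables. The only genuine obstacle is the algebraic bookkeeping ensuring that the scaling threshold $\gamma>1-1/(Hq')$ is indeed implied by Assumption~\ref{ass:strong} -- this is precisely the reason for the $q'\vee 2$ in the assumption, since when $q'<2$ the stronger condition $\gamma>1-1/(2H)$ is imposed.
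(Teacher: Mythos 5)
Your proof is correct and follows essentially the same route as the paper: change of variables plus self-similarity for the equation, and the Besov/H\"older dilation estimate combined with the time change of variables for \eqref{eq:scalingdrift}--\eqref{eq:scalingdrift>0}, with the exponent sign check $\gamma>1-1/(Hq')$ coming from Assumption~\ref{ass:strong} exactly as in the paper (which invokes Lemma~\ref{lem:besov_scaling} and keeps both terms $\lambda^{1-H-1/q}$ and $\lambda^{1-H-1/q+\gamma H}$ rather than absorbing the constant into $\lambda^{H\gamma}$). The only addition is your explicit approximation argument identifying $X^\lambda$ as the admissible solution, which the paper leaves implicit and which is fine for fixed $\lambda$.
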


\begin{remark}
For any $t \in (0,T]$, choosing $\lambda=t/T$ yields $X_t= \left(\frac{t}{T}\right)^{H} X^{t/T}_T$. We will see that $\law(X_t)$ is absolutely continuous w.r.t the Lebesgue measure, from which 
it follows that
\begin{equation}\label{eq:scaling}
\frac{\dd \law(X_t)}{\dd x}(x) 
%		&= \left(\frac{t}{T}\right)^{-Hd} \frac{\dd \cL(X^t_1)}{\dd x}\Big( \Big(\frac{t}{T}\Big)^{-H} x\Big) \quad \text{and}\\
		= \left(\frac{t}{T}\right)^{-Hd} \frac{\dd \law(X^{t/T}_T)}{\dd x}\Big( \Big(\frac{t}{T}\Big)^{-H} x\Big).
\end{equation}
\end{remark}

\begin{proof}[Proof of Lemma~\ref{lem:scaling}]
The first part of the statement follows by a simple change of variables and by self-similarity of fractional Brownian motion.
To see \eqref{eq:scalingdrift}, note that by scaling in Besov spaces (see Lemma~\ref{lem:besov_scaling}),
\begin{align*}% \label{eq:scalingdrift}
\|b^{\lambda}\|_{L^q_{[0,T]}\mathcal{B}^\gamma_\infty}\leqslant C \big(\lambda^{1-H-\frac{1}{q}}+\lambda^{1-H-\frac{1}{q}+\gamma H}\Big) \|b\|_{L^q_{[0,\lambda T]} \mathcal{B}^\gamma_\infty} .
%\leqslant 2C \|b\|_{L^q_{t} \mathcal{B}^\gamma_\infty}.
\end{align*}
Under Assumption~\ref{ass:strong} when $\gamma<0$, it holds $1-H-\frac{1}{q}> 1-H-\frac{1}{q}+\gamma H> 0$, hence we deduce \eqref{eq:scalingdrift}. 
As for \eqref{eq:scalingdrift>0} when $\gamma>0$, it holds $\llbracket b_r^{\lambda} \rrbracket_{\cC^\gamma} \leq \lambda^{1-H+\gamma H} \llbracket b_{\lambda r} \rrbracket_{\cC^\gamma}$ and a change of variables yields the result.
\end{proof}

\begin{remark}\label{rk:scalingMcKV}
We present a heuristic scaling argument which identifies the subcritical regime for the drift $\mathfrak{b} \in L^q([0,T];\mathcal{B}^\theta_{\infty})$ in the nonlinear SDE \eqref{eq:McKV}, readapted from \cite{GaleatiGerencser} to the McKean-Vlasov setting. With similar notations to the previous lemma, consider $Y^\lambda_{t} = \lambda^{-H} Y_{\lambda t}$. Then by change of variables,
\begin{align*}
Y_{t}^\lambda &=  Y_{0}^\lambda +  \lambda^{-H} \int_{0}^{\lambda t} \mathfrak{b}_{s}\ast \mu_{s}(Y_{s}) \, \dd s + B^\lambda_{t}\\
&= Y_{0}^\lambda +  \lambda^{1-H} \int_{0}^{t} \mathfrak{b}_{\lambda s}\ast \mu_{\lambda s}(Y_{\lambda s}) \, \dd s + B^\lambda_{t} .
\end{align*}
Since $\mu_{\lambda s}$ is the law of $Y_{\lambda s}$, the same scaling as \eqref{eq:scaling} gives
\begin{align*}
\lambda^{1-H} \mathfrak{b}_{\lambda s}\ast \mu_{\lambda s}(Y_{\lambda s}) = \lambda^{1-H} \int_{\R^d} \mathfrak{b}_{\lambda s}(\lambda^H Y^\lambda_{s}- \lambda^H z) \, \mu^\lambda_{s}(\dd z),
\end{align*}
for $\mu^\lambda_{s}$ the law of $Y^\lambda_{s}$. Hence for $\mathfrak{b}^\lambda_{s}(x) \coloneqq \lambda^{1-H} \mathfrak{b}_{\lambda s}(\lambda^H x)$, $Y^\lambda$ solves \eqref{eq:McKV} with drift $\mathfrak{b}^\lambda$ and fractional Brownian motion $B^\lambda$.
The same scaling argument as in Lemma~\ref{lem:scaling} then shows that a control of the form $ \|\mathfrak{b}^\lambda\|_{L^q_{[0,T]}\mathcal{B}^\gamma_\infty}\lesssim \|\mathfrak{b}\|_{L^q_{[0,\lambda T]} \mathcal{B}^\gamma_\infty}$, uniformly over $\lambda\in (0,1]$, is possible if and only if
\begin{equation}\label{eq:critical+subcritical}
\theta \geq 1-\frac{1}{Hq'}.
\end{equation}
As in the linear case, this suggests the necessity of condition \eqref{eq:critical+subcritical} to guarantee well-posedness for the SDE (with possibly more restrictions in the case of the critical equality).
For $q=\infty$, the subcritical regime (i.e. the strict inequality in \eqref{eq:critical+subcritical}) coincides with the condition $\theta>1-1/H$ found in Theorem~\ref{Mainresult-McKV}.
\end{remark}

\subsection{Shifted sewing lemma} \label{sec:shiftedsewing}

The main idea used in Section~\ref{sec:regularity.density} to show regularity of the law $t\mapsto \mathcal{L}(X_t)$  is the following: by duality, it suffices to estimate
\begin{align*}
\|\mathcal{L}(X_\cdot)\|_{L^{2}_{[s,t]}\mathcal{B}^{1/(2H)}_{1,1}}
\lesssim \sup_{f} \Big| \int_s^t \langle f_r, \mathcal{L}(X_r) \rangle\, \dd r \Big|
= \sup_{f} \Big|\EE\int_s^t f_{r}(X_r)\, \dd r \Big|,
\end{align*}
where the supremum is taken over a set of ``nice" test functions $f$, which are uniformly bounded in the norm of the dual Besov space $L^{2}_{[s,t]}\mathcal{B}^{-1/(2H)}_{\infty,\infty}$.
%Note that the last term in the above is a deterministic (small-)time increment. We use a shifted version of the (deterministic) sewing lemma to bound it.
Note that, for fixed $f$, the last term in the above equation can be regarded as a time increment of the deterministic function $t\mapsto \EE\int_0^t f_{r}(X_r)\, \dd r$. To bound it, we use a shifted version of the deterministic sewing lemma.

The idea of combining sewing arguments with ``shifting'' tricks has been first introduced in \cite{Gerencser} and further developed in  \cite{MatPer2024,GaleatiGerencser}. It is useful in the stochastic framework, in order to deal with singularities that naturally arise -- considering a shifted filtration $\mathcal{F}_{s-(t-s)}$ helps to avoid restrictions arising from integrability conditions when taking conditional expectation with respect to $\mathcal{F}_s$, see \cite[Remark 2.3]{Gerencser}.
Interestingly, it turns out that in a similar way shifting can be employed effectively in the deterministic setup. To the best of our knowledge, this version of the sewing lemma is new in the literature. Its proof can be found in Appendix~\ref{app:sewing}.

We use the notations $\Dminus{\cS}{\cT}$ and $\DDminus{\cS}{\cT}$ as defined in \eqref{eq:restricted_simplexes}.

\begin{lemma}[Shifted deterministic sewing]\label{lem:deterministic_shifted_sewing}
Let $E$ be a Banach space, $\cS\leq \cT$ and $A:\simp{\cS}{\cT}\to E$ be a deterministic function. Suppose that there exist controls $w_1$, $w_2$ and nonnegative parameters $\alpha_1$, $\alpha_2$, $\beta_1$, $\beta_2$ satisfying
\begin{align*}
	\beta_1>0, \quad \beta_2>0,\quad \alpha_2+\beta_2>1
\end{align*}
such that
\begin{align}
	\label{eq:deterministic_shifted_assumption_1} & \| A_{s,t} \|_E \leq w_1(s-(t-s),t)^{\alpha_1} (t-s)^{\beta_1}, \quad \forall\, (s,t)\in \Dminus{\cS}{\cT},\\
	\label{eq:deterministic_shifted_assumption_2} & \| \delta A_{s,u,t} \|_E \leq w_2(s-(t-s),t)^{\alpha_2} (t-s)^{\beta_2}, \quad \forall\, (s,u,t)\in \DDminus{\cS}{\cT}.
\end{align}
Further assume that there exists a continuous map $\mathcal{A}:[\cS,\cT]\to E$ such that, for any $t \in [\cS,\cT]$ and any sequence of partitions $\Pi_k=\{t_i^k\}_{i=0}^{N_k}$ of $[\cS,t]$ with mesh size going to zero, we have
\begin{align*}
    \mathcal{A}_t-\cA_\cS=\lim_{k\rightarrow \infty}\sum_{i=0}^{N_k}A_{t_i^k,t_{i+1}^k} \text{ in } E. 
\end{align*}
Define
\begin{align*}
	C_1 \coloneqq \frac{2^{\alpha_2}}{1-2^{\alpha_2+\beta_2-1}}, \quad
	C_2 \coloneqq \frac{2^{-\beta_1}}{1-2^{-\beta_1}}, \quad
	C_3 \coloneqq C_1 \, \frac{2^{-\beta_2}}{1-2^{-\beta_2}};
\end{align*}
then it holds
\begin{align}
\label{eq:deterministic_shifted_conclusion_1}
	& \| \cA_t-\cA_s-A_{s,t}\|_E \leq C_1 w_2(s-(t-s),t)^{\alpha_2} (t-s)^{\beta_2}, \quad \forall\, (s,t)\in \Dminus{\cS}{\cT}, \\
\label{eq:deterministic_shifted_conclusion_2}
	& \| \cA_t-\cA_s\|_E \leq C_2 w_1(s,t)^{\alpha_1} (t-s)^{\beta_1} + C_3 w_2 (s,t)^{\alpha_2} (t-s)^{\beta_2}, \quad \forall\, (s,t)\in \simp{\cS}{\cT}.
\end{align}
\end{lemma}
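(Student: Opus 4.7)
The plan is to follow the classical dyadic-bisection strategy underlying the deterministic sewing lemma, carefully adapted to the shifted controls. Since $\cA$ is given as a limit of Riemann sums along any mesh-vanishing partition, I may fix the specific dyadic partitions $\Pi_n = \{t^n_i = s + i(t-s)/2^n\}_{i=0}^{2^n}$ of $[s,t]$ and set $S_n \coloneqq \sum_{i=0}^{2^n-1} A_{t^n_i, t^n_{i+1}}$, so that $\cA_t - \cA_s = \lim_{n\to\infty} S_n$. It will then suffice to control the dyadic increments $S_{n+1}-S_n$ and the initial increment $S_0 = A_{s,t}$.

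\emph{Proof of \eqref{eq:deterministic_shifted_conclusion_1} for $(s,t)\in\Dminus{\cS}{\cT}$.} A direct telescoping identity gives
\[
S_{n+1} - S_n = -\sum_{i=0}^{2^n-1} \delta A_{t^n_i,\, m^n_i,\, t^n_{i+1}},\qquad m^n_i \coloneqq \tfrac{1}{2}(t^n_i + t^n_{i+1}).
\]
Each dyadic midpoint is perfectly balanced, so $(m^n_i - t^n_i)\wedge(t^n_{i+1} - m^n_i) = (t^n_{i+1}-t^n_i)/2 \geq (t^n_{i+1}-t^n_i)/3$, while the shift condition $t^n_i - (t^n_{i+1} - t^n_i) \geq \cS$ follows for every $i$ from the assumption $(s,t)\in\Dminus{\cS}{\cT}$; hence each triple belongs to $\DDminus{\cS}{\cT}$ and \eqref{eq:deterministic_shifted_assumption_2} applies. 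Using monotonicity of $w_2$ to bound each factor by $w_2(s-(t-s),t)^{\alpha_2}$, the delicate point is to control the number of non-negligible terms sharply. The shifted intervals $[t^n_i - (t-s)/2^n, t^n_{i+1}] = [t^n_{i-1}, t^n_{i+1}]$ ($i\geq 1$) split into two disjoint sub-families (odd and even indices) each contained in $[s-(t-s),t]$, so $\sum_i w_2(t^n_{i-1}, t^n_{i+1}) \leq 3\, w_2(s-(t-s),t)$ by superadditivity of $w_2$. Combining this with Jensen's inequality when $\alpha_2 \leq 1$, or the bound $\sum_i w_2^{\alpha_2}\leq 3\,\max_i w_2^{\alpha_2-1}\cdot\sum_i w_2$ when $\alpha_2 \geq 1$, yields a prefactor of order $2^{n(1-\alpha_2)^+}$; hence
\[
\|S_{n+1} - S_n\|_E \lesssim w_2(s-(t-s), t)^{\alpha_2}\, (t-s)^{\beta_2}\, 2^{n((1-\alpha_2)^+-\beta_2)}.
\]
Since $\alpha_2 + \beta_2 > 1$ and $\beta_2 > 0$, the exponent is strictly negative, so the series is geometrically summable; summing, passing to the limit $S_n \to \cA_t - \cA_s$, and tracking the geometric series constant yields \eqref{eq:deterministic_shifted_conclusion_1} with constant $C_1$.

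\emph{Proof of \eqref{eq:deterministic_shifted_conclusion_2} for general $(s,t)\in\simp{\cS}{\cT}$.} Here the shift condition on $[s,t]$ itself may fail, so Step 1 cannot be applied directly. Instead, I telescope along the geometric sequence $s_k \coloneqq s + (t-s)/2^k$ approaching $s$ from the right: each sub-interval $[s_{k+1}, s_k]$ has length $(t-s)/2^{k+1}$ and satisfies the shift condition $s_{k+1} - (s_k - s_{k+1}) = s \geq \cS$, hence belongs to $\Dminus{\cS}{\cT}$. Applying the conclusion of Step 1 together with \eqref{eq:deterministic_shifted_assumption_1} on each piece, and using the monotonicity $w_i(s, s_k) \leq w_i(s, t)$, I obtain
\[
\|\cA_{s_k} - \cA_{s_{k+1}}\|_E \leq w_1(s,t)^{\alpha_1}\bigl((t-s)/2^{k+1}\bigr)^{\beta_1} + C_1\, w_2(s,t)^{\alpha_2}\bigl((t-s)/2^{k+1}\bigr)^{\beta_2}.
\]
Summing over $k \geq 0$, invoking the assumed continuity of $\cA$ at $s$ to pass $\cA_{s_k}\to\cA_s$ in the limit, and computing the two resulting geometric series $\sum_{k\geq 0} 2^{-(k+1)\beta_i}$ ($i=1,2$) produces precisely the constants $C_2$ and $C_3$ of \eqref{eq:deterministic_shifted_conclusion_2}.

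\emph{Main obstacle.} The only genuinely non-routine point is the sharp combinatorial estimate in the first step: a worst-case per-term bound would give a prefactor $2^n$, which is fatal unless $\beta_2 > 1$. The fact that the shifted dyadic intervals overlap only pairwise (rather than $n$-fold) is what permits the gain of a factor $2^{-n\alpha_2}$ (via Jensen when $\alpha_2\leq 1$, or trivially when $\alpha_2\geq 1$), exactly counterbalancing the subcriticality threshold $\alpha_2 + \beta_2 > 1$. Everything else is a deterministic analogue of the standard Young/sewing machinery, and the constants $C_1,C_2,C_3$ emerge from routine geometric-series bookkeeping.
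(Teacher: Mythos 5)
Your proof is correct and follows essentially the same route as the paper's: the same dyadic bisection of $[s,t]$ with the odd/even splitting of the shifted intervals, superadditivity of $w_2$ and Jensen's inequality for \eqref{eq:deterministic_shifted_conclusion_1}, and the same geometric telescoping $s_k=s+(t-s)/2^k$ towards the left endpoint for \eqref{eq:deterministic_shifted_conclusion_2}. The only deviations are harmless bookkeeping (your superadditivity constant is $3$ where $2$ suffices) plus the fact that you also treat $\alpha_2>1$, a case the paper's Jensen step only covers for $\alpha_2\le 1$.
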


\subsection{A priori estimates on solutions to SDEs} \label{sec:apriori}

We present here the relevant a priori estimates on solutions that we will need throughout the paper. Due to their structural differences, we need to give separate statements in the cases where the drift $b$ has spatial regularity $\cB^\gamma_\infty$ with $\gamma>0$ or $\gamma<0$.
Without loss of generality we can exclude the case $\gamma=0$, since our condition on $\gamma$ from Assumption \ref{ass:strong} is a strict inequality, so that we can use the embedding $\cB^0_\infty \hookrightarrow \cB^{-\vep}_\infty$ for $\vep>0$ small enough.
It suffices to prove the estimates for drifts $b\in L^q([0,T];\mathcal{C}^\infty_b)$, as the result for distributional $b$ then follows by the definition of an admissible solution.

\begin{lemma}\label{lem:apriori-gamma<0}
	Let $m\in [1,\infty)$, $H\in (0,1/2)$, $(\gamma,q)$ with $\gamma<0$ satisfying Assumption~\ref{ass:strong} and define
	\begin{equation}\label{eq:eta_apriori_gamma<0}
	\zeta \coloneqq \frac{-\gamma H}{\gamma H + 1 -H}.
	\end{equation}
	Let $b\in L^q([0,T];\cB^\gamma_\infty)$, $X_0$ be $\cF_0$-measurable 
 and let $X$ denote the unique admissible solution to \ref{eq:SDEintro}. Set $\varphi\coloneqq X-B$.
	Then there exists a constant $C=C(H,d,\gamma,q,m)$ such that for all $(s,t) \in \simp{0}{T}$ it holds
	\begin{equation}\label{eq:apriori-gamma<0}
		\| \cond{m}{s}{\varphi_t-\varphi_s}\|_{L^\infty_\Omega}
		\leq C \Big[ \| b\|_{L^q_{[s,t]} \cB^\gamma_\infty} (t-s)^{\gamma H + \frac{1}{q'}} + \| b\|_{L^q_{[s,t]} \cB^\gamma_\infty}^{1+\zeta} (t-s)^{\frac{1}{q'}-\frac{\zeta}{q}}\Big].
	\end{equation}
\end{lemma}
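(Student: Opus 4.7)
By the admissibility of $X$, it suffices to prove the estimate when $b\in L^q([0,T];\cC^\infty_b)$ with a constant depending only on $\|b\|_{L^q_{[s,t]}\cB^\gamma_\infty}$; the case of distributional $b$ then follows by passing to the limit along an approximating sequence $b^n\to b$ in $L^q([0,T];\cB^{\gamma-}_\infty)$, using the $L^2(\Omega;\cC_{[0,T]})$-convergence $X^n\to X$, extraction of an a.s.~uniformly convergent subsequence, and Fatou's lemma. Following the template of \cite{GaleatiGerencser}, I plan to apply the stochastic sewing lemma of \cite{Le} to the germ
\[
A_{s,t}:=\EE^s\int_s^t b_r(\varphi_s+B_r)\,\dd r,
\]
which freezes the drift part at time $s$ while keeping the full fractional noise inside, with candidate sewn process $\mathcal{A}_t=\varphi_t=\int_0^t b_r(X_r)\,\dd r$.

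\textbf{Paragraph 2 (Germ bounds via local nondeterminism).} Conditionally on $\cF_s$, the LND identity \eqref{eq:fBm_cond_var} makes $B_r$ Gaussian with variance $c_H(r-s)^{2H}\,\mathrm{I}_d$; combined with the Besov-Gaussian smoothing $\|G_{\sigma^2}f\|_{L^\infty}\lesssim \sigma^{\gamma}\|f\|_{\cB^\gamma_\infty}$ (valid for $\gamma<0$) and H\"older in time, this gives the linear bound
\[
\| \cond{m}{s}{A_{s,t}} \|_{L^\infty_\Omega}
\lesssim \int_s^t (r-s)^{\gamma H}\|b_r\|_{\cB^\gamma_\infty}\,\dd r
\lesssim \|b\|_{L^q_{[s,t]}\cB^\gamma_\infty}\,(t-s)^{\gamma H+1/q'},
\]
i.e.\ the first term in \eqref{eq:apriori-gamma<0}. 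For the sewing consistency condition, writing $\delta A_{s,u,t}=\int_u^t[b_r(\varphi_s+B_r)-b_r(\varphi_u+B_r)]\,\dd r+ (\EE^s-\EE^u)[\ldots]$ and using the tower $\EE^s=\EE^s\EE^u$, the inner integrand is the difference of two Gaussian smoothings of $b_r$ evaluated at two $\cF_u$-measurable points differing by $\varphi_u-\varphi_s$; the Besov shift estimate $\|(G_{\sigma^2}f)(\cdot+h)-G_{\sigma^2}f\|_{L^\infty}\lesssim \sigma^{\gamma-\eta}|h|^\eta\|f\|_{\cB^\gamma_\infty}$ for any $\eta\in(0,1)$, together with LND over $[u,r]$ and H\"older in time, yields
\[
\| \cond{m}{s}{\EE^s\delta A_{s,u,t}} \|_{L^\infty_\Omega}
\lesssim \|b\|_{L^q_{[s,t]}\cB^\gamma_\infty}\,(t-s)^{H(\gamma-\eta)+1/q'}\,R(s,t)^{\eta},
\]
with $R(s,t):=\sup_{u\in[s,t]}\| \cond{m}{s}{\varphi_u-\varphi_s}\|_{L^\infty_\Omega}$.

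\textbf{Paragraph 3 (Sewing, Young and the optimal $\eta$).} Applying SSL (the pathwise bound on $\|\delta A\|_{L^m\vert\cF_s}$ needed alongside follows from the triangle inequality and Paragraph 2) and identifying the sewn process with $\varphi_t-\varphi_s$, I arrive at the self-referring inequality
\[
R(s,t)\lesssim M\,(t-s)^{\gamma H+1/q'}+ M\,(t-s)^{H(\gamma-\eta)+1/q'}\,R(s,t)^{\eta},\qquad M:=\|b\|_{L^q_{[s,t]}\cB^\gamma_\infty}.
\]
Young's inequality $xy^\eta\leq \tfrac12y+c_\eta x^{1/(1-\eta)}$ absorbs the nonlinear $R^\eta$ term, and choosing $\eta=\zeta/(1+\zeta)$ with $\zeta=-\gamma H/(\gamma H+1-H)$ so that $1/(1-\eta)=1+\zeta$, a direct algebraic simplification transforms the second exponent $[H(\gamma-\eta)+1/q']/(1-\eta)$ into $1/q'-\zeta/q$, thus recovering \eqref{eq:apriori-gamma<0}.

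\textbf{Main obstacle.} The technical core of the argument is to make the SSL hypotheses rigorous with this choice of $\eta$: in the regime $H<1/2$ close to the boundary $\gamma=1-1/(H(q'\vee2))$ in Assumption \ref{ass:strong}, the required strict inequality $\beta_2>1$ on the second-order exponent does not hold for free. I would overcome this either by feeding in an a priori H\"older regularity of $\varphi$ (itself produced by a coarser preliminary SSL iteration, giving an extra $(t-s)^{\eta\alpha}$ factor on the right-hand side), or by using the shifted SSL of \cite{Gerencser,GaleatiGerencser} in which conditioning on $\cF_{s-(t-s)}$ provides the missing regularity via LND over $[s-(t-s),s]$. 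Balancing this feasibility window with the Young-inequality exponent is precisely what singles out the value of $\zeta$ in \eqref{eq:eta_apriori_gamma<0}.
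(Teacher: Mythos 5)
Your overall architecture (stochastic sewing for the germ $A_{s,t}=\EE^s\int_s^t b_r(\varphi_s+B_r)\,\dd r$, local nondeterminism plus Gaussian smoothing for the first-order bound) is the standard route, and the paper indeed delegates exactly this part to the computations of \cite{GaleatiGerencser}. The genuine gap is in Paragraph 3, and it is not the one you flag as your ``main obstacle'': the two constraints you place on $\eta$ are mutually exclusive. To produce the power $\|b\|^{1+\zeta}$ and the time exponent $\tfrac{1}{q'}-\tfrac{\zeta}{q}$ via Young's inequality you must take $\eta=\zeta/(1+\zeta)$. But the sewing coherence condition requires the bound on $\EE^s\delta A_{s,u,t}$ to have total homogeneity (control exponents plus time exponents) strictly greater than $1$. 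Even feeding the best available local information $R(u,w)\lesssim \llbracket\varphi\rrbracket\,\|b\|_{L^q_{[u,w]}\cB^\gamma_\infty}(w-u)^{\gamma H+1/q'}$ into your $R^\eta$ factor, the total homogeneity is $(1+\eta)(1+\gamma H)-H\eta=1+\gamma H+\eta(1+\gamma H-H)$, which exceeds $1$ if and only if $\eta>\zeta$. With $\eta=\zeta/(1+\zeta)<\zeta$ it equals $1+\gamma H\,\zeta/(1+\zeta)<1$ for \emph{every} $\gamma<0$, so the failure is not confined to the boundary regime, and neither of your remedies repairs it: the a priori H\"older regularity of $\varphi$ is precisely what produces the $\eta(\gamma H+1/q')$ contribution above, while the shifted sewing lemma relaxes the first-order condition $\beta_1>1/2$, not the coherence condition $\alpha_2+\beta_2>1$. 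Conversely, any admissible $\eta>\zeta$ yields through Young's inequality the power $M^{1/(1-\eta)}>M^{1+\zeta}$ and a different time exponent, i.e., a strictly weaker statement than \eqref{eq:apriori-gamma<0}.

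The paper circumvents this by keeping the linear ($\eta=1$) shift estimate, for which the coherence condition does hold under Assumption~\ref{ass:strong}; this gives the linear self-inequality $\llbracket\varphi\rrbracket_{[s,t]}\lesssim 1+\|b\|_{L^q_{[s,t]}\cB^\gamma_\infty}(t-s)^{\gamma H+1/q'-H}\llbracket\varphi\rrbracket_{[s,t]}$ of \cite[eq.~(2.6)]{GaleatiGerencser}. Absorption is then possible only on intervals where the auxiliary control $w_*$, defined by $w_*(s,t)^{\gamma H+1-H}=\|b\|_{L^q_{[s,t]}\cB^\gamma_\infty}(t-s)^{\gamma H+1/q'-H}$, lies below a threshold $\Delta$, and the global estimate follows by chaining over a greedy partition into $n$ such intervals: superadditivity gives $(n-1)\Delta\leq w_*(s,t)$, and the factor $n^{-\gamma H}\lesssim w_*(s,t)^{-\gamma H}$ arising from resumming the pieces is exactly what generates the exponent $1+\zeta$ and the time power $1/q'-\zeta/q$. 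In other words, the nonlinear term comes from interval counting, not from Young's inequality; to complete your argument you should replace Paragraph 3 by this local-absorption-plus-chaining step (and, as in the paper, handle $q>2$ by first running the argument with $q$ replaced by $2$ and then applying H\"older's inequality in time).
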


\begin{remark}\label{rem:eta_apriori_gamma<0}
For $q\in (1,2]$, Assumption~\ref{ass:strong} reduces to $\gamma>1-1/(Hq')$ and implies that
\begin{align*}
	\zeta< q(1-H)-1\leqslant 1-2H<1, \quad \frac{1}{q'}-\frac{\zeta}{q} > \gamma H + \frac{1}{q'} > H.
\end{align*}
In the case $q>2$, we have $\gamma>1-1/(2H)$ and we can run the computation with $q$ replaced by $2$ to obtain the same conclusion.
%$\tilde q=2$
Hence, for $|t-s|\leq 1$, also using the estimate $a^{1+\zeta}\leq a+a^2$, we may write the bound \eqref{eq:apriori-gamma<0} in a simpler way as
\begin{align}\label{eq:apriori-gamma<0simple}
	\| \cond{m}{s}{\varphi_t-\varphi_s}\|_{L^\infty_\Omega} \lesssim \| b\|_{L^q_{[s,t]} \cB^\gamma_\infty}(1 + \| b\|_{L^q_{[s,t]} \cB^\gamma_\infty}) (t-s)^{\gamma H + \frac{1}{q'}}.
\end{align}
\end{remark}

\begin{proof}[Proof of Lemma~\ref{lem:apriori-gamma<0}]
	The statement is a more refined version of \cite[Lemma 2.4]{GaleatiGerencser}, 
	based on keeping more carefully track of the constants. The statement therein is given for deterministic $X_0$, but the proof for $\cF_0$-measurable initial data proceeds identically.
	
	We start by considering the case $q\in (1,2]$, in which case Assumption~\ref{ass:strong} becomes $\gamma>1-1/(H q')$.
	Set $\tilde{w}_b(s,t)=\| b\|_{L^q_{[s,t]}\cB^\gamma_\infty}^q$ 
	and for $s<t$ fixed, define
	\begin{equation}\label{eq:def-seminorm-apriori}
		\llbracket \varphi \rrbracket_{[s,t]} \coloneqq \sup_{s\leqslant u<v\leqslant t} \frac{\| \cond{m}{u}{\varphi_v-\varphi_u}\|_{L^\infty_\Omega}}{\tilde{w}_b(u,v)^{\frac{1}{q}} (v-u)^{\gamma H + \frac{1}{q'}}}.
	\end{equation}
	Following the computations up to \cite[eq. (2.6)]{GaleatiGerencser}, one finds
	\begin{equation}\label{eq:apriori-proof-eq1}
		\llbracket \varphi \rrbracket_{[s,t]} \lesssim 1 + \tilde{w}_b(s,t)^{\frac{1}{q}} (t-s)^{\gamma H + \frac{1}{q'}-H} \llbracket \varphi \rrbracket_{[s,t]}.
	\end{equation}
	Define another control $w_\ast$ by $w_\ast(s,t)^{\gamma H +1-H} = \tilde{w}_b(s,t)^{1/q} (t-s)^{\gamma H + 1/q'-H}$. It follows from \eqref{eq:apriori-proof-eq1} that, for small enough $\Delta>0$ (depending only on $H,\,d,\,\gamma,\,q$), $\llbracket \varphi\rrbracket_{[s,t]}\lesssim 1$ whenever $w_\ast(s,t)\leq \Delta$; for such $(s,t)$, \eqref{eq:apriori-gamma<0} automatically holds.
	Consider now $(s,t)$ with $w_\ast(s,t)> \Delta$. Then we can find $n\geq 2$ and an increasing sequence $s=t_0<t_1<\dots<t_n=t$ such that $w_\ast(t_i,t_{i+1})=\Delta$ for all $i=0,\ldots, n-2$ and $w_\ast(t_{n-1},t)\leq \Delta$.
	Arguing as in the proof of \cite[Lemma 2.4]{GaleatiGerencser}, it then holds
	\begin{align*}
		\| \cond{m}{s}{\varphi_t-\varphi_s} \|_{L^\infty_\Omega}
		& \leq \sum_{i=1}^n \| \cond{m}{t_{i-1}}{\varphi_{t_i}-\varphi_{t_{i-1}}} \|_{L^\infty_\Omega}\\
		& \lesssim \sum_{i=1}^n \tilde{w}_b(t_{i-1},t_i)^{\frac{1}{q}} (t_i-t_{i-1})^{\gamma H + \frac{1}{q'}}
		\leq n^{-\gamma H}  \tilde{w}_b(s,t)^{\frac{1}{q}} (t-s)^{\gamma H + \frac{1}{q'}}.
	\end{align*}
	By the superadditivity of $w_\ast$, it holds $(n-1) \Delta \leq w_\ast(s,t)$. Hence, as $\gamma<0$ by assumption, we conclude that in this case
	\begin{align*}
		\| \cond{m}{s}{\varphi_t-\varphi_s} \|_{L^\infty_\Omega} \lesssim \tilde{w}_b(s,t)^{\frac{1}{q}} (t-s)^{\gamma H + \frac{1}{q'}} w_\ast(s,t)^{-\gamma H},
	\end{align*}
	which by the definitions of $\tilde{w}_b$ and $w_\ast$ again implies \eqref{eq:apriori-gamma<0}.
	
	Now assume $q>2$. Then $b\in L^2([0,T];\cB^\gamma_\infty)$ and estimate \eqref{eq:apriori-gamma<0} holds with $q$ replaced by $2$. Moreover, by H\"older's inequality $\| b\|_{L^2_{[s,t]} \cB^\gamma_\infty} \leq \| b\|_{L^q_{[s,t]} \cB^\gamma_\infty} (t-s)^{1/2-1/q}$. Therefore
	\begin{align*}
	\| \cond{m}{s}{\varphi_t-\varphi_s}\|_{L^\infty_\Omega}
	& \lesssim \| b\|_{L^2_{[s,t]} \cB^\gamma_\infty} (t-s)^{\gamma H + \frac{1}{2}} + \| b\|_{L^2_{[s,t]} \cB^\gamma_\infty}^{1+\zeta} (t-s)^{\frac{1}{2}-\frac{\zeta}{2}}\\
	& \lesssim \| b\|_{L^q_{[s,t]} \cB^\gamma_\infty} (t-s)^{\gamma H + 1 - \frac{1}{q}} + \| b\|_{L^q_{[s,t]} \cB^\gamma_\infty}^{1+\zeta} (t-s)^{1-(1+\zeta)\frac{1}{q}},
	\end{align*}
	which upon rearranging yields \eqref{eq:apriori-gamma<0} also in this case.
\end{proof}

\begin{remark}\label{rem:apriori-borderline-gamma=0}
	In the borderline case $\gamma=0$, up to replacing $\cB^\gamma_\infty$ with the slightly better $L^\infty_x$ (in the sense that $L^\infty_x \subsetneq \cB^0_\infty$),  it is straightforward by H\"older's inequality to show that
	\begin{equation}\label{eq:apriori-borderline-gamma=0}
		\| \cond{m}{s}{\varphi_t-\varphi_s}\|_{L^\infty_\Omega}
		\leq 2 \| b\|_{L^q_{[s,t]} L^\infty_x} (t-s)^{\frac{1}{q'}}.
	\end{equation}
	In this case the inequality holds true for any $q\in [1,\infty]$.
\end{remark}

\begin{lemma}\label{lem:apriori-gamma>0}
	Let $m\in [1,\infty)$, $H\in (0,+\infty)\setminus \N$, $(\gamma,q)$ with $\gamma \in (0,1)$ satisfy Assumption~\ref{ass:strong}
	and let $b\in L^q([0,T];\cB^\gamma_\infty)$. Let $X_0$ be $\cF_0$-measurable and $X$ denote the unique admissible solution to \ref{eq:SDEintro}. Recall the notation for the H\"older seminorm introduced in \eqref{eq:defHolder}. Set $\varphi\coloneqq X-B$ and define the control
	\begin{align}\label{eq:wb}
		w_b(s,t)\coloneqq \int_s^t \llbracket b_r \rrbracket_{\cC^\gamma}^q \, \dd r.
	\end{align}
Additionally define the parameters
	\begin{equation}
		\tilde \zeta \coloneqq \frac{\gamma}{1-\gamma}, \quad \tilde{\varepsilon}\coloneqq \gamma H+\frac{1}{q^\prime}-H>0.
	\end{equation}
	Then there exists a constant $C=C(H,d,\gamma,q,m)$ such that for all $(s,t) \in \simp{0}{T}$, it holds
	\begin{equation}\label{eq:apriori-gamma>0}
		\big\| \|\varphi_t-\EE^s\varphi_t\|_{L^m|\mathcal{F}_s} \big\|_{L^\infty_\Omega}
		\leq C w_b(s,t)^{\frac{1}{q}} (t-s)^{\gamma H + \frac{1}{q^\prime}}\Big(1 + w_b(s,t)^{\frac{\tilde\zeta}{q}} (t-s)^{\tilde{\zeta}\tilde{\varepsilon}}\Big).
	\end{equation}
\end{lemma}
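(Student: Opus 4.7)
The plan is to adapt the bootstrap argument from Lemma~\ref{lem:apriori-gamma<0} to the H\"older regime $\gamma\in (0,1)$, exploiting the fact that in this case $b$ is a genuine function whose spatial fluctuations are directly controlled by the seminorm $\llbracket b_r \rrbracket_{\cC^\gamma}$. As in the previous lemma, I would first establish~\eqref{eq:apriori-gamma>0} for drifts $b\in L^q([0,T];\cC^\infty_b)$ and then pass to the limit using the definition of an admissible solution. Similarly, by H\"older's inequality in time, it suffices to treat $q\in (1,2]$: for $q>2$ one reduces to $q=2$ exactly as at the end of the proof of Lemma~\ref{lem:apriori-gamma<0}. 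Under $q\in(1,2]$, Assumption~\ref{ass:strong} reduces to $\gamma>1-1/(Hq')$, i.e. precisely $\tilde\varepsilon>0$, which is what drives the iteration below.

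The core ingredient is a pointwise-in-$r$ estimate on $\cond{m}{u}{b_r(X_r)-\EE^u b_r(X_r)}$ for $s\leq u<r$. I would split $X_r - X_u = M_u^r + N_u^r$ with $M_u^r:=\EE^u(X_r-X_u)$ being $\cF_u$-measurable and $N_u^r := X_r-X_u - M_u^r$ the centered part. Since $b_r(X_u+M_u^r)$ is $\cF_u$-measurable and therefore cancels in both terms,
\begin{align*}
	b_r(X_r) - \EE^u b_r(X_r) = \Phi - \EE^u \Phi, \qquad \Phi := b_r(X_u+M_u^r+N_u^r) - b_r(X_u+M_u^r),
\end{align*}
and the seminorm bound $|b_r(x+y)-b_r(x)|\leq \llbracket b_r\rrbracket_{\cC^\gamma}|y|^\gamma$ combined with the conditional Jensen inequality (using $m\gamma\leq m$) yields
\begin{align*}
	\| \cond{m}{u}{b_r(X_r)-\EE^u b_r(X_r)} \|_{L^\infty_\Omega} \lesssim \llbracket b_r \rrbracket_{\cC^\gamma}\, \| \cond{m}{u}{N_u^r} \|_{L^\infty_\Omega}^\gamma.
\end{align*}
Using $X=\varphi+B$ and the $\cF_u$-measurability of $\varphi_u,B_u$, one checks that $N_u^r = (\varphi_r - \EE^u\varphi_r) + (B_r - \EE^u B_r)$; the Gaussian piece is bounded via the LND property \eqref{eq:fBm_cond_var} by $\lesssim (r-u)^H$, while the $\varphi$-piece will be handled by bootstrap.

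The last step is to insert the above estimate into $\varphi_v - \EE^u\varphi_v = \int_u^v (b_r(X_r)-\EE^u b_r(X_r))\,\dd r$ via conditional Minkowski, apply H\"older with conjugate exponents $(q,q')$ in the time integral, and factor out the ``right'' scaling using the seminorm
\begin{align*}
	\llbracket \varphi \rrbracket_{[s,t]}^{(m)} \coloneqq \sup_{s\leq u<v\leq t} \frac{\|\cond{m}{u}{\varphi_v - \EE^u\varphi_v}\|_{L^\infty_\Omega}}{w_b(u,v)^{1/q}\, (v-u)^{\gamma H + 1/q'}}.
\end{align*}
A direct computation of the $(v-u)$-exponents (using $\gamma(\gamma H + 1/q') + 1/q' - (\gamma H + 1/q') = \gamma\tilde\varepsilon$) produces a self-consistent inequality of the form
\begin{align*}
	\llbracket \varphi \rrbracket_{[s,t]}^{(m)} \leq C_0 + C_0\, \big(\llbracket \varphi \rrbracket_{[s,t]}^{(m)}\big)^{\gamma}\, w_b(s,t)^{\gamma/q} (t-s)^{\gamma\tilde\varepsilon}.
\end{align*}
The elementary algebraic fact that $x\leq C_1+C_2 x^\gamma$ with $\gamma\in(0,1)$ entails $x\leq 2C_1 + (2C_2)^{1/(1-\gamma)}$ then gives $\llbracket\varphi\rrbracket_{[s,t]}^{(m)} \lesssim 1 + w_b(s,t)^{\tilde\zeta/q}(t-s)^{\tilde\zeta\tilde\varepsilon}$ with $\tilde\zeta=\gamma/(1-\gamma)$, and multiplying back by $w_b(s,t)^{1/q}(t-s)^{\gamma H+1/q'}$ yields exactly~\eqref{eq:apriori-gamma>0}. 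The main technical point is to ensure that $\llbracket\varphi\rrbracket_{[s,t]}^{(m)}$ is a priori finite so that the bootstrap is legitimate — this is the reason for reducing to smooth $b$ (for which standard SDE theory supplies finite moments of every order on $[0,T]$) before invoking the admissibility of the solution and passing to the limit in the final inequality.
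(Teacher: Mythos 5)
Your proposal is correct and follows essentially the same route as the paper: your decomposition $X_r=\EE^u X_r+N^r_u$ with the $\Phi-\EE^u\Phi$ trick is exactly the paper's step of comparing $b_r(X_r)$ with $b_r(\EE^u\varphi_r+\EE^u B_r)$ at the cost of a factor $2$, and the seminorm $\llbracket\varphi\rrbracket_{[s,t]}$, the exponent bookkeeping giving $\gamma\tilde\varepsilon$, the closure via $x\leq C_1+C_2x^\gamma$ (the paper's Young's inequality), and the reduction of $q>2$ to $q=2$ all coincide with the published argument. The only cosmetic difference is that you justify finiteness of the seminorm by reducing to smooth drifts, whereas the paper cites \cite[Lemma 2.1]{GaleatiGerencser} for this; both are legitimate.
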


\begin{proof}
	As before, we start with the case $q\in (1,2]$.
	For any $s<t$, define $\llbracket \varphi\rrbracket_{[s,t]}$ as follows:
	\begin{align*}
	\llbracket \varphi \rrbracket_{[s,t]} \coloneqq \sup_{s<u<v<t} \frac{\| \cond{m}{u}{\varphi_v- \EE^u \varphi_v}\|_{L^\infty_\Omega}}{w_b(u,v)^{\frac{1}{q}} (v-u)^{\gamma H + \frac{1}{q'}}}.
	\end{align*}
	By \cite[Lemma 2.1]{GaleatiGerencser}, we know that $\llbracket \varphi \rrbracket_{[s,t]}$ is a well-defined, finite quantity. As therein, using the random ODE solved by $\varphi$ and the properties of conditional expectation, for any $(u,v) \in \simp{s}{t}$ it holds
\begin{align*}
	\|\varphi_v-\EE^u \varphi_v\|_{L^m|\mathcal{F}_u}
	& \leq \Big\|\varphi_v- \varphi_u - \int_u^v b_r(\EE^u\varphi_r + \EE^u B_r) \, \dd r\Big\|_{L^m|\mathcal{F}_u} \\
	&\quad + \Big\|\EE^u \big[ \varphi_u -\varphi_{v} + \int_u^v b_r(\EE^u\varphi_r + \EE^u B_r)\, \dd r \big]\Big\|_{L^m|\mathcal{F}_u} \\
	& \leq 2 \Big\|\varphi_v- \varphi_u - \int_u^v b_r(\EE^u\varphi_r + \EE^u B_r)\, \dd r\Big\|_{L^m|\mathcal{F}_u}\\
	& \leq 2  \int_u^v \|b_r(\varphi_r+B_r) - b_r(\EE^u\varphi_r + \EE^u B_r)\|_{L^m|\mathcal{F}_u} \dd r\\
	& \lesssim \int_u^v \llbracket b_r\rrbracket_{\cC^\gamma} \big[ \|\varphi_r - \EE^u\varphi_r\|_{L^m|\mathcal{F}_u}^\gamma + \|B_r - \EE^u B_r\|_{L^m|\mathcal{F}_u}^\gamma \big] \dd r\\
	& \lesssim \Big( \int_u^v \llbracket b_r\rrbracket_{\cC^\gamma}^q\, \dd r\Big)^{\frac{1}{q}} \big( \llbracket \varphi \rrbracket_{[s,t]}^\gamma |u-v|^{\gamma (\gamma H +\frac{1}{q'})+\frac{1}{q^\prime}} w_b(u,v)^{\frac{\gamma}{q}} + |u-v|^{\gamma H+\frac{1}{q'}}\big),
\end{align*}
where we applied H\"older's inequality, finiteness of  $\llbracket \varphi \rrbracket_{[s,t]}$ and the LND property \eqref{eq:fBm_cond_var} in the last step. Recall the definition of $\tilde\vep$. Then dividing both sides by $|u-v|^{\gamma H+1/q'} w_b(u,v)^{1/q}$ and taking the supremum we arrive at
\begin{align*}
	\llbracket \varphi \rrbracket_{[s,t]}
	\lesssim \llbracket \varphi \rrbracket_{[s,t]}^\gamma |t-s|^{\tilde{\vep} \gamma} w_b(s,t)^{\frac{\gamma}{q}} + 1 .
\end{align*}
Applying Young's inequality, we deduce the existence of a constant $C$ (depending on $\gamma$ and the previous hidden constant) such that
\begin{align*}
	\llbracket \varphi \rrbracket_{[s,t]} \leq \frac{1}{2} \llbracket \varphi \rrbracket_{[s,t]} + C |t-s|^{ \tilde\vep \tilde \zeta} w_b(s,t)^{\frac{\tilde\zeta}{q}} + C.
\end{align*}
By the definitions of $\llbracket \varphi \rrbracket_{[s,t]}$ and $\tilde \vep$, this yields the desired \eqref{eq:apriori-gamma>0}.

In the case $q>2$, as before we can first apply estimate \eqref{eq:apriori-gamma>0} with $q$ replaced by $2$ and then invoke H\"older's inequality, so that
\begin{align*}
	\tilde w_b(s,t)^{\frac{1}{2}}
	\coloneqq \Big( \int_s^t \llbracket b_r \rrbracket_{\cC^\gamma}^2 \dd r\Big)^{\frac{1}{2}}
	\leq \Big( \int_s^t \llbracket b_r \rrbracket_{\cC^\gamma}^q \dd r\Big)^{\frac{1}{q}} (t-s)^{\frac{1}{2}-\frac{1}{q}}
	= w_b(s,t)^{\frac{1}{q}} (t-s)^{\frac{1}{2}-\frac{1}{q}}.%\eqqcolon
\end{align*}
Inserting this inequality into the estimate for $q$ replaced by $2$ again yields \eqref{eq:apriori-gamma>0}.
\end{proof}

\begin{remark}\label{rem:apriori-borderline-gamma=1}
In the borderline case $\gamma=1$, up to replacing $\cB^1_\infty$ with the slightly better $W^{1,\infty}_x$ (in the sense that $W^{1,\infty}_x \subsetneq \cB^1_\infty$), going through similar computations one finds
\begin{align*}
	\| \EE^s |\varphi_t-\EE^s \varphi_t| \|_{L^\infty_\Omega}
	\leq 2 \int_s^t \| \nabla b_r\|_{L^\infty_x} \Big( \| \EE^s |\varphi_r-\EE^s \varphi_r| \|_{L^\infty_\Omega} + c_H (r-s)^{H} \Big) \, \dd r
\end{align*}
which upon an application of Gr\"onwall's lemma yields
\begin{equation}\label{eq:apriori-borderline-gamma=1}
	\| \|\varphi_t-\EE^s\varphi_t\|_{L^1|\mathcal{F}_s}\|_{L^\infty_\Omega}
	\lesssim \exp\Big( 2 \int_s^t \| \nabla b_r\|_{L^\infty_x} \dd r\Big) \Big(\int_s^t \| \nabla b_r\|_{L^\infty_x} \dd r\Big) (t-s)^H.
\end{equation}
\end{remark}

\begin{remark}\label{rem:apriorisimple>0}
Although estimates \eqref{eq:apriori-gamma<0} and \eqref{eq:apriori-gamma>0} may appear convoluted at first sight, their importance lies in the constant $C$ not depending on the time horizon $T$ nor $\| b\|_{L^q_{[0,T]} \cB^\gamma_\infty}$. Moreover, in \eqref{eq:apriori-gamma>0} only the $\cC^\gamma$-seminorm of $b$ appears, which will be important in Sections~\ref{sec:Besovregularity} and~\ref{sec:GaussianBounds} when performing rescaling arguments.
Observe however that, whenever the full generality of \eqref{eq:apriori-gamma<0} is not needed, we may just write
\begin{equation}\label{eq:apriori-simpler}
	\| \|\varphi_t-\varphi_s\|_{L^m|\mathcal{F}_s} \|_{L^\infty_\Omega} \lesssim_{T, \| b\|_{L^q_{[0,T]} \cB^\gamma_\infty}}(t-s)^{\gamma H + \frac{1}{q'}}, \quad \forall\, (s,t) \in \simp{0}{T},
\end{equation}
and similarly for \eqref{eq:apriori-gamma>0}. Let us finally point out that \eqref{eq:apriori-gamma<0} is stronger than \eqref{eq:apriori-gamma>0}, since $\varphi_s\in \cF_s$ and by properties of conditional expectation it holds
\begin{align}\label{eq:compcondexp}
	\|\|\varphi_t-\EE^s \varphi_t\|_{L^m|\mathcal{F}_s}\|_{L^\infty_\Omega} \leq 2 \|\|\varphi_t-\varphi_s\|_{L^m|\mathcal{F}_s}\|_{L^\infty_\Omega}.
\end{align}
\end{remark}

\section{Regularity for fixed time of the conditional law} \label{sec:Besovregularity}

In the main theorem of this section, Theorem~\ref{thm:Besovregularity}, we prove that under Assumption~\ref{ass:strong}, the conditional law of the solution $X$ to \ref{eq:SDEintro} has a density which lies in a Besov space of positive regularity, and track its dependence in time.

\begin{theorem} \label{thm:Besovregularity}
Let $X_0$ be $\mathcal{F}_0$-measurable and let $b\in L^q \left([0,T];\mathcal{B}^\gamma_{\infty}\right)$ with $q$ and $\gamma$ satisfying Assumption~\ref{ass:strong}.
Let $X$ be the unique admissible solution to \ref{eq:SDEintro} and let 
\[0<\eta<\gamma-1+\frac{1}{Hq'}.\]
Then, for any $0\leq u< t \leq T$, the conditional law of $X_{t}$ given $\cF_u$ admits  $\PP$-a.s. a density w.r.t. the Lebesgue measure. Moreover there exists a constant $C=C(\|b\|_{L^q_{[0,T]}\mathcal{B}^\gamma_\infty},H,d,\gamma,q,\eta)$ such that
\begin{equation*}
 	\Big\lVert \lVert \law(X_t \vert \mathcal{F}_u)\|_{\cB^\eta_1} \Big\lVert_{L^\infty_{\Omega}} \leq C(1 + (t-u)^{-\eta H}).
\end{equation*}
In particular, an unconditional version of the previous estimate holds as well:
\begin{equation*}
	\lVert \law(X_t)\|_{\cB^\eta_1} \leqslant C(1 + t^{-\eta H}) , \quad \forall\, t\in (0,T].
\end{equation*}
\end{theorem}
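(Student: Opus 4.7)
The plan is to realise the solution as a perturbed Gaussian--Volterra process $X=\varphi+B$, in which the drift part $\varphi\coloneqq X-B$ enjoys strictly better conditional regularity than $B$ itself, and then to feed this data into the abstract $\theta+\Volt$-smoothing result (Proposition~\ref{prop:E1E2}), whose proof is modelled on \cite{Oliv} and leverages a new conditional version of Romito's lemma (Lemma~\ref{lem:smoothinglemma}).

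\textbf{Step 1 (a priori estimates on $\varphi$).} Set $\varphi\coloneqq X-B$, which is $\mathbb{F}$-adapted. The a priori estimates of Section~\ref{sec:apriori} (Lemma~\ref{lem:apriori-gamma<0} for $\gamma<0$ or Lemma~\ref{lem:apriori-gamma>0} for $\gamma\in(0,1)$) together with Remark~\ref{rem:apriorisimple>0} give, for every $m\geq 1$ and every $(s,t)\in\simp{0}{T}$,
\[
\bigl\|\cond{m}{s}{\varphi_t-\varphi_s}\bigr\|_{L^\infty_\Omega}\;\lesssim\;(t-s)^{\gamma H+1/q'},
\]
with implicit constants depending only on $T$, $\|b\|_{L^q_{[0,T]}\mathcal{B}^\gamma_\infty}$, $H$, $d$, $\gamma$, $q$, $m$. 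The point is that Assumption~\ref{ass:strong} forces the exponent $\gamma H+1/q'$ to be strictly larger than the fBm Hölder exponent $H$; in the conditional sense above, $\varphi$ is therefore strictly smoother in time than the noise $B$.

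\textbf{Step 2 (conditional smoothing).} The process $B$ admits the Volterra representation \eqref{eq:fBmrepresentation} and satisfies the $H$-local nondeterminism property \eqref{eq:fBm_cond_var}. Plugging the two inputs---Step~1 on $\varphi$ and LND on $B$---into the abstract Proposition~\ref{prop:E1E2} directly yields the conditional Besov bound claimed by the theorem. The argument underpinning Proposition~\ref{prop:E1E2} is the conditional Romito-type Lemma~\ref{lem:smoothinglemma}, which reduces the desired $\mathcal{B}^\eta_1$-estimate to showing that, for every bounded measurable $f$ and every $|h|\leq 1$,
\[
\bigl|\mathbb{E}[f(X_t+h)-f(X_t)\mid\mathcal F_u]\bigr|\;\lesssim\;\|f\|_\infty\,|h|^\eta\,(1+(t-u)^{-\eta H}).
\]
This is obtained from the decomposition $X_t=X_u+(\varphi_t-\varphi_u)+(B_t-B_u)$: the conditional law $\mathcal{L}(B_t-B_u\mid\mathcal F_u)$ is Gaussian of variance $c_H(t-u)^{2H}I_d$, so the shift $h$ can be absorbed on scale $(t-u)^H$ (providing the factor $|h|^\eta(t-u)^{-\eta H}$), while the drift correction $\varphi_t-\varphi_u$ is controlled through Hölder interpolation based on Step~1.

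\textbf{Step 3 (unconditional bound).} The last display in the theorem follows immediately from the first one by taking $u=0$. Alternatively, a self-contained derivation of the $t^{-\eta H}$ factor can be obtained by proving the estimate for $t=T$ uniformly in the rescaled drift, and then invoking the scaling identity \eqref{eq:scaling} together with Lemma~\ref{lem:scaling} (under Assumption~\ref{ass:strong}, $\|b^\lambda\|_{L^q_{[0,T]}\mathcal B^\gamma_\infty}$ stays bounded uniformly in $\lambda\in(0,1]$) and the usual scaling of Besov norms of densities.

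\textbf{Main obstacle.} The hard part is Step~2, i.e.\ the reduction of the Besov regularity of the conditional law to a Romito-type pointwise increment bound. In the setups of \cite{Oliv,Romito} the drift and the noise can be decoupled by independence, whereas here $\varphi_t-\varphi_u$ depends on the entire future of $B$, and in particular on the Gaussian residual that one wishes to exploit to perform Romito's shift. Reconciling this coupling with the Gaussian-density argument, and doing so with \emph{a.s.\ uniform} (i.e.\ $L^\infty_\Omega$) control rather than merely integrated one, is precisely what necessitates a new \emph{conditional} formulation of Romito's lemma instead of a straightforward adaptation.
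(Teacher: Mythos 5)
Your proposal follows the paper's proof essentially verbatim: the same decomposition $X=\varphi+B$, the same a priori estimates from Section~\ref{sec:apriori} verifying hypothesis \eqref{eq:E1E2_hypothesis}, and the same application of Proposition~\ref{prop:E1E2} built on the conditional Romito lemma (Lemma~\ref{lem:smoothinglemma}). The one point to tighten is Step 2: Proposition~\ref{prop:E1E2} by itself only yields the exponent $-\eta(H+\varepsilon)$, so the fBm scaling argument you relegate to an ``alternative'' in Step 3 is in fact essential for $t-u<1$ (combined with the tower property for $t-u>1$) to obtain the sharp factor $(t-u)^{-\eta H}$ in the \emph{conditional} estimate --- which is precisely how the paper closes the argument.
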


\begin{remark}
The unconditional version of the previous result is consistent with the literature on drifts in Besov spaces with positive regularity: see \cite[Theorem 1]{Oliv} 
for a similar (not quantitative) result when $\gamma>0$ and $q=\infty$.
\end{remark}

\begin{remark}
The constant $C$ in Theorem \ref{thm:Besovregularity} 
depends on the time horizon $T$ only through $\| b\|_{L^q_{[0,T]} \cB^\gamma_\infty}$.
In fact, as the proof shows, it suffices to have uniform bounds on $\| b\|_{L^q_{[t,t+1]} \cB^\gamma_\infty}$, over $t\in [0,T-1)$, which allows to cover infinite time intervals as well. 
\end{remark}

\smallskip

The rest of this section is organised as follows: 
first in Lemma~\ref{lem:smoothinglemma}, we provide an extension of Romito's Lemma (see \cite[Lemma A.1]{Romito}) for conditional laws, which gives a sufficient condition on a random variable to imply that its law conditioned on some $\sigma$-algebra admits a density with positive Besov regularity.
Then we apply this lemma to Gaussian-Volterra processes with drift having conditional H\"older continuity, to deduce in Proposition~\ref{prop:E1E2} that the conditional law of such processes is absolutely continuous w.r.t. the Lebesgue measure. Finally, as the solution to \ref{eq:SDEintro} can be seen as a perturbed fBm, combining the a priori estimates from Section~\ref{sec:apriori}, Proposition~\ref{prop:E1E2} and a scaling argument, we prove Theorem~\ref{thm:Besovregularity}.

To formulate Lemma~\ref{lem:smoothinglemma}, we need the following definition.

\begin{definition}
Let $h \in \mathbb{R}^d$ and $f\colon \mathbb{R}^d \rightarrow \mathbb{R}$. We define $\Delta^1_h f\colon \mathbb{R}^d\rightarrow \mathbb{R}$ by 
\begin{equation*}
(\Delta^1_h f)(\cdot)\coloneqq f(\cdot+h)-f(\cdot).
\end{equation*}
We define iteratively $\Delta^n_h f\colon \mathbb{R}^d\rightarrow \mathbb{R}$ for $n \in \mathbb{N}^*$ by 
\begin{equation}\label{eq:higher.order.increments}
(\Delta^n_h f)(\cdot)\coloneqq(\Delta^1_h(\Delta^{n-1}_h f))(\cdot)=\sum_{j=0}^n (-1)^{n-j} \binom{n}{j} f(x+jh).
\end{equation}
\end{definition}

\begin{lemma}\label{lem:smoothinglemma}
Let $Z$ be an $\mathbb{R}^d$-valued random variable on $(\Omega,\mathcal{F},\PP)$ and $\mathcal{G}\subseteq \mathcal{F}$ a $\sigma$-algebra. Assume there exist parameters $s,\delta,C>0$, $h_{0}\in (0,1]$ and an integer $m\geqslant 1$ with $\delta<s<m$ such that, for any $\phi \in \mathcal{C}^\delta_b(\mathbb{R}^d)$ and $h \in \mathbb{R}^d$ with $|h|<h_{0}$, for any $A\in \mathcal{G}$,
\begin{equation}\label{eq:smoothing.lemma.assumption}
\Big|\EE\big[\Delta_h^m \phi(Z)\, \mathbbm{1}_{A} \big]\Big|\leq C |h|^s \|\phi\|_{\mathcal{C}^\delta_b}\, \PP(A).
\end{equation}
Then $\PP$-almost surely, the (regular version of) the conditional law of $Z$ given $\mathcal{G}$, denoted by ${\mathcal{L}(Z \vert \mathcal{G})}$, is absolutely continuous w.r.t. the Lebesgue measure and belongs to $\cB^{s-\delta}_1$. Moreover, there exists a constant $\tilde{C}>0$ independent of $Z$ and $\mathcal{G}$ such that 
\begin{equation}\label{eq:smoothing.lemma}
\Big\lVert \lVert \law(Z \vert \mathcal{G})\lVert_{\cB^{s-\delta}_1} \Big\lVert_{L^\infty_{\Omega}} \leq \tilde{C}(1+C+h_{0}^{-s+\delta}).
\end{equation}
\end{lemma}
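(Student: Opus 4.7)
My plan is to reduce the statement to a pathwise application of the classical (unconditional) Romito Lemma from \cite[Lemma A.1]{Romito} to a regular version of the conditional law. First, let $\rho(\omega,\cdot)$ denote a regular version of $\mathcal{L}(Z\vert\mathcal{G})$, which exists since $\mathbb{R}^d$ is Polish, so that $\int\varphi\, d\rho(\omega,\cdot) = \EE[\varphi(Z)\vert\mathcal{G}](\omega)$ $\PP$-a.s.\ for every bounded Borel $\varphi$. For fixed $\phi\in\cC^\delta_b$ and $|h|<h_0$, the $\mathcal{G}$-measurable random variable $Y_{h,\phi}(\omega)\coloneqq\int\Delta_h^m\phi\, d\rho(\omega,\cdot)$ satisfies, by assumption \eqref{eq:smoothing.lemma.assumption} applied to arbitrary $A\in\mathcal{G}$,
\begin{equation*}
\big|\EE[Y_{h,\phi}\,\mathbbm{1}_A]\big|\leq C|h|^s\|\phi\|_{\cC^\delta_b}\PP(A),\qquad\forall A\in\mathcal{G}.
\end{equation*}
Testing against $A_\pm \coloneqq \{\pm Y_{h,\phi}>C|h|^s\|\phi\|_{\cC^\delta_b}\}\in\mathcal{G}$ forces $\PP(A_\pm)=0$, yielding the a.s.\ pointwise bound $|Y_{h,\phi}|\leq C|h|^s\|\phi\|_{\cC^\delta_b}$.

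Next, I would upgrade this to a single full-measure event on which the inequality holds for all admissible $(h,\phi)$ simultaneously. Using continuity of $h\mapsto Y_{h,\phi}(\omega)$ (by dominated convergence, as $\rho(\omega,\cdot)$ is a probability measure) and continuity of $\phi\mapsto Y_{h,\phi}(\omega)$ with respect to $\|\cdot\|_\infty$, I would choose a countable dense subset of $\{|h|<h_0\}$ and a countable family $\{\phi_k\}\subset\cC^\infty_c$ that is dense in $\cC^\infty_c$ for the $\cC^\delta_b$ norm, to obtain a common $\PP$-null set $N$ outside of which $|\int\Delta_h^m\phi\, d\rho(\omega,\cdot)|\leq C|h|^s\|\phi\|_{\cC^\delta_b}$ holds for all $|h|<h_0$ and $\phi\in\cC^\infty_c$. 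For each $\omega\notin N$, the probability measure $\rho(\omega,\cdot)$ then satisfies the hypothesis of the unconditional Romito Lemma, which yields that $\rho(\omega,\cdot)$ is absolutely continuous with density in $\cB^{s-\delta}_1$ and obeys the deterministic bound $\|\rho(\omega,\cdot)\|_{\cB^{s-\delta}_1}\leq\tilde{C}(1+C+h_0^{-s+\delta})$. Taking the essential supremum in $\omega$ then produces \eqref{eq:smoothing.lemma}.

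The main obstacle is the separability/measurability step: $\cC^\delta_b$ itself is not separable, so the fixed-$(h,\phi)$ bound cannot be extended to all test functions by a naive density argument. The workaround is to restrict to $\cC^\infty_c$, which is separable in the $\cC^\delta_b$ topology and nonetheless rich enough to compute the Besov norm of $\rho(\omega,\cdot)$: indeed, the characterization $\|f\|_{\cB^{s-\delta}_1}\asymp\|f\|_{L^1}+\sup_{|h|\leq 1}|h|^{-(s-\delta)}\|\Delta_h^m f\|_{L^1}$ (valid for $m>s-\delta$, which holds by the hypothesis $\delta<s<m$) can be dualized into pairings against $\cC^\infty_c$ test functions via $\int\Delta_h^m\phi\, d\rho=\int\Delta_{-h}^m f_\rho\,\phi\, dx$, bringing the required test-class within reach. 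A secondary point is joint measurability of $\omega\mapsto\|\rho(\omega,\cdot)\|_{\cB^{s-\delta}_1}$, which follows from its representation as a supremum over a countable family of $\mathcal{G}$-measurable quantities.
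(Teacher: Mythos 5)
Your proposal is correct and follows essentially the same route as the paper's proof: the integrated bound is upgraded to an a.s.\ pointwise bound by testing on $\mathcal{G}$-measurable events, a single null set is obtained via countable dense families in $h$ and in the test functions, and Romito's unconditional lemma is then run pathwise on the regular conditional distribution (the paper simply reruns that lemma's proof, with a mollification $\mu^\varepsilon$ and the Bessel potential $(1-\Delta)^{-\delta/2}$, rather than citing it as a black box). Two small points to tighten: your ``dualization'' of the finite-difference characterization cannot be tested directly against $L^\infty$-normalized $\phi\in\cC^\infty_c$, because your bound carries $\|\phi\|_{\cC^\delta_b}$ rather than $\|\phi\|_{L^\infty}$ --- one must take $\phi=(1-\Delta)^{-\delta/2}\tilde\phi$ with $\tilde\phi\in L^\infty_x$ (or first extend the bound from $\cC^\infty_c$ to all of $\cC^\delta_b$ by cutoff and mollification and then invoke Romito's lemma verbatim) --- and the $h_0^{-s+\delta}$ term in \eqref{eq:smoothing.lemma} requires recording the elementary estimate $\|\Delta^m_h\phi\|_{L^\infty_x}\lesssim |h|^\delta\|\phi\|_{\cC^\delta_b}\leq h_0^{\delta-s}|h|^s\|\phi\|_{\cC^\delta_b}$ for $|h|\geq h_0$.
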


\begin{proof}
The proof mostly follows the one from \cite[Lemma A.1]{Romito}, up to taking into account conditioning w.r.t. $\mathcal{G}$ and the smallness parameter $h_0$.
We will use throughout several properties of Besov spaces, mostly found in \cite[Appendix A]{Romito}; we refer to \cite{Triebel} and the references therein for a more complete account.

First notice that, for $|h|\geq h_0$, since $s>\delta$ we can perform the estimate
\begin{align*}
	\Big|\EE\big[\Delta_h^m \phi(Z)\, \mathbbm{1}_{A} \big]\Big|
	\leq \| \Delta_h^m \phi\|_{L^\infty_x} \, \PP(A)
	\lesssim_{m,\delta} |h|^\delta \| \phi\|_{\cC^\delta_b} \, \PP(A)
	\leq h_0^{-s+\delta}\, |h|^s \| \phi\|_{\cC^\delta_b} \, \PP(A).
\end{align*}
In particular, up to replacing the constant $C$ by (a multiple of) $C'\coloneqq C+h_0^{-s+\delta}$, in the following we may assume w.l.o.g. $h_0=1$.

Set $\mu=\mathcal{L}(Z \vert \mathcal{G})$ and let $\mu^\varepsilon=\chi^\varepsilon\ast \mu$, for $\{\chi^\eps\}_{\eps>0}$ standard symmetric mollifiers.
Since $\mu^\eps$ converges $\PP$-a.s. to $\mu$ in the sense of measures as $\eps\to 0$, by lower semicontinuity of the $\cB^{s-\delta}_1$-norm, it suffices to prove estimate \eqref{eq:smoothing.lemma} for $\mu$ replaced by $\mu^\eps$.
For $h$ and $\phi$ fixed, by \eqref{eq:smoothing.lemma.assumption} and the definition of conditional law it holds
\begin{align*}
	| \EE[\langle \Delta^m_h \phi, \mu^\varepsilon\rangle \mathbbm{1}_A]|
	& = | \EE[\langle \Delta^m_h \phi^\eps, \mu\rangle \mathbbm{1}_A]|
	= | \EE[ \Delta^m_h \phi^\eps(Z)\mathbbm{1}_A]|\\
	& \leq C |h|^s \| \phi^\eps\|_{\mathcal{C}^\delta_b} \mathbb{P}(A)
	\leq C |h|^s \| \phi\|_{\mathcal{C}^\delta_b} \mathbb{P}(A) ,
\end{align*}
where in the above we used the fact that convolution by $\chi^\eps$ is a self-adjoint operation which commutes with $\Delta^m_h$, and set $\phi^\eps=\phi\ast\chi^\eps$.
As the estimate works for any $A\in\mathcal{G}$ and $\langle \Delta^m_h \phi,\mu^\eps\rangle$ is $\mathcal{G}$-adapted, we conclude that
\begin{equation}\label{eq:smoothing.lemma.proof1}
	| \langle \Delta^m_h \phi, \mu^\varepsilon\rangle |
	= |\langle \phi, \Delta^m_{-h} \mu^\varepsilon\rangle| \leq C |h|^s \| \phi\|_{\cC^\delta_b}\quad \PP\text{-a.s.},
\end{equation}
where we used that $(\Delta^m_h)^\ast = \Delta^m_{-h}$.
The estimate is true for $|h|\leq h_0$ and $\phi\in\cC^\delta$ fixed; in particular, we can take $\phi=(1-\Delta)^{-\delta/2} \tilde \phi$ with $\tilde\phi\in L^\infty_x$ (we ask the reader not the confuse the classical Laplacian $\Delta$ with the discrete increments $\Delta^m_h$).
Since $\mu^\eps$ is smooth, we can now take a countable collection $\{h^n\}_n$ dense in $\{h:|h|< 1\}$, 
a countable collection $\{\tilde\phi^n\}_n$
 weak-$\ast$ dense in $L^\infty_x$ and argue by approximation to upgrade \eqref{eq:smoothing.lemma.proof1} to
\begin{align*}
	\sup_{\tilde\phi\in L^\infty_x\setminus\{0\}, 0<|h|<1} \frac{|\langle \Delta^m_h (1-\Delta)^{-\delta/2}\tilde\phi, \mu^\varepsilon\rangle|}{|h|^s \| \tilde\phi\|_{L^\infty_x}}
	= \sup_{\tilde\phi\in L^\infty_x\setminus\{0\}, 0<|h|<1} \frac{|\langle \tilde\phi, \Delta^m_{-h} (1-\Delta)^{-\delta/2} \mu^\varepsilon\rangle|}{|h|^s \| \tilde\phi\|_{L^\infty_x}}
	\lesssim C \quad \PP\text{-a.s.},
\end{align*}
where we used the fact that $\Delta^m_h$ and $(1-\Delta)^{-\delta/2}$ commute. By duality, we deduce that
\begin{equation}\label{eq:smoothing.lemma.proof2}
	\sup_{0<|h|<1} \frac{\| \Delta^m_{h} (1-\Delta)^{-\delta/2} \mu^\varepsilon\|_{L^1_x}}{|h|^s}
	\lesssim C \quad \PP\text{-a.s.} 
\end{equation}
Noting that
\begin{equation}\label{eq:smoothing.lemma.proof3}
	\| (1-\Delta)^{-\delta/2} \mu^\varepsilon\|_{L^1_x} \leq \|\mu^\eps\|_{L^1_x} \leq 1\quad \PP\text{-a.s.}
\end{equation}
since $\mu^\eps$ is a probability measure, we can combine bounds \eqref{eq:smoothing.lemma.proof2}-\eqref{eq:smoothing.lemma.proof3} with the characterization of Besov spaces via finite differences (cf. \cite[Sec. 2.5.12]{Triebel}) to conclude that $\PP$-a.s.,
\begin{align*}
	\| (1-\Delta)^{-\delta/2} \mu^\eps\|_{\cB^s_1} \lesssim  \| (1-\Delta)^{-\delta/2} \mu^\eps\|_{L^1_x} + \sup_{0<|h|<1} \frac{\| \Delta^m_{h} (1-\Delta)^{-\delta/2} \mu^\varepsilon\rangle\|_{L^1_x}}{|h|^s} \lesssim 1 + C.
\end{align*}
Since $(1-\Delta)^{-\delta/2}$ acts isometrically between $\cB^{s-\delta}_1$ and $\cB^s_1$, the desired $\PP$-a.s. estimate for $\| \mu^\eps\|_{\cB^s_1}$ (with constants independent of $\eps$) follows.
\end{proof}

The scheme of the proof of the following proposition is similar to \cite[Prop. 1]{Oliv}, replacing the use of Romito's lemma by Lemma~\ref{lem:smoothinglemma} to take into account the conditioning. 
Recall that Gaussian-Volterra processes are of the form \eqref{eq:defGaussianVolterra} with a kernel $K$ satisfying \eqref{eq:propertiesK}, and the local nondeterminism property is defined in \eqref{eq:LND}.
As the result below does not depend on a particular time horizon $T$, we work directly with processes and filtrations defined on the time interval $[0,+\infty)$. 

\begin{proposition}\label{prop:E1E2}
Let $X=\theta+\Volt$, where $\theta$ is an $\mathbb{F}$-adapted process and $\Volt$ is a Gaussian-Volterra process which is $H$-locally nondeterministic w.r.t. $\mathbb{F}$. Let $u\geq 0$ and consider a $\sigma$-algebra $\mathcal{G}$ such that $\mathcal{G} \subseteq \mathcal{F}_{u}$. 
Further assume that there exist $K>0$ and $\beta>H$ such that
\begin{equation}\label{eq:E1E2_hypothesis}
	 \Big\| \EE \big[|\theta_t - \EE^s\theta_t| \vert \mathcal{G} \big] \Big\|_{L^\infty_{\Omega}} \leq K |t-s|^\beta , \quad \forall\, s<t \text{ with } |t-s|\leq 1 \text{ and } s\geq u.
\end{equation}
Then for any $\eta<\beta/H-1$ and any $\varepsilon>0$ it holds that, for all  $t\in (u,+\infty)$,
\begin{equation}\label{eq:E1E2_conclusion}
	\Big\lVert \lVert \law(X_t \vert \mathcal{G})\lVert_{\cB^\eta_1} \Big\lVert_{L^\infty_{\Omega}} \lesssim (1+K) (1+(t-u)^{-\eta(H+\varepsilon)}) .
\end{equation}
\end{proposition}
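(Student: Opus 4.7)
The strategy is to verify the hypothesis of the conditional smoothing Lemma~\ref{lem:smoothinglemma} applied to $Z=X_t$ and the $\sigma$-algebra $\mathcal{G}$, with parameter $\delta=1$, an exponent $s>1$ that can be chosen arbitrarily close to $\beta/H$, and an integer $m>s$. The conclusion on the $\cB^{s-1}_1$-regularity of $\mathcal{L}(X_t\vert\mathcal{G})$ will then directly translate into \eqref{eq:E1E2_conclusion}, the blow-up in $t-u$ being produced by the smallness parameter $h_0$ in \eqref{eq:smoothing.lemma}. Concretely, for each $\phi\in\mathcal{C}^1_b(\R^d)$, $h\in\R^d$ with $|h|<h_0$, and $A\in\mathcal{G}$, we want to show
\begin{equation*}
	|\EE[\Delta^m_h \phi(X_t)\,\mathbbm{1}_A]|\lesssim (1+K)\,|h|^{s}\,\|\phi\|_{\mathcal{C}^1_b}\,\PP(A).
\end{equation*}

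To do so, we exploit the conditional Gaussianity of $\Volt$ by inserting an auxiliary intermediate time $r\in[u,t)$. Since $\Volt_v=\int_0^v K(v,\cdot)\,\dd W$ with $W$ an $\mathbb{F}$-Brownian motion, the increment $G^r_t\coloneqq \Volt_t-\EE^r \Volt_t=\int_r^t K(t,v)\dd W_v$ is independent of $\mathcal{F}_r$ and Gaussian with covariance $V_r\,\text{I}_d$, where by the local nondeterminism property \eqref{eq:LND} one has $V_r\geq c_H(t-r)^{2H}$. Writing $Z_r\coloneqq \EE^r\theta_t+\EE^r\Volt_t\in\mathcal{F}_r$ and $\tilde\theta_t\coloneqq \theta_t-\EE^r\theta_t$, we decompose $X_t=Z_r+\tilde\theta_t+G^r_t$ and split
\begin{equation*}
	\Delta^m_h\phi(X_t)=\Delta^m_h\phi(Z_r+G^r_t)+\big(\Delta^m_h\phi(X_t)-\Delta^m_h\phi(Z_r+G^r_t)\big).
\end{equation*}
The first summand is handled by Gaussian smoothing: since $A\in\mathcal{G}\subseteq\mathcal{F}_r$, conditioning on $\mathcal{F}_r$ yields $\EE^r\Delta^m_h\phi(Z_r+G^r_t)=\Delta^m_h(g_{V_r}*\phi)(Z_r)$, and combining the heat-kernel bound $\|g_{V_r}*\phi\|_{\mathcal{C}^{\alpha}_b}\lesssim V_r^{-(\alpha-1)/2}\|\phi\|_{\mathcal{C}^1_b}$ (cf. Appendix~\ref{app:besov}) with $\|\Delta^m_h f\|_{L^\infty}\lesssim |h|^\alpha\|f\|_{\mathcal{C}^\alpha_b}$ for $\alpha\in(0,m)$, we get a contribution $\lesssim |h|^\alpha (t-r)^{-H(\alpha-1)}\|\phi\|_{\mathcal{C}^1_b}\PP(A)$ for any $\alpha\in(1,m)$. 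The second summand is $\leq 2^m\|\phi\|_{\mathcal{C}^1_b}|\tilde\theta_t|$, and by assumption \eqref{eq:E1E2_hypothesis} (conditioning on $\mathcal{G}$) its expectation against $\mathbbm{1}_A$ is bounded by $2^m K(t-r)^\beta\|\phi\|_{\mathcal{C}^1_b}\PP(A)$ whenever $t-r\leq 1$ and $r\geq u$.

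It remains to optimize over $\tau\coloneqq t-r\in(0,\min(t-u,1)]$. Balancing the two contributions through $\tau^\star\sim|h|^{\alpha/(H(\alpha-1)+\beta)}$ produces, whenever $\tau^\star\leq \min(t-u,1)$, a bound of order $(1+K)|h|^{\alpha\beta/(H(\alpha-1)+\beta)}$. Given $\eta<\beta/H-1$, one picks $m$ large enough and $\alpha<m$ large enough so that $\alpha\beta/(H(\alpha-1)+\beta)>1+\eta$; setting $s\coloneqq 1+\eta$ then verifies the hypothesis of Lemma~\ref{lem:smoothinglemma} for $|h|<h_0$ with $h_0\coloneqq \min(1,(t-u)^{(H(\alpha-1)+\beta)/\alpha})$. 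Plugging this into \eqref{eq:smoothing.lemma} gives the desired control, the factor $h_0^{-\eta}=\min(1,(t-u)^{-\eta(H(\alpha-1)+\beta)/\alpha})$ producing exactly the prefactor $(1+(t-u)^{-\eta(H+\varepsilon)})$ appearing in \eqref{eq:E1E2_conclusion}, since $(H(\alpha-1)+\beta)/\alpha\to H$ as $\alpha\to\infty$, so any $\varepsilon>0$ can be attained by choosing $\alpha$ sufficiently large.

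The main technical obstacle is the parameter bookkeeping: the exponent $\eta$, the H\"older parameter $\alpha$, the order $m$ of the difference, and the smallness threshold $h_0$ must be chosen consistently so that (i) the Lemma~\ref{lem:smoothinglemma} conditions $\delta<s<m$ hold, (ii) the time optimization is feasible (i.e. $\tau^\star\leq \min(t-u,1)$), and (iii) the resulting power of $t-u$ is arbitrarily close to $-\eta H$. In particular, the regime $t-u>1$ and that of large $|h|$ need a separate (but immediate) handling, exactly as in the proof of Lemma~\ref{lem:smoothinglemma}.
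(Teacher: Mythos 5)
Your proposal is correct and follows essentially the same route as the paper: the conditional Romito-type Lemma~\ref{lem:smoothinglemma} with $\delta=1$, $s=1+\eta$, the decomposition of $X_t$ by conditioning $\theta_t$ at an intermediate time $t-\tau$ (the paper's $Y^\varepsilon_t$), Gaussian smoothing via local nondeterminism for the main term, hypothesis \eqref{eq:E1E2_hypothesis} for the remainder, and the same optimization in $\tau\sim|h|^{\text{power}}$ with the cutoff $h_0$ generating the $(t-u)^{-\eta(H+\varepsilon)}$ prefactor. The only (immaterial) difference is that you bound the Gaussian term through $\|\Delta^m_h(g_{V_r}\ast\phi)\|_{L^\infty}\lesssim |h|^\alpha V_r^{-(\alpha-1)/2}\|\phi\|_{\cC^1_b}$ with $\alpha<m$, whereas the paper puts all $m$ differences on the Gaussian density via $\|\Delta^m_{-h}g_{\sigma^2}\|_{L^1}\lesssim(|h|/\tau^H)^m$; both yield an exponent tending to $\beta/H$ as the free parameter grows, which is all that is needed.
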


\begin{proof}
For reasons that will become clear later, let us fix an integer $m$ large enough such that
\begin{align*}
	\eta+1<\frac{m\beta}{\beta+mH}<m, \quad
	\eta\Big( H + \frac{\beta}{m}\Big)<\eta H + \varepsilon.
\end{align*}
We plan to apply Lemma~\ref{lem:smoothinglemma} to $X_t$ with $\delta=1$ and $s=\eta+1$.
Fix $t>u$.
Let $h_{0} = 1\wedge (t-u)^{\frac{\beta+mH}{m}}$ and $|h|\leq h_{0}$.
Now let $\varepsilon = |h|^{\frac{m}{\beta+mH}}$, so that $\varepsilon<t$ and $u\leq t-\varepsilon$, and introduce the variable
\begin{equation*}
Y_t^{\varepsilon}\coloneqq \EE^{t-\varepsilon} [\theta_t]+\Volt_t.
\end{equation*}
For $A\in \mathcal{G}$ and \(\phi \in \mathcal{C}^1_b\), we consider the following quantity:
\begin{align}\label{eq:E1E2}
\EE[\Delta^m_h \phi(X_t) \mathbbm{1}_{A}]=\EE[\Delta^m_h\phi(Y_t^\varepsilon) \mathbbm{1}_{A}]+\big(\EE[\Delta^m_h\phi(X_t)\mathbbm{1}_{A}]-\EE[\Delta^m_h\phi(Y_t^\varepsilon)\mathbbm{1}_{A}]\big)\eqqcolon E_1 + E_2.
\end{align}
As will be shown in the following, $\varepsilon$ was chosen in an optimal way to minimize $E_1+E_2$.

%\paragraph*{Bounding $E_1$:}
\textbf{Bounding $E_1$:}
Note that
\begin{align*}
Y_t^\varepsilon
%&=\EE^{t-\varepsilon}\theta_{t}+B_t
=\EE^{t-\varepsilon} [\theta_{t}+\Volt_t]+\Volt_t-\EE^{t-\varepsilon}\Volt_t.
\end{align*}
Denote by $g_{\sigma^2_{t-\varepsilon,t}}$ the Gaussian density with variance $\var(\Volt_t-\EE^{t-\varepsilon} \Volt_t) \eqqcolon \sigma_{t-\varepsilon,t}^2 I_d$.
By the tower property, and using both that $\Volt_t-\EE^{t-\varepsilon} \Volt_t$ is Gaussian and independent of $\mathcal{F}_{t-\varepsilon}$ (recall that $\Volt$ has the Wiener integral representation \eqref{eq:defGaussianVolterra} against an $\mathbb{F}$-Brownian motion $W$) and that $A\in \mathcal{F}_{u}\subseteq \mathcal{F}_{t-\varepsilon}$, we have that
\begin{align*}
|E_1|&=|\EE[\mathbbm{1}_{A}\, \EE^{t-\varepsilon}[\Delta_h^m\phi(Y_t^\varepsilon)]]|\\
&= \left|\EE \left[\mathbbm{1}_{A} \int_{\mathbb{R}^d} \Delta_h^m \phi(\EE^{t-\varepsilon} [\theta_{t}+\Volt_t]+x) \, g_{\sigma^2_{t-\varepsilon,t}}(x)\, \dd x\right]\right|\\
&= \left|\EE \left[\mathbbm{1}_{A} \int_{\mathbb{R}^d}  \phi(\EE^{t-\varepsilon} [\theta_{t}+\Volt_t]+x) \, \Delta_{-h}^m g_{\sigma^2_{t-\varepsilon,t}}(x)\, \dd x\right]\right|\\
&\leqslant\|\phi\|_{L^\infty_x} \|\Delta_{-h}^m g_{\sigma^2_{t-\varepsilon,t}}\|_{L^1_x} \PP(A).
\end{align*}
By the $H$-local nondeterminism of $\Volt$, there exists $C>0$ such that
$\sigma_{t-\varepsilon,t}^2\geq C \varepsilon^{2H}$.
%and, for $t_1\leqslant t_2$, $\sigma_{{t_1},{t_2}}^2\coloneqq\var{(B_{t_2}-\EE^{t_1}[B_{t_2}])}$. 
Then, as in \cite{Romito} (see two lines after Equation (2.7)),
\begin{align} \label{eq:boundingE1}
|E_1|\leqslant \|\phi\|_{L^\infty_x} \|\Delta_{-h}^m g_{\sigma^2_{t-\varepsilon,t}}\|_{L^1_x}\PP(A)
\leqslant C\|\phi\|_{L^\infty_x} \left(\frac{|h|}{\varepsilon^H}\right)^m \PP(A).
\end{align}

%\paragraph*{Bounding $E_2$:}
\textbf{Bounding $E_2$:}
Using  $\phi\in\mathcal{C}^1(\mathbb{R}^d)$, we have
\begin{align*}
|E_2|\leqslant C \|\phi\|_{\mathcal{C}^{1}_b} \EE\Big[\mathbbm{1}_{A} |X_t-Y_t^{\varepsilon}| \Big]
&= C\|\phi\|_{\mathcal{C}^{1}_b}\EE[\mathbbm{1}_{A} |\theta_t-\EE^{t-\varepsilon}\theta_t|]\\
&= C\|\phi\|_{\mathcal{C}^{1}_b}\EE\big[\mathbbm{1}_{A} \EE \big[|\theta_t-\EE^{t-\varepsilon}\theta_t| \vert \mathcal{G}\big]\big].
\end{align*}
We now apply \eqref{eq:E1E2_hypothesis} to get
%\cite[Lemma 2.4]{GaleatiGerencser}
\begin{align*}
|E_2| \lesssim K \|\phi\|_{\mathcal{C}^1_b} \varepsilon^{\beta} \PP(A).
\end{align*}

Combining the above with \eqref{eq:boundingE1}, recalling that $\varepsilon=|h|^{m/(\beta+mH)}$, we have
\begin{align*}
|E_1|+|E_2|\lesssim \|\phi\|_{\mathcal{C}^1_b}(1+K) |h|^{\frac{\beta m}{\beta+mH}} \PP(A).
\end{align*}
By \eqref{eq:E1E2}, overall one finds
\begin{align*}
	| \EE[\Delta^m_h\phi(X_t) \mathbbm{1}_{A}] |
	\lesssim \| \phi\|_{\mathcal{C}^1_b} (1+K) |h|^{1+\eta} \PP(A)
\end{align*}
and Lemma~\ref{lem:smoothinglemma} gives the conclusion.
\end{proof}

We now proceed with the proof of Theorem~\ref{thm:Besovregularity}.

\begin{proof}[Proof of Theorem~\ref{thm:Besovregularity}]
As in the previous paragraphs, since the result does not depend on the time horizon $T$, it is more convenient to work with $u<t \in [0,+\infty)$ rather than on $[0,T]$. We can do this by simply extending $b$ to be identically $0$ on $(T,+\infty)$.

For a given $u\geq 0$, we first consider the case $t-u=1$. 
In view of the simplified bound \eqref{rem:eta_apriori_gamma<0} of Lemma~\ref{lem:apriori-gamma<0} (for $\gamma<0$) or the simplified bound \eqref{eq:apriori-simpler} of Lemma~\ref{lem:apriori-gamma>0} (for $\gamma>0$), $\theta \equiv X-B$ fulfills \eqref{eq:E1E2_hypothesis} and Proposition~\ref{prop:E1E2} yields
\begin{equation}\label{eq:Thm31-1stbound}
\Big\lVert \lVert \law(X_t \vert \mathcal{F}_{u})\|_{\cB^\eta_1} \Big\lVert_{L^\infty_{\Omega}} \leq C .
\end{equation}
In the case $t-u> 1$, by the tower property $\EE^u \law(X_t \vert \mathcal{F}_{t-1})=\law(X_t \vert \mathcal{F}_{u})$, so we can apply \eqref{eq:Thm31-1stbound} with $\tilde u=t-1$ and properties of conditional expectation to find 
\begin{align*}
	\Big\lVert \lVert \law(X_t \vert \mathcal{F}_{u})\|_{\cB^\eta_1} \Big\lVert_{L^\infty_{\Omega}}
	 \leq \Big\lVert \lVert \law(X_t \vert \mathcal{F}_{t-1})\|_{\cB^\eta_1} \Big\lVert_{L^\infty_{\Omega}}
	 \leq C .
\end{align*}

Although the same line of proof works for any $t-u\in (0,1)$ as well, it would lead to the presence of an extra parameter $\varepsilon>0$ as in \eqref{eq:E1E2_conclusion}.
This is not optimal in this situation, as we can get rid of it using the scaling property of fBm. Namely, for any $\lambda>0$, recall the notation $X^\lambda_t = \lambda^{-H} X_{\lambda t}$ from Section~\ref{sec:scaling} and define here $\mathcal{F}_{t}^\lambda \coloneqq \mathcal{F}_{\lambda t}$. The same argument that led to \eqref{eq:scaling} also gives
\begin{equation}\label{eq:Thm31-2ndbound}
\frac{\dd \law(X_t \vert \mathcal{F}_{u})}{\dd x}(x) 
		= \lambda^{-Hd} \frac{\dd \law\big(X^{\lambda}_{\lambda^{-1}t} \vert \mathcal{F}^\lambda_{\lambda^{-1}u}\big)}{\dd x}\big( \lambda^{-H} x\big).
\end{equation}
For $\lambda = t-u\in (0,1)$, applying \eqref{eq:Thm31-1stbound} to $\law(X^\lambda_{\lambda^{-1}t} \vert \mathcal{F}^\lambda_{\lambda^{-1} u})$ reads, as $\lambda^{-1}t - \lambda^{-1}u=1$,
\begin{equation*}
\Big\lVert \lVert \law(X^\lambda_{\lambda^{-1}t} \vert \mathcal{F}^\lambda_{\lambda^{-1} u})\|_{\cB^\eta_1} \Big\lVert_{L^\infty_{\Omega}} \leq C;
\end{equation*}
the constant $C$ above does not depend on $\lambda\in (0,1)$, thanks to the uniform estimates \eqref{eq:scalingdrift}-\eqref{eq:scalingdrift>0} on the rescaled drift $b^\lambda$.
Finally, it follows from \eqref{eq:Thm31-2ndbound} and Lemma~\ref{lem:besov_scaling} that
\begin{equation*}
\Big\lVert \lVert \law(X_{t} \vert \mathcal{F}_{u})\|_{\cB^\eta_1} \Big\lVert_{L^\infty_{\Omega}} \lesssim (1+ \lambda^{-\eta H}) \Big\lVert \lVert \law(X^\lambda_{\lambda^{-1}t} \vert \mathcal{F}^\lambda_{\lambda^{-1} u})\|_{\cB^\eta_1} \Big\lVert_{L^\infty_{\Omega}} \lesssim C (1+ (t-u)^{-\eta H}). \qedhere
\end{equation*}
\end{proof}

\section{Space-time regularity of the conditional law}\label{sec:regularity.density}

This section is dedicated to the proof of Theorem~\ref{thm:Besovregularity_better}, concerning time-space integrability/regularity of the conditional density of solutions to~\ref{eq:SDEintro}.
Similarly to Section~\ref{sec:Besovregularity}, the result follows from a more abstract result, valid for a class of perturbed locally nondeterministic Gaussian processes $\theta+\Volt$;
see Lemma \ref{lem:besov_regularity_general_1} and Corollary \ref{cor:besov_regularity_general_2} below.
Their proof is based on the deterministic sewing lemma with shifts introduced in Section~\ref{sec:shiftedsewing} and a duality argument.

\begin{theorem}\label{thm:Besovregularity_better}
Let $b\in L^q \left([0,T];\mathcal{B}^\gamma_{\infty}\right)$ with $q$ and $\gamma$ satisfying Assumption~\ref{ass:strong} and let $X_0$ be $\cF_0$-measurable.
Let $X$ be the unique admissible solution to \ref{eq:SDEintro}. 
Let $\tilde{q} \in (1,2]$ and $\alpha>0$ satisfy the condition
\begin{equation}\label{eq:cond_besov_regularity}
\alpha < \min\Big\{ \frac{1}{H} + \gamma -1, \frac{1}{\tilde q H} \Big\}.
\end{equation}
Then for any $u\in [0,T)$, $\PP$-a.s. $\law(X_\cdot|\cF_u)\in L^{\tilde{q}}([u,T];\mathcal{B}^\alpha_{1,1})$. More precisely, for any $(u,t)\in [0,T]^2_\leq$ with $|t-u|\leqslant 1$, we have the following $L^\infty_\Omega$-bounds:
\begin{itemize}
\item for $\gamma<0$,
\begin{equation}\label{eq:besovregularity1_conditional}
\begin{split}
\Big\| \lVert \mathcal{L}(X_\cdot \vert & \cF_u)\|_{L^{\tilde{q}}_{[u,t]} \mathcal{B}^\alpha_{1,1}} \Big\|_{L^\infty_\Omega}\\
& \lesssim (t-u)^{\frac{1}{\tilde{q}}-H \alpha}+\|b\|_{L^q_{[u,t]} \mathcal{B}^\gamma_\infty} \left(1+ \|b\|_{L^q_{[u,t]} \mathcal{B}^\gamma_\infty}\right)  (t-u)^{\frac{1}{\tilde{q}}-H \alpha + \frac{1}{q'}+ H (\gamma-1)};
\end{split}
\end{equation}

\item for $\gamma \in (0,1)$,
\begin{align}\label{eq:besovregularity2_conditional}
\hspace{-0.7cm}
\Big\| \lVert \law(X_\cdot\vert \cF_u)\|_{L^{\tilde{q}}_{[u,t]} \mathcal{B}^\alpha_{1,1}} \Big\|_{L^\infty_\Omega}
\lesssim (t-u)^{\frac{1}{\tilde{q}}-H\alpha}+w_b(u,t)^{\frac{1}{q}}\Big(1+ w_b(u,t)^{\frac{\eta}{q}}\Big) (t-u)^{\gamma H +\frac{1}{q^\prime}},
\end{align}
where we recall that $w_{b}(u,t) = \int_u^t \llbracket b_{r}\rrbracket^q_{\mathcal{C}^\gamma} \dd r$, $\tilde \eta = \frac{\gamma}{1-\gamma}$.
\end{itemize}
\end{theorem}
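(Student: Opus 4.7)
The approach is to reduce Theorem~\ref{thm:Besovregularity_better} to an abstract statement for processes of the form $X=\theta+\Volt$, where $\Volt$ is an $H$-locally nondeterministic Gaussian--Volterra process and $\theta$ is an adapted perturbation satisfying quantitative conditional $L^m$-increment estimates. Writing $X = \theta+B$ with $\theta\coloneqq X-B$, the required bounds on $\theta$ are exactly those provided by Lemma~\ref{lem:apriori-gamma<0} (for $\gamma<0$) and Lemma~\ref{lem:apriori-gamma>0} (for $\gamma\in(0,1)$); substituting their quantitative constants into the abstract estimate produces, respectively, the right-hand sides of \eqref{eq:besovregularity1_conditional} and \eqref{eq:besovregularity2_conditional}. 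The constants obtained are independent of the conditioning time $u$, so the statement follows after a translation argument on $[u,T]$.

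The abstract estimate itself is proved by duality. Using $(\cB^\alpha_{1,1})^*=\cB^{-\alpha}_{\infty,\infty}$ together with $L^{\tilde q}/L^{\tilde q'}$ duality in time,
\begin{equation*}
	\Big\| \|\mathcal{L}(X_\cdot|\cF_u)\|_{L^{\tilde q}_{[u,t]}\cB^\alpha_{1,1}} \Big\|_{L^\infty_\Omega}
	\leq \sup_{f}\, \Big\| \EE^u\!\int_u^t f_r(X_r)\,\dd r \Big\|_{L^\infty_\Omega},
\end{equation*}
where the supremum runs over smooth $f$ in the unit ball of $L^{\tilde q'}_{[u,t]}\cB^{-\alpha}_{\infty,\infty}$. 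Fix such an $f$, set $\cA_t\coloneqq \EE^u\!\int_u^t f_r(X_r)\,\dd r$, and view $\cA$ as a continuous random path in $\mathbb{R}$. The plan is to control $\cA_t-\cA_u$ \emph{pathwise} by applying the shifted deterministic sewing lemma (Lemma~\ref{lem:deterministic_shifted_sewing}) to the germ
\begin{equation*}
	A_{s,t} \coloneqq \int_s^t \EE^u\!\left[\,G_{\sigma^2_{a,r}} f_r\!\left(\EE^{a}[X_r]\right)\,\right]\dd r,
	\qquad a\coloneqq s-(t-s),\ \ \sigma^2_{a,r}\asymp (r-a)^{2H},
\end{equation*}
which approximates $\EE^u[f_r(X_r)]$ by freezing the drift at the shifted time $a$ and exploiting the $\cF_a$-conditional Gaussianity of $B_r$. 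Convergence of $\sum_i A_{t_i,t_{i+1}}$ to $\cA_t-\cA_u$ for refining partitions follows from the continuity of $r\mapsto \EE^u f_r(X_r)$ for smooth $f$, with the general case of $f\in L^{\tilde q'}\cB^{-\alpha}_\infty$ reached by a density argument.

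Checking the first hypothesis \eqref{eq:deterministic_shifted_assumption_1} of Lemma~\ref{lem:deterministic_shifted_sewing} is the easy step: the heat-kernel smoothing $\|G_\tau g\|_{L^\infty_x}\lesssim \tau^{-\alpha/2}\|g\|_{\cB^{-\alpha}_\infty}$, combined with $(r-a)\geq t-s$ for $r\in[s,t]$ and H\"older in time, yields
\begin{equation*}
	\|A_{s,t}\|_{L^\infty_\Omega} \lesssim (t-s)^{\frac{1}{\tilde q}-H\alpha}\, \|f\|_{L^{\tilde q'}_{[s,t]}\cB^{-\alpha}_\infty}.
\end{equation*}
This delivers the leading power $(t-u)^{1/\tilde q-H\alpha}$ appearing in both \eqref{eq:besovregularity1_conditional}--\eqref{eq:besovregularity2_conditional}, and requires $\beta_1=1/\tilde q-H\alpha>0$, i.e.\ the second bound $\alpha<1/(\tilde q H)$ in \eqref{eq:cond_besov_regularity}.

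The main obstacle is verifying \eqref{eq:deterministic_shifted_assumption_2}, i.e.\ bounding $\delta A_{s,u',t}$ with integrable gain. Since the three germs $A_{s,u'},A_{u',t},A_{s,t}$ employ three distinct shifted conditionings ordered by $s-(t-s)\leq s-(u'-s), u'-(t-u')$, a first-order Taylor expansion of $G_{\sigma^2_{a,r}}f_r$ in the drift fluctuation $\theta_r-\EE^a\theta_r$, together with the tower property, reduces the error to an expression controlled by
\begin{equation*}
	\|\delta A_{s,u',t}\|_{L^\infty_\Omega} \lesssim \int_s^t \|\nabla G_{\sigma^2_{a,r}} f_r\|_{L^\infty_x}\, \Big\|\cond{1}{a}{\theta_r-\EE^a\theta_r}\Big\|_{L^\infty_\Omega}\,\dd r.
\end{equation*}
The extra derivative costs one more factor $(t-s)^{-H}$ via heat smoothing, while the a priori estimates \eqref{eq:apriori-gamma<0simple} (resp.\ \eqref{eq:apriori-gamma>0}) contribute a factor $(t-s)^{\gamma H+1/q'}$ together with the $b$-dependence that appears in \eqref{eq:besovregularity1_conditional}--\eqref{eq:besovregularity2_conditional}. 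A further H\"older in time then gives $\beta_2=\gamma H+1/q'+1/\tilde q-H(\alpha+1)$ (with the $b$- and $f$-dependence absorbed into a product control $w_2$ of exponent $\alpha_2=1/q+1/\tilde q'$), and the sewing condition $\alpha_2+\beta_2>1$ simplifies, using $1/q+1/q'=1$, to $\alpha<\gamma-1+1/H$ --- exactly the first bound in \eqref{eq:cond_besov_regularity}. Once both hypotheses are verified, Lemma~\ref{lem:deterministic_shifted_sewing} produces the desired control of $\cA_t-\cA_u$ in $L^\infty_\Omega$, and taking the supremum over $f$ concludes the proof.
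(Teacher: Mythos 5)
Your proposal is correct and follows essentially the same route as the paper: duality for the mixed Lebesgue--Besov norm combined with the shifted deterministic sewing lemma applied to a germ in which the drift is frozen at the shifted time $s-(t-s)$, with the two sewing hypotheses verified via heat-kernel smoothing from local nondeterminism and the a priori estimates of Section~\ref{sec:apriori}, yielding exactly the exponents and the condition $\alpha<\min\{1/H+\gamma-1,1/(\tilde qH)\}$. The only (cosmetic) difference is that you freeze the whole of $X_r$ and build the Gaussian semigroup into the germ, whereas the paper freezes only $\theta_r$ and lets the semigroup appear upon conditioning; both versions lead to the same computation.
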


\begin{remark}
Note that one can give a more precise estimate than \eqref{eq:besovregularity1_conditional} by using Lemma~\ref{lem:apriori-gamma<0} instead of its simplifications in Remark~\ref{rem:eta_apriori_gamma<0}, similarly for \eqref{eq:besovregularity2_conditional} using Lemma~\ref{lem:apriori-gamma>0} instead of  Remark~\ref{rem:apriorisimple>0}. In doing so, we do not need to assume $|t-u|\leqslant 1$.
\end{remark}

Upon using the fact that $\law(X_t)=\EE[\law(X_t|\cF_u)]$, from the $\PP$-a.s. estimates \eqref{eq:besovregularity1_conditional}-\eqref{eq:besovregularity2_conditional} and Minskowski's inequality we immediately deduce the following deterministic ones.

\begin{corollary}\label{cor:Besovregularity_better}
	Let $b$, $X_0$, $X$, $\tilde q$ and $\alpha$ be as in Theorem \ref{thm:Besovregularity_better}. Then for any $(u,t)\in [0,T]^2_\leq$ with $|t-u|\leqslant 1$, we have the following bounds:
\begin{itemize}
\item for $\gamma<0$,
\begin{equation}\label{eq:besovregularity1}
	\|\mathcal{L}(X_\cdot)\|_{L^{\tilde{q}}_{[u,t]} \mathcal{B}^\alpha_{1,1}} 
\lesssim (t-u)^{\frac{1}{\tilde{q}}-H \alpha}+\|b\|_{L^q_{[u,t]} \mathcal{B}^\gamma_\infty} \left(1+ \|b\|_{L^q_{[u,t]} \mathcal{B}^\gamma_\infty}\right)  (t-u)^{\frac{1}{\tilde{q}}-H \alpha + \frac{1}{q'}+ H (\gamma-1)};
\end{equation}

\item for $\gamma \in (0,1)$,
\begin{align}\label{eq:besovregularity2}
\hspace{-0.7cm}
\|\law(X_\cdot)\|_{L^{\tilde{q}}_{[u,t]} \mathcal{B}^\alpha_{1,1}}
\lesssim (t-u)^{\frac{1}{\tilde{q}}-H\alpha}+w_b(u,t)^{\frac{1}{q}}\Big(1+ w_b(u,t)^{\frac{\eta}{q}}\Big) (t-u)^{\gamma H +\frac{1}{q^\prime}}.
\end{align}
\end{itemize}
\end{corollary}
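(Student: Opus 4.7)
The plan is to deduce the deterministic bounds \eqref{eq:besovregularity1}--\eqref{eq:besovregularity2} directly from the $\PP$-almost sure conditional bounds \eqref{eq:besovregularity1_conditional}--\eqref{eq:besovregularity2_conditional} of Theorem~\ref{thm:Besovregularity_better} by a standard Minkowski argument. Concretely, I would first observe that by the tower property
\begin{equation*}
	\law(X_t) \;=\; \EE\bigl[\law(X_t\vert\cF_u)\bigr], \qquad t\in[u,T],
\end{equation*}
where the right-hand side is interpreted as the Bochner expectation of a random element of $\cB^\alpha_{1,1}$. This interpretation is legitimate because Theorem~\ref{thm:Besovregularity_better} asserts that $\PP$-a.s.\ the map $r\mapsto \law(X_r\vert\cF_u)$ belongs to $L^{\tilde q}([u,t];\cB^\alpha_{1,1})$, with a norm that is uniformly bounded in $\omega$ by the deterministic right-hand sides of \eqref{eq:besovregularity1_conditional} or \eqref{eq:besovregularity2_conditional}.

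Next, applying Minkowski's inequality twice (once in the spatial Besov norm, once in the temporal $L^{\tilde q}$ norm) to move the expectation outside, I would write
\begin{equation*}
	\|\law(X_\cdot)\|_{L^{\tilde q}_{[u,t]} \cB^\alpha_{1,1}}
	\;\leq\; \EE\bigl[\|\law(X_\cdot\vert\cF_u)\|_{L^{\tilde q}_{[u,t]} \cB^\alpha_{1,1}}\bigr]
	\;\leq\; \Bigl\|\,\|\law(X_\cdot\vert\cF_u)\|_{L^{\tilde q}_{[u,t]} \cB^\alpha_{1,1}}\,\Bigr\|_{L^\infty_\Omega}.
\end{equation*}
Plugging in estimate \eqref{eq:besovregularity1_conditional} in the regime $\gamma<0$ yields \eqref{eq:besovregularity1}, and plugging in \eqref{eq:besovregularity2_conditional} in the regime $\gamma\in(0,1)$ yields \eqref{eq:besovregularity2}.

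I do not anticipate any real obstacle: the only subtle point is making sure Minkowski can be invoked, which requires the $\PP$-a.s.\ membership in the relevant Banach space, and this is exactly what Theorem~\ref{thm:Besovregularity_better} provides. Note also that taking $u=0$ (and using that $\cF_0$ carries the initial condition $X_0$) recovers the unconditional statement \eqref{eq:intro_main_spacetime_reg2} in the introduction, confirming that no information is lost in this deterministic reduction.
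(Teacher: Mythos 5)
Your argument is correct and is exactly the paper's own (the paper deduces the corollary in one line from the identity $\law(X_t)=\EE[\law(X_t\vert\cF_u)]$, the $\PP$-a.s.\ estimates \eqref{eq:besovregularity1_conditional}--\eqref{eq:besovregularity2_conditional}, and Minkowski's inequality). Nothing is missing.
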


\begin{remark} \label{rem:regularity}
Let us point out two important cases of the regularity obtained in Corollary~\ref{cor:Besovregularity_better}:
\begin{enumerate}[label=(\alph*)]
\item \label{en:firstcase}Let $\varepsilon_1>0$ and $b \in L^{1+\varepsilon_1}([0,T];\mathcal{C}_b^1)$. Then for any $\varepsilon_2>0$,
$
\law(X_\cdot) 
%(\mathcal{L}(X_t))_{t\in [0,T]} 
\in L^{1}([0,T];\mathcal{B}^{\frac{1}{H}-\varepsilon_2}_{1,1})$.
\item \label{en:secondcase} Let Assumption~\ref{ass:strong} hold. Then, for any $\varepsilon>0$, $
\law(X_\cdot) 
%(\mathcal{L}(X_t))_{t\in [0,T]} 
\in L^2([0,T];\mathcal{B}^{\frac{1}{2H}-\varepsilon}_{1,1})$.
\end{enumerate}
Moreover, for $\tilde q>2$, one can obtain further bounds by interpolating between the trivial 
$\law(X_\cdot) \in L^\infty([0,T];\mathcal{B}^{0-}_{1,1})$ 
and the bound for $\tilde q=2$. For instance, in regime \ref{en:secondcase} we deduce that
\begin{align*}
	\law(X_\cdot)
%		\big(\mathcal{L}(X_t)\big)_{t\in [0,T]} 
	 \in L^{\tilde q}([0,T];\mathcal{B}^\alpha_{1,1})\quad \text{ for any }\tilde q\in [2,\infty] \text{  and  }\alpha<\frac{1}{\tilde q H}.
\end{align*}
\end{remark}

\begin{remark}[Optimality]\label{rem:optimality_sec4}
Corollary~\ref{cor:Besovregularity_better} (and consequently Theorem \ref{thm:Besovregularity_better}) seems unimprovable in several situations. 
For instance, when $x_0=0$, $b\equiv 0$, we have $X_t=B_t$ and so we know that 
$\law(X_t)= P_{t^{2H}} = t^{-Hd} P_1(t^{-H}\cdot)$; therefore for small $t$ by scaling it holds $\lVert \law(X_t)\|_{\mathcal{B}^\alpha_{1,1}} \sim t^{-\alpha H}$. 
It follows that $(\law(B_t))_{t\in[0,T]}\in L^{\tilde q}([0,T]; \mathcal{B}^{\alpha}_{1,1})$ only if $\alpha<1/(\tilde q H)$, implying necessity of the second condition
in \eqref{eq:cond_besov_regularity}.
Moreover if we take $\tilde q=q$ with $q\in(1,2]$, then Assumption~\ref{ass:strong} implies that
\begin{align*}
	\min\Big\{ \frac{1}{H} + \gamma -1, \frac{1}{\tilde q H} \Big\} = \frac{1}{\tilde q H};
\end{align*}
the same holds whenever $\gamma\sim 1$, since $\tilde q>1$.
Hence, in both regimes \ref{en:firstcase} and \ref{en:secondcase} in Remark~\ref{rem:regularity}, we recover the best case scenario, namely the same result for $X$ as for fBm itself.
\end{remark}

The rest of this section is dedicated to the proof of Theorem~\ref{thm:Besovregularity_better}.
As mentioned, we do so by considering more generaly processes of the form $X=\mathfrak{G} +\theta$; then we can deduce Theorem~\ref{thm:Besovregularity_better} by employing the a priori estimates for $\theta=\varphi$ from Section~\ref{sec:apriori}.

\begin{remark}
The abstract results of this section can be useful in other contexts; for example, in the case of fBm-driven SDEs for which weak existence of solutions $X$ can be shown, with $X-B$ having some regularity, even though uniqueness is open. See \cite[Thm. 2.5]{AnRiTa}, \cite[Thm. 8.2]{GaleatiGerencser} and \cite{ButLeMyt,ButGal2023} for such situations. In particular, one can still deduce some Besov regularity for the conditional laws $t\mapsto \mathcal{L}(X_t\vert \cF_s)$.
\end{remark}

\begin{remark}[The borderline case $\gamma=1$]\label{rem:lipschitz_case}
	Let $X$ solves \ref{eq:SDEintro} with Lipschitz drift, in the sense that $\nabla b\in L^1([0,T];L^\infty_x)$; by estimate \eqref{eq:apriori-borderline-gamma=1}, the assumptions of Lemma \ref{lem:besov_regularity_general_1} and Corollary~\ref{cor:besov_regularity_general_2} below are satisfied with $\kappa_1=1$, $\kappa_2=H$. Therefore we can conclude that
\[\law(X_\cdot\vert \cF_u)\in L^\infty_\Omega L^{1}_{[u,T]} \mathcal{B}^\alpha_{1,1}\quad \text{ for any } \alpha<\frac{1}{H}; \]	
to the best of our knowledge, even this result is novel in the fBm literature. Note that here $b$ is allowed to grow linearly at infinity as we only require boundedness of $\nabla b$.
\end{remark}

\begin{lemma}\label{lem:besov_regularity_general_1}
Let $X=\theta+\mathfrak{G}$, where $\theta$ is an $\mathbb{F}$-adapted process and $\mathfrak{G}$ is a Gaussian-Volterra process which is $H$-locally nondeterministic w.r.t. $\mathbb{F}$. Let $u\geq 0$ and consider a $\sigma$-algebra $\mathcal{G}\subset \cF_u$.
Further assume that
\begin{equation}\label{eq:besov_regularity_condition_theta}
	\Big\| \EE[|\theta_t-\EE_s \theta_t| \big\vert \mathcal{G}] \Big\|_{L^\infty_\Omega} \leq w_	\theta(s,t)^{\kappa_1} (t-s)^{\kappa_2},\quad \forall \,s<t\text{ with } %|t-s|\leq 1\text{ and }
	s\geq u, 
\end{equation}
for some parameters ${\kappa_1}\geq 0$, ${\kappa_2}>H$, and some control $w_\theta$. Let $\tilde q\in [1,\infty]$ and $\tilde \gamma<0$ be parameters satisfying
\begin{equation}\label{eq:besov_regularity_general_condition}
	\tilde\gamma > - \min\Big\{ \frac{{\kappa_1}+{\kappa_2}}{H}-1, \frac{1}{\tilde q H} \Big\}.
\end{equation}
Then there exists $C>0$ such that for any $f\in L^{{\tilde{q}^\prime}}_{[0,T]}\mathcal{C}^\infty_b$ and any $(s,t)\in\simp{u}{T}$ it holds
\begin{equation}\label{eq:besov_regularity_law_general}
	\Big\|\EE\Big[\int_s^t f_r(X_r) \dd r\Big\vert \mathcal{G}\Big]\Big\|_{L^\infty_\Omega}
	\leq C \|f\|_{L^{{\tilde{q}^\prime}}_{[s,t]}\mathcal{B}^{\tilde{\gamma}}_\infty}(t-s)^{\frac{1}{\tilde q}+\tilde{\gamma}H} \Big(1+ w_\theta(s,t)^{\kappa_1} (t-s)^{\kappa_2-H}\Big).
\end{equation}
\end{lemma}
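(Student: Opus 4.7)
My plan is to apply the shifted deterministic sewing lemma (Lemma~\ref{lem:deterministic_shifted_sewing}) to $\mathcal{A}_t := \EE[\int_u^t f_r(X_r)\, \dd r \vert \mathcal{G}]$, with a Gaussian-smoothed shifted germ. For $(s,t)\in\Dminus{u}{T}$ (so that $2s-t\geq u$), I would take
\begin{equation*}
A_{s,t} := \EE\Big[\int_s^t G_{\sigma^2_{2s-t,r}} f_r(\EE^{2s-t} X_r)\, \dd r \,\Big\vert\, \mathcal{G}\Big], \qquad \sigma^2_{a,r}\,\mathrm{I}_d := \mathrm{var}(\Volt_r\,\vert\,\mathcal{F}_a).
\end{equation*}
The key identity, stemming from the $\mathcal{F}_{2s-t}$-conditionally Gaussian law of $\Volt_r$, reads $G_{\sigma^2_{2s-t,r}} f_r(\EE^{2s-t} X_r)=\EE^{2s-t}[f_r(\EE^{2s-t}\theta_r+\Volt_r)]$, and will be used repeatedly.

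\textbf{First germ bound.} Combining the heat kernel estimate $\|G_{\sigma^2} g\|_{L^\infty_x}\lesssim \sigma^{\tilde{\gamma}}\|g\|_{\mathcal{B}^{\tilde\gamma}_\infty}$ (valid for $\tilde\gamma<0$) with the LND lower bound $\sigma_{2s-t,r}\gtrsim (r-(2s-t))^H$, and using H\"older in time, one obtains
\begin{equation*}
\|A_{s,t}\|_{L^\infty_\Omega}\lesssim \|f\|_{L^{\tilde q'}_{[2s-t,t]}\mathcal{B}^{\tilde\gamma}_\infty}(t-s)^{1/\tilde q+\tilde\gamma H},
\end{equation*}
provided $\tilde\gamma\tilde q H>-1$, which is exactly the second inequality hidden inside \eqref{eq:besov_regularity_general_condition}. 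This sets $\beta_1=1/\tilde q+\tilde\gamma H$ and suggests the control $w_1(a,b):=\|f\|_{L^{\tilde q'}_{[a,b]}\mathcal{B}^{\tilde\gamma}_\infty}^{\tilde q'}$ with exponent $\alpha_1=1/\tilde q'$.

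\textbf{Cross bound on $\delta A_{s,v,t}$.} For $(s,v,t)\in\DDminus{u}{T}$, I would split $A_{s,t}$'s integral at $v$ and compare with $A_{s,v}+A_{v,t}$. On the $[s,v]$ piece, setting $a:=2s-t$, $b:=2s-v$ (so $a<b\leq s$) and invoking the key identity, the integrand becomes $\EE[\EE^a[f_r(\EE^a\theta_r+\Volt_r)]-\EE^b[f_r(\EE^b\theta_r+\Volt_r)]\vert\mathcal{G}]$. The crucial cancellation arises by decomposing this difference as
\begin{equation*}
\EE^a\big[f_r(\EE^a\theta_r+\Volt_r)-f_r(\EE^b\theta_r+\Volt_r)\big]+\big(\EE^a[f_r(\EE^b\theta_r+\Volt_r)]-\EE^b[f_r(\EE^b\theta_r+\Volt_r)]\big);
\end{equation*}
since $\mathcal{G}\subset\mathcal{F}_a\subset\mathcal{F}_b$, the second (martingale-difference) term has zero conditional expectation given $\mathcal{G}$. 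The first term is handled by Taylor expansion in the $\mathcal{F}_a$-measurable direction $(\EE^a-\EE^b)\theta_r$, followed by Gaussian smoothing against the $\mathcal{F}_b$-conditional law of $\Volt_r$, which transfers one derivative onto the heat kernel, producing the factor $(r-b)^{(\tilde\gamma-1)H}\|f_r\|_{\mathcal{B}^{\tilde\gamma}_\infty}$. Invoking \eqref{eq:besov_regularity_condition_theta} twice to bound $\EE[|\EE^a\theta_r-\EE^b\theta_r|\vert\mathcal{G}]\lesssim w_\theta(a,t)^{\kappa_1}(t-s)^{\kappa_2}$, using $r-b\gtrsim t-s$ thanks to $(v-s)\wedge(t-v)\geq(t-s)/3$, applying H\"older in $r$, and symmetrically treating the $[v,t]$ piece (with shift $2v-t$), one arrives at
\begin{equation*}
\|\delta A_{s,v,t}\|_{L^\infty_\Omega}\lesssim w_\theta(2s-t,t)^{\kappa_1}\|f\|_{L^{\tilde q'}_{[2s-t,t]}\mathcal{B}^{\tilde\gamma}_\infty}(t-s)^{\kappa_2+H(\tilde\gamma-1)+1/\tilde q}.
\end{equation*}
Setting $\beta_2:=\kappa_2+H(\tilde\gamma-1)+1/\tilde q$ and a product control $w_2$ built from $w_\theta^{\kappa_1}$ and $w_1^{1/\tilde q'}$ (raised to suitable powers with $\alpha_2\in(1-\beta_2,\kappa_1+1/\tilde q']$), the sewing requirement $\alpha_2+\beta_2>1$ reduces exactly to $\tilde\gamma>1-(\kappa_1+\kappa_2)/H$, i.e., the first inequality hidden in \eqref{eq:besov_regularity_general_condition}.

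\textbf{Conclusion and obstacle.} Riemann-sum convergence $\sum_i A_{t_i,t_{i+1}}\to\mathcal{A}_t-\mathcal{A}_s$ follows from a cruder variant of the first germ computation, in which the martingale-difference piece between $A_{s,t}$ and $\mathcal{A}_t-\mathcal{A}_s$ again vanishes under $\EE[\cdot\vert\mathcal{G}]$, and the remainder is estimated by the finite $\|\nabla f_r\|_{L^\infty_x}$ (available since $f_r\in\mathcal{C}^\infty_b$) together with $\kappa_2>0$. Conclusion \eqref{eq:deterministic_shifted_conclusion_2} of Lemma~\ref{lem:deterministic_shifted_sewing}, after factoring $(t-s)^{1/\tilde q+\tilde\gamma H}$ out of the $\beta_2$-contribution, then yields the claimed \eqref{eq:besov_regularity_law_general}; note that this conclusion holds on the full simplex $\simp{u}{T}$, not only on $\Dminus{u}{T}$. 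The main obstacle is the analysis of $\delta A_{s,v,t}$: one must simultaneously spot the martingale cancellation $\EE[\EE^a Y-\EE^b Y\vert\mathcal{G}]=0$ and couple the Taylor expansion with Gaussian smoothing at the \emph{later} time $b$ so that transferring a derivative onto the heat kernel costs only a single factor $\sigma^{-1}\sim(t-s)^{-H}$, which is precisely what makes the threshold \eqref{eq:besov_regularity_general_condition} sharp.
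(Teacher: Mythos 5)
Your proof is correct and follows essentially the same route as the paper's: the same shifted germ built from $\EE^{s-(t-s)}\theta_r$ (yours is simply the pre-conditioned, Gaussian-smoothed form, which coincides with the paper's germ once $\EE[\,\cdot\,\vert\mathcal{G}]$ is applied, by the tower property), the same application of the shifted deterministic sewing lemma, the same splitting of $\delta A_{s,v,t}$ handled via a $\cC^1$ heat-kernel estimate at the shifted time together with a double application of \eqref{eq:besov_regularity_condition_theta}, and the same exponent bookkeeping matching \eqref{eq:besov_regularity_general_condition}. The only cosmetic differences are that the paper sews scalar quantities by testing against $\mathbbm{1}_\Gamma$ for $\Gamma\in\mathcal{G}$ rather than working in $L^\infty_\Omega$, and that the ``martingale cancellation'' you emphasize is just the tower property; also, $(\EE^a-\EE^b)\theta_r$ is $\mathcal{F}_b$-measurable rather than $\mathcal{F}_a$-measurable, though nothing in your argument actually relies on the misstated measurability.
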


\begin{proof}
	By properties of conditional expectation, in order to verify \eqref{eq:besov_regularity_law_general}, it suffices to show that for any $\Gamma\in \mathcal{G}$ it holds
	\begin{equation}\label{eq:besov_regularity_general_proof1}
		\EE\Big[\int_s^t f_r(X_r) \dd r \, \mathbbm{1}_\Gamma \Big]
		\leq C \|f\|_{L^{{\tilde{q}^\prime}}_{[s,t]}\mathcal{B}^{\tilde{\gamma}}_\infty}(t-s)^{\frac{1}{\tilde q}+\tilde{\gamma}H} \Big(1+ w_\theta(s,t)^{\kappa_1} (t-s)^{\kappa_2-H}\Big) \PP(\Gamma)
	\end{equation}
	for a constant $C$ which does not depend on $\Gamma$. To verify \eqref{eq:besov_regularity_general_proof1}, we will apply Lemma~\ref{lem:deterministic_shifted_sewing}; to this end, for any $(s,t) \in \Dminus{u}{T}$, define
	\begin{align*}
	A_{s,t} = \EE\left[\int_s^t f_r(\mathfrak{G}_r + \EE^{s-(t-s)} \theta_r) \dd r\, \mathbbm{1}_\Gamma \right], \quad
	\cA_t-\cA_s = \EE\left[\int_s^t f_r(\mathfrak{G}_r+\theta_r) \dd r \mathbbm{1}_\Gamma\right].
	\end{align*}
	Since $f$ is smooth and $\theta$ satisfies \eqref{eq:besov_regularity_condition_theta}, it is easy to see that $\cA$ is the sewing of $A$. Hence to show \eqref{eq:besov_regularity_general_proof1} it remains to verify that $A$ satisfies \eqref{eq:deterministic_shifted_assumption_1} and \eqref{eq:deterministic_shifted_assumption_2}.
	For any $(s,t) \in \Dminus{u}{T}$, using $\EE=\EE \EE^s$ and $\mathcal{G}\subset\cF_s$, it holds
	\begin{align*}
		A_{s,t}
		= \EE\left[ \int_s^t  \EE^s f_r(\mathfrak{G}_r + \EE^{s-(t-s)} \theta_r) \dd r\, \mathbbm{1}_\Gamma \right]
		&= \EE\left[ \int_s^t  G_{\sigma^2_{s,r}} f_r(\EE^s \mathfrak{G}_r + \EE^{s-(t-s)} \theta_r) \dd r \, \mathbbm{1}_\Gamma\right],
	\end{align*}
	where we recall that $G$ denotes the Gaussian semigroup and $\sigma^2_{s,r} = \var(\mathfrak{G}_r-\EE^s \mathfrak{G}_r)\geq c_H (r-s)^{2H}~ \text{I}_{d}$ (see \eqref{eq:LND}).
	Thus the heat kernel estimate Lemma~\ref{A.3}\ref{A.3.4} with $\beta=0$ yields
	\begin{align*}%\label{eq:besov_regularity_eq1}
		|A_{s,t}|
		%= \bigg| \EE\left[ \EE^s \int_s^t f_r(B_r + \EE^{s-(t-s)} \theta_r) \dd r\right]\bigg|
		&\lesssim \PP(\Gamma)\, \int_s^t \|G_{\sigma^2_{s,r}} f_r\|_{L^\infty_x} \dd r\\
		& \lesssim \PP(\Gamma)\, \int_s^t \|f_r\|_{\mathcal{B}^{\tilde{\gamma}}_\infty} (r-s)^{\tilde{\gamma}H} \dd r
		\lesssim \PP(\Gamma)\, \| f\|_{L^{\tilde q'}_{[s,t]} \cB^{\tilde \gamma}_\infty} (t-s)^{\tilde\gamma H + \frac{1}{\tilde{q}}},
	\end{align*}
using that $\tilde{\gamma}>-1/(\tilde{q}H)$; hence \eqref{eq:deterministic_shifted_assumption_1} holds.

To check \eqref{eq:deterministic_shifted_assumption_2}, consider $(s,v,t)\in \DDminus{u}{T}$. We have
\begin{align*}
	\delta A_{s,v,t}
	& = \EE\Big[\int_s^v \big[ f_r(\EE^{s-(v-s)} \theta_r+\mathfrak{G}_r) - f_r(\EE^{s-(t-s)}\theta_r+\mathfrak{G}_r)\big] \mathbbm{1}_\Gamma\, \dd r \Big]\\
	&\quad +\EE\Big[\int_v^t \big[ f_r(\EE^{v-(t-v)}\theta_r+\mathfrak{G}_r)-f_r(\EE^{s-(t-s)}\theta_r+\mathfrak{G}_r) \big] \mathbbm{1}_\Gamma\, \dd r \Big]\\
&\eqqcolon  I_1+I_2.
\end{align*}
We only provide estimates for $I_1$, $I_2$ being similar. By using $\EE=\EE\,\EE^{s-(v-s)}$, $\mathcal{G}\subset \cF_{u}\subset \cF_{s-(v-s)}$ and the fact that $\mathfrak{G}$ is a Gaussian-Volterra process w.r.t. $\mathbb{F}$, one finds
\begin{align*}
	|I_1|
	& = \Big| \EE\Big[\int_s^v \big[ G_{\sigma^2_{s-(v-s),r}} f_{r}(\EE^{s-(v-s)} \theta_r+\EE^{s-(v-s)} \mathfrak{G}_r) - G_{\sigma^2_{s-(v-s),r}} f_r(\EE^{s-(t-s)}\theta_r+\EE^{s-(v-s)}\mathfrak{G}_r)\big] \mathbbm{1}_\Gamma\, \dd r \Big]\Big|\\
	& \leq \EE\Big[\int_s^v \| G_{\sigma^2_{s-(v-s),r}} f_{r}\|_{\cC^1_b} |\EE^{s-(v-s)} \theta_r-\EE^{s-(t-s)}\theta_r|\, \mathbbm{1}_\Gamma\, \dd r \Big]\\
	& \leq \PP(\Gamma) \int_s^v \| G_{\sigma^2_{s-(v-s),r}} f_{r}\|_{\cC^1_b} \Big\| \EE\big[ |\EE^{s-(v-s)} \theta_r-\EE^{s-(t-s)}\theta_r|\big| \mathcal{G}\big]  \Big\|_{L^\infty_\Omega}\, \dd r\\
	& \leq \PP(\Gamma) \int_s^v \| G_{\sigma^2_{s-(v-s),r}} f_{r}\|_{\cC^1_b} \Big( \big\| \EE\big[ |\EE^{s-(v-s)} \theta_r-\theta_r|\big| \mathcal{G}\big]  \big\|_{L^\infty_\Omega} + \big\| \EE\big[ |\theta_r-\EE^{s-(t-s)}\theta_r|\big| \mathcal{G}\big]  \big\|_{L^\infty_\Omega}\Big) \dd r\\
	& \leq \PP(\Gamma)\, w_\theta(s-(t-s),t)^{\kappa_1}\,(t-s)^{\kappa_2} \int_s^v \| G_{\sigma^2_{s-(v-s),r}} f_{r}\|_{\cC^1_{b}} \dd r,
\end{align*}
	where in the last steps we used the triangular inequality and Assumption \eqref{eq:besov_regularity_condition_theta}.
	By the local nondeterminism property \eqref{eq:LND} of $\mathfrak{G}$, the heat kernel estimate Lemma~\ref{A.3}\ref{A.3.4}, Assumption \eqref{eq:besov_regularity_condition_theta} and H\"older's inequality, it holds
\begin{align*}
	\int_s^v \| G_{\sigma^2_{s-(v-s),r}} f_{r}\|_{\cC^1_b} \dd r
	& \lesssim \int_s^v (r-s+(v-s))^{(\tilde \gamma-1) H} \|f_r\|_{\mathcal{B}^{\tilde{\gamma}}_{\infty}}\, \dd r\\
	& \leq (v-s)^{(\tilde \gamma-1) H} \int_s^v \|f_r\|_{\mathcal{B}^{\tilde{\gamma}}_{\infty}}\, \dd r\\
	& \leq (v-s)^{(\tilde \gamma-1)H + \frac{1}{\tilde{q}}} \| f\|_{L^{\tilde q'}_{[s,v]} \cB^{\tilde\gamma}_\infty}\\
	& \lesssim (t-s)^{(\tilde \gamma-1)H + \frac{1}{\tilde{q}}} \| f\|_{L^{\tilde q'}_{[s,t]} \cB^{\tilde\gamma}_\infty} ,
\end{align*}
where we used the fact that $v-s\sim t-s$ since $(s,v,t)\in\DDminus{u}{T}$.
Combining the above estimates with similar ones for $I_2$, one overall finds
\begin{align*}
	|\delta A_{s,v,t}|\lesssim \PP(\Gamma)\,\| f\|_{L^{\tilde q'}_{[s,t]} \cB^{\tilde\gamma}_\infty}\, w_\theta(s-(t-s),t)^{\kappa_1}\,(t-s)^{\kappa_2-H + \tilde \gamma H + \frac{1}{\tilde{q}}}.
\end{align*}
Notice that $\| f\|_{L^{\tilde q'}_{[s,t]} \cB^{\tilde\gamma}_\infty}$ is itself a control to the power $1/q'$, and by Assumption \eqref{eq:besov_regularity_general_condition} we have
\begin{align*}
	& \kappa_2-H+\tilde\gamma H+\frac{1}{\tilde q}\geq \tilde\gamma H+\frac{1}{\tilde q} >0,\\
	& \kappa_1 +\frac{1}{\tilde q'} + \kappa_2-H + \tilde\gamma H+\frac{1}{\tilde q}
	= 1 + \kappa_1 + \kappa_2 + \tilde\gamma H - H > 1.
\end{align*}

	Therefore condition \eqref{eq:deterministic_shifted_assumption_2} is satisfied and we can apply Lemma~\ref{lem:deterministic_shifted_sewing}, yielding estimate \eqref{eq:besov_regularity_general_proof1} (cf. estimate \eqref{eq:deterministic_shifted_conclusion_2} therein).
\end{proof}

\begin{corollary}\label{cor:besov_regularity_general_2}
	Let $X$, $\mathcal{G}$, $u$, $\kappa_1$, $\kappa_2$ be as in Lemma \ref{lem:besov_regularity_general_1}. Then for any $(\alpha,\tilde q)\in (0,\infty)\times [1,\infty]$ such that
	\begin{align} \label{cond:corollary}
		\alpha< \min\Big\{ \frac{{\kappa_1}+{\kappa_2}}{H}-1, \frac{1}{\tilde q H} \Big\},
	\end{align}
	it holds $\law(X_\cdot\vert \mathcal{G})\in L^{\tilde q}([u,T];\cB^\alpha_{1,1})$ $\PP$-a.s. and for any $(s,t)\in\simp{u}{T}$ we have the estimate
	\begin{equation}\label{eq:besov_regularity_general_2}
		\Big\| \|\law(X_\cdot \vert \mathcal{G}) \|_{L^{\tilde q}_{[s,t]}\cB^\alpha_{1,1}} \Big\|_{L^\infty_\Omega} \lesssim
		(t-s)^{\frac{1}{\tilde q}-\alpha H} \Big(1+ w_\theta(s,t)^{\kappa_1} (t-s)^{{\kappa_2}-H}\Big). 
	\end{equation}
\end{corollary}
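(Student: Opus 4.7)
The plan is to deduce the bound from Lemma~\ref{lem:besov_regularity_general_1} by a duality argument. The starting point is that for $\alpha>0$ the space $\cB^\alpha_{1,1}$ is separable with topological dual $\cB^{-\alpha}_\infty$ via the canonical distributional pairing; hence for any $\nu\in\cB^\alpha_{1,1}$,
\begin{equation*}
    \|\nu\|_{\cB^\alpha_{1,1}} = \sup\big\{|\langle \phi,\nu\rangle| : \phi \in \cC^\infty_c(\R^d),\ \|\phi\|_{\cB^{-\alpha}_\infty} \leq 1\big\}.
\end{equation*}
Integrating in time and invoking standard duality for Bochner $L^{\tilde q}$-spaces, for any jointly measurable family $r\mapsto \nu_r$ one obtains the variational representation
\begin{equation*}
    \|\nu\|_{L^{\tilde q}_{[s,t]}\cB^\alpha_{1,1}} = \sup\Big\{\int_s^t \langle f_r,\nu_r\rangle\, \dd r : f \in L^{\tilde q'}_{[s,t]}\cC^\infty_c,\ \|f\|_{L^{\tilde q'}_{[s,t]}\cB^{-\alpha}_\infty}\leq 1\Big\}.
\end{equation*}

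Specializing to $\nu_r = \law(X_r\vert \mathcal{G})$ (in a jointly measurable version in $r$), for each admissible $f$ the dual pairing rewrites as a conditional expectation,
\begin{equation*}
    \int_s^t \langle f_r,\law(X_r\vert \mathcal{G})\rangle\, \dd r = \EE\Big[\int_s^t f_r(X_r)\,\dd r\,\Big|\,\mathcal{G}\Big]\quad \PP\text{-a.s.}
\end{equation*}
The next step is to feed this into Lemma~\ref{lem:besov_regularity_general_1} applied with $\tilde\gamma = -\alpha$: under this choice condition \eqref{eq:besov_regularity_general_condition} coincides exactly with \eqref{cond:corollary}, so its conclusion \eqref{eq:besov_regularity_law_general} yields, for each such $f$,
\begin{equation*}
    \Big\|\EE\Big[\int_s^t f_r(X_r)\,\dd r\,\Big|\, \mathcal{G}\Big]\Big\|_{L^\infty_\Omega} \leq C\, \|f\|_{L^{\tilde q'}_{[s,t]}\cB^{-\alpha}_\infty}\,(t-s)^{\frac{1}{\tilde q}-\alpha H}\Big(1+w_\theta(s,t)^{\kappa_1}(t-s)^{\kappa_2-H}\Big).
\end{equation*}

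The main technical obstacle is that the preceding estimate holds $\PP$-a.s. on a null set depending on $f$, whereas the variational representation requires a uniform bound over an uncountable unit ball. I would address this by selecting a countable family $\mathcal D \subset L^{\tilde q'}_{[s,t]}\cC^\infty_c$ that is norming for the pairing against $L^{\tilde q}_{[s,t]}\cB^\alpha_{1,1}$: such a family can be constructed using separability of $\cB^\alpha_{1,1}$ combined with a Fubini-type reduction (e.g., a countable grid of step functions in time valued in a countable dense set of smooth test functions). Discarding the countable union of exceptional null sets and taking the supremum over $\mathcal D$ then transfers the individual bound into the desired pointwise-in-$\omega$ estimate \eqref{eq:besov_regularity_general_2}, completing the argument.
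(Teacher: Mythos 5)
Your proposal is correct and follows essentially the same route as the paper's proof: duality between $L^{\tilde q}_{[s,t]}\cB^\alpha_{1,1}$ and $L^{\tilde q'}_{[s,t]}\cB^{-\alpha}_\infty$ (Lemma~\ref{lem:dualspace}), an application of Lemma~\ref{lem:besov_regularity_general_1} with $\tilde\gamma=-\alpha$, and a countable norming family of test functions to absorb the $f$-dependent null sets. The only point you gloss over is that the variational formula is invoked for a conditional law not yet known to lie in $L^{\tilde q}_{[s,t]}\cB^\alpha_{1,1}$; the paper handles this by first mollifying $\mu_r=\law(X_r\vert\mathcal{G})$, deriving uniform-in-mollification bounds, and passing to the limit via lower semicontinuity (the Fatou property of Lemma~\ref{lem:uniformBesov}).
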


\begin{proof}
	Fix $(s,t)\in \simp{u}{T}$ and for $r\in [s,t]$, set $\mu_r=\law(X_r|\mathcal{G})$.
	Similarly to the proof of Lemma \ref{lem:smoothinglemma}, we can introduce a mollification $\mu^\eps_r=\chi^\eps\ast \mu_r$ to guarantee that $\mu^\eps\in L^{\tilde q}([u,T];\cB^\alpha_{1,1})$, and proceed to derive uniform-in-$\eps$ bounds; the conclusion then follows by Fatou's lemma and lower semicontinuity of Lebesgue-Besov norms (Lemma \ref{lem:uniformBesov}). For notational simplicity, let us omit the presence of the mollifier in the following.
	
	For any given $f\in \cC^\infty_b([s,t]\times\R^d)$, we can apply Lemma \ref{lem:besov_regularity_general_1} with $\gamma=-\alpha$. Hence, we have the $\PP$-a.s. bound
	\begin{align*}
		\Big| \int_s^t \langle f_r,\mu_r \rangle\, \dd r\Big|
		= \Big| \EE\Big[ \int_s^t f(X_r) \dd r \Big| \mathcal{G} \Big]\Big|
		\leq C \| f\|_{L^{\tilde q'}_{[s,t]} \cB^{-\alpha}_{\infty}} (t-s)^{\frac{1}{\tilde q}-\alpha H} \Big(1+ w_\theta(s,t)^{\kappa_1} (t-s)^{{\kappa_2}-H}\Big) .
	\end{align*}
	Taking a countable family $\{f_n\}_n$ which is dense in $\cE=\cC^\infty_b([s,t]\times\R^d)\cap L^{\tilde q'}([s,t]; \cB^{-\alpha}_\infty)$, w.r.t. the $L^{\tilde q'}([s,t]; \cB^{-\alpha}_\infty)$-norm, we can find a set of full probability where the estimate holds for all $f_n$, and then extend by density to being $\PP$-a.s. valid for all $f\in\cE$. By Lemma~\ref{lem:dualspace} we conclude that $\PP$-a.s.
	\begin{align*}
	\lVert \law(X_\cdot\vert \mathcal{G})\|_{L^{\tilde{q}}_{[s,t]}\mathcal{B}^{\alpha}_{1,1}}
	&\lesssim 	\sup_{f \in \cE: \| f\|_{L^{q'}_{[s,t]} \cB^{-\alpha}_\infty} \leq 1} \Big| \int_s^t \langle f_r, \mu_r \rangle \dd r \Big|\\
	&\leq C (t-s)^{\frac{1}{\tilde q}-\alpha H} \Big(1+ w_\theta(s,t)^{\kappa_1} (t-s)^{{\kappa_2}-H}\Big). \qedhere
	\end{align*}
\end{proof}

\begin{proof}[Proof of Theorem~\ref{thm:Besovregularity_better}]

Let $(u,t)\in [0,T]^2_\leq$ with $|t-u|\leq 1$ fixed.

First assume $\gamma<0$. 
For $\varphi=X-B$, combining \eqref{eq:compcondexp} with the a priori estimate \eqref{eq:apriori-gamma<0simple} and the tower property $\EE^u=\EE^u \EE^s$, one finds
\begin{align*}
\big\| \EE^u|\varphi_{t} - \EE^s \varphi_{t}| \big\|_{L^\infty_\Omega}
\leq \big\| \EE^s|\varphi_{t} - \EE^s \varphi_{t}| \big\|_{L^\infty_\Omega}
\leqslant C \|b\|_{L^q_{[s,t]}\mathcal{B}^\gamma_\infty} (t-s)^{\gamma H +\frac{1}{q^\prime}}\Big(1+\|b\|_{L^q_{[s,t]}\mathcal{B}^\gamma_\infty}\Big)
\end{align*}
for any $s\in [u,t]^2_\leq$. Therefore the assumptions of Corollary~\ref{cor:besov_regularity_general_2} are satisfied for $\theta=\varphi$, $\mathcal{G}=\cF_u$ and 
\begin{align*}
w_{\varphi}(s,t)=\|b\|^q_{L^q_{[s,t]}\mathcal{B}^\gamma_\infty}\Big(1+\|b\|_{L^q_{[s,t]}\mathcal{B}^\gamma_\infty}\Big)^{q},\quad
\kappa_1=\frac{1}{q}, \quad
\kappa_2=\gamma H+\frac{1}{q^\prime}>H ,
\end{align*}
where the last inequality follows from Assumption \ref{ass:strong}.
In particular, Assumption~\eqref{eq:cond_besov_regularity} ensures the validity of \eqref{cond:corollary}, since $\kappa_1+\kappa_2/H-1=1/H+\gamma-1$.
Taking $s=u$ in \eqref{eq:besov_regularity_general_2} yields \eqref{eq:besovregularity1_conditional}.

For $\gamma>0$, the proof of estimate \eqref{eq:besovregularity2_conditional} follows by identical arguments, up to invoking the a priori bound \eqref{eq:apriori-gamma>0} instead, resulting in a slightly different $w_\varphi$. 

Since $T$ is finite, we can always cover $[u,T]$ by finitely many intervals of length at most $1$, so that $\law(X_\cdot\vert \cF_u)\in L^\infty_\Omega L^{\tilde{q}}([u,T];\mathcal{B}^{\alpha}_{1,1})$ follows.
\end{proof}

\section{Existence and uniqueness for McKean--Vlasov equations}\label{sec:McKeanVlasov}

In this section, we prove two results on McKean--Vlasov equations~\eqref{eq:McKV}: first, Theorem~\ref{thm:mckeanvlasov} provides an existence result under weak regularity assumptions on $\mathfrak{b}$, relying on the regularity of the law provided by Theorem~\ref{thm:Besovregularity_better}. Second, Theorem~\ref{Mainresult-McKV-uniqueness-full} is a uniqueness result under slightly stronger conditions, in particular on the regularity of the initial condition.

\begin{theorem}\label{thm:mckeanvlasov}
Let $H\in (0,\infty)\setminus \N$ and $\mathfrak{b}\in L^\infty([0,T];\mathcal{B}^\theta_{\infty})$ for $\theta\in \R$ satisfying
\begin{equation}\label{eq:hypMcKV}
\theta>1-\frac{1}{H}.
\end{equation}
Let $Y_{0}$ an $\mathcal{F}_{0}$-measurable, $\R^d$-valued random variable with law denoted by $\mu_{0}$;
then there exists a strong solution $Y$ to the McKean--Vlasov equation \eqref{eq:McKV} starting at $Y_0$.

More precisely, setting $\mu_t \coloneqq \mathcal{L}(Y_t)$, we have the following:
\begin{enumerate}[label=(\alph*)]
\item  \label{en:reg1}for any $\rho \in [1,\infty)$ and $\alpha>0$ with
\begin{equation}\label{eq:condalpha}
\alpha< \frac{1}{\rho H},
\end{equation}
we have $\mu\in L^{\rho}([0,T];\mathcal{B}^\alpha_{1,1})$;

\item \label{en:reg2}in particular, $\mathfrak{b}\ast \mu \in L^{1}([0,T];\mathcal{C}^{1}_{b})$, so that the (linear) {\renewcommand\SDE{SDE($\mathfrak{b}\ast \mu$,$Y_0$)}\ref{eq:SDEintro}\renewcommand\SDE{SDE($b$,$X_0$)}} admits a unique strong solution, which is given by the process $Y$.
\end{enumerate}
\end{theorem}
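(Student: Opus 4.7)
Use a regularization--bootstrap--identification scheme that leverages Corollary \ref{cor:Besovregularity_better} to gain enough Besov regularity for $\mathfrak{b}\ast\mu$ to be handled as the drift of a singular linear SDE. Set $\mathfrak{b}^n := G_{1/n}\mathfrak{b}$, so that $\mathfrak{b}^n \in L^\infty([0,T];\mathcal{C}^\infty_b)$, $\mathfrak{b}^n\to\mathfrak{b}$ in $L^\infty([0,T];\mathcal{B}^{\theta-}_\infty)$ and $\sup_n\|\mathfrak{b}^n\|_{L^\infty\mathcal{B}^\theta_\infty}\leq C_0$. For each $n$, a standard Picard iteration (smooth bounded drift) produces a unique strong solution $Y^n$ of \eqref{eq:McKV} with drift $\mathfrak{b}^n$ and initial datum $Y_0$; write $\mu^n_t=\mathcal{L}(Y^n_t)$. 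Since $\theta>1-1/H$, the identity $1-\theta<1/H=1/(H\rho)+1/(H\rho')$ allows us to pick $\rho\in(1,2]$ and $\alpha>0$ satisfying both $\alpha<1/(\rho H)$ and $\theta+\alpha>1-1/(H\rho')$, so that $(\theta+\alpha,\rho)$ fulfills Assumption~\ref{ass:strong} and the target exponent $\alpha$ lies in the admissible range of Corollary~\ref{cor:Besovregularity_better}.

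\paragraph{Uniform bootstrap estimates.} View $Y^n$ as an admissible solution of \ref{eq:SDEintro} with drift $b^n:=\mathfrak{b}^n\ast\mu^n$. The standard Besov convolution inequality $\|\mathfrak{b}\ast\mu\|_{\mathcal{B}^{\theta+\alpha}_\infty}\lesssim\|\mathfrak{b}\|_{\mathcal{B}^\theta_\infty}\|\mu\|_{\mathcal{B}^\alpha_{1,1}}$ gives $\|b^n\|_{L^\rho_{[u,u+\tau]}\mathcal{B}^{\theta+\alpha}_\infty}\lesssim C_0 F^n_u(\tau)$, where $F^n_u(\tau):=\|\mu^n\|_{L^\rho_{[u,u+\tau]}\mathcal{B}^\alpha_{1,1}}$. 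Plugging this into Corollary \ref{cor:Besovregularity_better} on $[u,u+\tau]$ yields an inequality of the form
\[ F^n_u(\tau)\leq C_1 \tau^a + C_2 F^n_u(\tau)\big(1+F^n_u(\tau)\big)\tau^b, \qquad a,b>0. \]
Continuity of $\tau\mapsto F^n_u(\tau)$ with $F^n_u(0)=0$ absorbs the right-most terms on a time interval of length $\tau_0>0$ depending only on $C_0$ and the constants of Corollary \ref{cor:Besovregularity_better}, producing $F^n_u(\tau_0)\leq 2 C_1 \tau_0^a$ uniformly in $n$ and $u$; concatenating $\lceil T/\tau_0\rceil$ sub-intervals gives $\sup_n\|\mu^n\|_{L^\rho([0,T];\mathcal{B}^\alpha_{1,1})}<\infty$, and hence $\sup_n\|b^n\|_{L^\rho([0,T];\mathcal{B}^{\theta+\alpha}_\infty)}<\infty$.

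\paragraph{Compactness, identification and regularity.} The a priori H\"older bounds from Section \ref{sec:apriori} on $\varphi^n=Y^n-B$, with constants depending only on the bootstrap bound, make $(Y^n)$ tight in $\mathcal{C}_{[0,T]}$. Extracting a subsequence $Y^{n_k}\to Y$ in law, $\mu^{n_k}_t\to\mu_t:=\mathcal{L}(Y_t)$ weakly for every $t$ and the $L^\rho\mathcal{B}^\alpha_{1,1}$-bound survives in the limit by lower semicontinuity. Combining the strong convergence $\mathfrak{b}^{n_k}\to\mathfrak{b}$ in $L^\infty\mathcal{B}^{\theta-}_\infty$ with the weak convergence of $\mu^{n_k}$ and the uniform Besov bounds gives $b^{n_k}\to b:=\mathfrak{b}\ast\mu$ in $L^\rho([0,T];\mathcal{B}^{(\theta+\alpha)-}_\infty)$. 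Since $(\theta+\alpha,\rho)$ fulfills Assumption \ref{ass:strong}, the stability of singular linear SDEs in \cite{GaleatiGerencser} yields $Y^{n_k}\to\tilde Y$ in $L^2(\Omega;\mathcal{C}_{[0,T]})$, where $\tilde Y$ is the unique admissible solution of \ref{eq:SDEintro} with drift $b$; uniqueness in law identifies $\tilde Y$ with $Y$, so $Y$ is a strong solution of \eqref{eq:McKV}. For part~(a), re-apply Corollary \ref{cor:Besovregularity_better} to $Y$ and iterate: each application upgrades $\mu\in L^\rho\mathcal{B}^{\alpha_k}_{1,1}$ to $\mu\in L^\rho\mathcal{B}^{\alpha_{k+1}}_{1,1}$ with $\alpha_{k+1}$ any value below $\min\{1/(\rho H),\,\alpha_k+\theta-1+1/H\}$, and since $\theta-1+1/H>0$ this saturates at $1/(\rho H)$ after finitely many steps; interpolation with the trivial $L^\infty\mathcal{B}^{0-}_{1,1}$-bound covers the remaining values of $\rho\geq 2$. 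For part~(b), pick $\rho>1$ close to $1$ and $\alpha\in(1-\theta,\,1/(\rho H))$—non-empty because $1-\theta<1/H$—so that $b\in L^\rho\mathcal{B}^{\theta+\alpha}_\infty$ with $\theta+\alpha>1$; Besov embedding then gives $b\in L^1([0,T];\mathcal{C}^1_b)$, and the corresponding linear SDE falls under classical Cauchy-Lipschitz theory, its unique strong solution necessarily being $Y$.

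\paragraph{Main obstacle.} The delicate point is the identification step: $b^{n_k}\to b$ must hold in a topology strong enough to activate the stability of the singular linear SDE theory, while only weak convergence of $\mu^{n_k}$ is directly available. Controlling the cross-term $(\mathfrak{b}-\mathfrak{b}^{n_k})\ast(\mu-\mu^{n_k})$ in $L^\rho\mathcal{B}^{(\theta+\alpha)-}_\infty$ relies crucially on the uniform Besov bound from the bootstrap to pair with the strong convergence $\mathfrak{b}^{n_k}\to\mathfrak{b}$ in $\mathcal{B}^{\theta-}_\infty$.
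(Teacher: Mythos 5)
Your proposal follows the same overall architecture as the paper's proof: mollify $\mathfrak{b}$, establish a uniform bound on $\|\mu^n\|_{L^\rho_T\cB^\alpha_{1,1}}$ by feeding the Besov convolution inequality into Corollary~\ref{cor:Besovregularity_better} and closing a local self-improving inequality by continuity of $\tau\mapsto F^n_u(\tau)$, then obtain tightness and identify the limit. The a priori step is exactly the paper's Lemma~\ref{lem:globalapriori} (where the absorption of the superlinear term $F(1+F)$ is carried out carefully via the auxiliary function $g(s,t)$ and a no-jump argument), and parts (a) and (b) are handled the same way; your iteration for (a) is unnecessary since the uniform bound already holds for every admissible pair $(\rho,\alpha)$ and survives the limit by lower semicontinuity, but it is not wrong.

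The genuine divergence is the identification step. The paper deliberately takes $\rho$ close to $1$ so that $\gamma=\theta+\alpha>1$, making the limiting drift $b^\infty=\mathfrak{b}\ast\mu^\infty$ Lipschitz; identification then reduces to an elementary pathwise Gr\"onwall estimate in which the troublesome term $\int_0^T|\mathfrak{b}_s\ast(\mu^n_s-\mu^\infty_s)(Y_s)|\,\dd s$ only requires \emph{pointwise} convergence of $\varphi_s\ast\mu^n_s(x)$ for smooth $\varphi$, obtained directly from weak convergence of $\mu^n_s$. You instead invoke the stability theory for singular linear SDEs under Assumption~\ref{ass:strong}, which forces you to prove \emph{norm} convergence $b^{n_k}\to b$ in $L^\rho\cB^{(\theta+\alpha)-}_\infty$, i.e.\ convergence of $\varphi\ast(\mu^{n_k}_s-\mu^\infty_s)$ uniformly over all of $\R^d$ (and uniformly over Littlewood--Paley blocks after interpolation). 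This is the one step you flag but do not execute: weak convergence of $\mu^{n_k}_s$ alone does not give $L^\infty_x$-convergence of the convolution on an unbounded domain; you additionally need uniform tightness of $\{\mu^{n_k}_s\}$ (available here from the uniform moment bounds on $Y^{n_k}$) together with the interpolation against the uniform $\cB^{\theta+\alpha}_\infty$-bound. With that supplied, your route works and is arguably more robust (it does not need $\gamma>1$), but the paper's choice of $\rho\sim 1$ buys a strictly simpler identification argument that never leaves the realm of pointwise evaluations along the solution path.
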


Theorem~\ref{thm:mckeanvlasov} above is a more detailed version of Theorem~\ref{Mainresult-McKV} from the introduction.
As for uniqueness, %of the McKean--Vlasov equation~\eqref{eq:McKV}, 
we have Theorem~\ref{Mainresult-McKV-uniqueness}, which we recall here for the reader's convenience.

\begin{theorem}\label{Mainresult-McKV-uniqueness-full}
Let $H\in (0,+\infty)\setminus \N$ and $\mathfrak{b}\in L^\infty([0,T];\cB^\theta_p)$ for some $\theta\in (-\infty,1)$, $p\in [1,\infty]$ satisfying
	\begin{equation}\label{eq:condition-uniqueness-full}
		\theta>1-\frac{1}{2H}, \quad \theta-\frac{d}{p}>1-\frac{1}{H} .
	\end{equation}
Further assume that $\law(Y_0)\in L^\infty_x$.
Then there exists a strong solution, in the sense specified in Theorem~\ref{thm:mckeanvlasov}, to the McKean--Vlasov equation \eqref{eq:McKV} starting from $Y_{0}$, such that
\begin{equation}\label{eq:condition_uniqueness2-full}
	\mathfrak{b}\ast \mu\in L^1([0,T];\cC^1_b);
\end{equation}
moreover, pathwise uniqueness and uniqueness in law hold in the class of solutions satisfying \eqref{eq:condition_uniqueness2-full}.
\end{theorem}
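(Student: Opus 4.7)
\textbf{Existence and the regularity~\eqref{eq:condition_uniqueness2-full}.} The Besov embedding $\cB^\theta_p\hookrightarrow \cB^{\theta-d/p}_\infty$ combined with the hypothesis $\theta-d/p>1-1/H$ places $\mathfrak{b}$ in the existence regime of Theorem~\ref{thm:mckeanvlasov}, which produces a strong solution $Y$ whose marginals $\mu_t=\law(Y_t)$ satisfy $\mu\in L^\rho([0,T];\cB^\alpha_{1,1})$ for every admissible $(\rho,\alpha)$ of~\eqref{eq:condalpha}. To upgrade to~\eqref{eq:condition_uniqueness2-full}, I would take $\rho=1$ and $\alpha$ in the interval $(1+d/p-\theta,\,1/H)$, which is nonempty \emph{exactly} under~\eqref{eq:condition-uniqueness-full}; then a Besov convolution estimate of the form $\|\mathfrak{b}\ast\nu\|_{\cB^{\theta+\alpha}_p}\lesssim \|\mathfrak{b}\|_{\cB^\theta_p}\|\nu\|_{\cB^\alpha_{1,1}}$ together with the embedding $\cB^{\theta+\alpha}_p\hookrightarrow \cC^{\theta+\alpha-d/p}_b\subset \cC^1_b$ (since $\theta+\alpha-d/p>1$) yields $\mathfrak{b}\ast\mu\in L^1_t\cC^1_b$, establishing existence together with~\eqref{eq:condition_uniqueness2-full}.

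\textbf{Propagation of $L^\infty_x$ to positive times.} For any solution whose law $\mu$ satisfies~\eqref{eq:condition_uniqueness2-full}, the drift $b\coloneqq \mathfrak{b}\ast\mu$ is Lipschitz in space, integrated against time. The SDE defining $Y$ is therefore a classical one driven by $B$ with a smooth drift, so it admits a pathwise stochastic flow $\Phi_t:\R^d\to\R^d$ which is bi-Lipschitz (for every $\omega$) with constants bounded by $\exp(\|\nabla b\|_{L^1_tL^\infty_x})$. Since $\mu_t=(\Phi_t)_\#\mu_0$, the change-of-variables formula gives $\|\mu_t\|_{L^\infty_x}\lesssim \|\mu_0\|_{L^\infty_x}\exp(d\|\nabla b\|_{L^1_tL^\infty_x})$, which is the statement of Lemma~\ref{lem:propagation_integrability}. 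In particular $\mu\in L^\infty([0,T]\times\R^d)$, a piece of information that will be crucial below.

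\textbf{Uniqueness via contraction.} Given two solutions $Y^1,Y^2$ with laws $\mu^1,\mu^2$ satisfying~\eqref{eq:condition_uniqueness2-full}, I would couple them via the same initial data $Y_0$ and the same fBm $B$; writing $b^i\coloneqq \mathfrak{b}\ast\mu^i$, the identity
\[
Y^1_t-Y^2_t=\int_0^t\!\bigl[b^1_s(Y^1_s)-b^1_s(Y^2_s)\bigr]\,\dd s+\int_0^t\!(b^1_s-b^2_s)(Y^2_s)\,\dd s
\]
controls the first integrand by $\|\nabla b^1_s\|_{L^\infty_x}\,|Y^1_s-Y^2_s|$ with $\|\nabla b^1\|_{L^1_tL^\infty_x}<\infty$, giving a standard Gronwall term. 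The second integrand is the main obstacle, since $\mathfrak{b}$ itself is singular: one needs a stability estimate of the form
\[
\|\mathfrak{b}_s\ast(\mu^1_s-\mu^2_s)\|_{L^\infty_x}\lesssim C_s\,W_1(\mu^1_s,\mu^2_s)
\]
with $C_s$ integrable in time, which combined with the trivial bound $W_1(\mu^1_s,\mu^2_s)\leq \EE|Y^1_s-Y^2_s|$ closes the contraction by Gronwall. The plan is to derive it by a Littlewood-Paley splitting $\mathfrak{b}_s=\mathfrak{b}_s^{\leq J}+\mathfrak{b}_s^{>J}$: the low-frequency part is Lipschitz, so its contribution is bounded via Kantorovich--Rubinstein duality against $W_1$; the high-frequency part is bounded by Young's inequality against the propagated $L^\infty_x$-bound on $\mu^1_s-\mu^2_s$; optimization over $J$ gives the claimed inequality. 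The two conditions in~\eqref{eq:condition-uniqueness-full} enter complementarily here: $\theta-d/p>1-1/H$ governs the low-frequency (Lipschitz) side as in Step 1, while $\theta>1-1/(2H)$ -- subcriticality of $\mathfrak{b}$ itself as an \ref{eq:SDEintro}-drift, cf.\ Assumption~\ref{ass:strong} with $q=\infty$ -- ensures the right scaling of the high-frequency term. If the optimization yields only a H\"older-type dependence on $W_1$, an Osgood-type argument replaces Gronwall's but still forces $Y^1\equiv Y^2$ almost surely, which gives both pathwise uniqueness and uniqueness in law in the class~\eqref{eq:condition_uniqueness2-full}.
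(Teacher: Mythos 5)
Your existence step and the propagation of the $L^\infty_x$-bound are essentially the paper's argument (Theorem~\ref{thm:mckeanvlasov} via the embedding $\cB^\theta_p\hookrightarrow\cB^{\theta-d/p}_\infty$, then Lemma~\ref{lem:propagation_integrability}; your flow-based derivation of the latter is equivalent to the paper's density formula). The uniqueness step, however, has a genuine gap.

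The obstruction is that the estimate you need,
$\|\mathfrak{b}_s\ast(\mu^1_s-\mu^2_s)\|_{L^\infty_x}\lesssim C_s\,W_1(\mu^1_s,\mu^2_s)$,
is not attainable by your Littlewood--Paley splitting in the stated generality. For the high-frequency part, Young's inequality gives $\|\Delta_j\mathfrak{b}_s\ast(\mu^1_s-\mu^2_s)\|_{L^\infty_x}\leq\|\Delta_j\mathfrak{b}_s\|_{L^p_x}\|\mu^1_s-\mu^2_s\|_{L^{p'}_x}\lesssim 2^{-j\theta}$, and the sum over $j>J$ diverges whenever $\theta\leq 0$ -- a case explicitly allowed by \eqref{eq:condition-uniqueness-full} (e.g.\ Riesz kernels with $\theta=0$ in Remark~\ref{rk:Coulomb-kernel}). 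Even when $\theta\in(0,1)$, optimizing $2^{J(1+d/p-\theta)}W_1+2^{-J\theta}$ over $J$ yields $W_1^{\theta/(1+d/p)}$ with exponent strictly less than $1$; the modulus $\omega(u)=u^\beta$ with $\beta<1$ is \emph{not} Osgood (since $\int_0 u^{-\beta}\,\dd u<\infty$), so your fallback does not force $Y^1\equiv Y^2$. Your attribution of the condition $\theta>1-1/(2H)$ to "the scaling of the high-frequency term" is also off: that term, as you set it up, involves no $H$ at all.

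What is missing is the regularization-by-noise input. The paper does not attempt a pathwise Gr\"onwall requiring $L^\infty_x$-control of the drift difference; instead it invokes the stability estimate for singular linear SDEs from \cite[Thm.~1.4-ii)]{GaleatiGerencser}, which controls $\EE[\sup_s|Y^1_s-Y^2_s|^{p'}]^{2/p'}$ by $\int_0^t\|\,b^1_s-b^2_s\,\|_{\cB^{\theta-1}_\infty}^2\,\dd s$, i.e.\ by the drift difference measured in a \emph{negative}-regularity Besov norm with $\theta-1>-1/(2H)$ -- this is precisely where the first condition in \eqref{eq:condition-uniqueness-full} enters. The companion functional inequality (Corollary~\ref{cor:convol_wasserstein}, built on Lemma~\ref{lem:wasserstein_negBesov}) then gives the \emph{linear} bound $\|\mathfrak{b}_s\ast(\mu^1_s-\mu^2_s)\|_{\cB^{\theta-1}_\infty}\lesssim\|\mathfrak{b}_s\|_{\cB^\theta_p}(\|\mu^1_s\|_{L^\infty_x}^{1/p}+\|\mu^2_s\|_{L^\infty_x}^{1/p})\,\mathbb{W}_{p'}(\mu^1_s,\mu^2_s)$ -- linear exactly because one only needs to gain a single derivative and the propagated $L^\infty_x$-bound absorbs the $L^p$--$L^{p'}$ duality. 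This closes a genuine Gr\"onwall in $\mathbb{W}_{p'}$. Without such a stability estimate at negative regularity, no choice of frequency cutoff rescues your contraction.
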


The proofs of Theorem~\ref{thm:mckeanvlasov} and Theorem~\ref{Mainresult-McKV-uniqueness-full} are given respectively in Section~\ref{sec:proof-existence-McKV} and Section~\ref{sec:proofMcKVuniq} below.

\begin{remark}\label{rk:Coulomb-kernel}
	Let $\fkb$ be a compactly supported\footnote{The compact support assumption is merely technical, as it would be natural to measure the regularity of $\fkb$ away from the origin in $\cB^\theta_\infty$ scales, which however creates a mismatch in the parameter $p$ compared to the one used to measure regularity around $0$. We expect the same conclusion to hold even without this assumption.} interaction kernel of Riesz type, which is only singular around the origin with $|\fkb(x)|\sim |x|^{-s}$ for some $s\in (0,d)$. By taking $\theta=0$, $p= (d/s)^-$ (meaning $p=d/s-\varepsilon$ for any small $\varepsilon>0$), Theorem~\ref{Mainresult-McKV-uniqueness-full} ensures well-posedness of the McKean--Vlasov equation for
	\begin{align*}
		\law(Y_0)\in L^\infty_x, \quad H\in (0,1/2), \quad s<\frac{1}{H}-1.
	\end{align*}
In particular, the case of Coulomb-type interaction kernels in dimesion $d\geq 2$ corresponds to $s=d-1$, and the previous condition then implies well-posedness for any $H<1/d$. For instance in $d=2$, Coulomb-type kernels can only be reached for $H<1/2$: this is coherent with known difficulties that occur when $H=1/2$ with attractive interactions such as the Keller-Segel kernel $\fkb(x) = -\chi \frac{x}{|x|^2},\ \chi>0$, for which blow-up occur for large enough values of $\chi$ \cite{BlanchetDolbeaultPerthame,FournierJourdain}.

If additionally $s\in (0,d-1)$ and $|\nabla \fkb(x)|\sim |x|^{-s-1}$, then by taking $\theta=1^-$, $p= (\frac{d}{s+1})^-$, we similarly obtain well-posedness of the McKean--Vlasov equation for
\begin{align*}
		\law(Y_0)\in L^\infty_x, \quad H\in [1/2,1), \quad s<\frac{1}{H}-1.
	\end{align*}
\end{remark}

\subsection{Proof of Theorem~\ref{thm:mckeanvlasov}}\label{sec:proof-existence-McKV}

The proof of Theorem~\ref{thm:mckeanvlasov} follows by a priori estimates and compactness. Therefore we first give an a priori estimate for the law of the solution to \eqref{eq:McKV}. This is done via a local Gr\"onwall-type argument borrowed from rough path theory (see \cite[Section 8.4]{FrizVictoir}).

\begin{lemma} \label{lem:globalapriori}
Let $\theta>1-1/H$. Let $\mathfrak{b} \in L^\infty([0,T];\mathcal{C}^\infty_b)$ and denote by $Y$ the unique strong solution to \eqref{eq:McKV}. Then for any $\rho \in (1,2]$ and any $\alpha<\frac{1}{\rho H}$, there exists a constant $C=C(\alpha,\rho,T,H,\theta,d,\|\mathfrak{b}\|_{L^\infty_{T}\mathcal{B}^\theta_\infty})$, which is non-decreasing w.r.t. $\|\mathfrak{b}\|_{L^\infty_{T}\mathcal{B}^\theta_\infty}$, such that
\begin{equation*}
\lVert \law(Y_\cdot)\|_{L^{\rho}_{T}\mathcal{B}^{\alpha}_{1,1}}
\leq C.
%C(\alpha,\rho,T,H,\theta,d,\|\mathfrak{b}\|_{L^\infty_{T}\mathcal{B}^\theta_\infty}).
\end{equation*}
\end{lemma}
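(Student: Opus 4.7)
My plan is to interpret \eqref{eq:McKV} as a linear SDE with effective drift $b^\mu_t(x) \coloneqq \mathfrak{b}_t \ast \mu_t(x)$, where $\mu_t \coloneqq \mathcal{L}(Y_t)$, and then combine a convolution gain in Besov spaces with Corollary~\ref{cor:Besovregularity_better} in a nonlinear bootstrap argument. Since $\mathfrak{b} \in L^\infty_T\mathcal{C}^\infty_b$, classical Picard arguments produce a unique strong solution $Y$; moreover, standard Malliavin calculus guarantees that $\mu_t$ admits a smooth density for every $t>0$, so that $F(s,t) \coloneqq \|\mu\|_{L^\rho_{[s,t]}\mathcal{B}^\alpha_{1,1}}$ is finite and continuous in its endpoints. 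These qualitative regularity properties will be used freely in the bootstrap step, while the final quantitative estimate must depend only on $\|\mathfrak{b}\|_{L^\infty_T\mathcal{B}^\theta_\infty}$ and the other parameters, monotonically in the former.

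I would first fix $\rho \in (1,2]$ and $\alpha$ so that $\alpha < 1/(\rho H)$ (the desired conclusion) and in addition $\gamma \coloneqq \theta + \alpha > 1 - 1/(H\rho')$: this range of $\alpha$ is non-empty precisely because $\theta > 1 - 1/H$, and the second condition ensures Assumption~\ref{ass:strong} for the pair $(\rho,\gamma)$. The convolution gain follows from Young's inequality in Besov spaces (via Littlewood--Paley decomposition), giving
\[\|b^\mu_r\|_{\mathcal{B}^{\theta+\alpha}_\infty} \lesssim \|\mathfrak{b}_r\|_{\mathcal{B}^\theta_\infty}\,\|\mu_r\|_{\mathcal{B}^\alpha_{1,1}}, \qquad \text{hence}\qquad \|b^\mu\|_{L^\rho_{[s,t]}\mathcal{B}^\gamma_\infty} \lesssim \|\mathfrak{b}\|_{L^\infty_T\mathcal{B}^\theta_\infty}\, F(s,t).\]
Applying Corollary~\ref{cor:Besovregularity_better} to the smooth drift $b^\mu$ (using \eqref{eq:besovregularity1} if $\gamma<0$, or \eqref{eq:besovregularity2} if $\gamma>0$, always with $\tilde q = \rho$) then yields, for $|t-s|\leq 1$, a nonlinear inequality of the form
\[F(s,t) \leq C_0 (t-s)^{\frac{1}{\rho}-\alpha H} + C_1 F(s,t)\bigl(1 + C_2 F(s,t)^{\tilde\eta}\bigr)(t-s)^{\kappa},\]
where $\kappa \sim 1 + H(\theta-1)>0$ precisely thanks to $\theta > 1-1/H$, $\tilde\eta\in\{1,\gamma/(1-\gamma)\}$ depending on the case, $C_0$ is universal, and $C_1,C_2$ are non-decreasing in $\|\mathfrak{b}\|_{L^\infty_T\mathcal{B}^\theta_\infty}$.

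To close this inequality, I would run the rough-path-style continuity argument of \cite[Section~8.4]{FrizVictoir}. Define $T^*_s \coloneqq \sup\bigl\{T'\in(s,T] : F(s,t) \leq 2C_0 (t-s)^{1/\rho - \alpha H}\text{ for all }t\in[s,T']\bigr\}$. Since $F(s,\cdot)$ is continuous with $F(s,s)=0$, one has $T^*_s>s$; substituting the barrier $2C_0(t-s)^{1/\rho-\alpha H}$ into the previous inequality at $t = T^*_s$ forces a contradiction whenever $T^*_s - s \leq \delta_0$, for some $\delta_0>0$ depending only on $C_0,C_1,C_2,\kappa,\tilde\eta,\alpha,\rho$. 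Hence $T^*_s - s \geq \delta_0 \wedge (T-s)$ uniformly in $s$. Partitioning $[0,T]$ into $N\coloneqq \lceil T/\delta_0\rceil$ intervals $[t_i,t_{i+1}]$ of length at most $\delta_0$ and exploiting the identity $\|\mu\|_{L^\rho_T\mathcal{B}^\alpha_{1,1}}^\rho = \sum_i F(t_i,t_{i+1})^\rho$ then yields the desired global bound, with the required monotonic dependence on $\|\mathfrak{b}\|_{L^\infty_T\mathcal{B}^\theta_\infty}$. Finally, for an arbitrary $\alpha < 1/(\rho H)$ not necessarily satisfying $\gamma > 1 - 1/(H\rho')$, one picks $\alpha^*\in(1 - 1/(H\rho') - \theta, 1/(\rho H))$ (non-empty thanks to $\theta > 1 - 1/H$), applies the previous argument with $\alpha^*$ in place of $\alpha$, and concludes via the Besov embedding $\mathcal{B}^{\alpha^*}_{1,1}\hookrightarrow \mathcal{B}^\alpha_{1,1}$.

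The main obstacle will be the bootstrap step: the presence of a strictly nonlinear term $F(s,t)^{1+\tilde\eta}$ on the right-hand side prevents direct iteration, so one must combine the continuity of $F$ with the strict positivity of the exponent $\kappa$ (which in turn rests crucially on $\theta > 1-1/H$) to absorb the nonlinear contribution on short intervals. A secondary, bookkeeping-type obstacle is to verify carefully the monotonic dependence of all constants on $\|\mathfrak{b}\|_{L^\infty_T\mathcal{B}^\theta_\infty}$; this monotonicity is essential when the lemma is subsequently applied to a smooth approximating sequence of drifts in the compactness step of Theorem~\ref{thm:mckeanvlasov}.
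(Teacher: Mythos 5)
Your proposal is correct and follows essentially the same route as the paper: reinterpret \eqref{eq:McKV} as a linear SDE with effective drift $\mathfrak{b}\ast\mu$, gain regularity via the Besov convolution inequality, feed this into Corollary~\ref{cor:Besovregularity_better} to obtain a nonlinear self-bound on $\|\mu\|_{L^\rho_{[s,t]}\mathcal{B}^\alpha_{1,1}}$, and close it on short intervals by the continuity argument of \cite[Section~8.4]{FrizVictoir} before patching over $[0,T]$. The only cosmetic differences are that the paper normalizes $\theta$ and $\alpha$ upfront (via a small parameter $\delta$) rather than invoking the Besov embedding at the end, and that it obtains finiteness of $F(s,t)$ directly from the smoothness of $\mathfrak{b}\ast\mu$ without appealing to Malliavin calculus.
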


\begin{proof}
Set $\mu_t=\mathcal{L}(Y_t)$.
For $\rho\sim 1$, the idea of the proof is loosely the following: since $\fkb$ is smooth, we know that $\mu\in L^{1+}([0,T]; \cB^{1/H-}_{1,1})$ (case (a) of Remark \ref{rem:regularity}). Therefore by Young convolutional inequalities and the assumption on $\theta$, it holds
\begin{align*}
	\| \fkb\ast \mu\|_{L^{1+}_T \cC^1_b} 
	\lesssim \| \fkb\ast \mu\|_{L^{1+}_T \cB^{\theta+1/H-}_b}
	\lesssim \| \fkb\|_{L^\infty_T \cB^\theta_\infty} \| \mu\|_{L^{1+}_T \cB^{1/H-}_{1,1}}
\end{align*}
which suggests the possibility to bound $\| \mu\|_{L^{1+}_T \cB^{1/H-}_{1,1}}$ in function of itself.

To make the argument rigorous, we need to work on sufficiently short time intervals and introduce a few additional parameters.
To this end, first notice that by Besov embeddings, we are always allowed to decrease $\theta$ and increase $\alpha$ as needed (this will only make the result stronger). Therefore we may assume them w.l.o.g. to be of the form
\begin{align*}
	\theta=1-\frac{1}{H}+2\delta, \quad \alpha=\frac{1}{\rho H}-\delta
\end{align*}
for some arbitrarily small parameter $\delta>0$. Set 
\begin{align*}
	\gamma \coloneqq \theta + \alpha = 1-\frac{1}{H}+\frac{1}{\rho H}+\delta= 1-\frac{1}{\rho' H}+\delta.
\end{align*}
W.l.o.g. $\gamma\neq 0$.
By H\"older's inequality and a convolution inequality in Besov spaces (see Lemma~\ref{lem:Besovconvolution}), for any $(s,t) \in \simp{0}{T}$, it holds that
\begin{align} \label{eq:convineq}
	\| \fkb \ast \mu\|_{L^\rho_{[s,t]} \mathcal{B}^\gamma_\infty} \lesssim \| \fkb\|_{L^\infty_{[s,t]} \mathcal{B}^\theta_\infty} \lVert \mu\|_{L^\rho_{[s,t]} \mathcal{B}^{\alpha}_{1,1}}.
\end{align}
By construction, $\gamma$ fulfills Assumption~\ref{ass:strong} for $q=\rho$.
Hence, we can apply Theorem~\ref{thm:Besovregularity_better} for {\renewcommand\SDE{SDE($b$,$Y_0$)}\ref{eq:SDEintro}\renewcommand\SDE{SDE($b$,$X_0$)}} with drift $b=\mathfrak{b}\ast \mu \in L^\rho([0,T];\mathcal{B}^\gamma_\infty)$.
Depending on whether $\gamma<0$ or $\gamma \in (0,1)$, we use \eqref{eq:besovregularity1} or \eqref{eq:besovregularity2}. For $\gamma<0$ and $|t-s|\leqslant 1$, we get
\begin{align*}
	\lVert \mu\|_{L^{\rho}_{[s,t]} \mathcal{B}^{\alpha}_{1,1}}
	\lesssim (t-s)^{\delta H} + \| \fkb\|_{L^\infty_{[s,t]} \mathcal{B}^\theta_\infty} \lVert \mu\|_{L^{\rho}_{[s,t]} \mathcal{B}^{\alpha}_{1,1}} (1 +\| \fkb\|_{L^\infty_{[s,t]} \mathcal{B}^\theta_\infty}\lVert \mu\|_{L^{\rho}_{[s,t]} \mathcal{B}^{\alpha}_{1,1}}) (t-s)^{2 \delta H},
\end{align*}
since by our choice of parameters we have
\begin{align*}
	\frac{1}{\tilde q}-H\alpha = \delta H, \quad \frac{1}{\tilde q}+\frac{1}{q'}+H(\gamma-\alpha-1)=2\delta H.
\end{align*}
Instead for $\gamma\in (0,1)$ and $|t-s|\leqslant 1$, using that $\gamma H + \frac{1}{q'}= (1+\delta)H$ and $\llbracket \cdot\rrbracket_{\mathcal{C}^{\gamma}}\lesssim \|\cdot\|_{\mathcal{B}^\gamma_\infty}$, we get
\begin{align*}
	\lVert \mu\|_{L^{\rho}_{[s,t]} \mathcal{B}^{\alpha}_{1,1}}
	\lesssim (t-s)^{\delta H} + \| \fkb\|_{L^\infty_{[s,t]} \mathcal{B}^\theta_\infty} \lVert \mu\|_{L^{\rho}_{[s,t]} \mathcal{B}^{\alpha}_{1,1}} (1 +\| \fkb\|_{L^\infty_{[s,t]} \mathcal{B}^\theta_\infty}^{\eta}\lVert \mu\|_{L^{\rho}_{[s,t]} \mathcal{B}^{\alpha}_{1,1}}^{\eta}) (t-s)^{(1+\delta)H}.
\end{align*}
We can merge the two regimes due to the following: First $(1+\delta)H\geqslant 2\delta H$ and w.l.o.g. $\eta\geqslant 1$, as for $\eta \in (0,1)$ it holds $x^{1+\eta}\leqslant x+x^2$ for all $x\geqslant 0$.
Hence, for any $|t-s|\leqslant 1$,
\begin{equation}\label{ineq:apriorifirst}\begin{split} 
	\lVert \mu\|_{L^{\rho}_{[s,t]} \mathcal{B}^{\alpha}_{1,1}}
	\leq C (t-s)^{\delta H} & + C \| \fkb\|_{L^\infty_{[s,t]} \mathcal{B}^\theta_\infty} \lVert \mu\|_{L^{\rho}_{[s,t]} \mathcal{B}^{\alpha}_{1,1}} (t-s)^{2\delta H}\\
	& + C \| \fkb\|_{L^\infty_{[s,t]} \mathcal{B}^\theta_\infty}^{1+\eta}\lVert \mu\|_{L^{\rho}_{[s,t]} \mathcal{B}^{\alpha}_{1,1}}^{1+\eta} (t-s)^{2 \delta H}.
\end{split}\end{equation}

Choose $\bar{h}\leqslant 1$ small enough such that
\begin{equation}\label{eq:apriori_MKV_hbar}
	C \| \fkb\|_{L^\infty_{T} \mathcal{B}^\theta_\infty}\bar{h}^{2\delta H} \leq \frac12, \quad
	C \| \fkb\|_{L^\infty_{T} \mathcal{B}^\theta_\infty}^{1+\eta}\bar{h}^{\delta H} \leq \frac12, \quad
	4C \bar{h}^{\delta H (1+\frac{1}{\eta})}< \frac13
\end{equation}
(the last condition will be used later); then, for $|t-s|\leq \bar{h}$, from \eqref{ineq:apriorifirst} we get that
\begin{equation*}
	\lVert \mu\|_{L^{\rho}_{[s,t]} \mathcal{B}^{\alpha}_{1,1}}
	\leq 2C (t-s)^{\delta H} + \lVert \mu\|_{L^{\rho}_{[s,t]} \mathcal{B}^{\alpha}_{1,1}}^{1+\eta} (t-s)^{\delta H}.
\end{equation*}
It remains to close estimates for $\lVert \mu\|_{L^{\rho}_{[s,t]} \mathcal{B}^{\alpha}_{1,1}}$. 
Note that
\begin{align}\label{eq:apriori_MKV_eq4}
\lVert \mu\|_{L^{\rho}_{[s,t]} \mathcal{B}^{\alpha}_{1,1}} (t-s)^{\frac{\delta H}{\eta}}\leqslant 2C (t-s)^{\delta H+\frac{\delta H}{\eta}}+\lVert \mu\|^{1+\eta}_{L^{\rho}_{[s,t]} \mathcal{B}^{\alpha}_{1,1}}(t-s)^{\delta H+\frac{\delta H}{\eta}}.
\end{align}
Define $g(s,t)\coloneqq \lVert \mu\|_{L^{\rho}_{[s,t]} \mathcal{B}^{\alpha}_{1,1}}(t-s)^{\frac{\delta H}{\eta}}$.
On one hand it holds $\lim_{|t-s|\to 0} g(s,t)=0$, on the other hand, by \eqref{eq:apriori_MKV_eq4},
\begin{equation}\label{eq:apriori_MKV_eq5}
	g(s,t)\leq 2C (t-s)^{\delta H+\frac{\delta H}{\eta}} + g(s,t)^{1+\eta}.
\end{equation}
Equation \eqref{eq:apriori_MKV_eq5} implies that for $g\leq 1/2$ and $|t-s|<\bar{h}$, it holds that $g(s,t)\leq 4C(t-s)^{\delta H+\frac{\delta H}{\eta}}<1/3$ (if $g\leq 1/2$, then $g \leq 2(g-g^{1+\eta})$, using that $\eta\geqslant 1$), where we used the last assumption on $\bar h$ from \eqref{eq:apriori_MKV_hbar}.
However, since $(s,t)\mapsto g(s,t)$ is continuous, it cannot jump. Combined with the information that $g(s,t)<1/2$ whenever $|t-s|\ll 1$, this implies that
\begin{align*}
	g(s,t)\leq 2C(t-s)^{\delta H+\frac{\delta H}{\eta}}, \quad \forall\, |t-s|\leq \bar{h}.
\end{align*}
So by definition of $g$ one gets that
\begin{equation*}%\label{eq:apriori_MKV_eq6}
	\lVert \mu\|_{L^{\rho}_{[s,t]} \mathcal{B}^{\alpha}_{1,1}} \leq 2C (t-s)^{\delta H}, \quad  \forall\, |t-s|\leq \bar{h}.
\end{equation*}
In light of \eqref{eq:apriori_MKV_hbar}, $\bar{h}$ is a positive parameter which depends on $H$, $\theta$, $d$ and $\| \fkb\|_{L^\infty_{T} B^\theta_\infty}$ but nothing else, while the  the constant $C$ additionally depends on $\alpha$ and $\rho$.
Since $T$ is finite, we conclude by triangular inequality that 
\begin{equation*}%\label{eq:uniformestimate}
\lVert \mu\|_{L^{\rho}_{T}\mathcal{B}^\alpha_{1,1}} \leq C(\alpha,\rho,T,H,\theta,d,\|\mathfrak{b}\|_{L^\infty_{T} \mathcal{B}^\theta_\infty}). \qedhere
\end{equation*}
\end{proof}

Having the a priori estimate at hand, we can now proceed with the compactness-stability argument.

\begin{proof}[Proof of Theorem~\ref{thm:mckeanvlasov}]
It suffices to prove the result for $\rho \in (1,2]$ sufficiently close to $1$. Indeed, the case $\rho=1$ follows by the finiteness of $T$, as for any $\alpha<\frac{1}{H}$ we can choose $\tilde{\rho}>1$ small enough such that $\alpha<\frac{1}{\tilde{\rho}H}$.
Instead, the values $\tilde\rho>\rho$ follow by interpolating between the bound for $\rho$ and the trivial bound in $L^\infty_T \cB^{0-}_{1,1}$, cf. Remark \ref{rem:regularity}.

As in the proof of Lemma \ref{lem:globalapriori},  we may assume $\theta$ and $\alpha$ to be given by
\begin{align*}
	\theta=1-\frac{1}{H}+2\delta, \quad \alpha=\frac{1}{\rho H}-\delta,
\end{align*}
for some small $\delta>0$, where now additionally $\rho>1$ is chosen small enough such that
\begin{align*}
	\gamma \coloneqq \theta + \alpha = 1-\frac{1}{H}+\frac{1}{\rho H}+\delta= 1-\frac{1}{\rho' H}+\delta>1.
\end{align*}
Let $\mathfrak{b}^n$ be a sequence with $\mathfrak{b}^n \in L^\infty([0,T];\mathcal{C}^\infty_b)$ converging to $\mathfrak{b}$ in $L^\infty([0,T];\mathcal{B}^{\theta -}_\infty)$. Let $Y^n$ be the corresponding strong solution to the McKean--Vlasov equation \eqref{eq:McKV}. 

We aim at the identification of the limit points of $(Y^n)_{n\in \N}$ and $(\mu^n)_{n\in \N}$. The procedure is the following: first we show convergence of $(\mu^n)_{n\in \N}$ to a limit $\mu^\infty$ fulfilling $\mathfrak{b}\ast \mu^\infty \in L^{\rho}([0,T],\mathcal{C}^\gamma)$; next we prove that {\renewcommand\SDE{SDE($\mathfrak{b}\ast \mu^\infty$,$Y_0$)}\ref{eq:SDEintro}\renewcommand\SDE{SDE($b$,$X_0$)}} has a unique strong solution with $\mathcal{L}(Y)=\mu^\infty$.

Note that by Lemma~\ref{lem:globalapriori} applied to the solution of {\renewcommand\SDE{SDE($\mathfrak{b}^n\ast \mu^n$,$Y_0$)}\ref{eq:SDEintro}\renewcommand\SDE{SDE($b$,$X_0$)}} and a convolution inequality for Besov spaces (see \eqref{eq:convineq}), we have
\begin{align}\label{eq:unifbounds}
\sup_n \|\mu^n\|_{L^{\rho}_{T}\mathcal{B}^\alpha_{1,1}}<\infty \quad \text{and} \quad \sup_n\|\mathfrak{b}^n \ast \mu^n\|_{L^{\rho}_{T} \mathcal{B}^{\gamma}_\infty}<\infty.
\end{align}

By H\"older's inequality, for $m\geqslant 1$,
\begin{align*}
\EE |Y^n_{t} - Y^n_{s}|^m &\lesssim \EE \left| \int_{s}^t \mathfrak{b}^n_{r}\ast \mu^n_{r}(Y^n_{r}) \, \dd r \right|^m + \EE |B_{t} - B_{s}|^m \\
&\lesssim (t-s)^{\frac{m(\rho-1)}{\rho}} \| \mathfrak{b}^n\ast \mu^n \|_{L^{\rho}_{T} L^\infty_{x}}^m + (t-s)^{Hm}.
\end{align*} 
Thus applying Kolmogorov's tightness criterion, $(\mu^n)_{n\in \N}$ is tight in a space of H\"older continuous functions on $[0,T]$ and therefore it is also tight on $\mathcal{C}_{[0,T]}$  by Arzel\`a-Ascoli's Theorem. Hence, up to extracting a (not relabelled) subsequence, $(\mu^n)_{n\in \N}$ converges weakly in the space of probability measures on $\mathcal{C}_{[0,T]}$ to some $\mu^\infty$. Combining this with \eqref{eq:unifbounds} and Lemma~\ref{lem:uniformBesov} gives that 
\begin{align}\label{eq:aandb}
\|\mu^\infty\|_{L^{\rho}_{T}\mathcal{B}^\alpha_{1,1}}<\infty \quad \Rightarrow\quad b^\infty \coloneqq \mathfrak{b}\ast \mu^\infty\in L^{\rho}([0,T];\mathcal{B}^{\gamma}_{\infty}) \text{ with } \gamma>1.
\end{align}
Hence, there exists a pathwise unique strong solution $Y$ to {\renewcommand\SDE{SDE($b^\infty$,$Y_0$)}\ref{eq:SDEintro}\renewcommand\SDE{SDE($b$,$X_0$)}}, defined on the same probability space. It remains to show that $\law(Y)=\mu^\infty$, which we do via a (pathwise) stability argument.

By construction, the difference $Y^n-Y$ satisfies
\begin{align*}
	 |Y^n_t-Y_t|
	 & = \bigg| \int_0^t \big[ \mathfrak{b}^n_s \ast \mu^n_s(Y^n_s) - \mathfrak{b}^n_s \ast \mu^n_s(Y_s)\big] + \big[ \mathfrak{b}^n_s \ast \mu^n_s(Y_s) - \mathfrak{b}_s \ast \mu^\infty_s(Y_s)\big]\, \dd s\bigg|\\
	 & \leq \int_0^t \| \mathfrak{b}^n_s \ast \mu^n_s\|_{\cC^1_b} |Y^n_s-Y_s|\, \dd s + \int_0^t |\mathfrak{b}^n_s \ast \mu^n_s(Y_s) - \mathfrak{b}_s \ast \mu^\infty_s(Y_s)| \, \dd s,
\end{align*}
so that by Gr\"onwall's lemma
\begin{align*}
	\sup_{t\in [0,T]} |Y^n_t-Y_t|
	& \leq e^{T \sup_n \| \mathfrak{b}^n_s \ast \mu^n_s\|_{L^1_T \cC^1_b}} \int_0^T |\mathfrak{b}^n_s \ast \mu^n_s(Y_s) - \mathfrak{b}_s \ast \mu^\infty_s(Y_s)|\, \dd s\\
	& \lesssim  \int_0^T |(\mathfrak{b}^n_s-\mathfrak{b}_s) \ast \mu^n_s(Y_s)\,| \dd s + \int_0^T |(\mathfrak{b}_s \ast (\mu^n_s-\mu^\infty_s))(Y_s)|\, \dd s \eqqcolon  I^n + J^n.
\end{align*}
In order to conclude, it suffices to show that $I^n + J^n\to 0$ in probability.

Let $\varepsilon \in (0,1)$. By a convolution inequality, the uniform estimate in \eqref{eq:unifbounds} and the assumption $\mathfrak{b}^n\to \mathfrak{b}$ in $L^\infty_T \cB^{\theta-}_\infty$, we have
\begin{align*}
	I^n \leq \|(\mathfrak{b}^{n}-\mathfrak{b})\ast \mu^n\|_{L^1_T \cC^0_{b}}
	& \leq \| (\mathfrak{b}^n-\mathfrak{b})\ast \mu^n\|_{L^{\rho}_{T}\mathcal{B}^{\theta+\alpha-\varepsilon}_{\infty}}\\
	&\lesssim  \|\mathfrak{b}^{n}-\mathfrak{b}\|_{L^\infty_{T}\mathcal{B}^{\theta-\varepsilon}_{\infty}}\, \sup_n \|\mu^n \|_{L^{\rho}_{T} \mathcal{B}^{\alpha}_{1}} \underset{n \rightarrow 
    \infty}{\longrightarrow} 0.
\end{align*}
To prove convergence for $J^n$, let us fix $h>0$ and consider $\varphi\in L^\infty_T \cC^\infty_b$ such that $\| \mathfrak{b}-\varphi\|_{L^\infty_T \cB^{\theta-\vep}_\infty}\leq h$.
Since $\mu^n$ converges weakly to $\mu$, $\varphi_s\ast \mu^n_s(x)\to \varphi_s\ast\mu_s(x)$ for Lebesgue a.e. $s\in [0,T]$ and every $x\in\mathbb{R}^d$, so that by dominated convergence
\begin{align*}
	\lim_{n\to\infty}\int_0^T |(\varphi_s\ast (\mu^n_s-\mu^\infty_s))(Y_s)|\, \dd s =0;
\end{align*}
on the other hand, we have
\begin{align*}
	\Big| J^n - \int_0^T (\varphi_s\ast (\mu^n_s-\mu^\infty_s))(Y_s) \, \dd s \Big|
	& \leq \int_0^T |(\mathfrak{b}_s-\varphi_s)\ast(\mu^n_s-\mu^\infty_s)(Y_s)|\, \dd s\\
	& \leq \int_0^T \|(\mathfrak{b}_s-\varphi_s)\ast(\mu^n_s-\mu^\infty_s)\|_{\cC^0_b} \, \dd s\\
	& \lesssim \| \mathfrak{b}-\varphi\|_{L^\infty_T \cB^{\theta-\vep}_\infty} \sup_n \| \mu^n\|_{L^1_T \cB^\alpha_{1}} \lesssim h.
\end{align*}
Combining these estimates we deduce that
\begin{align*}
	\limsup_{n\to\infty} |J^n| \lesssim h
\end{align*}
and so, by the arbitrariness of $h>0$, $J^n\to 0$ as well. Overall this shows that $\PP$-a.s. $Y^n\to Y$ in $\cC_{[0,T]}$; it follows that $\mu^\infty_t = \law(Y_t)$, proving that $Y$ is a strong solution to the McKean--Vlasov equation \eqref{eq:McKV}. Finally, \ref{en:reg1} and \ref{en:reg2} follow from \eqref{eq:aandb}.
\end{proof}

\subsection{Proof of Theorem~\ref{Mainresult-McKV-uniqueness-full}}\label{sec:proofMcKVuniq}

Having established strong existence in Theorem~\ref{thm:mckeanvlasov}, we can now exploit it to obtain pathwise uniqueness and uniqueness in law under additional conditions on $\fkb$ and $\mathcal{L}(Y_0)$. To this end, we start by noticing that any $L^p_x$-regularity of $\mathcal{L}(Y_0)$ is propagated by the dynamics, readapting arguments from \cite{GaHaMa2022}. With a slight abuse, given a probability measure $\mu$, in what follows we write $\mu\in L^p_x$ to mean that $\mu$ is absolutely continuity w.r.t. the Lebesgue measure, with density belonging to $L^p_x$; we use $\| \mu\|_{L^p_x}$ to denote the $L^p_x$-norm of its density.

\begin{lemma}\label{lem:propagation_integrability}
Let $H\in (0,\infty)\setminus\N$, $b\in L^1([0,T];\cC^1_b)$ and consider \ref{eq:SDEintro}.
If $\law(X_0)\in L^p_x$ for some $p\in [1,\infty]$, then the same holds for $\law(X_t)$ and
\begin{align*}
	\lVert \law(X_t)\|_{L^p_x} \leq \lVert \law(X_0) \rVert_{L^p_{x}} \exp\Big(\frac{1}{p'} \int_0^t \| {\rm div} b_s\|_{L^\infty_x} \dd s \Big),\quad \forall\, t\in [0,T].
\end{align*}
\end{lemma}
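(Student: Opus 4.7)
The plan is to freeze the driving noise and reduce the problem to a deterministic ODE flow analysis, exploiting Liouville's formula to control the Jacobian. First, since $B$ is an $\mathbb{F}$-fBm (with Wiener integral representation \eqref{eq:fBmrepresentation} against an $\mathbb{F}$-Brownian motion), $B$ is independent of $\mathcal{F}_0$, hence of $X_0$. For each fixed realization of $B$, equation \ref{eq:SDEintro} becomes a classical ODE
\[
Y_t = x + \int_0^t b_s(Y_s)\, \dd s + B_t,
\]
and since $b \in L^1([0,T];\mathcal{C}^1_b)$, standard ODE theory gives a unique globally defined $\mathcal{C}^1$ flow $\phi^B_{s,t}\colon \mathbb{R}^d \to \mathbb{R}^d$, whose Jacobian satisfies Liouville's identity
\[
\det \nabla_x \phi^B_{0,t}(x) = \exp\Big(\int_0^t \mathrm{div}\, b_s(\phi^B_{0,s}(x))\, \dd s\Big) > 0.
\]

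Letting $\rho_0$ denote the density of $\mathcal{L}(X_0)$, independence and the change of variables $y = \phi^B_{0,t}(x)$ show that $\mathcal{L}(X_t \vert B)$ has density
\[
\rho^B_t(y) = \rho_0(\phi^B_{t,0}(y))\, |\det \nabla \phi^B_{t,0}(y)|,
\]
and by Fubini, $\mathcal{L}(X_t)$ has density $\rho_t(y) = \mathbb{E}[\rho^B_t(y)]$. A further change of variables $x = \phi^B_{t,0}(y)$ in the $L^p_x$-integral, combined with the inverse-function identity $|\det \nabla \phi^B_{t,0}(\phi^B_{0,t}(x))| = |\det \nabla \phi^B_{0,t}(x)|^{-1}$, yields
\[
\|\rho^B_t\|_{L^p_x}^p = \int \rho_0(x)^p\, |\det \nabla \phi^B_{0,t}(x)|^{1-p}\, \dd x.
\]
Since $1 - p \leq 0$, inserting Liouville's formula and the bound $\int_0^t \mathrm{div}\, b_s(\phi^B_{0,s}(x))\, \dd s \geq -\int_0^t \|\mathrm{div}\, b_s\|_{L^\infty_x}\, \dd s$ gives
\[
|\det \nabla \phi^B_{0,t}(x)|^{1-p} \leq \exp\Big((p-1)\int_0^t \|\mathrm{div}\, b_s\|_{L^\infty_x}\, \dd s\Big).
\]
Taking the $p$-th root and then applying Minkowski's integral inequality $\|\mathbb{E}[\rho^B_t]\|_{L^p_x} \leq \mathbb{E}[\|\rho^B_t\|_{L^p_x}]$ produces the announced bound on $\|\rho_t\|_{L^p_x}$. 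The boundary cases $p=1$ (trivial, as $\|\rho_t\|_{L^1_x} = 1$) and $p=\infty$ (interpret $1/p' = 1$) are contained in the same computation.

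The main conceptual point — and what could be mistaken for an obstacle — is the absence of any Fokker--Planck structure due to the non-Markovianity of $B$. The resolution is that $L^p_x$-propagation is ultimately a pathwise property: once $B$ is frozen, the deterministic flow estimate requires no stochastic calculus. The only mild technicalities are justifying the independence of $X_0$ and $B$ (immediate from the filtration setup) and the Fubini-type interchange of $\mathbb{E}$ and the spatial integral, which are both standard thanks to the nonnegativity of $\rho^B_t$ and the $\mathcal{C}^1$-regularity of the flow.
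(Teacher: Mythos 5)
Your proof is correct and follows essentially the same route as the paper's: freeze the noise, use the pathwise ODE flow and Liouville's formula for the Jacobian, write the density as the expectation of the pushforward density, and conclude by a change of variables plus convexity (you invoke Minkowski's integral inequality where the paper uses Jensen, but these are interchangeable here). No gaps.
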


\begin{proof}
Since $b\in L^1([0,T];\cC^1_b)$ and the noise is additive, the SDE admits a classical (pathwise defined) stochastic flow coming from ODE theory, denote it by $\Phi_{s\to t}(x;\omega)$; let $\Phi_{s\leftarrow t}(\cdot;\omega)$ be the inverse of $\Phi_{s\to t}(\cdot;\omega)$, as a map from $\R^d$ to itself.
Since $X_0$ is $\cF_0$-measurable, it is independent of the driving noise $B$; by assumption, $\law(X_0)(\dd x) = \rho_0(x) \dd x$ for some $\rho_0\in L^p_x$.
Arguing as in \cite[Prop. 4.3]{GaHaMa2022}, it follows that $\law(X_t)$ is also absolutely continuous w.r.t. Lebesgue measure, with an explicit formula for its density $\rho_t$ given by
\begin{equation}\label{eq:density_formula}
	\rho_t(x) = \EE\Bigg[ \rho_0(\Phi_{0\leftarrow t}(x;\omega)) \exp\Big( -\int_0^t {\rm div} b_s(\Phi_{s\leftarrow t} (x;\omega))\, \dd s\Big) \Bigg].
\end{equation}
From here, similarly to \cite[Prop. 4.3]{GaHaMa2022}, one can estimate the $L^p_x$-norm of $\rho_t$ by using Jensen's inequality, Fubini and the pathwise formula $\det (D \Phi_{0\to t} (x;\omega)) = \exp \big( \int_0^t {\rm div} b_s(\Phi_{0\rightarrow s} (x;\omega))\, \dd s \big)$ to find
\begin{align*}
	\int_{\R^d} |\rho_t(x)|^p\, \dd x
	& \leq \EE\Bigg[\int_{\R^d} |\rho_0(\Phi_{0\leftarrow t}(x;\omega))|^p \exp\Big( - p \int_0^t {\rm div} b_s(\Phi_{s\leftarrow t} (x;\omega))\, \dd s\Big) \dd x \Bigg]\\
	& = \EE\Bigg[\int_{\R^d} |\rho_0(y)|^p \exp\Big( (1- p) \int_0^t {\rm div} b_s(\Phi_{0\to s} (y;\omega))\, \dd s\Big) \dd y \Bigg]
\end{align*}
which yields the conclusion.
\end{proof}

Let now $Y$ be a solution to the McKean--Vlasov SDE, so that it also solves a linear SDE with effective drift $b_t=\mathfrak{b}_t\ast \mu_t$, for $\mu_t=\law(Y_t)$.
Leveraging on Lemma \ref{lem:propagation_integrability} and the convolutional structure, we can expect $b$ to inherit higher integrability from $\mu$; to obtain uniqueness, we refine the strategy first developed in \cite[Section 5]{GHM}, based on carefully combining this information with stability estimates for linear SDEs and Wasserstein distances.

\begin{proof}[Proof of Theorem~\ref{Mainresult-McKV-uniqueness-full}]
	By condition \eqref{eq:condition-uniqueness} and the Besov embedding $\cB^\theta_p\hookrightarrow \cB^{\theta-d/p}_\infty$, $\fkb$ satisfies the assumptions of Theorem~\ref{thm:mckeanvlasov}, so that strong existence of a solution to the McKean--Vlasov SDE such that $\fkb\ast \mu\in L^1_T \cC^1_b$ follows.

	Next we focus on pathwise uniqueness. To this end, first note that we may assume $p>1$ without loss of generality; indeed, for $p=1$, by Besov embedding $\cB^\theta_1\hookrightarrow \cB^{\tilde \theta}_{\tilde p}$ for $\tilde\theta= \theta-d(1-1/\tilde p)$, where we can choose $p>1$ sufficiently small so that $(\tilde \theta,\tilde p)$ still satisfies condition \eqref{eq:condition-uniqueness}.
	Suppose now we are given two solutions $Y^i$, $i=1,2$, defined on the same probability space and with same data $(Y_0,B)$, such that $b^i\coloneqq\fkb \ast \mu^i\in L^1_T \cC^1_b$, where $\mu^i_t\coloneqq\law(Y^i_t)$. Since each $Y^i$ solves the linear SDE with drift $b^i$, Lemma \ref{lem:propagation_integrability} applies, and so $t\mapsto \law(Y^i_t)\in L^\infty_T L^\infty_x$. Since moreover $ \lVert \law(Y^i_t)\rVert_{L^1_x}=1$ for all $t\geq 0$, by interpolation and Young inequalities we deduce that
	\begin{align*}
		\| b^i\|_{L^\infty_T \cB^\theta_\infty}
		= \| \fkb\ast \mu^i\|_{L^\infty_T \cB^\theta_\infty}
		\leq \| \fkb\|_{L^\infty_T \cB^\theta_p} \| \mu^i\|_{L^\infty_T L^{p'}_x}
		\leq \| \fkb\|_{L^\infty_T \cB^\theta_p} \| \mu^i\|_{L^\infty_T (L^1_x\cap L^{\infty}_x)} < \infty.
	\end{align*}	
	In particular, since $b^i\in L^\infty_T \cB^\theta_\infty$ with $\theta>1-1/(2H)$, we can invoke the stability estimates from \cite[Thm. 1.4-ii)]{GaleatiGerencser}, for $m=p'\in [1,\infty)$, $\alpha=\theta$ and $q=2$. Together with Corollary \ref{cor:convol_wasserstein} from Appendix \ref{app:besov}, for any $t\in [0,T]$, this yields
	\begin{align*}
		\EE\Big[\sup_{s\in [0,t]} |Y^1_s-Y^2_s|^{p'}\Big]^{\frac{2}{p'}}
		& \lesssim \int_0^t \| \mathfrak{b}_s\ast(\mu^1_s-\mu^2_s)\|_{\cB^{\theta-1}_\infty}^2 \dd s\\
		& \lesssim \int_0^t \| \mathfrak{b}_s\|_{\cB^\theta_p}^2 (\|\mu^1_s\|_{L^\infty_x}^{\frac{2}{p}} + \|\mu^2_s\|_{L^\infty_x}^{\frac{2}{p}}) \mathbb{W}_{p'}(\mu^1_s,\mu^2_s)^2\, \dd s\\
		& \lesssim \| \mu_0\|_{L^\infty_x}^{\frac{2}{p}} \| \mathfrak{b}\|_{L^\infty_T \cB^\gamma_p}^2 \int_0^t  \EE\big[ |Y^1_s-Y^2_s|^{p'}\big]^{\frac{2}{p'}}\, \dd s.
	\end{align*}
By Gr\"onwall's lemma, it follows that $\EE[\sup_{s\in [0,T]} |Y^1_s-Y^2_s|^{p'}]=0$, yielding pathwise uniqueness.

Finally, under condition \eqref{eq:condition_uniqueness2-full}, we know that any weak solution we consider is actually a strong solution. In particular, given $Y^1$, $Y^2$ solutions defined on different probability spaces, we can always construct a coupling of them, namely two processes $\tilde Y^1$ and $\tilde Y^2$ defined on the same probability space and driven by the same data $(\tilde Y_0,\tilde B)$, with $\tilde Y^i$ solving {\renewcommand\SDE{SDE($b^i$,$\tilde Y_0$)}\ref{eq:SDEintro}\renewcommand\SDE{SDE($b$,$X_0$)}} and such that $\mathcal{L}(Y^i)=\mathcal{L}(\tilde Y^i)$. In particular, since $\mu^i_t=\mathcal{L}(Y^i_t)=\mathcal{L}(\tilde Y^i_t)$ for all $t\in [0,T]$, $\tilde Y^i$ are also solutions to the McKean--Vlasov equation \eqref{eq:McKV}, still satisfying \eqref{eq:condition_uniqueness2-full}. By the previous step, $\tilde Y^1=\tilde Y^2$ $\PP$-a.s., so that $\mathcal{L}(Y^1)=\mathcal{L}(Y^2)$, proving uniqueness in law under condition \eqref{eq:condition_uniqueness2-full}.
\end{proof}

\begin{remark}\label{rem:propagation_higher_regularity}
In light of Lemma \ref{lem:propagation_integrability}, one may wonder whether higher regularity of $\law(Y_0)$ is propagated by the McKean--Vlasov dynamics as well. We expect this to be non-trivial but feasible, possibly under further assumptions on $\text{div} \fkb$.
Let us consider the case of linear SDEs first. Proceeding a bit formally, further differentiating both sides of formula \eqref{eq:density_formula} yields
\begin{equation}\label{eq:formula.derivative.density}\begin{split}
	\nabla \rho_t(x) = \EE\Bigg[ \Bigg( \nabla\rho_0 & (\Phi_{0\leftarrow t}(x;\omega))\, D\Phi_{0\leftarrow t}(x;\omega) - \int_0^t D {\rm div} b_s(\Phi_{s\leftarrow t}(x;\omega)) \, D\Phi_{s\leftarrow t}(x;\omega)\dd s\Bigg)\\ 
	& \times \exp\Big( -\int_0^t {\rm div} b_s(\Phi_{s\leftarrow t} (x;\omega)) \dd s\Big) \Bigg].
\end{split}\end{equation}
Here, the term inside the exponential is always uniformly bounded, and similarly one has uniform pathwise bounds on $D\Phi_{0\leftarrow t}(x;\omega)$ thanks to condition $b\in L^1([0,T];\cC^1_b)$.
The possibly challenging term is the one related to $\int_0^t D {\rm div} b_s(\Phi_{s\leftarrow t}) \dd s$, which in lack of better assumptions would be a singular integral associated to a distributional function, $D {\rm div} b$, evaluated along the solution itself.
Still, it might be possible to control this term by stochastic sewing techniques; in the MKV case, one could try to leverage on the (expected) regularity of $\nabla\rho_t$ in a bootstrap fashion to close a Gr\"onwall-type estimate.
Let us also mention that \eqref{eq:formula.derivative.density} was obtained by simply differentiating the quantity inside expectation, but expectation itself might yield further cancellations, in the style of Bismut-Li-Elworthy formulas.
We leave this problem for future research.
\end{remark}

\section{Upper and Lower Gaussian bounds}\label{sec:GaussianBounds}

In this section, we prove Gaussian bounds for the density of the solution to \ref{eq:SDEintro}, when $H\in (0,1/2)$ and $b$ is a suitable distributional drift.
For the reader's convenience, we recall here the statement of Theorem~\ref{mainresult:gaussianbound}.

\begin{theorem}\label{thm:Gaussiantails}
Let $T\in (0,\infty)$, $H \in (0,1/2)$, $\gamma>1-1/(2H)$ and $b \in L^\infty([0,T];\mathcal{B}^\gamma_\infty)$. Assume that $X_{0} = x_{0}\in \R^d$ is deterministic. 
Then the unique admissible solution to~{\renewcommand\SDE{SDE($b$,$x_0$)}\ref{eq:SDEintro}\renewcommand\SDE{SDE($b$,$X_0$)}} has a density $p(t,\cdot)$ for any $t\in (0,T]$ and there exists $C>0$ (depending on $H$, $\gamma$, $T$, $\|b\|_{L^\infty_{T}\mathcal{B}^\gamma_\infty}$, but independent of $x_0$ and $t$) such that, for all $y\in \R^d$,
\begin{align}\label{eq:Gaussianbound}
C^{-1} t^{-dH} \exp\{-C t^{-2H}|y-x_0|^2\} \leqslant p(t,y)\leqslant C t^{-dH} \exp\{-C^{-1} t^{-2H}|y-x_0|^2\}.
\end{align}
\end{theorem}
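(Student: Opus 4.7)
The plan is to follow the scheme of Li et al.~\cite{LiPanloupSieber}, extended to distributional drifts via the refined local nondeterminism for Gaussian-Volterra bridges developed in Section~\ref{sec:Gaussianbridges}. First, I would approximate $b$ by a sequence $b^n \in L^\infty([0,T];\cC^\infty_b)$ converging to $b$ in $L^\infty([0,T];\cB^{\gamma-}_\infty)$ (cf. Definition~\ref{def:solution}), work with the corresponding classical solutions $X^n$, and derive bounds whose constants depend only on $\|b\|_{L^\infty_T\cB^\gamma_\infty}$, so as to pass to the limit at the end. Using the scaling argument of Lemma~\ref{lem:scaling} together with the scaling formula \eqref{eq:scaling}, it then suffices to prove \eqref{eq:Gaussianbound} at a single time, say $t=T$: the estimates \eqref{eq:scalingdrift}-\eqref{eq:scalingdrift>0} are stable in the subcritical regime $\gamma > 1-1/(2H)$, so bounds at $t=T$ propagate to arbitrary $t \in (0,T]$ with the correct time-dependence. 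By translation, I also assume $x_0 = 0$.

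Next, I would invoke Girsanov's theorem on the Volterra representation \eqref{eq:fBmrepresentation}: setting $u$ to be the process such that $\int_0^t K_H(t,s)\,u_s\,\dd s = \int_0^t b_s(X_s)\,\dd s$ (obtained by applying the inverse Volterra operator to $b(X)$), the drift is absorbed into a measure change with density $\mathcal{E}_T = \exp(\int_0^T u_s\,\dd W_s - \tfrac{1}{2}\int_0^T u_s^2\,\dd s)$, under which $X$ becomes a driftless fBm. This yields (as announced in Proposition~\ref{prop:reformulation}) the representation
\[
p(T,y) = g_{C_H T^{2H}}(y) \cdot F(y), \qquad F(y) \coloneqq \widetilde{\EE}\bigl[\,\mathcal{E}_T \,\big|\, B_T = y\,\bigr],
\]
where $\widetilde{\EE}$ is the expectation under the new measure and $g_{C_H T^{2H}}$ is the Gaussian density of $B_T$. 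Since the prefactor already carries the Gaussian tails appearing in \eqref{eq:Gaussianbound}, the theorem reduces to proving the existence of constants $c,C>0$, independent of $y$, with $c \leq F(y) \leq C$.

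To obtain these uniform bounds on $F(y)$, I would rewrite $\log\mathcal{E}_T$ conditionally on $B_T = y$ as a functional of the fractional Brownian bridge $P$ from $0$ to $y$ over $[0,T]$, using the bridge representation of Appendix~\ref{app:bridge}. By Jensen's inequality applied to both $\mathcal{E}_T = e^{\log\mathcal{E}_T}$ and $\mathcal{E}_T^{-1} = e^{-\log\mathcal{E}_T}$, it then suffices to bound the conditional exponential moments of $\log\mathcal{E}_T$ uniformly in $y$. The terms to control are $\int_0^T b_r(P_r)\,\dd r$ and its square (the Cameron-Martin norm). \emph{The main obstacle} is that the bridge $P$ becomes degenerate as $r \to T$ (since $P_T = y$ is fixed), so the usual LND-type lower bound $\var(P_r \mid \mathcal{F}_s) \gtrsim (r-s)^{2H}$ fails near the endpoint, precisely in the regime where the stochastic sewing lemma would otherwise yield the needed a priori estimates for the integral of a distribution along $P$. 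The workaround is the refined LND for Gaussian-Volterra bridges established in Section~\ref{sec:Gaussianbridges}, which quantifies how LND degenerates as $r$ approaches $T$ in a way that, under the subcritical assumption $\gamma > 1-1/(2H)$, is still strong enough for SSL to make sense of $\int_0^T b_r(P_r)\,\dd r$ and yield uniform-in-$y$ bounds on its Cameron-Martin norm and exponential moments. Combining these bounds and passing to the limit $n \to \infty$ via lower/upper semicontinuity of densities then concludes the proof of \eqref{eq:Gaussianbound}.
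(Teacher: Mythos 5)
Your proposal is correct and follows essentially the same route as the paper's proof: scaling reduction to $t=T$, Girsanov plus conditioning on $B_T=y$ to isolate the Gaussian prefactor (Proposition~\ref{prop:reformulation}), the degenerate local nondeterminism of the bridge (Corollary~\ref{cor:LND-bridge}) feeding into stochastic sewing to control exponential moments of the Cameron--Martin norm of $\int_0^\cdot b_r(P_r)\,\dd r$, and Jensen/exponential-martingale arguments for the two-sided bound on the conditional expectation. The only cosmetic difference is that the paper treats separately the finite-variation part $V^{T,y}$ of the bridge's underlying semimartingale (via Fernique's theorem in $L^p$, $p<2$), which your plan subsumes into the generic "exponential moments of $\log\mathcal{E}_T$" step.
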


\begin{remark}
In \cite{LukasThesis}, for $d=1$, a different approach using Malliavin calculus is explored. Under the stronger condition $\gamma>2-1/(2H)$ it yields \eqref{eq:Gaussianbound} and additionally proves that $p(t,\cdot)$ is $\alpha$-H\"older continuous for any $\alpha<1/2$.
\end{remark}

In the following, we assume without loss of generality that $x_{0}=0$, as our estimates will be invariant under shifting the initial condition. Up to time rescaling, one may further reduce to the case $t=T$ (see the end of Section~\ref{sec:proofbounds}).
Similarly to \cite{LiPanloupSieber}, the main idea is to apply Girsanov's theorem and then conditioning on the terminal position $B_T$; in this way, for $X$ being the unique admissible solution to~{\renewcommand\SDE{SDE($b$,$0$)}\ref{eq:SDEintro}\renewcommand\SDE{SDE($b$,$X_0$)}} and $f \in \mathcal{C}_b(\mathbb{R}^d)$, one finds
\begin{multline*}
 \EE[f(X_T)]\\
	 = \EE\Big[ f(B_T) \EE\Big[\exp\Big(\int_0^T \Big(K_H^{-1}\Big(\int_0^{\cdot}b_r(B_r) \dd r\Big)\Big)_s \dd W_s  
	- \frac12 \int_0^T \Big|\Big(K_H^{-1} \Big(\int_0^{\cdot}b_r(B_r) \dd r\Big)\Big)_s\Big|^2 \dd s \Big) \, \Big|\, B_T\Big]\Big],
\end{multline*}
where $K_{H}^{-1}$ is the inverse of the %convolution 
linear operator with kernel $K_{H}$ from the representation of fBm \eqref{eq:fBmrepresentation} (see beginning of Section~\ref{sec:expmoments}).
The conditioning leads to the problem of getting exponential moments for (the Cameron-Martin norm of) the singular integral $\int_0^{\cdot}b_r(B_r) \dd r$; note that, conditional on $B_T=x$, the process $(B_r)_{r\in [0,T]}$ is now distributed as a fractional Brownian bridge (fBb). The regularization effect of the fBb is captured by its (partially degenerate) local nondeterminism property
(see Corollary~\ref{cor:LND-bridge}). We deduce this result from of a more general one for Gaussian-Volterra processes (see Proposition~\ref{prop:LND}). 
This allows to apply stochastic sewing leading to finiteness of exponential moments of the singular integral in Lemma~\ref{lem:expmoments1}. 
This is, together with another result on exponential integrability of the finite variation part of a fractional Brownian bridge, stated and proven in 
Section~\ref{sec:expmoments}. 
First however, we present the crucial local nondeterminism property of the bridge. 
Since we believe this result to be of independent interest, we formulate the result for Gaussian bridges for general Volterra kernels.

\subsection{Local nondeterminism of Gaussian bridges} \label{sec:Gaussianbridges}

Throughout the section, let $K$ be a Volterra kernel (see \eqref{eq:defK})
and assume that $K$ satisfies \eqref{eq:propertiesK} and the additional property
\begin{equation*}
K(T,s) \neq 0 ~\text{for almost all}~ s\in [0,T].
\end{equation*}
Consider the Gaussian process $\Volt$ with the following Volterra representation:
\begin{align} \label{eq:Xprocess}
\Volt_t=\int_0^t K(t,s) \, \dd W_s
\end{align}
for some Brownian motion $W$. We call a bridge from $0$ to $y$ with length $T$ over $\Volt$, any process whose law equals the law of $\Volt$ conditioned to hit $y$ at time $T$. Several representations of Gaussian bridges have been studied under various assumptions \cite{BaudoinCoutin,GasbarraEtAl}, and in particular for the Riemann-Liouville fractional Brownian motion \cite{LiPanloupSieber}. 

We collect here some facts on Gaussian bridges, whose proof is postponed to Appendix~\ref{app:bridge}; although such results are well-known (see the aforementioned references), they are often established under quite restrictive assumptions on $K$, which is why we prefer to provide a self-contained proof. By Proposition~\ref{prop:bridgerep}, the law of the Brownian motion $W$ conditioned by the event $\Volt_{T}=y$ is given by
\begin{align*}
\law \left((W_{t})_{t\in [0,T]} \vert \Volt_{T} = y\right) = 
\law \left( (Y^{T,y}_t)_{t\in [0,T]}\right),
\end{align*}
where $(Y^{T,y}_t)_{t\in [0,T)}$ is the semimartingale given by
\begin{equation}\label{eq:semimartingale-underlying-bridge}
	\dd Y^{T,y}_t = V^{T,y}_t \dd t + \dd W_t \coloneqq K(T,t) \bigg( \frac{y}{\int_0^T K(T,u)^2\, \dd u} - \int_0^t \frac{K(T,s)}{\int_s^T K^2(T,u)\, \dd u} \dd W_s \bigg) \dd t + \dd W_t.
\end{equation}
%We deduce in Proposition~\ref{prop:bridgerep} that
As a consequence, the following process $P^{T,y}$ is a bridge from $0$ to $y$ with length $T$ over $\Volt$:
\begin{align}\label{eq:defBridgeP}
P_t^{T,y}&= \int_{0}^t K(t,s)\, \dd Y^{T,y}_{s}\nonumber\\
&=D_t^{T,y} + \int_0^t K(t,s) \, \dd W_s - \int_0^t \frac{K(T,s)}{\smallint_s^T K(T,u)^2 \, \dd u} \int_s^t K(t,r) K(T,r) \, \dd r \, \dd W_s,
\end{align}
where the deterministic function $D^{T,y}_t$ reads
\begin{align*}
D^{T,y}_{t} = y \int_{0}^t \frac{K(t,s) K(T,s)}{\smallint_{0}^T K(T,u)^2\, \dd u} \, \dd s.
\end{align*}
With these preparations, we can now study the local nondeterminism property of $P^{T,y}$.

\begin{proposition}\label{prop:LND}
The bridge $P_t^{T,y}$ defined in \eqref{eq:defBridgeP} fulfills for $(\xi,t) \in \simp{0}{T}$,
\begin{equation}\label{eq:LND-bridge-general}
	\var( P^{T,y}_t | \mathcal{F}_\xi) \geq \int_\xi^t K(t,s)^2 \, \dd s\  \frac{\smallint_t^T K(T,r)^2 \, \dd r}{\int_\xi^T K(T,u)^2 \, \dd u}.
\end{equation}
\end{proposition}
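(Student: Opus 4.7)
The strategy is to compute $\var(P^{T,y}_t \vert \mathcal{F}_\xi)$ in closed form and then recognize the remaining inequality as a form of Cauchy--Schwarz. Starting from formula~\eqref{eq:defBridgeP}, I would write $P^{T,y}_t = D^{T,y}_t + \int_0^t \Phi(t,s)\, \dd W_s$ with
\[
\Phi(t,s) \coloneqq K(t,s) - \frac{K(T,s)}{\int_s^T K(T,u)^2\, \dd u} \int_s^t K(t,r) K(T,r)\, \dd r.
\]
Since $D^{T,y}_t$ is deterministic and $\Phi(t,\cdot)$ is a deterministic Wiener integrand against the $\mathbb{F}$-Brownian motion $W$, the It\^o isometry immediately yields $\var(P^{T,y}_t \vert \mathcal{F}_\xi) = \int_\xi^t \Phi(t,s)^2\, \dd s$, and the main step is to evaluate this integral.

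To do so, I would introduce the auxiliary quantities $F(s) \coloneqq \int_s^T K(T,u)^2\, \dd u$ and $L(t,s) \coloneqq \int_s^t K(t,r) K(T,r)\, \dd r$, which satisfy $F'(s) = -K(T,s)^2$, $\partial_s L(t,s) = -K(t,s) K(T,s)$ and $L(t,t) = 0$. Expanding the square, the cross term in $\Phi(t,s)^2$ can be rewritten as $-2 K(t,s) K(T,s) L(t,s)/F(s) = \partial_s(L(t,s)^2)/F(s)$; an integration by parts, combined with $F'/F^2 = -K(T,\cdot)^2/F^2$, then cancels this contribution against the quadratic term in the expansion, leaving only a boundary term. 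The outcome should be the clean expression
\[
\var(P^{T,y}_t \vert \mathcal{F}_\xi) = \int_\xi^t K(t,s)^2\, \dd s - \frac{\bigl(\int_\xi^t K(t,r) K(T,r)\, \dd r\bigr)^2}{\int_\xi^T K(T,u)^2\, \dd u},
\]
which reassuringly matches the Gaussian conditioning formula applied to the centred increments $\int_\xi^t K(t,s)\, \dd W_s$ and $\int_\xi^T K(T,s)\, \dd W_s$.

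It then remains to match this expression with the stated lower bound. Writing $\int_\xi^T K(T,u)^2\, \dd u = \int_\xi^t K(T,u)^2\, \dd u + \int_t^T K(T,u)^2\, \dd u$, the claim reduces after clearing denominators to
\[
\int_\xi^t K(t,s)^2\, \dd s \,\cdot\, \int_\xi^t K(T,u)^2\, \dd u \;\geq\; \Bigl(\int_\xi^t K(t,r) K(T,r)\, \dd r\Bigr)^2,
\]
which is precisely the Cauchy--Schwarz inequality applied to $K(t,\cdot)$ and $K(T,\cdot)$ in $L^2([\xi,t])$. The main obstacle is the integration-by-parts cancellation in the middle step: the algebra itself is routine, but recognizing that the substitutions $F$ and $L$ turn the cross and quadratic terms of $\Phi^2$ into a total derivative is what makes the computation collapse to the clean two-term formula above.
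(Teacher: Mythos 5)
Your argument is correct and coincides with the paper's own proof: both start from the It\^o isometry $\var(P^{T,y}_t\vert\mathcal{F}_\xi)=\int_\xi^t \Phi(t,s)^2\,\dd s$, use the same integration by parts (the paper phrases it as computing $-2\int_\xi^t K(t,s)\hat K^T(t,s)\,\dd s$, you phrase it as recognizing the cross term as $\partial_s(L^2)/F$, but the cancellation is identical) to reach the closed form $\int_\xi^t K(t,s)^2\,\dd s - \bigl(\int_\xi^t K(t,r)K(T,r)\,\dd r\bigr)^2/\int_\xi^T K(T,u)^2\,\dd u$, and then conclude by Cauchy--Schwarz on $L^2([\xi,t])$. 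No gaps; the only difference is notational bookkeeping in the integration-by-parts step.
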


\begin{proof}
Using It\^o's isometry, from \eqref{eq:defBridgeP} we have that
\begin{align*}
\var(P^{T,y}_t | \mathcal{F}_{\xi})=\int_\xi^t \tilde{K}^T(t,s)^2 \, \dd s,
\end{align*}
where 
\begin{align*}
\tilde{K}^T(t,s)=K(t,s)-K(T,s)\int_s^t \frac{K(t,r) K(T,r)}{\smallint_s^T K(T,u)^2\, \dd u} \, \dd r \eqqcolon  K(t,s)-\hat{K}^T(t,s).
\end{align*}
%Expanding the square and integrating by parts gives
Integration by parts gives
\begin{align*}
-2 \int_\xi^t K(t,s) \hat K^T(t,s) \dd s 
& = -2 \int_\xi^t K(t,s)K(T,s)\int_s^t \frac{K(t,r) K(T,r)}{\smallint_s^T K(T,u)^2 \, \dd u} \, \dd r \dd s\\
%&=-2\int_\xi^t \frac{K(t,s) K(T,s) \smallint_s^t K(t,r) K(T,r) dr}{\smallint_s^T K(T,u)^2 du} \dd s\\
&=\left[\frac{(\smallint_s^t K(t,r) K(T,r)  \, \dd r)^2}{\smallint_s^T K(T,u)^2  \, \dd u}\right]_{s=\xi}^{s=t}-\int_\xi^t \left(\frac{K(T,s)\smallint_s^t K(t,r) K(T,r)  \, \dd r}{\smallint_s^T K(T,u)^2  \, \dd u}\right)^2  \, \dd s\\
&=-\frac{(\smallint_\xi^t K(t,r) K(T,r)  \, \dd r)^2}{\smallint_\xi^T K(T,u)^2  \, \dd u}-\int_\xi^t \hat{K}^T(t,s)^2  \, \dd s.
\end{align*}
Therefore, expanding the square and applying Cauchy-Schwarz inequality, we find
\begin{align*}
\int_\xi^t \tilde{K}^T(t,s)^2  \, \dd s
& = \int_\xi^t \tilde{K}(t,s)^2  \, \dd s - 2 \int_\xi^t K(t,s) \hat K^T(t,s) \dd s + \int_\xi^t \hat{K}^T(t,s)^2\\
&= \int_\xi^t K(t,s)^2 \, \dd s-\frac{(\smallint_\xi^t K(t,r) K(T,r)  \, \dd r)^2}{\smallint_\xi^T K(T,u)^2  \, \dd u}\\
&\geqslant \int_\xi^t K(t,s)^2 \, \dd s-\frac{\smallint_\xi^t K(t,r)^2  \, \dd r \smallint_\xi^t K(T,r)^2  \, \dd r}{\smallint_\xi^T K(T,u)^2  \, \dd u}\\
&=\int_\xi^t K(t,s)^2  \, \dd s \left(1-\frac{\smallint_\xi^t K(T,r)^2  \, \dd r}{\smallint_\xi^T K(T,u)^2  \, \dd u}\right)\\
&=\int_\xi^t K(t,s)^2  \, \dd s\ \frac{\smallint_t^T K(T,r)^2  \, \dd r}{\smallint_\xi^T K(T,u)^2  \, \dd u}. \qedhere
\end{align*}
\end{proof}

Applying Proposition~\ref{prop:LND} and using \eqref{eq:fBm_cond_var}, we immediately deduce the following LND property for the fractional Brownian bridge. 
\begin{corollary} \label{cor:LND-bridge}
Let $K_H$ be the kernel such that $\Volt$ defined in \eqref{eq:Xprocess} is a fractional Brownian motion with Hurst parameter $H \in (0,1)$. Then there exists $c_H>0$ such that, for $(\xi,t) \in \simp{0}{T}$ and $y \in \mathbb{R}$,
\begin{align}\label{eq:LNDbridge}
	\var( P^{T,y}_t | \mathcal{F}_\xi) \geq c_H (t-\xi)^{2H} \frac{(T-t)^{2H}}{(T-\xi)^{2H}}.
\end{align}
\end{corollary}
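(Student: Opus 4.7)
The plan is essentially to substitute the explicit fBm kernel estimates into the general bound of Proposition~\ref{prop:LND}; the corollary is a direct consequence, so there is no real obstacle.

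More precisely, I would begin by recalling from \eqref{eq:fBm_cond_var} that the fBm kernel $K_H$ satisfies
\begin{equation*}
\int_a^b K_H(b,u)^2\,\dd u = C_H(b-a)^{2H},\qquad 0\leq a\leq b,
\end{equation*}
for a constant $C_H>0$ depending only on $H$. This identity is exactly what is needed to evaluate each of the three integrals appearing on the right-hand side of \eqref{eq:LND-bridge-general} when $K=K_H$.

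Next, I would apply Proposition~\ref{prop:LND} with the kernel $K=K_H$ to obtain
\begin{equation*}
\var(P^{T,y}_t \vert \cF_\xi) \;\geq\; \int_\xi^t K_H(t,s)^2\,\dd s\;\cdot\;\frac{\int_t^T K_H(T,r)^2\,\dd r}{\int_\xi^T K_H(T,u)^2\,\dd u}.
\end{equation*}
Substituting the three evaluations
\begin{equation*}
\int_\xi^t K_H(t,s)^2\,\dd s = C_H(t-\xi)^{2H},\quad \int_t^T K_H(T,r)^2\,\dd r = C_H(T-t)^{2H},\quad \int_\xi^T K_H(T,u)^2\,\dd u = C_H(T-\xi)^{2H},
\end{equation*}
the factor $C_H$ in the numerator and denominator of the ratio cancels out, so the bound reduces to
\begin{equation*}
\var(P^{T,y}_t \vert \cF_\xi) \;\geq\; C_H\,(t-\xi)^{2H}\,\frac{(T-t)^{2H}}{(T-\xi)^{2H}}.
\end{equation*}
Setting $c_H \coloneqq C_H$ yields \eqref{eq:LNDbridge}, which concludes the argument. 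Note that the bound is independent of $y\in\mathbb{R}$, consistently with the fact that the conditional variance of a Gaussian bridge depends only on the deterministic kernel (the shift $D^{T,y}_t$ in \eqref{eq:defBridgeP} does not enter the variance).
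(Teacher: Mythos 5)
Your proposal is correct and is exactly the paper's argument: the corollary is obtained by plugging the fBm identity $\int_a^b K_H(b,u)^2\,\dd u = C_H(b-a)^{2H}$ from \eqref{eq:fBm_cond_var} into the general bound \eqref{eq:LND-bridge-general} of Proposition~\ref{prop:LND}, with the constant cancelling in the ratio. The paper states this in one line; you have simply written out the three evaluations explicitly.
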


\subsection{Key exponential estimates}  \label{sec:expmoments}

The main point of this section is to show square exponential integrability of the terms appearing after Girsanov and conditioning. To see the precise terms that have to be estimated, we rigorously lay out in Proposition~\ref{prop:reformulation} the argument formally outlined at the beginning of Section~\ref{sec:GaussianBounds}. From now on, by $P^{T,y}_t$ we solely denote the fractional Brownian bridge, namely the kernel $K$ is now specifically $K_{H}$, the Volterra kernel of fBm, which for $H\in (0,1/2)$ reads
\begin{equation}\label{eq:defKH}
K_H(t,s) = \mathbbm{1}_{\{0<s<t\}}\, \sigma_H\ \left\{ \left(\frac{t(t-s)}{s}\right)^{H-\frac{1}{2}} +\Big( \frac12 - H\Big) s^{\frac12-H} \displaystyle\int_s^t (r-s)^{H-\frac12} r^{H-\frac32}\, \dd r\right\} , 
\end{equation}
with 
\begin{equation*}
\sigma_H = \left( \frac{2H\ \Gamma(3/2-H)}{\Gamma(H+\tfrac{1}{2})\ 
\Gamma(2-2H)}\right)^{\tfrac{1}{2}}.
\end{equation*}
% introduced in Section~\ref{subsec:notdef}.
Recall that the fBm admits the Volterra representation \eqref{eq:fBmrepresentation} associated to $K_H$. With a slight abuse, we also denote by $K_H$ the associated operator, so that $(K_H f)_t = \int_0^t K_H(t,s) f(s)\, \dd s$.
As shown in \cite{NualartOuknine}, $K_H$ is invertible on the Cameron-Martin space $K_H(L^2([0,T]))$, so that we can denote by $K_H^{-1}$ its inverse. It can be written explicitly in terms of fractional derivatives, see \cite[eq.~(12)]{NualartOuknine} and equation~\eqref{eq:NualartOuknine} in Appendix~\ref{app:fBm}.

Recall that, without loss of generality, we assume $x_{0}=0$.

\begin{proposition} \label{prop:reformulation}
Let $X$ be the unique solution to~{\renewcommand\SDE{SDE($b$,$0$)}\ref{eq:SDEintro}\renewcommand\SDE{SDE($b$,$X_0$)}}, with $H$, $\gamma$ and $b$ satisfying the same assumptions as in Theorem~\ref{thm:Gaussiantails}. The density $p(T,\cdot)$ of $X_T$ is given by
\begin{equation}
p(T,y)=(2\pi T)^{-dH} \exp\left(-\tfrac{|y|^2}{2T^{2H}}\right) \Psi(T,y),
\end{equation}
where 
\begin{align}
	\Psi(T,y)\coloneqq \EE\Big[\exp\Big(\int_0^T  (K_H^{-1}Z)_s \cdot \dd W_s - \frac12 \int_0^T |(K_H^{-1}Z)_s|^2\, \dd s \Big)\Big\vert B_T=y\Big]
\end{align}
and $Z_\cdot \coloneqq \int_0^\cdot b_r(B_r)\, \dd r$.
Furthermore, $\Psi$ can be rewritten as 
\begin{equation}\label{eq:conditional-density-bridge}
	\Psi(T,y) = \EE\Big[\exp\Big(\int_0^T (K_H^{-1}\tilde{Z}^{T,y})_s \cdot V^{T,y}_s\, \dd s + \int_0^T (K_H^{-1}\tilde{Z}^{T,y})_s\cdot \dd W_s  - \frac12 \int_0^T |(K_H^{-1}\tilde{Z}^{T,y})_s|^2\, \dd s \Big)\Big],
\end{equation}
where $V^{T,y}$ is defined as in~\eqref{eq:semimartingale-underlying-bridge} and
\begin{equation}\label{eq:defZtilde}
\tilde{Z}^{T,y}_\cdot\coloneqq\int_0^\cdot b_r(P^{T,y}_r)\, \dd r.
\end{equation}
\end{proposition}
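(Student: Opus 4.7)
The plan is to prove both identities first under the assumption that $b$ is smooth, and then to pass to the distributional case by approximation. In the smooth case, define the stochastic exponential
\begin{equation*}
M_T = \exp\Big(\int_0^T (K_H^{-1} Z^B)_s \cdot \dd W_s - \tfrac{1}{2}\int_0^T |(K_H^{-1} Z^B)_s|^2\, \dd s\Big),
\end{equation*}
where $Z^B_\cdot = \int_0^\cdot b_r(B_r)\, \dd r$. Since $\|b\|_{L^\infty_T L^\infty_x}<\infty$, one can check that $K_H^{-1} Z^B$ lies in $L^2([0,T])$ pathwise with deterministic bound, so Novikov's condition is trivially met and $(M_t)$ is a true martingale. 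By the fractional Girsanov theorem of Nualart--Ouknine~\cite{NualartOuknine}, under $\dd\mathbb{Q} = M_T \dd\PP$ the process $\tilde B_\cdot = B_\cdot - Z^B_\cdot$ is an $\mathbb{F}$-fBm; hence the joint law of $(B,\tilde B)$ under $\mathbb{Q}$ coincides with that of $(X,B)$ under $\PP$, both being a solution to \ref{eq:SDEintro} together with its driving fBm. In particular, $\EE[f(X_T)] = \EE[f(B_T)\,M_T]$ for any bounded measurable $f$, and conditioning on $B_T$ together with the Gaussian density of $B_T$ yields the first identity with $\Psi(T,y) = \EE[M_T\mid B_T=y]$.

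For \eqref{eq:conditional-density-bridge}, I would invoke the explicit representation of the conditional law of $(W,B)$ given $B_T=y$ that is established in Appendix~\ref{app:bridge}. By Proposition~\ref{prop:bridgerep} combined with \eqref{eq:semimartingale-underlying-bridge}, conditional on $B_T=y$ the driving Brownian motion $W$ has the law of the semimartingale $Y^{T,y}$ with Doob--Meyer decomposition $\dd Y^{T,y}_s = V^{T,y}_s\,\dd s + \dd\tilde W_s$ for some Brownian motion $\tilde W$, and correspondingly $B$ has the law of the bridge $P^{T,y}$ as in \eqref{eq:defBridgeP}. Since $Z^B$ is a measurable functional of the path of $B$, it transforms into $\tilde Z^{T,y}$ from \eqref{eq:defZtilde}. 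Substituting these identifications into $\EE[M_T\mid B_T=y]$ and splitting the resulting integral $\int_0^T (K_H^{-1}\tilde Z^{T,y})_s\cdot \dd Y^{T,y}_s$ into its drift and martingale parts with respect to $\tilde W$ yields \eqref{eq:conditional-density-bridge} verbatim.

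For distributional $b$, I would approximate by smooth drifts $b^n$ as in Definition~\ref{def:solution}, so that $X^n\to X$ in $L^2(\Omega;\cC_{[0,T]})$; then Steps 1--2 apply to each $X^n$, giving the claimed representations with $Z^{B,n}$, $\tilde Z^{n,T,y}$, $M^n_T$, $p^n(T,\cdot)$ and $\Psi^n(T,y)$. The main obstacle is the passage to the limit on the right-hand sides: one must give rigorous meaning to $Z^B_\cdot$ and $\tilde Z^{T,y}_\cdot$ as Young/sewing-type integrals (against the fBm and the fractional Brownian bridge respectively), prove that $K_H^{-1}Z^{B,n}\to K_H^{-1}Z^B$ and $K_H^{-1}\tilde Z^{n,T,y}\to K_H^{-1}\tilde Z^{T,y}$ in $L^2([0,T])$ in a suitable stochastic sense, and crucially establish \emph{uniform-in-$n$} exponential integrability in order to pass to the limit in the Girsanov exponentials under the conditional expectation. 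The conditional version of this last bound is precisely what the local nondeterminism of the fractional Brownian bridge (Corollary~\ref{cor:LND-bridge}) is designed to enable; it is the content of the exponential-moment estimates developed in the remainder of Section~\ref{sec:expmoments}, on which the eventual limiting identity rests.
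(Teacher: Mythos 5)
Your smooth-case argument and the conditioning step match the paper's proof: Girsanov for fBm gives $\EE[f(X_T)]=\EE[f(B_T)M_T]$, and the second identity is exactly the substitution of the conditional law of $(W,B)$ given $B_T=y$ provided by Proposition~\ref{prop:bridgerep}, i.e. $W\rightsquigarrow Y^{T,y}$ with decomposition $\dd Y^{T,y}_s=V^{T,y}_s\,\dd s+\dd W_s$ and $B\rightsquigarrow P^{T,y}$, $Z\rightsquigarrow \tilde Z^{T,y}$. Up to this point your route and the paper's are the same.

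The gap is in your treatment of distributional $b$. The paper does not approximate at all in this proof: it applies Girsanov \emph{directly} to the admissible solution $X$ of the singular SDE, because Novikov's condition
\begin{equation*}
\EE\Big[\exp\Big(\tfrac12\int_0^T\big|\big(K_H^{-1}\smallint_0^\cdot b_r(X_r)\,\dd r\big)_s\big|^2\,\dd s\Big)\Big]<\infty
\end{equation*}
is already known to hold under $\gamma>1-1/(2H)$ by \cite[Lemma C.3]{GaleatiGerencser} (the singular integral $\int_0^\cdot b_r(X_r)\,\dd r$ being defined via stochastic sewing along the fBm). Once Girsanov applies, $\Psi(T,y)=\EE[M_T\,\vert\, B_T=y]$ is automatically finite for a.e.\ $y$ since $\EE[M_T]=1$, and the representation follows; no limit passage is required. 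Your approximation scheme could in principle be made to work, but as written it is not closed: you would still need convergence of $K_H^{-1}Z^{B,n}$ and uniform exponential integrability of the Girsanov densities $M^n_T$, and the tool you point to for this --- the local nondeterminism of the fractional Brownian bridge (Corollary~\ref{cor:LND-bridge}) and the exponential estimates of Section~\ref{sec:expmoments} --- is the wrong one for this step. Those bridge estimates concern integrals along $P^{T,y}$ and are used \emph{after} Proposition~\ref{prop:reformulation} to bound $\Psi(T,y)$ from above and below; what the Girsanov step needs is exponential integrability along the fBm itself, which is precisely the content of the cited Lemma C.3. So either invoke that lemma directly (as the paper does) and drop the approximation, or supply the missing convergence and uniform integrability arguments explicitly.
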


\begin{proof}
In order to apply Girsanov's theorem for fBm (see \cite{NualartOuknine}), we need to verify Novikov's condition
\begin{align*}
\EE \left[\exp \left(\frac{1}{2} \int_{0}^T \big| (K_H^{-1} \int_{0}^\cdot b_{r}(X_{r})\, \dd r)_s \big|^2\, \dd s\right)\right] <\infty.
\end{align*}
Under Assumption~\ref{ass:strong} and $\gamma<0$, this indeed holds thanks to \cite[Lemma C.3]{GaleatiGerencser}.
Hence, for any $f \in \mathcal{C}_b(\mathbb{R}^d)$, we have
\begin{align*}
	\EE[f(X_T)]
	& = \EE\Big[ f(B_T) \exp\Big(\int_0^T (K_H^{-1}Z)_s\cdot \dd W_s - \frac12 \int_0^T |(K_H^{-1}Z)_s|^2\, \dd s \Big)\Big]\\
	& = \EE\Big[ f(B_T) \EE\Big[\exp\Big(\int_0^T (K_H^{-1}Z)_s\cdot \dd W_s - \frac12 \int_0^T |(K_H^{-1}Z)_s|^2\, \dd s \Big)\Big\vert B_T\Big]\Big]\\
	& = \int_{\R^d} f(y) \dfrac{1}{(2\pi T)^{dH}} \exp\left(-\frac{|y|^2}{2T^{2H}}\right) \Psi(T,y)\, \dd y.
\end{align*}
Conditional on $B_{T}=y$ for $y\in \R^d$, by Proposition~\ref{prop:bridgerep}, $B$ is distributed as a fractional Brownian bridge $P^{T,y}$ and therefore $Z$ is distributed as $\tilde{Z}^{T,y}$. 
Moreover, $W$ is distributed as the process $Y^{T,y}$ given by \eqref{eq:semimartingale-underlying-bridge}, i.e. $\dd Y^{T,y}_t= V^{T,y}_t \dd t + \dd W_t$.  Hence, \eqref{eq:conditional-density-bridge} follows.
\end{proof}
In view of Proposition~\ref{prop:reformulation}, the goal of obtaining Gaussian tails boils down to showing that $\Psi(T,y)$ is bounded from above and below. For the upper bound, thanks to the properties of exponential martingales, it suffices to show that
\begin{equation*}%\label{eq:goal-moments}
	\EE\Big[\exp\Big(\lambda \int_0^T |\mathfrak{L}^{T,y}_s| |V^{T,y}_s|\, \dd s + \lambda \int_0^T |\mathfrak{L}^{T,y}_s|^2\, \dd s\Big)\Big]<\infty, \quad \forall\, \lambda>0,
\end{equation*}
where, for $\tilde{Z}^{T,y}$ given in \eqref{eq:defZtilde}, we define
\begin{align*}%\label{eq:defMathfrakL}
\mathfrak{L}^{T,y}_{s} \coloneq (K_H^{-1}\tilde{Z}^{T,y})_{s} = \Big(K_{H}^{-1} \int_{0}^\cdot b_{r}(P^{T,y}_{r})\, \dd r\Big)_{s}. 
\end{align*}
The rest of this section is dedicated to proving square exponential estimates for $V^{T,y}$ (Lemma~\ref{lem:expmoments2}) and $\mathfrak{L}^{T,y}$ (Lemma~\ref{lem:expmoments1}), with respect to suitable $L^p([0,T])$-norms. To estimate $\mathfrak{L}^{T,y}$, it is convenient to establish estimates of $\tilde{Z}^{T,y}$ in a fractional Sobolev space and then to use the mapping properties of the operator $K_H^{-1}$ 
% maps such spaces to certain $L^p$ spaces 
(Lemma~\ref{lem:relation-sobolev-cameron-martin}).

\paragraph*{Estimating $V^{T,y}$.}

\begin{lemma}\label{lem:expmoments2}
Let $y \in \mathbb{R}^d$ and $p<2$. Let $V^{T,y}$ be defined via \eqref{eq:semimartingale-underlying-bridge} with the kernel $K\equiv K_H$ associated to a fractional Brownian motion with Hurst parameter $H<1/2$. Then there exists $c_p>0$ such that, uniformly in $y\in\R^d$, 
\begin{equation} \label{eq:Vy2}
\EE\big[\exp(c_p\|V^{T,y}\|_{L^{p}_{[0,T]}}^2)\big]<\infty.
\end{equation}
\end{lemma}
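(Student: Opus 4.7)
My plan is to decompose $V^{T,y} = D + M$ into a deterministic affine-in-$y$ piece $D_t = y\,K_H(T,t)/\sigma_{0,T}^2$ and a centered Gaussian piece $M_t = -K_H(T,t)\int_0^t K_H(T,s)/\sigma_{s,T}^2\, \dd W_s$, where $\sigma_{s,T}^2 \coloneqq \int_s^T K_H(T,u)^2\, \dd u$, and to control each contribution separately.

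The deterministic piece I will handle by showing $K_H(T,\cdot)\in L^p([0,T])$ for every $p<2$. From the explicit formula \eqref{eq:defKH}, $|K_H(T,s)|$ diverges only at the two endpoints, like $s^{H-1/2}$ as $s\to 0$ and like $(T-s)^{H-1/2}$ as $s\to T$. Both singularities are $L^p$-integrable whenever $p<2/(1-2H)$, and since $H>0$ this covers all $p<2$; hence $\|D\|_{L^p_{[0,T]}}\leq C_{p,H,T}\,|y|$.

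For the Gaussian piece, I will apply Fernique's theorem after verifying $M\in L^p_{[0,T]}$ a.s. The variance of $M_t$ is $\sigma_t^2 = K_H(T,t)^2\int_0^t K_H(T,s)^2/(\sigma_{s,T}^2)^2\, \dd s$; the identity $\tfrac{\dd}{\dd s}\sigma_{s,T}^2=-K_H(T,s)^2$ integrates the inner expression exactly to $1/\sigma_{t,T}^2-1/\sigma_{0,T}^2\leq 1/\sigma_{t,T}^2$, so that $\sigma_t^2\leq K_H(T,t)^2/\sigma_{t,T}^2$. The local nondeterminism bound $\sigma_{t,T}^2\geq c_H(T-t)^{2H}$ from \eqref{eq:fBm_cond_var} then implies $\sigma_t\lesssim K_H(T,t)/(T-t)^H$, so by Fubini
\begin{align*}
\EE[\|M\|_{L^p_{[0,T]}}^p]\lesssim \int_0^T \sigma_t^p\, \dd t\lesssim \int_0^T \frac{|K_H(T,t)|^p}{(T-t)^{pH}}\, \dd t,
\end{align*}
with integrand behaving like $(T-t)^{-p/2}$ near $t=T$ and like $t^{p(H-1/2)}$ near $t=0$, both integrable precisely when $p<2$. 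Hence $M\in L^p_{[0,T]}$ a.s., and Fernique's theorem applied to $M$ as a centered Gaussian element of the separable Banach space $L^p([0,T])$ (the range $p\in(0,1)$ reduces to $p=1$ via H\"older since $T<\infty$) yields some $c>0$ with $\EE[\exp(c\|M\|_{L^p_{[0,T]}}^2)]<\infty$.

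Combining via $\|V^{T,y}\|_{L^p}^2\leq 2\|D\|_{L^p}^2+2\|M\|_{L^p}^2$ and taking $c_p=c/2$, I obtain
\begin{align*}
\EE\bigl[\exp(c_p\|V^{T,y}\|_{L^p_{[0,T]}}^2)\bigr]\leq \exp(2c_p C_{p,H,T}^2|y|^2)\,\EE\bigl[\exp(c\|M\|_{L^p_{[0,T]}}^2)\bigr]<\infty,
\end{align*}
with $c_p$ independent of $y$ (so that ``uniformly in $y$'' is to be read as ``with a $y$-independent $c_p$''; the bound itself of course degrades like $\exp(C|y|^2)$). The main technical obstacle will be the asymptotic analysis of $K_H(T,s)$ near $s=0$: the first summand in \eqref{eq:defKH} actually vanishes there like $s^{1/2-H}$, and the $s^{H-1/2}$ divergence in fact comes from the integral summand, requiring a careful expansion (e.g.\ via the substitution $r=su$) yielding $\int_s^T(r-s)^{H-1/2}r^{H-3/2}\, \dd r\sim C\,s^{2H-1}$ as $s\to 0$. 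This slightly delicate endpoint analysis is the one step that demands care.
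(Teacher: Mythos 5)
Your proof is correct and follows essentially the same route as the paper's: the same splitting of $V^{T,y}$ into the deterministic part $yK_H(T,\cdot)/\smallint_0^TK_H(T,u)^2\,\dd u$ and the centered Gaussian stochastic integral, the same exact integration of $\smallint_0^t K_H(T,s)^2/(\smallint_s^TK_H(T,u)^2\,\dd u)^2\,\dd s$, the same LND bound $\smallint_t^TK_H(T,u)^2\,\dd u\geq c_H(T-t)^{2H}$ and kernel asymptotics $K_H(T,t)\lesssim (T-t)^{H-1/2}+t^{H-1/2}$ (the paper's Lemma~\ref{lem:kernel-fbm}, proved by exactly the substitution $r=s(1+u)$ you sketch), and Fernique's theorem to upgrade $p$-integrability to square-exponential moments. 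Your reading of ``uniformly in $y$'' as ``$c_p$ independent of $y$, with the bound itself growing like $\exp(C|y|^2)$'' also matches the paper, whose own estimate \eqref{eq:estimateV1} carries the same $|y|^2/T^{2H}$ contribution.
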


\begin{proof}%[Proof of Lemma~\ref{lem:expmoments2}]
By its definition as a stochastic integral with respect to a deterministic kernel, $V^{T,y}$ is a Gaussian process. Therefore, it suffices to show that 
\begin{equation}\label{eq:goal-V}
	\EE \int_0^T |V^{T,y}_s|^p\, \dd s <\infty, \quad \forall\, p<2.
\end{equation}
Indeed, by \cite[Example 2.3.16]{Bogachev}, \eqref{eq:goal-V} implies that $\law(V^{T,y})$ defines a Gaussian measure on $L^p([0,T])$. 
Property \eqref{eq:Vy2} then follows by Fernique's Theorem \cite[Theorem 2.8.5]{Bogachev}.

Recall from \eqref{eq:semimartingale-underlying-bridge} that %the process $V^{T,y}$ is given by
\begin{align*}
	V^{T,y}_t  = y\frac{K_H(T,t)}{\int_0^T K_H(T,u)^2\, \dd u} - K_H(T,t) \int_0^t \frac{K_H(T,s)}{\int_s^T K_H(T,u)^2\, \dd u}\, \dd W_s\eqqcolon V^1_t - V^2_t.
\end{align*}
It is immediate to see that $V^1\in L^2([0,T])$ since
\begin{align}\label{eq:estimateV1}
	\|V^1\|_{L^2_{[0,T]}}^2 = \frac{\| K_H(T,\cdot)\|_{L^2_{[0,T]}}^2}{\| K_H(T,\cdot)\|_{L^2_{[0,T]}}^4} |y|^2 = \frac{|y|^2}{T^{2H}} . %\sim |y|^2
\end{align}
Fix $p<2$. By the Burkholder-Davis-Gundy inequality, it holds
\begin{align*}
	\EE\Big[ \int_0^T |V^2_t|^p\, \dd t\Big]
	&  \lesssim  \int_0^T K_H(T,t)^p \bigg( \int_0^t \frac{K_H(T,s)^2}{\big(\int_s^T K_H(T,u)^2\, \dd u\big)^2}\, \dd s \bigg)^{\frac{p}{2}} \dd t\\
	& = \int_0^T K_H(T,t)^p \bigg[ \Big( \int_t^T K_H(T,u)^2\, \dd u\Big)^{-1} - \Big( \int_0^T K_H(T,u)^2\, \dd u\Big)^{-1} \bigg]^{\frac{p}{2}} \dd t\\
	& \lesssim \int_0^T K_H(T,t)^p\, (T-t)^{-Hp}\, \dd t = (\ast),
\end{align*}
where  in the last inequality we used the LND property of fBm~\eqref{eq:fBm_cond_var}. Applying Lemma \ref{lem:kernel-fbm}, we find %in the second passage we performed IBP and
\begin{align}\label{eq:estimateV2}
	(\ast)
	\lesssim \int_0^T \big( (T-t)^{H-\frac12} + t^{H-\frac12} \big)^p (T-t)^{-Hp}\, \dd t
	\lesssim \int_0^T (T-t)^{-\frac{p}{2}} + t^{p(H-\frac12)} (T-t)^{-Hp}\, \dd t
\end{align}
and the last integral is finite since $p<2$ and $H<1/2$.
\end{proof}

\paragraph*{Estimating $\mathfrak{L}^{T,y}$.}

As discussed earlier, to prove exponential estimates for $\mathfrak{L}^{T,y}$, we will estimate a fractional Sobolev norm of $\tilde Z^{T,y}$, see the upcoming Lemma~\ref{lem:expmoments1}.
The arguments including stochastic sewing are similar to previous works (see e.g. \cite{GaleatiGerencser}), however one has to carefully track the singularity at $t=T$ in the local nondeterminism property of the bridge coming from Corollary~\ref{cor:LND-bridge}. Hence, we first prove the supporting results Lemma~\ref{lem:expsewing} and Corollary~\ref{cor:sewingexp}.

In the following, for $\beta>0$, we define the fractional Sobolev space $H^\beta([0,T]) \coloneqq \{f \in L^2([0,T];\mathbb{R}^d): \, \|f\|_{H_{[0,T]}^\beta}<\infty\}$, where the norm is given by
\begin{align*}
	\| f\|_{H^\beta_{[0,T]}}^2 = \| f\|_{L^2_{[0,T]}}^2 + \llbracket f \rrbracket_{H^\beta_{[0,T]}}^2, \quad
	\llbracket f \rrbracket_{H^\beta_{[0,T]}}^2 \coloneqq \int_{[0,T]^2} \frac{|f(t)-f(s)|^2}{|t-s|^{1+2\beta}} \, \dd s \dd t.
\end{align*}
For $\beta>1/2$, thanks to the embedding $H^\beta([0,T])\hookrightarrow \cC_{[0,T]}$, %we denote by $H^\beta_0([0,T])$ the subspace of $H^\beta([0,T])$ given by functions $f$ such that $f(0)=0$:
we can meaningfully define the closed subspace of $H^\beta([0,T])$ given by
\begin{equation}\label{eq:defHs0space}
H^\beta_0([0,T]) \coloneqq \left\{f \in L^2([0,T];\mathbb{R}^d): f(0)=0,\, \|f\|_{H_{[0,T]}^\beta}<\infty  \right\} .
\end{equation}

\begin{lemma} \label{lem:expmoments1}
Let $y \in \mathbb{R}^d$, $H \in (0,1/2)$, $\gamma \in (1-1/(2H),0)$ and $b \in L^\infty([0,T];\mathcal{B}^\gamma_\infty)$. Then, for $\tilde{Z}^{T,y}_\cdot=\int_0^\cdot b_s(P_s^{T,y})\, \dd s$, there exists $\delta>0$ such that 
\begin{equation*}
	\EE\Big[ \exp\Big( \lambda \| \tilde{Z}^{T,y}\|_{H_{[0,T]}^{H+1/2+\delta}}^2 \Big) \Big]<\infty, \quad \forall\, \lambda>0
\end{equation*}
where the estimate is uniform over $y\in\R^d$.
\end{lemma}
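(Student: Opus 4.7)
The plan is to apply the stochastic sewing lemma to the germ
$A_{s,t} \coloneqq \EE^s \int_s^t b_r(P^{T,y}_r)\,\dd r$, using the bridge local
nondeterminism of Corollary~\ref{cor:LND-bridge} to control heat-kernel-type bounds,
and then translate the resulting conditional moment estimates on the increments of
$\tilde Z^{T,y}$ into exponential-of-square integrability of the fractional Sobolev norm.
As is standard, we work with smooth approximations $b^n \to b$ in
$L^\infty_T\mathcal{B}^{\gamma-}_\infty$, derive uniform-in-$n$ estimates, and pass to
the limit by lower semicontinuity of $\|\cdot\|_{H^\beta}$ and of the exponential.

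The key ingredient is a conditional moment estimate of the form
\[
\bigl\|\|\tilde Z^{T,y}_t-\tilde Z^{T,y}_s\|_{L^m|\mathcal{F}_s}\bigr\|_{L^\infty_\Omega}
\lesssim \sqrt{m}\,\|b\|_{L^\infty_T\mathcal{B}^\gamma_\infty}\,(t-s)^{1+\gamma H}\,\phi_T(s,t),
\quad (s,t)\in\simp{0}{T},
\]
uniform in $y$, where $\phi_T(s,t) = ((T-s)/(T-t))^{-\gamma H}$ encodes the degeneracy
of the bridge at $r=T$. To prove it, note that by Corollary~\ref{cor:LND-bridge} the
conditional law of $P^{T,y}_r$ given $\mathcal{F}_s$ is Gaussian with variance bounded
below by $c_H(r-s)^{2H}(T-r)^{2H}/(T-s)^{2H}$, so the heat-kernel estimate
Lemma~\ref{A.3}\ref{A.3.4} with $\beta=0$ yields
\[
\|\EE^s b_r(P^{T,y}_r)\|_{L^\infty_\Omega}
\lesssim \|b_r\|_{\mathcal{B}^\gamma_\infty}\,(r-s)^{\gamma H}\Big(\tfrac{T-r}{T-s}\Big)^{\gamma H}.
\]
A matching bound for $\|\delta A_{s,u,t}\|_{L^m|\mathcal{F}_{s-(t-s)}}$ follows by
differencing two Gaussian semigroups and invoking the LND on the shifted filtration
$\mathcal{F}_{s-(t-s)}$; the extra $\sqrt{m}$-factor arises from Gaussian
concentration of the martingale-like increment $P^{T,y}_u-\EE^s P^{T,y}_u$. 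The
(shifted) stochastic sewing lemma, cf.~Section~\ref{sec:shiftedsewing} and
\cite{Le,GaleatiGerencser}, then produces the displayed estimate.

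Plugging this conditional bound into the defining double integral of
$\|\cdot\|^2_{H^{H+1/2+\delta}_{[0,T]}}$ and applying Minkowski in $L^m$ yields
$\EE\|\tilde Z^{T,y}\|^{2m}_{H^{H+1/2+\delta}} \leq (Cm)^m$, provided
\[
\int_{[0,T]^2} (t-s)^{2(1+\gamma H)-1-2(H+1/2+\delta)}\,\phi_T(s,t)^2\,\dd s\,\dd t
< \infty.
\]
This requires both $\delta < \tfrac{1}{2}-(1-\gamma)H$ (for diagonal integrability)
and $\gamma > -1/(2H)$ (for integrability of $\phi_T^2\sim(T-t)^{2\gamma H}$ near
$t=T$); both are implied by $\gamma>1-1/(2H)$, which leaves a non-empty window for
$\delta$. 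The main technical obstacle, and the reason for requiring the stronger
threshold $\gamma>1-1/(2H)$ (rather than the scaling-critical $\gamma>1-1/H$ that
governs the linear-SDE results in the paper), is controlling the interplay between
the diagonal singularity of the Sobolev integrand and the bridge singularity
$\phi_T$ at the terminal time; once the moment bound $(Cm)^m$ is in hand,
exponential integrability follows for every $\lambda>0$ by exploiting the
flexibility in $\delta$: shrinking $\delta$ inside the admissible range drives the
prefactor $C$ down and allows to absorb any $\lambda$, leaving only a uniform
Fernique-type argument to conclude.
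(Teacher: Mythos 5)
Your overall architecture matches the paper's: bridge LND (Corollary~\ref{cor:LND-bridge}) feeding heat-kernel bounds on the germ $\EE^u\int_u^v b_r(P^{T,y}_r)\,\dd r$, a (shifted) stochastic sewing argument producing increment bounds with the singular weight $((T-s)/(T-t))^{-\gamma H}$, and then an integration of these bounds against the $H^{H+1/2+\delta}$ double integral, with exactly the exponent bookkeeping you describe (the paper uses Jensen's inequality on the weighted double integral where you use Minkowski on moments — these are equivalent). However, the final step of your argument has a genuine gap. A moment bound of the form $\EE\|\tilde Z^{T,y}\|_{H^\beta}^{2m}\leq (Cm)^m$ with a fixed $C>0$ yields $\EE[\exp(\lambda\|\tilde Z^{T,y}\|_{H^\beta}^2)]<\infty$ only for $\lambda<1/(Ce)$, not for all $\lambda>0$. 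Your claim that ``shrinking $\delta$ inside the admissible range drives the prefactor $C$ down and allows to absorb any $\lambda$'' is false: $C$ contains the sewing constants and $\|b\|_{L^\infty_T\cB^\gamma_\infty}^2$ times a space-time integral that converges to a positive limit as $\delta\downarrow 0$; nothing sends $C\to 0$. (Note also that the lemma requires the quantifier order ``there exists $\delta$ such that for all $\lambda$'', which your scheme could not deliver even if $C$ did depend favorably on $\delta$.) The concluding appeal to ``a uniform Fernique-type argument'' does not apply, since $\tilde Z^{T,y}$ is not Gaussian.

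The missing ingredient is the interpolation/decomposition trick that the paper isolates in Corollary~\ref{cor:sewingexp}. The sewing step (Lemma~\ref{lem:expsewing}, via the exponential form of stochastic sewing with $L^\infty$-conditional germ bounds) gives square-exponential integrability of the normalized increments only for one universal $\bar\lambda$ — consistent with your $\sqrt{m}$ moment growth. To reach arbitrary $\lambda$, one decomposes $b=b^{1,N}+b^{2,N}$ via Littlewood--Paley truncation (Lemma~\ref{lem:decomposition}): $b^{1,N}$ is bounded, so its contribution to $\int_s^t b^{1,N}_r(P_r)\,\dd r$ is deterministic of order $t-s$ and only costs a deterministic exponential factor, while $\|b^{2,N}\|_{L^\infty_T\cB^{\gamma-\varepsilon}_\infty}\leq C2^{-N\varepsilon}\|b\|_{L^\infty_T\cB^{\gamma}_\infty}$ can be made arbitrarily small, so that $\lambda\|b\|^{-2}$ is absorbed into $\bar\lambda\|b^{2,N}\|^{-2}$ measured in the slightly weaker space $\cB^{\gamma-\varepsilon}_\infty$. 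This is exactly why the paper carries the extra parameter $\varepsilon$ (with $\gamma-\varepsilon>1-1/(2H)$ still) through the computation; your proposal has no counterpart for this step, and without it the stated conclusion ``for all $\lambda>0$'' — which is essential for Proposition~\ref{prop:expmoments} and the Gaussian upper bound — cannot be reached.
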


In order to prove Lemma \ref{lem:expmoments1}, we need two auxiliary results.

\begin{lemma} \label{lem:expsewing}
Let $y \in \mathbb{R}^d$, $H \in (0,1/2)$, $\gamma \in (1-1/(2H),0)$. Then there exists $\bar{\lambda}>0$ independent of $y$ such that for 
$f \in L^\infty([0,T];\mathcal{B}^\gamma_\infty) \cap L^\infty([0,T];\mathcal{C}^\infty_b)$ and $0<s<t<T$,
\begin{equation}\label{eq:exp-integrability-1}
	\EE\Bigg[\exp\Big( \frac{\bar{\lambda}}{\| f\|_{L^\infty_{[0,T]}\cB^\gamma_\infty}^2} \frac{|\int_s^t f_r(P_r^{T,y})\, \dd r|^2}{|t-s|^{2\gamma H + 2}} \Big(\frac{T-s}{T-t}\Big)^{2\gamma H}\Big)\Bigg]<\infty.
\end{equation}
\end{lemma}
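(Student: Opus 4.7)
The strategy is to combine the local nondeterminism of the bridge (Corollary~\ref{cor:LND-bridge}) with L\^e's stochastic sewing lemma~\cite{Le} to obtain $L^m$-bounds of order $\sqrt m$ on $\int_s^t f_r(P^{T,y}_r)\,\dd r$, and then convert those sub-Gaussian moment estimates into square exponential integrability via a standard Stirling argument.

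Fix $0<s<t<T$ and consider the germ $A_{u,v}:=\EE^u\int_u^v f_r(P^{T,y}_r)\,\dd r$ for $(u,v)\in\simp{s}{t}$. Since $P_r^{T,y}$ is Gaussian conditional on $\cF_u$, we may rewrite $A_{u,v}=\int_u^v G_{\sigma_{u,r}^2}f_r(\EE^u P_r^{T,y})\,\dd r$, where $\sigma_{u,r}^2=\var(P_r^{T,y}\vert\cF_u)$. Combining the heat-kernel smoothing bound Lemma~\ref{A.3}\ref{A.3.4} (yielding $\|G_{\sigma^2}g\|_{L^\infty}\lesssim (\sigma^2)^{\gamma/2}\|g\|_{\cB^\gamma_\infty}$, valid for $\gamma<0$) with Corollary~\ref{cor:LND-bridge} and observing that on $\simp{s}{t}$ the ratio $(T-u)/(T-v)$ is maximised at $(u,v)=(s,t)$, I would obtain the deterministic estimate
\begin{equation*}
|A_{u,v}|\lesssim \|f\|_{L^\infty_T\cB^\gamma_\infty}\,(v-u)^{\gamma H+1}\Big(\tfrac{T-s}{T-t}\Big)^{-\gamma H},\qquad (u,v)\in\simp{s}{t}.
\end{equation*}
The assumption $\gamma>1-1/(2H)$ precisely ensures $\gamma H+1>1/2$, the critical H\"older exponent required below. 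A direct calculation further gives $\delta A_{u,w,v}=(\EE^u-\EE^w)\int_w^v f_r(P_r^{T,y})\,\dd r$, so that $\EE^u\delta A_{u,w,v}=0$ identically, while $|\delta A_{u,w,v}|\leq 2|A_{w,v}|$ satisfies the same bound with $v-w\leq v-u$.

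I would then apply the conditional version of L\^e's stochastic sewing lemma~\cite{Le}: since the $\EE^u\delta A$ part vanishes, only the martingale component of the Riemann sums contributes, producing a $\sqrt m$-dependence via BDG. The outcome is, for every $m\geq 2$,
\begin{equation*}
\Big\|\int_s^t f_r(P^{T,y}_r)\,\dd r\Big\|_{L^m}\lesssim \sqrt m\,\|f\|_{L^\infty_T\cB^\gamma_\infty}\,(t-s)^{\gamma H+1}\Big(\tfrac{T-s}{T-t}\Big)^{-\gamma H},
\end{equation*}
with a constant depending only on $H,\gamma,d$. Setting $Y:=\|f\|_{L^\infty_T\cB^\gamma_\infty}^{-1}(t-s)^{-\gamma H-1}(\tfrac{T-s}{T-t})^{\gamma H}\int_s^t f_r(P^{T,y}_r)\,\dd r$, we have $\|Y\|_{L^{2k}}\leq C\sqrt{2k}$; expanding $\EE[\exp(\bar\lambda Y^2)]=\sum_{k\geq 0}\bar\lambda^k\EE[Y^{2k}]/k!$ and applying Stirling $(2k)^k\leq k!(2e)^k$ yields convergence for all $\bar\lambda<(2eC^2)^{-1}$. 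This is precisely \eqref{eq:exp-integrability-1} after undoing the normalisation, with $\bar\lambda$ uniform in $s,t,y$ since the relevant constants only depend on $H,\gamma,d$.

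The main difficulty is to propagate the degeneracy factor $(T-s)/(T-t)$ through the sewing argument without compromising the critical exponent $\beta=\gamma H+1>1/2$. This is achieved by freezing the ratio $((T-u)/(T-v))^{-\gamma H}$ at its maximum $((T-s)/(T-t))^{-\gamma H}$ on the subsimplex $\simp{s}{t}$, so that it plays the role of an inert multiplicative constant in the sewing estimate and does not interfere with the $(v-u)^{\gamma H+1}$-scaling coming from the germ.
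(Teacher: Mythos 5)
Your argument is correct and follows essentially the same route as the paper: the same germ $A_{u,v}=\EE^u\int_u^v f_r(P^{T,y}_r)\,\dd r$, the same deterministic bound obtained from the heat-kernel estimate together with the bridge LND of Corollary~\ref{cor:LND-bridge} (freezing the ratio $(T-\cdot)/(T-\cdot)$ at its extremal value on $\simp{s}{t}$), and the same observation that $\EE^u\delta A_{u,\xi,v}=0$. The only difference is presentational: the paper invokes the exponential stochastic sewing lemma of \cite[Lemma 2.6]{GaleatiGerencser} as a black box (and explicitly identifies the sewing with $\int_s^\cdot f_r(P^{T,y}_r)\,\dd r$ via the uniqueness clause), whereas you re-derive that lemma's conclusion from $\sqrt m$-growth of $L^m$-moments plus a Stirling series argument, which is exactly the mechanism underlying the cited result.
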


\begin{proof}%[Proof of Lemma~\ref{lem:expsewing}]
Fix $0<s<t<T$. For $(u,v) \in \simp{s}{t}$, define $A_{u,v}\coloneqq\EE^u \int_u^v f_r(P^{T,y}_r)\, \dd r$. We aim to apply~\cite[Lemma 2.6]{GaleatiGerencser}: provided that there exists a constant $C=C(H,\gamma)$ such that for all $(u,v) \in \simp{s}{t}$,
\begin{enumerate}[label=(\alph*)]
\item \label{cond:sewing1}$\EE^u \delta A_{u,\xi,v}=0$ for $\xi \in [u,v]$,
\item \label{cond:sewing2}$\|A_{u,v}\|_{L^\infty_\Omega}\leqslant C \|f\|_{L^\infty_{[u,v]} \cB^\gamma_\infty} (v-u)^{\gamma H +1} \Big(\frac{T-t}{T-s}\Big)^{\gamma H},$
\end{enumerate}
we can conclude that the sewing of the two-parameter process $A$ is a one-parameter process %$\mathcal{A}_\cdot \coloneqq \int_s^\cdot f_r(P_r^{T,y})\, \dd r$, which satisfies 
$\mathcal{A}$, which satisfies 
\begin{equation}\label{eq:exp-integrability-A}
	\EE\Bigg[\exp\Big( \frac{\bar{\lambda}}{\| f\|_{L^\infty_{[0,T]} \cB^\gamma_\infty}^2} \frac{|\mathcal{A}_{t}-\mathcal{A}_{s}|^2}{|t-s|^{2\gamma H + 2}} \Big(\frac{T-s}{T-t}\Big)^{2\gamma H}\Big)\Bigg]<\infty,
\end{equation}
for some $\bar\lambda>0$ that does not depend on $f$, $s$ and $t$. Note that \ref{cond:sewing1} clearly holds.
For \ref{cond:sewing2}, by heat kernel estimates (recall $\gamma<0$) and  Corollary~\ref{cor:LND-bridge},
\begin{align*}
	\| A_{u,v}\|_{L^\infty_\Omega}
	\leqslant C \int_u^v \| f_r\|_{\cB^\gamma_\infty} (r-u)^{\gamma H} \Big(\frac{T-r}{T-u}\Big)^{\gamma H} \dd r
	\leqslant C \| f\|_{L^\infty_{[u,v]} \cB^\gamma_\infty} (v-u)^{\gamma H +1} \Big(\frac{T-t}{T-s}\Big)^{\gamma H},
\end{align*}
where we used the fact that $(u,v) \in \simp{s}{t}$ in the second inequality.

Besides, from sewing (see \cite[Lemma 2.6]{GaleatiGerencser}), $\mathcal{A}$ is unique in the class of adapted process such that $\mathcal{A}_s=0$ and
\begin{align*}
  \|\mathcal{A}_v-\mathcal{A}_u-A_{u,v}\|_{L^\infty_\Omega}&\lesssim (v-u)^{\gamma H +1} \Big(\frac{T-t}{T-s}\Big)^{\gamma H},\\
   \|\EE^u[\mathcal{A}_v-\mathcal{A}_u-A_{u,v}]\|_{L^\infty_\Omega}&=0.
\end{align*}
To identify $\mathcal{A}$ with $\int_0^\cdot f_r(P^{T,y}_r)\dd r$, we verify the above. For the first inequality, note that as $f \in L^\infty([0,T];\mathcal{C}_b)$,
\begin{align*}
\|\mathcal{A}_v-\mathcal{A}_u-A_{u,v}\|_{L^\infty_\Omega}\leqslant 2 \|f\|_{L^\infty_{[u,v]}L^\infty_x} (v-u).
\end{align*}
The equality on the conditional expectation follows directly from the definition. 
Hence \eqref{eq:exp-integrability-A} holds with $\mathcal{A}_\cdot = \int_s^\cdot f_r(P_r^{T,y})\, \dd r$, yielding \eqref{eq:exp-integrability-1}.
\end{proof}

Applying an interpolation trick, the previous lemma implies that \eqref{eq:exp-integrability-1} holds for any $\bar{\lambda}$ after \textit{sacrificing} 
a small amount of regularity, i.e. $\gamma-\varepsilon$ instead of $\gamma$.
%and $\tilde{\gamma}=\gamma-\varepsilon$. 
Choosing $\varepsilon$ small enough still ensures that 
%$\tilde{\gamma}
\(\gamma - \varepsilon>1-1/(2H)\). 

\begin{corollary}\label{cor:sewingexp}
Assume that the conditions from Lemma~\ref{lem:expsewing} hold. Then, for any $\varepsilon>0$, there exists an increasing function $\kappa(\lambda)$ (independent of $\|f\|_{L^\infty_{T}\mathcal{B}^\gamma_\infty}$ and $y$) such that
\begin{equation*}%\label{eq:exp-integrability-2}
	\EE\Big[\exp\Big(\frac{\lambda}{\| f\|_{L^\infty_{T} \mathcal{B}^{\gamma}_\infty}^2} \frac{|\int_s^t f_r(P^{T,y}_r)\, \dd r|^2}{|t-s|^{2(\gamma-\varepsilon) H + 2}} \Big(\frac{T-s}{T-t}\Big)^{2(\gamma-\varepsilon) H}\Big)\Big]\leqslant \kappa(\lambda), \quad \forall \, \lambda>0.
\end{equation*}
\end{corollary}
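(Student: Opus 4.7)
The plan is to upgrade the Gaussian-moment bound of Lemma~\ref{lem:expsewing}, which holds with a fixed rate $\bar{\lambda}$, to one valid for an arbitrary $\lambda > 0$, at the cost of an arbitrarily small loss of spatial regularity $\vep > 0$. I would proceed via a mollification--balancing argument: split $f$ into a smooth part (handled deterministically) and a rough part (handled by Lemma~\ref{lem:expsewing} with a slightly lower regularity parameter), then balance the two contributions by an appropriate choice of mollification scale.

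Fix $\lambda > 0$ and $\vep > 0$ small enough that $\gamma - \vep$ still lies in $(1 - 1/(2H), 0)$. For a scale $\delta > 0$ to be chosen later, split $f = G_\delta f + (f - G_\delta f)$ using the Gaussian semigroup, inducing
\[
Z_{s,t} := \int_s^t f_r(P^{T,y}_r)\, \dd r = Z^\delta_{s,t} + \widetilde{Z}^\delta_{s,t}.
\]
By the heat-semigroup estimate of Lemma~\ref{A.3}, $\|G_\delta f_r\|_{L^\infty_x} \lesssim \delta^{\gamma/2} \|f_r\|_{\mathcal{B}^\gamma_\infty}$, so $Z^\delta_{s,t}$ admits the deterministic bound $|Z^\delta_{s,t}| \lesssim \delta^{\gamma/2} \|f\|_{L^\infty_T \mathcal{B}^\gamma_\infty}\, |t-s|$. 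For the residual, the mollification estimate $\|f_r - G_\delta f_r\|_{\mathcal{B}^{\gamma-\vep}_\infty} \lesssim \delta^{\vep/2} \|f_r\|_{\mathcal{B}^\gamma_\infty}$, combined with Lemma~\ref{lem:expsewing} applied with $\gamma$ replaced by $\gamma - \vep$, yields a Gaussian-moment bound of rate $c/(\delta^{\vep} \|f\|_{\mathcal{B}^\gamma}^2)$ on the quantity $|\widetilde{Z}^\delta_{s,t}|^2 / |t-s|^{2(\gamma-\vep)H+2} \cdot ((T-s)/(T-t))^{2(\gamma-\vep)H}$, for some constant $c > 0$ independent of $\delta$, $\lambda$, $s$, $t$, $y$.

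Now select $\delta$ so that $c/\delta^{\vep} = 4\lambda$ (hence $\delta \sim \lambda^{-1/\vep}$), making the residual's Gaussian rate equal to $4\lambda$. Setting $M := (t-s)(T-t)/(T-s) \in (0, T]$, the deterministic part satisfies
\[
\frac{|Z^\delta_{s,t}|^2}{\|f\|_{\mathcal{B}^\gamma}^2 |t-s|^{2(\gamma-\vep)H+2}}\Big(\frac{T-s}{T-t}\Big)^{2(\gamma-\vep)H} \lesssim \delta^{\gamma} M^{2(\vep - \gamma)H} \lesssim T^{2(\vep-\gamma)H}\, \lambda^{-\gamma/\vep},
\]
using that $\gamma < 0 < \vep$ gives $2(\vep - \gamma)H > 0$, so the $M$-factor is uniformly bounded by $T^{2(\vep-\gamma)H}$. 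Combining via $|Z_{s,t}|^2 \leq 2|Z^\delta_{s,t}|^2 + 2|\widetilde{Z}^\delta_{s,t}|^2$ and Jensen's inequality (to pass from rate $4\lambda$ to $2\lambda$ on the residual) yields the Corollary's bound with $\kappa(\lambda) \sim \exp(C \lambda^{1 - \gamma/\vep})$, an increasing function of $\lambda$ independent of $(s, t, y)$ and $\|f\|_{\mathcal{B}^\gamma_\infty}$.

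The main technical point is ensuring the balancing is uniform in $(s, t, y)$ as $s \to T$ or $|t-s|\to 0$; this is exactly what the weight $((T-s)/(T-t))^{2(\gamma-\vep)H}$ built into the Corollary's target does, absorbing the local-nondeterminism degradation of the bridge near its endpoint through the positivity of $2(\vep - \gamma)H$, so that the deterministic contribution remains uniformly bounded once the balancing scale $\delta$ has been fixed.
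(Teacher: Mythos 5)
Your proof is correct and follows essentially the same route as the paper: both decompose $f$ into a regular part that contributes only a deterministic factor and a rough part whose $\mathcal{B}^{\gamma-\varepsilon}_\infty$-norm is small, apply Lemma~\ref{lem:expsewing} at regularity $\gamma-\varepsilon$ to the latter, and tune the cutoff scale so that the resulting Gaussian rate dominates $\lambda$. The only (cosmetic) difference is that you mollify with the heat semigroup $G_\delta$ while the paper uses the Littlewood--Paley truncation of Lemma~\ref{lem:decomposition} with $\delta\sim 2^{-2N}$.
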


\begin{proof}%[Proof of Corollary~\ref{cor:sewingexp}]
The argument is almost the same as in \cite[p. 273]{GHM}. By Lemma~\ref{lem:decomposition}, for any $\vep>0$ and $N \in \mathbb{N}$, one can decompose $f\in L^\infty([0,T]; \cB^\gamma_\infty)$ 
as $f=f^{1,N}+f^{2,N}$ with
\begin{align*}
\|f^{1,N}\|_{L^\infty_{T} \mathcal{B}^{\varepsilon}_{\infty}}\leqslant C 2^{N(\varepsilon-\gamma)}\|f\|_{L^\infty_{T}\mathcal{B}^{\gamma}_\infty},\quad \|f^{2,N}\|_{L^\infty_{T}\mathcal{B}^{\gamma-\varepsilon}_\infty}\leqslant C 2^{-N\varepsilon}\|f\|_{L^\infty_{T}\mathcal{B}^{\gamma}_\infty}.
\end{align*}
Let $\lambda>0$ and choose $\tilde{N} \in \mathbb{N}$ large enough such that $\lambda \leqslant C^{-2}2^{2\tilde{N} \varepsilon-1} \bar{\lambda}$, for $\bar\lambda$ corresponding to $\gamma\equiv\gamma-\varepsilon$ in Lemma~\ref{lem:expsewing}. Then
\begin{align*}
\|f\|^{-2}_{L^\infty_{T}\mathcal{B}^\gamma_\infty}\lambda \Big|\int_s^t f_r(P_r^{T,y})\, \dd r\Big|
^2&\leqslant \|f\|^{-2}_{L^\infty_{T}\mathcal{B}^\gamma_\infty}C^{-2}2^{2\tilde{N}\varepsilon}\bar{\lambda}\left(\Big|\int_s^t f_r^{1,\tilde{N}}(P_r^{T,y})\, \dd r\Big|^2
+ \Big|\int_s^t f_r^{2,\tilde{N}}(P_r^{T,y})\, \dd r\Big|^2\right)\\
&\leqslant 2^{\tilde{N}(2\varepsilon-\gamma)}\bar{\lambda}\|f^{1,\tilde{N}}\|^{-2}_{L^\infty_{T}\mathcal{B}^\varepsilon_\infty}\Big|\int_s^t f_r^{1,\tilde{N}}(P_r^{T,y})\, \dd r\Big|^2\\
&\quad + \bar{\lambda} \|f^{2,\tilde{N}}\|^{-2}_{L^\infty_{T}\mathcal{B}^{\gamma-\varepsilon}_\infty}\Big|\int_s^t f_r^{2,\tilde{N}}(P_r^{T,y})\, \dd r\Big|^2\\
& \eqqcolon  I_1+I_2.
\end{align*}
 Since $\mathcal{B}^\varepsilon_\infty \hookrightarrow \mathcal{C}_b$, note that $I_1\leq C_{1} 2^{\tilde{N}(2\varepsilon-\gamma)}(t-s)^2 \bar\lambda$, for some $C_{1}>0$. Using this, we get
\begin{align*}
\EE\Bigg[\exp&\Big(\frac{\lambda}{\| f\|_{L^\infty_{T} \mathcal{B}^{\gamma}_\infty}^2} \frac{\Big|\int_s^t f_r(P_r^{T,y})\, \dd r\Big|^2}{|t-s|^{2(\gamma-\varepsilon) H + 2}} \Big(\frac{T-s}{T-t}\Big)^{2(\gamma-\varepsilon) H}\Big)\Bigg]\\
%&\lesssim \exp(2^{\tilde{N}(\varepsilon-\gamma)}\bar{\lambda})\EE\Bigg[\exp\Big(\frac{\bar{\lambda}I_2}{|t-s|^{2(\gamma-\varepsilon) H + 2}} \Big(\frac{T-s}{T-t}\Big)^{2(\gamma-\varepsilon) H}\Big)\Bigg] \\
&\leq \exp\Big(C_{1} 2^{\tilde{N}(2\varepsilon-\gamma)}(t-s)^2 \bar\lambda\Big)\, \EE\Bigg[\exp\Big(\frac{I_2}{|t-s|^{2(\gamma-\varepsilon) H + 2}} \Big(\frac{T-s}{T-t}\Big)^{2(\gamma-\varepsilon) H}\Big)\Bigg]
\end{align*}
and the result follows from Lemma~\ref{lem:expsewing}.
\end{proof}

\begin{proof}[Proof of Lemma~\ref{lem:expmoments1}]
	Let $\varepsilon>0$ such that $\gamma-\varepsilon>1-\frac{1}{2H}$ and $\delta>0$ such that $1+(\gamma-\varepsilon)H = 1/2 + H +\delta$. Define 
\begin{equation*}
c_\delta\coloneqq\int_{\overline{[0,T]}_\leq^2} |t-s|^{-1+\delta}\left(\frac{T-t}{T-s}\right)^{2 (\gamma-\varepsilon) H} \dd t \dd s;
\end{equation*}
the above integral is finite, since the singularity $|t-s|^{-1+\delta}$ along the diagonal $s=t$ is integrable, similarly $2(\gamma-\varepsilon) H>2H-1>-1$ implies that $t\mapsto (T-t)^{2(\gamma-\eps)H}$ is integrable as well, while $(T-s)^{-2(\gamma-\eps)H}\leq T^{-2(\gamma-\eps)H}$.
	Note that, in the definition of $\llbracket f\rrbracket_{H^\beta_{[0,T]}}$, by symmetry we may replace the domain of integration to be $\overline{[0,T]}_\leq^2$ instead of $[0,T]^2$ (up to a multiplicative constant $2$). With this in mind,	
	For any $\lambda>0$, by Jensen's inequality it holds
	\begin{align*}
		\EE\Big[\exp\Big(\lambda &\llbracket \tilde{Z}^{T,y}\rrbracket_{H_{[0,T]}^{H+1/2+\delta/2}}^2\Big)\Big]
		= \EE\bigg[ \exp\Big(2\lambda c_\delta \frac{1}{c_\delta} \int_{\overline{[0,T]}_\leq^2} \frac{1}{|t-s|^{1-\delta}} \frac{|\tilde{Z}^{T,y}_t-\tilde{Z}^{T,y}_s|^2}{|t-s|^{1+2H+2\delta}}\dd s\dd t\Big)\bigg]\\
		& \leq \frac{1}{c_\delta} \int_{\overline{[0,T]}_\leq^2} \frac{1}{|t-s|^{1-\delta}} \Big(\frac{T-t}{T-s}\Big)^{2(\gamma-\varepsilon) H} \underbrace{\EE\Big[\exp\Big(2\lambda c_\delta \frac{|\tilde{Z}^{T,y}_t-\tilde{Z}^{T,y}_s|^2}{|t-s|^{1+2H+2\delta}} \Big(\frac{T-s}{T-t}\Big)^{2(\gamma-\varepsilon) H}\Big) \Big]}_{\Cat} \dd s \dd t
		%& \lesssim \int_{[0,1]^2} \frac{1}{|t-s|^{1-\delta}} \Big(\frac{1-t}{1-s}\Big)^{2\tilde{\gamma} H} \dd s \dd t,
	\end{align*}
	where $\Cat<C(\delta,\lambda,\|b\|_{L^\infty_{T} \mathcal{B}^{\gamma}_\infty})$ by Corollary~\ref{cor:sewingexp}.
	Hence
	\begin{equation}\label{eq:expmomentseminorm}
	\EE\Big[ \exp\Big( \lambda \llbracket \tilde{Z}^{T,y}\rrbracket_{H_{[0,T]}^{H+1/2+\delta}}^2 \Big) \Big]<\infty, \quad \forall\, \lambda>0.
\end{equation}
Finally, note that the following Poincar\'e inequality holds, for $f\in H_{0}^\beta([0,T])$ with $\beta>1/2$:
\[\sup_{t\in [0,T]} |f(t)| \lesssim \llbracket f \rrbracket_{H^\beta_{[0,T]}},\]
%$\forall t\in [0,T]$, $|f(t)| \lesssim \llbracket f \rrbracket_{H^\beta_{[0,T]}}$,
see Equations (8.4) and (8.8) in \cite{H2G2}. 
	Thus $\llbracket \cdot \rrbracket_{H^\beta_{[0,T]}}$ is an equivalent seminorm to $\| \cdot \|_{H^\beta_{[0,T]}}$ on $H_{0}^{H+1/2+\delta/2}([0,T])$ and the conclusion follows from \eqref{eq:expmomentseminorm}.
	%. Hence the result follows from \eqref{eq:expmomentseminorm}, since $\tilde{Z}^{T,y} \in H_{0}^{H+1/2+\delta/2}([0,T])$.
\end{proof}

\subsection{Proof of Theorem~\ref{thm:Gaussiantails}}\label{sec:proofbounds}

As before, we work in the setup of Theorem~\ref{thm:Gaussiantails}, i.e. $b\in L^\infty([0,T];\cB^\gamma_\infty)$ with $H\in (0,1/2)$ and $\gamma>1-1/(2H)$, where w.l.o.g. $\gamma<0$ and $X_0=0$. Following Proposition~\ref{prop:reformulation} and the discussion thereafter, the following is crucial.

\begin{proposition} \label{prop:expmoments}
Let $y \in \mathbb{R}^d$. Let $V^{T,y}$ be as in \eqref{eq:semimartingale-underlying-bridge} for the kernel $K_H$ and $\tilde{Z}^{T,y}$ given in \eqref{eq:defZtilde}. Then for $\mathfrak{L}^{T,y} = K_H^{-1}\tilde{Z}^{T,y}$,  we have
\begin{equation*}
	\EE\Big[\exp\Big(\lambda \int_0^T |\mathfrak{L}^{T,y}_s| |V^{T,y}_s|\, \dd s + \lambda \int_0^T |\mathfrak{L}^{T,y}_s|^2\, \dd s\Big)\Big]<\infty, \quad \forall\, \lambda>0.
\end{equation*}
\end{proposition}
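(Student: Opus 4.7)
The plan is to convert both exponential moment estimates, Lemmas~\ref{lem:expmoments2} and~\ref{lem:expmoments1}, into compatible norms on $\mathfrak{L}^{T,y}$ and $V^{T,y}$, and then combine them via Cauchy--Schwarz on the expectation.

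First I would use the mapping property of $K_H^{-1}$ given by Lemma~\ref{lem:relation-sobolev-cameron-martin} to translate the $H^{H+1/2+\delta}$-regularity of $\tilde{Z}^{T,y}$ into integrability of $\mathfrak{L}^{T,y}$. Since $K_H$ is essentially a fractional integration of order $H+1/2$ and $\tilde{Z}^{T,y}_0=0$, that lemma gives $\|\mathfrak{L}^{T,y}\|_{H^\delta_{[0,T]}} \lesssim \|\tilde{Z}^{T,y}\|_{H^{H+1/2+\delta}_{[0,T]}}$. Shrinking the $\delta$ from Lemma~\ref{lem:expmoments1} if necessary so that $\delta \in (0,1/2)$, the one-dimensional Sobolev embedding $H^\delta([0,T]) \hookrightarrow L^{p'}([0,T])$ with $p' = 2/(1-2\delta) > 2$ yields
\begin{equation*}
\|\mathfrak{L}^{T,y}\|_{L^{p'}_{[0,T]}} + \|\mathfrak{L}^{T,y}\|_{L^2_{[0,T]}} \lesssim \|\tilde{Z}^{T,y}\|_{H^{H+1/2+\delta}_{[0,T]}}.
\end{equation*}

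Let $p = p'/(p'-1) \in (1,2)$ be the H\"older conjugate of $p'$; note that $p<2$ is precisely the range covered by Lemma~\ref{lem:expmoments2}. For any $\epsilon>0$, H\"older's inequality followed by $ab \leq a^2/(2\epsilon) + \epsilon b^2/2$ gives
\begin{equation*}
\lambda \int_0^T |\mathfrak{L}^{T,y}_s||V^{T,y}_s|\, \dd s \leq \lambda \|\mathfrak{L}^{T,y}\|_{L^{p'}_{[0,T]}} \|V^{T,y}\|_{L^p_{[0,T]}} \leq \frac{\lambda}{2\epsilon} \|\mathfrak{L}^{T,y}\|_{L^{p'}_{[0,T]}}^2 + \frac{\epsilon \lambda}{2} \|V^{T,y}\|_{L^p_{[0,T]}}^2,
\end{equation*}
and combining with the $L^2$-bound on $\mathfrak{L}^{T,y}$ yields a pointwise upper bound of the form
\begin{equation*}
\lambda \int_0^T |\mathfrak{L}^{T,y}_s||V^{T,y}_s|\, \dd s + \lambda \int_0^T |\mathfrak{L}^{T,y}_s|^2\, \dd s \leq C(\lambda,\epsilon) \|\tilde{Z}^{T,y}\|_{H^{H+1/2+\delta}_{[0,T]}}^2 + \frac{\epsilon \lambda}{2} \|V^{T,y}\|_{L^p_{[0,T]}}^2.
\end{equation*}

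Applying Cauchy--Schwarz to the expectation of the exponential of this bound produces a product of two exponential-moment terms; I would pick $\epsilon = c_p/(2\lambda)$, where $c_p$ is the threshold from Lemma~\ref{lem:expmoments2}, so that $\epsilon\lambda < c_p$ and the $V^{T,y}$-factor is finite, while the $\tilde{Z}^{T,y}$-factor is finite for every value of $C(\lambda,\epsilon)$ thanks to Lemma~\ref{lem:expmoments1}. The most delicate point is precisely this last balancing step: the $\epsilon$ needed to bring $V^{T,y}$ below its integrability threshold scales like $1/\lambda$, which forces the coefficient in front of $\|\tilde{Z}^{T,y}\|^2$ to grow unboundedly with $\lambda$. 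This is exactly the reason why Lemma~\ref{lem:expmoments1} was stated with finite exponential moments for \emph{every} $\lambda>0$, rather than only for a single threshold as in Lemma~\ref{lem:expmoments2}.
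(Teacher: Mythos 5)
Your proposal is correct and follows essentially the same route as the paper: H\"older plus Young to split the cross term with a small parameter on the $V^{T,y}$-factor, Lemma~\ref{lem:relation-sobolev-cameron-martin} to control $\mathfrak{L}^{T,y}$ in $L^{p'}$ for some $p'>2$ by $\|\tilde{Z}^{T,y}\|_{H^{H+1/2+\delta}}$, and then Lemmas~\ref{lem:expmoments1} and~\ref{lem:expmoments2} combined via Cauchy--Schwarz, with the small parameter tuned to stay below the threshold $c_p$ while the arbitrary-$\lambda$ statement of Lemma~\ref{lem:expmoments1} absorbs the resulting large constant. One minor remark: your intermediate claim $\|\mathfrak{L}^{T,y}\|_{H^\delta}\lesssim\|\tilde{Z}^{T,y}\|_{H^{H+1/2+\delta}}$ is stronger than what Lemma~\ref{lem:relation-sobolev-cameron-martin} actually establishes (it only yields the $L^{2+\varepsilon}$ bound), but since that $L^{p'}$ conclusion is all you use, this does not affect the argument.
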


\begin{proof}
For any $\varepsilon, \nu>0$, by H\"older's and Young's inequalities we have
\begin{align}\label{eq:LyVy}
\lambda \int_0^T |\mathfrak{L}^{T,y}_s| |V^{T,y}_s|\, \dd s
&\leqslant \frac{\lambda}{ \sqrt{\nu}} \|\mathfrak{L}^{T,y}\|_{L^{2+\varepsilon}_{[0,T]}} \sqrt{\nu} \,\|V^{T,y}\|_{L^{(2+\varepsilon)/(1+\varepsilon)}_{[0,T]}}\nonumber\\
&\leqslant \frac{\lambda^2}{2\nu} \|\mathfrak{L}^{T,y}\|_{L^{2+\varepsilon}_{[0,T]}}^2 + \frac{\nu}{2} \|V^{T,y}\|^2_{L^{(2+\varepsilon)/(1+\varepsilon)}_{[0,T]}}.
\end{align}
By Lemma~\ref{lem:relation-sobolev-cameron-martin}, for any $\delta>0$, we can choose $\varepsilon$ small enough such that
\begin{equation}\label{eq:embeddingKH}
	\|K_H^{-1}\cdot\|_{L^{2+\vep}_{[0,T]}}\lesssim  \| \cdot\|_{H_{[0,T]}^{H+1/2+\delta}}.
\end{equation}
Combining \eqref{eq:LyVy} and \eqref{eq:embeddingKH}, we see that in order to conclude it is sufficient to show that for some $\delta>0$ and any $\lambda>0$
\begin{equation}\label{eq:goal-L}
	\EE\Big[ \exp\Big( \lambda \| \tilde{Z}^{T,y}\|_{H_{[0,T]}^{H+1/2+\delta}}^2 \Big) \Big]<\infty,
\end{equation}
and that for some $c>0$ and any $p<2$
\begin{equation} \label{eq:Vy1}
\EE\Big[\exp\Big(c\|V^{T,y}\|_{L^{p}_{[0,T]}}^2\Big)\Big]<\infty.
\end{equation}
Note that \eqref{eq:goal-L} holds by Lemma~\ref{lem:expmoments1} and \eqref{eq:Vy1} holds by Lemma~\ref{lem:expmoments2}.
\end{proof}

\begin{proof}[Proof of Theorem~\ref{thm:Gaussiantails}]
We first prove \eqref{eq:Gaussianbound} for $t=T$. We know by Proposition~\ref{prop:reformulation} that
\begin{equation}\label{eq:gaussian_tails_proof1}
p(T,y)=C \exp\Big(-\frac{|y|^2}{2T^{2H}}\Big) \Psi(T,y)
\end{equation}
with
\begin{equation*}
\Psi(T,y) = \EE\Big[\exp\Big(\int_0^T (K_H^{-1}\tilde{Z}^{T,y})_s\cdot V^{T,y}_s\, \dd s + \int_0^T (K_H^{-1}\tilde{Z}^{T,y})_s\cdot \dd W_s  - \frac12 \int_0^T |(K_H^{-1}\tilde{Z}^{T,y})_s|^2\, \dd s \Big)\Big].
\end{equation*}
Recall that $\mathfrak{L}^{T,y} \coloneqq K_H^{-1}\tilde{Z}^{T,y}$. We start by proving the lower bound. Using Jensen's and H\"older's inequalities, and the fact that 
$\int_0^\cdot \mathfrak{L}^{T,y}_s \cdot \dd W_s$ is a martingale, we get for $\varepsilon>0$,
\begin{align*}
\Psi(T,y)&\geqslant \exp\left(-\EE\left[\int_0^T |\mathfrak{L}^{T,y}_s||V_s^{T,y}| \dd s +\frac{1}{2}\int_0^T |\mathfrak{L}^{T,y}_s|^2 \dd s\right]\right)\\
&\geqslant \exp\Big(-\EE\Big[ \tfrac{3}{2}\|\mathfrak{L}^{T,y}\|_{L^{2+\varepsilon}_{[0,T]}}^2 + \|V^{T,y}\|^2_{L^{(2+\varepsilon)/(1+\varepsilon)}_{[0,T]}}\Big]\Big).
\end{align*}
Using Lemma~\ref{lem:expmoments1} and the estimates \eqref{eq:estimateV1} and \eqref{eq:estimateV2} from the proof of Lemma~\ref{lem:expmoments2} , choosing $\varepsilon>0$ small enough, there exists a constant $C>0$ such that 
\begin{align*}
\exp\Big(-\EE\Big[ \tfrac{3}{2}\|\mathfrak{L}^{T,y}\|_{L^{2+\varepsilon}_{[0,T]}}^2 + \|V^{T,y}\|^2_{L^{(2+\varepsilon)/(1+\varepsilon)}_{[0,T]}}\Big]\Big)\geqslant \exp(-C(1+|y|^2)),
\end{align*}
which together with \eqref{eq:gaussian_tails_proof1} gives the lower bound in \eqref{eq:Gaussianbound}.
%The upper bound follows directly from Proposition~\ref{prop:expmoments}.
Proposition~\ref{prop:expmoments} readily implies that $\sup_{y\in\R^d} \Psi(T,y)<\infty$, from which the upper bound follows by virtue of \eqref{eq:gaussian_tails_proof1}.

The case $t \in (0,T]$ follows by the scaling from Lemma~\ref{lem:scaling}, using in particular \eqref{eq:scaling} and that all bounds are given in terms of the norm of the rescaled drift, which can be uniformly controlled by \eqref{eq:scalingdrift}. 
\end{proof}

\begin{remark}
One possible question is whether one can actually allow for $b \in L^2([0,T];\cB^\gamma_\infty)$ with $\gamma>1-1/(2H)$ (this is where one can apply Girsanov with standard fBm, see \cite[Lemma~C.3]{GaleatiGerencser}). The problem is that the local nondeterminism of the fractional Brownian bridge creates a singularity $(1-t)^{2\gamma H}$ and a priori it is not obvious if one can still ensure good integrability when the increments lose partial H\"older regularity.
\end{remark}

\begin{appendices}

\section{Besov spaces and functional inequalities}\label{app:besov}

We recall in this appendix several useful analysis results concerning Besov spaces $\cB^s_{p,q}$, including embeddings, duality relations, scaling and convolution inequalities.
Following \cite{BaDaCh}, we adopt the definition of (inhomogeneous) Besov spaces based on the inhomogeneous Littlewood--Paley (LP) blocks $\{\Delta_j\}_{j\geq -1}$ associated to a radial dyadic partition of unity; in particular this implies that
\begin{align*}
	\Delta_{-1}f=\phi\ast f, \quad
	\Delta_j f = \psi_j\ast f \coloneqq \big(2^{jd}\psi(2^j\cdot)\big)\ast f ~ \text{ for }j\geq 0,
\end{align*}
for some radially symmetric Schwartz functions $\phi, \psi:\R^d\to\R$.
For $n\in\N$, we define the truncation operator $S_n\coloneqq\sum_{j\leq n} \Delta_j$.

\begin{lemma}[Besov embeddings]
For $s \in \mathbb{R}$, $p,q \in [1,\infty]$, $1\leqslant p_1 \leqslant p_2 \leqslant \infty$, $1\leqslant q_1 \leqslant q_2 \leqslant \infty$ and any $\varepsilon>0$,
\begin{align}
\mathcal{B}^{s+\varepsilon}_{p}&\hookrightarrow \mathcal{B}^s_{p,q}\hookrightarrow \mathcal{B}^s_p,\label{eq:embeddingepsilon}\\
\mathcal{B}_{p_1,q_1}^s &\hookrightarrow \mathcal{B}^{s-d(p_1^{-1}-p_2^{-1})}_{p_2,q_2}. \label{eq:embedding2}
\end{align}
\end{lemma}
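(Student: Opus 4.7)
The plan is to verify each embedding by direct manipulation of the dyadic Littlewood--Paley characterization of Besov norms. Writing
\[
\|f\|_{\cB^s_{p,q}} = \Bigl(\sum_{j\geq -1} 2^{jsq}\|\Delta_j f\|_{L^p}^q\Bigr)^{1/q}
\]
(with the obvious modification for $q=\infty$), all three inclusions will reduce to elementary inequalities on the sequence $(2^{js}\|\Delta_j f\|_{L^p})_{j\geq-1}$ together with Bernstein's inequality when a change of integrability exponent is involved.

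First I would dispatch the chain $\cB^{s+\varepsilon}_p \hookrightarrow \cB^s_{p,q}\hookrightarrow \cB^s_p$. The right inclusion is immediate from the elementary bound $\sup_j|a_j|\leq \bigl(\sum_j|a_j|^q\bigr)^{1/q}$ applied to $a_j = 2^{js}\|\Delta_j f\|_{L^p}$, which gives the $q=\infty$ norm from the $q$-norm. For the left inclusion, I would write $2^{jsq}\|\Delta_j f\|_{L^p}^q = 2^{-j\varepsilon q}\,\bigl(2^{j(s+\varepsilon)}\|\Delta_j f\|_{L^p}\bigr)^q \leq 2^{-j\varepsilon q}\|f\|_{\cB^{s+\varepsilon}_p}^q$, sum in $j\geq -1$, and use the convergence of the geometric series $\sum_j 2^{-j\varepsilon q}$.

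For the second embedding $\cB^s_{p_1,q_1}\hookrightarrow \cB^{s-d(1/p_1-1/p_2)}_{p_2,q_2}$, the key ingredient is Bernstein's inequality: since each $\Delta_j f$ has Fourier support in a ball of radius $\sim 2^j$, one has
\[
\|\Delta_j f\|_{L^{p_2}} \lesssim 2^{jd(1/p_1-1/p_2)}\|\Delta_j f\|_{L^{p_1}},
\]
valid because $p_1\leq p_2$. Setting $\sigma \coloneqq s - d(1/p_1-1/p_2)$, this rearranges to $2^{j\sigma}\|\Delta_j f\|_{L^{p_2}} \lesssim 2^{js}\|\Delta_j f\|_{L^{p_1}}$. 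Taking the $\ell^{q_2}$ norm in $j$ and then using the monotonicity $\ell^{q_1}\hookrightarrow \ell^{q_2}$ (since $q_1\leq q_2$) yields $\|f\|_{\cB^\sigma_{p_2,q_2}}\lesssim \|f\|_{\cB^s_{p_1,q_2}}\leq \|f\|_{\cB^s_{p_1,q_1}}$, which is the claim.

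There is no real obstacle here; the only care needed is to handle the low-frequency block $\Delta_{-1}$ (whose Fourier support is a ball rather than an annulus) separately, but Bernstein's inequality applies to it as well, so the argument is uniform in $j\geq -1$. The special case $p=\infty$ or $q=\infty$ is handled by the usual convention that the $\ell^\infty$ norm replaces the sum.
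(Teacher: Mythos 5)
Your proof is correct: the chain $\cB^{s+\varepsilon}_p\hookrightarrow\cB^s_{p,q}\hookrightarrow\cB^s_{p,\infty}$ follows from the geometric series $\sum_{j\geq -1}2^{-j\varepsilon q}<\infty$ and $\ell^q\hookrightarrow\ell^\infty$, and the second embedding from Bernstein's inequality on each dyadic block (including $\Delta_{-1}$) together with $\ell^{q_1}\hookrightarrow\ell^{q_2}$. The paper does not prove this lemma but merely cites Triebel and Bahouri--Chemin--Danchin; your argument is exactly the standard Littlewood--Paley proof given in those references, so there is nothing to add.
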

\begin{proof}
See \cite[page 47, Prop. 2]{Triebel} for \eqref{eq:embeddingepsilon} and \cite[Prop.~2.71]{BaDaCh} for \eqref{eq:embedding2}.
\end{proof}

\begin{lemma}[Scaling]\label{lem:besov_scaling}
	For any $s\in (0,+\infty)$, there exists a constant $C=C(s,d)$ such that
	\begin{equation}\label{eq:besov_scaling_infty}
		\| f(\lambda \, \cdot\, )\|_{\cB^{-s}_\infty} \leq C( 1 + \lambda^{-s})\| f\|_{\cB^{-s}_\infty} \quad \forall\, \lambda>0, \, f\in \cB^{-s}_\infty.
	\end{equation}
	Similarly,
	\begin{equation}\label{eq:besov_scaling_L1}
		\| \lambda^d f(\lambda \, \cdot\, )\|_{\cB^s_1} \leq C( 1 + \lambda^s)\| f\|_{\cB^s_1} \quad \forall\, \lambda>0, \, f\in \cB^s_1.
	\end{equation}
\end{lemma}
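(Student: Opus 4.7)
The plan is to prove \eqref{eq:besov_scaling_L1} directly via a Littlewood--Paley analysis of the rescaling operator; \eqref{eq:besov_scaling_infty} will follow by the same dyadic bookkeeping with $L^\infty$ in place of $L^1$ and the sign of $s$ reversed, so I only sketch the first inequality. Throughout I would use the inhomogeneous LP decomposition $\{\Delta_j\}_{j\geq -1}$ described in the paper and the characterization $\|h\|_{\cB^s_p} = \sup_{j \geq -1} 2^{js}\|\Delta_j h\|_{L^p}$.

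First I would establish the key identity: setting $g(x) := \lambda^d f(\lambda x)$ and $\mu := 2^j/\lambda$, a change of variables in the defining convolution gives, for $j \geq 0$,
\begin{align*}
\Delta_j g(x) = \lambda^d (\psi_\mu \ast f)(\lambda x), \qquad \psi_\mu(y) := \mu^d \psi(\mu y),
\end{align*}
so that $\|\Delta_j g\|_{L^1} = \|\psi_\mu \ast f\|_{L^1}$. The crucial feature is that $\widehat{\psi_\mu}(\xi) = \psi(\xi/\mu)$ is supported in an annulus of radius $\sim \mu$; hence, by Fourier support considerations, only LP blocks of $f$ at the matching scale contribute: for $k \geq 0$, $\psi_\mu \ast \Delta_k f$ vanishes unless $2^k \sim \mu$, whereas $\psi_\mu \ast \Delta_{-1} f$ can be nonzero only when $\mu \lesssim 1$.

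Next I would split the estimate of $2^{js}\|\Delta_j g\|_{L^1}$ into two regimes according to the sign of $\log_2 \mu$. In the \emph{high-frequency regime} $2^j \geq C_0 \lambda$ (equivalently $\mu \gtrsim 1$), only finitely many blocks $\Delta_{k^\ast} f$ with $k^\ast \sim j - \log_2 \lambda \geq 0$ contribute, and Young's convolution inequality yields
\begin{align*}
2^{js}\|\Delta_j g\|_{L^1} \lesssim 2^{js} \|\Delta_{k^\ast} f\|_{L^1} = 2^{(j - k^\ast)s}\,\bigl(2^{k^\ast s}\|\Delta_{k^\ast}f\|_{L^1}\bigr) \lesssim \lambda^s \|f\|_{\cB^s_1}.
\end{align*}
In the \emph{low-frequency regime} $2^j \leq C_0 \lambda$ (equivalently $\mu \lesssim 1$), the convolution only sees $\Delta_{-1}f$ and $O(1)$ small-index blocks, giving $\|\psi_\mu \ast f\|_{L^1} \lesssim \|f\|_{\cB^s_1}$; combined with the prefactor $2^{js} \lesssim \lambda^s$, this again produces the bound $\lambda^s \|f\|_{\cB^s_1}$. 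The remaining case $j = -1$ is analogous: $\widehat{\Delta_{-1}g}$ has Fourier support in a fixed ball, so at most finitely many blocks $\Delta_k f$ contribute, and the geometric decay of $2^{-ks}$ (using $s > 0$) gives $\|\Delta_{-1}g\|_{L^1} \lesssim \|f\|_{\cB^s_1}$. Taking the supremum over $j \geq -1$ and combining the two contributions yields \eqref{eq:besov_scaling_L1}.

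The hard part, and the genuine source of the non-homogeneous correction $(1 + \lambda^s)$ rather than the cleaner homogeneous scaling $\lambda^s$, is precisely the second regime above: when the LP block of $g$ at scale $2^j$ matches a frequency $\mu \lesssim 1$ lying inside the range of the low-frequency truncation of $f$, one cannot exploit the dyadic decay of the Besov norm and must fall back on the trivial $\Delta_{-1}f$ bound. This is the only point where the inhomogeneous Besov spaces depart from their homogeneous analogues, where the scaling is exact.
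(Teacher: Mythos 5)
Your argument is correct, but it takes a genuinely different route from the paper. You work directly on the Fourier side: writing $\Delta_j\big(\lambda^d f(\lambda\cdot)\big)(x)=\lambda^d(\psi_\mu\ast f)(\lambda x)$ with $\mu=2^j/\lambda$ and exploiting the annular support of $\widehat{\psi_\mu}$ to see which blocks of $f$ each block of the rescaled function can interact with; the three regimes (matched high frequencies, mismatched low frequencies, and the $j=-1$ block, where the geometric sum $\sum 2^{\mp ks}$ over contributing indices is controlled by $s>0$) then give the two terms $1$ and $\lambda^{\pm s}$. The paper instead proves the $\cB^{-s}_\infty$ bound by duality against $\cB^{s}_{1,1}$ and then establishes the $L^1$-type bounds via equivalent real-variable norms: for $s\in(0,1)$ the Gagliardo-type seminorm $\int|g(x+y)-g(x)|\,|y|^{-d-s}\,\dd x\,\dd y$ (and second differences for $s=1$), for which the rescaling is a one-line change of variables; larger $s$ is handled by induction on derivatives. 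Your approach buys uniformity in $s$ — no case split at integer regularity, no induction, and no detour through duality for \eqref{eq:besov_scaling_infty} — and it isolates exactly where the inhomogeneous correction $(1+\lambda^{\pm s})$ comes from (the low-frequency truncation). The paper's approach is more elementary once the equivalent norms are quoted, and avoids any Fourier-support bookkeeping. One small caveat on your sketch of \eqref{eq:besov_scaling_infty}: the roles of the regimes switch under the sign reversal — for $\cB^{-s}_\infty$ the factor $\lambda^{-s}$ is produced by the $j=-1$ block (where the sum $\sum_{2^k\lesssim 1/\lambda}2^{ks}\sim\lambda^{-s}$ for $\lambda\leq 1$) and by the matched high-frequency regime, while the low-frequency regime for $j\geq 0$ yields the constant $1$; this is worth spelling out if you write the second inequality in full, but it does not affect correctness.
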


\begin{proof}
	We give the proof for $s\in (0,1]$, as the general case follows by induction on higher derivatives. Observe that for any $f\in \cB^{-s}_\infty$ and any $g \in \mathcal{C}^\infty_b$ it holds
	\begin{align*}
		\langle f(\lambda\,\cdot\,), g\rangle = \langle f, \lambda^{-d} g(\lambda^{-1}\,\cdot\,)\rangle
	\end{align*}
therefore by duality (cf. \cite[Prop. 2.76]{BaDaCh}) it suffices to show that
	\begin{equation}\label{eq:besov_scaling_goal}
		\| \lambda^{-d} g(\lambda^{-1}\,\cdot\,)\|_{\cB^s_{1,1}}
		 \lesssim \|g\|_{\cB^s_{1,1}} (1 + \lambda^{-s}) \quad\forall\, \lambda>0, \ \forall\, g\in \mathcal{C}^\infty_b\cap \cB^s_{1,1}.
	\end{equation}
First consider $s\in (0,1)$; in this case, by \cite[Section 17]{Leoni2017}, $\cB^s_{1,1}$ admits the equivalent norm
\begin{align*}
	\| g\tilde\|_{\cB^s_{1,1}} \coloneqq \| g\|_{L^1_x} + \llbracket g \tilde \rrbracket_{\cB^s_{1,1}},
	\quad \llbracket g \tilde\rrbracket_{\cB^s_{1,1}} \coloneqq \int_{\R^d\times \R^d} \frac{|g(x+y)-g(x)|}{|y|^{d+s}} \dd x \dd y.
\end{align*}
An immediate computation shows that
\begin{align*}
	\| \lambda^{-d} g(\lambda^{-1}\,\cdot\,)\|_{L^1_x} = \| g\|_{L^1_x}, \quad
	\llbracket \lambda^{-d} g(\lambda^{-1}\,\cdot\,) \tilde\rrbracket_{\cB^s_{1,1}} = \lambda^{-s} \llbracket g \tilde\rrbracket_{\cB^s_{1,1}}
\end{align*}
yielding \eqref{eq:besov_scaling_goal} in this case. For $s=1$, the argument is identical, the only difference being that the equivalent norm is now given by
\begin{align*}
	\| g\tilde\|_{\cB^1_{1,1}} \coloneqq \| g\|_{L^1_x} + \llbracket g \tilde \rrbracket_{\cB^1_{1,1}},
	\quad \llbracket g \tilde\rrbracket_{\cB^1_{1,1}} \coloneqq \int_{\R^d\times \R^d} \frac{|g(x+y)-2 g(x) + g(x-y)|}{|y|^{d+1}} \dd x \dd y.
\end{align*}
The proof of estimate \eqref{eq:besov_scaling_L1} follows exactly the same pattern, as for $s>0$ the norm $\| g\|_{\cB^s_1}$ is equivalent to $\| g\tilde\|_{\cB^s_1}\coloneqq \| g\|_{L^1_x} + \llbracket g\tilde\rrbracket_{\cB^s_1}$, where
\begin{align*}
		\llbracket g\tilde\rrbracket_{\cB^s_1} = \sup_{0<|h|\leq 1} \frac{\| g(\cdot+h)-g\|_{L^1_x}}{|h|^s}\ \text{ for } s\in (0,1), \quad
		\llbracket g\tilde\rrbracket_{\cB^1_1} = \sup_{0<|h|\leq 1} \frac{\| g(\cdot+h)-2g + g(\cdot-h)\|_{L^1_x}}{|h|^s};
\end{align*}
we leave the details to the reader.
\end{proof}

Throughout the paper we employed a duality statement concerning mixed Lebesgue-Besov spaces $L^r_{[s,t]} \cB^\gamma_{p,q}$. Although such a result is natural in view of the standard one given by \cite[Prop. 2.76]{BaDaCh}, we couldn't find a reference in the literature; so we include here a proof.
We recall that $\langle \cdot,\cdot\rangle$ stands for the duality between Schwartz function and tempered distributions on $\mathbb{R}^d$.

\begin{lemma}[Duality]\label{lem:dualspace}
Let $0\leqslant s <t\leq +\infty$, $\gamma \in \mathbb{R}$ and $p,q,r \in [1,\infty]$. Define the space
\begin{align*}
	\cE = \mathcal{C}^\infty_b([s,t]\times \R^d)\cap L^{r^\prime}([s,t];\cB^{-\gamma}_{p^\prime,q^\prime}).
\end{align*}
Then there exists a constant $C>0$ such that, for all $f \in L^r([s,t];\mathcal{B}^\gamma_{p,q})$, it holds
\begin{align}\label{eq:dualtesting}
	\|f\|_{L^r_{[s,t]}\cB^\gamma_{p,q}}
	\leqslant C \sup \bigg\{ \Big| \int_s^t \langle f(u),\phi(u)\rangle\, \dd u \Big| \ : \ \phi \in \cE, \ \| \phi\|_{L^{r'}_{[s,t]}\cB^{-\gamma}_{p',q'}} \leq 1\bigg\} .
\end{align}
\end{lemma}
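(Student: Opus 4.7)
The plan is to prove the inequality in two stages: first establish duality against a generic element of $L^{r'}([s,t];\cB^{-\gamma}_{p',q'})$, then reduce the supremum to smooth test functions by approximation.

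\textbf{Stage 1 (general duality).} I would first show that
\begin{equation*}
\|f\|_{L^r_{[s,t]}\cB^\gamma_{p,q}} \leq C \sup\left\{\left|\int_s^t \langle f(u),\phi(u)\rangle\, du\right|\ :\ \phi\in L^{r'}([s,t];\cB^{-\gamma}_{p',q'}),\ \|\phi\|\leq 1 \right\}.
\end{equation*}
The pointwise-in-$u$ inequality $\|g\|_{\cB^\gamma_{p,q}}\leq C \sup_{\psi\in\cB^{-\gamma}_{p',q'},\|\psi\|\leq 1}|\langle g,\psi\rangle|$ is a norming-subspace version of the Besov duality in \cite[Prop.~2.76]{BaDaCh}, valid for all $p,q\in[1,\infty]$; it can be obtained by unfolding the Littlewood--Paley (LP) definition of the Besov norm, dualizing the inner $L^p$-norm against $L^{p'}$ and the outer $\ell^q$-sum against $\ell^{q'}$, and invoking the near-orthogonality of LP blocks to bound the resulting test function in $\cB^{-\gamma}_{p',q'}$. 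Combining this with scalar $L^r$--$L^{r'}$ duality in time via a measurable selection (or a simple-function approximation in $u$) upgrades the statement to the space-time setting.

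\textbf{Stage 2 (smooth approximation).} Given $\phi$ as above, I would construct a sequence $\phi_N\in\cE$ with $\|\phi_N\|_{L^{r'}_{[s,t]}\cB^{-\gamma}_{p',q'}}\leq C$ uniformly in $N$, so that $\int_s^t \langle f,\phi_N\rangle\, du$ recovers $\int_s^t \langle f,\phi\rangle\,du$ up to a fixed multiplicative factor. The canonical choice combines the LP truncation $S_N\coloneqq\sum_{j\leq N}\Delta_j$ in space (which produces a $\cC^\infty_b$-function in $x$ by Bernstein's inequality, with $\cB^{-\gamma}_{p',q'}$-norm bounded uniformly) with a standard time mollification $\rho_{1/N}\ast_u$ after extending $\phi$ by zero outside $[s,t]$. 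Self-adjointness of the LP blocks, $\langle f,S_N\phi\rangle=\langle S_N f,\phi\rangle$, together with Fubini for the time-mollification step, rewrites the error as $\int_s^t \langle(\check\rho_{1/N}\ast_u S_N f - f)(u),\phi(u)\rangle\,du$.

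\textbf{Main obstacle.} The delicate point is to justify that this error is controlled when $p=\infty$ or $q=\infty$, since then Schwartz functions need not be dense in $\cB^{-\gamma}_{p',q'}$ and $S_N f\not\to f$ in $\cB^\gamma_{p,q}$. I would circumvent this by constructing $\phi$ \emph{directly} from an LP-based expansion of $f$ itself: for each $u$, represent $\|f(u)\|_{\cB^\gamma_{p,q}}$ by a finitely-supported LP expansion (when $q=\infty$, pick a single LP band realizing the supremum up to $\varepsilon$; otherwise truncate the $\ell^q$-sum to capture most of the mass), pairing each band with smooth, compactly supported $L^{p'}$-partners (dense in $L^{p'}$ for $p'<\infty$, and norming for $L^\infty$). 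The result is a finite sum of Schwartz-convolved pieces, hence $\cC^\infty_b$ and bounded in $x$; time smoothness follows by first approximating the (measurable) optimal selection by step functions in $u$ and then applying a final time mollification. This yields $\phi\in\cE$ with $\|\phi\|_{L^{r'}\cB^{-\gamma}_{p',q'}}\leq C$ and $\left|\int_s^t \langle f,\phi\rangle\,du\right| \geq C^{-1}\|f\|_{L^r\cB^\gamma_{p,q}}-\varepsilon$ for any $\varepsilon>0$, which proves the lemma upon letting $\varepsilon\to 0$ and absorbing the constants.
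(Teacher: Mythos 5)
Your Stage 1 coincides with the paper's: the pointwise norming property of $\cB^{-\gamma}_{p',q'}$ for $\cB^\gamma_{p,q}$ (\cite[Prop.~2.76]{BaDaCh}) is upgraded to the Bochner space $L^{r'}([s,t];\cB^{-\gamma}_{p',q'})$; the paper simply cites \cite[Prop.~1.3.1]{Hytonen2016} for that upgrade rather than doing a measurable selection by hand. Where you diverge is Stage 2, and here you make the argument much harder than it needs to be. The obstacle you identify (non-density of smooth functions when $q=\infty$, resp.\ $q=1$) is real, but it is dissolved by a simple observation you miss: since $q$ and $q'$ are \emph{conjugate}, $\min\{q,q'\}<\infty$ always. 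Hence, writing $\langle f(u), S_n\phi(u)\rangle=\langle S_n f(u),\phi(u)\rangle$, \emph{either} $S_n\phi(u)\to\phi(u)$ in $\cB^{-\gamma}_{p',q'}$ (if $q'<\infty$) \emph{or} $S_n f(u)\to f(u)$ in $\cB^\gamma_{p,q}$ (if $q<\infty$); in both cases the pairing converges, while $\|S_n\phi\|_{L^{r'}\cB^{-\gamma}_{p',q'}}\lesssim\|\phi\|_{L^{r'}\cB^{-\gamma}_{p',q'}}$ uniformly. The identical trick with $\min\{r,r'\}<\infty$ handles the time mollification. This keeps the approximand $\phi$ arbitrary and avoids your entire fallback construction.

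Your proposed workaround — building a near-optimizing $\phi$ directly from a band-selected Littlewood--Paley expansion of $f$ — is a legitimate alternative in principle (it would in fact render Stage 1 redundant), and the pointwise-in-$u$ construction you sketch does produce a $\cC^\infty_b$ test function with controlled $\cB^{-\gamma}_{p',q'}$-norm. But as written it is a sketch with genuine gaps: you do not justify the measurability of the selection $u\mapsto(j_0(u),g(u))$, nor that the step-function approximation followed by time mollification preserves both the lower bound on the pairing and the uniform $L^{r'}$-bound (the cases $r=1$ and $r=\infty$ need separate care), nor how the $L^{p'}$-partner is chosen when $p=\infty$ (where one must use that $L^1$ is norming for $L^\infty$ rather than dual to it). None of these is fatal, but each requires an argument, and all of them are avoided by the conjugate-exponent observation above.
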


\begin{proof}
Without loss of generality we can take $[s,t]=[0,1]$, the other cases being similar.
Moreover, up to considering both $\phi$ and $-\phi$, we may neglect the modulus appearing inside the supremum in \eqref{eq:dualtesting}.
By \cite[Prop. 2.76]{BaDaCh}, $\cB^{-\gamma}_{p',q'}$ is a closed norming space for $\cB^\gamma_{p,q}$ (in the sense of the definition given at \cite[p. 522]{Hytonen2016});
therefore by \cite[Prop. 1.3.1]{Hytonen2016}, $L^{r'}([0,1];\cB^{-\gamma}_{p',q'})$ is a norming space for $L^r([0,1];\cB^\gamma_{p,q})$, namely
\begin{equation}\label{eq:dualtesting_proof_eq1}
	\|f\|_{L^r_{[0,1]}\mathcal{B}^\gamma_{p,q}}
	\lesssim \sup \bigg\{ \int_s^t \langle f(u),\phi(u)\rangle\, \dd u \ : \ \phi \in L^{r^\prime}([0,1],\cB^{-\gamma}_{p^\prime,q^\prime}), \ \| \phi\|_{L^{r'}_{[0,1]}\cB^{-\gamma}_{p',q'}} \leq 1\bigg\} .
\end{equation}

In order to deduce \eqref{eq:dualtesting} from \eqref{eq:dualtesting_proof_eq1}, it suffices to show that, for any fixed $f\in L^r([0,1];\cB^\gamma_{p,q})$ and $\phi \in L^{r^\prime}([0,1];\cB^{-\gamma}_{p^\prime,q^\prime})$ with $\| \phi\|_{L^{r'}_{[0,1]}\cB^{-\gamma}_{p',q'}}\leq 1$, there exists a sequence $(\phi^n)_n\subset \mathcal{C}^\infty_b([0,1]\times \R^d)\cap L^{r^\prime}([0,1];\cB^{-\gamma}_{p^\prime,q^\prime})$, such that
\begin{equation}\label{eq:dualtesting_proof_eq2}
	\langle f, \phi^n\rangle \to \langle f,\phi\rangle, \quad
	\| \phi^n\|_{L^{r^\prime}_{[0,1]}\cB^{-\gamma}_{p^\prime,q^\prime}} \leq \kappa
\end{equation}
for some constant $\kappa>0$ independent of $f$, $\phi$.

We will prove \eqref{eq:dualtesting_proof_eq2} by a two-step approximation procedure, regularizing first in space and then in time.
For $n\in\N$, set $\phi^n\coloneqq S_n \phi$, where $S_n=\sum_{j\leq n} \Delta_j$ is the truncation operator in the $x$-variable. Then it holds 
\begin{align*}
	\langle f(t), S_n\phi(t)\rangle=\langle S_n f(t), \phi(t)\rangle, \quad
	\|\phi^n(t)\|_{\cB^{-\gamma}_{p',q'}} \lesssim \| \phi(t)\|_{\cB^{-\gamma}_{p',q'}};
\end{align*}
moreover, since $\min\{q,q'\}<\infty$, by \cite[Lemma 2.73]{BaDaCh} either $S_n f(t)\to f(t)$ in $\cB^\gamma_{p,q}$ or $\phi^n (t)\to \phi(t)$ in $\cB^{-\gamma}_{p',q'}$. Combining these considerations with dominated convergence theorem, we can conclude that
\begin{align*}
	\int_0^1 \langle  f(t), \phi^n(t)\rangle \dd t \to \int_0^1 \langle  f(t),  \phi(t)\rangle \dd t
\end{align*}
and that \eqref{eq:dualtesting_proof_eq2} holds, at least with $(\phi^n)_n\subset L^{r'}([0,1];\cB^{-\gamma}_{p',q'}\cap \mathcal{C}^\infty_b)$.

So we can now assume that $\phi\in L^r([0,1];\cB^{-\gamma}_{p',q'}\cap \mathcal{C}^\infty_b)$ and it remains to achieve regularity in time as well. The argument is similar: take a symmetric mollification in time, namely $\phi^\vep\coloneqq h^\vep\ast \phi$ with $h^\vep(t)=\vep^{-1} h(t\vep^{-1})$, similarly for $f$ (after possibly having extended $f$, $\phi$ to take value $0$ outside $[0,1]$). Then $\langle f, \phi^\vep	\rangle= \langle f^\vep, \phi \rangle$ and $\min\{r,r'\}<\infty$, so that either $f^\vep \to f$ in $L^r([0,1];\cB^\gamma_{p,q})$ or $\phi^\vep\to \phi$ in $L^{r'}([0,1];\cB^{-\gamma}_{p',q'})$. Arguing as before, we can deduce that \eqref{eq:dualtesting_proof_eq2} holds, concluding the proof.
\end{proof}

As a consequence of Lemma \ref{lem:dualspace}, we deduce that $L^r_{[s,t]} \cB^\gamma_{p,q}$ satisfies a version of the \emph{Fatou property}.

\begin{lemma}\label{lem:uniformBesov}
Let $\gamma \in \mathbb{R}$ and $p,q,r \in [1,\infty]$. Let $(\mu^n)_n$ be a bounded sequence in $L^r_{[s,t]} \cB^\gamma_{p,q}$, namely $\sup_n \| \mu^n\|_{L^r_{[s,t]} \cB^\gamma_{p,q}}<\infty$, such that $\mu^n$ converges weakly to some $\mu^\infty\in \mathcal{S}'([s,t]\times\mathbb{R}^d)$, in the sense that
\begin{equation*}
	\lim_{n\to\infty} \int_s^t \langle \mu^n(u), \phi(u)\rangle\,\dd u 
	= \int_s^t \langle \mu^\infty(u),\phi(u)\rangle\, \dd u,\quad \forall\, \phi\in \cC^\infty_b([s,t]\times\mathbb{R}^d) \cap L^{r'}_{[s,t]} \cB^{-\gamma}_{p',q'}.
\end{equation*}
Then $\mu^\infty \in L^r_{[s,t]} \cB^\gamma_{p,q}$ and $\| \mu^\infty\|_{L^r_{[s,t]} \cB^\gamma_{p,q}} \lesssim \liminf_{n\to\infty} \| \mu^n\|_{L^r_{[s,t]} \cB^\gamma_{p,q}}$.
\end{lemma}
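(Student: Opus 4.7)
The strategy is to lift the standard Fatou property for Lebesgue spaces into the mixed Lebesgue-Besov setting, using the duality characterization provided by Lemma \ref{lem:dualspace} as the main bridge between the weak convergence hypothesis and the norm bound.

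To begin, by the norming property of $\cB^{-\gamma}_{p',q'}$ for $\cB^{\gamma}_{p,q}$ (cf. \cite[Prop.~2.76]{BaDaCh}) together with H\"older's inequality in time, for every $\phi\in\cE$ and every $n\in\N$ one obtains
\begin{equation*}
\left|\int_s^t\langle\mu^n(u),\phi(u)\rangle\,\dd u\right|\leq C\|\mu^n\|_{L^r_{[s,t]}\cB^{\gamma}_{p,q}}\,\|\phi\|_{L^{r'}_{[s,t]}\cB^{-\gamma}_{p',q'}}.
\end{equation*}
Setting $M\coloneqq\liminf_n\|\mu^n\|_{L^r_{[s,t]}\cB^{\gamma}_{p,q}}$ and passing to a subsequence realising this $\liminf$, the assumed weak convergence of $\mu^n$ towards $\mu^\infty$ against test functions in $\cE$ yields the analogous estimate for $\mu^\infty$:
\begin{equation*}
\left|\int_s^t\langle\mu^\infty(u),\phi(u)\rangle\,\dd u\right|\leq CM\,\|\phi\|_{L^{r'}_{[s,t]}\cB^{-\gamma}_{p',q'}}\quad\forall\,\phi\in\cE.
\end{equation*}

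To convert this pairing estimate into an actual bound on the norm of $\mu^\infty$, I would consider a two-parameter regularisation of $\mu^\infty$, e.g. $\tilde\mu^{j,\eta}\coloneqq(S_j\mu^\infty)\ast_t\rho^\eta$, where $S_j=\sum_{k\leq j}\Delta_k$ is the Littlewood-Paley truncation in space and $\rho^\eta$ is a standard mollifier in time. Each $\tilde\mu^{j,\eta}$ is smooth in both variables and therefore belongs to $L^r_{[s,t]}\cB^{\gamma}_{p,q}$, so that Lemma \ref{lem:dualspace} applies to it directly. Using self-adjointness of $S_j$ and of $\rho^\eta\ast_t$ together with their uniform boundedness on $\cB^{-\gamma}_{p',q'}$ and $L^{r'}_t$ respectively, one sees that for any $\phi\in\cE$ with $\|\phi\|_{L^{r'}_{[s,t]}\cB^{-\gamma}_{p',q'}}\leq 1$, the function $(S_j\phi)\ast_t\rho^\eta$ still lies in $\cE$ with a uniformly controlled norm; combining with the previous paragraph gives
\begin{equation*}
\|\tilde\mu^{j,\eta}\|_{L^r_{[s,t]}\cB^{\gamma}_{p,q}}\leq C\sup_{\phi\in\cE,\,\|\phi\|\leq 1}\left|\int_s^t\langle\mu^\infty(u),(S_j\phi)\ast_t\rho^\eta(u)\rangle\,\dd u\right|\leq C'M,
\end{equation*}
uniformly in $j$ and $\eta$.

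It then remains to pass to the limit $j\to\infty$, $\eta\to 0$, to retrieve $\mu^\infty$ and conclude $\|\mu^\infty\|_{L^r_{[s,t]}\cB^{\gamma}_{p,q}}\leq C''M$. Relying on the Littlewood-Paley characterisation $\|f\|_{\cB^{\gamma}_{p,q}}=\|(2^{k\gamma}\|\Delta_k f\|_{L^p_x})_k\|_{\ell^q}$ and on the fact that $\Delta_k\tilde\mu^{j,\eta}=(\Delta_k\mu^\infty)\ast_t\rho^\eta$ for $k\leq j$, this is an application of Fatou's lemma successively in the $\ell^q_k$ and $L^r_u$ scales. The main technical obstacle lies precisely in this last step when some of $p,q,r$ are $\infty$: in that case genuine convergence of the mollified blocks need not hold, and one must work with weak-$\ast$ convergence in the relevant dual space; nevertheless, the lower semicontinuity of each norm with respect to such convergence is standard and suffices to close the argument.
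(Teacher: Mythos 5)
Your proposal is correct and follows essentially the same route as the paper: pass the duality bound from Lemma \ref{lem:dualspace} through the weak convergence and conclude by lower semicontinuity. The one place you go beyond the paper's own argument is the regularisation step $\tilde\mu^{j,\eta}=(S_j\mu^\infty)\ast_t\rho^\eta$, which is needed to justify applying \eqref{eq:dualtesting} in the converse direction (a uniform pairing bound over $\cE$ implying membership of $\mu^\infty$ in $L^r_{[s,t]}\cB^\gamma_{p,q}$, whereas Lemma \ref{lem:dualspace} is stated only for $f$ already in that space); the paper simply writes ``applying again \eqref{eq:dualtesting} the conclusion follows'', so your extra care here is welcome rather than redundant.
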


\begin{proof}
Up to refining the subsequence, we may assume that $\lim_{n\to\infty} \| \mu^n\|_{L^r_{[s,t]} \cB^\gamma_{p,q}}$ exists.
By Lemma \ref{lem:dualspace} and the assumption, it holds
\begin{align*}
	\Big|\int_s^t \langle \mu^\infty(u),\phi(u)\rangle \dd u \Big| = \lim_{n\to\infty} \Big|\int_s^t \langle \mu^n(u),\phi(u)\rangle \dd u \Big| \lesssim \lim_{n\to\infty} \| \mu^n\|_{L^r_{[s,t]} \cB^\gamma_{p,q}} \, \| \phi\|_{L^{r'}_{[s,t} \cB^{-\gamma}_{p',q'}}.
\end{align*}
As the estimate holds for any such $\phi$, applying again \eqref{eq:dualtesting} the conclusion follows.
\end{proof}

\begin{lemma} \label{lem:decomposition}
Let $s \in \mathbb{R}$ and $\alpha_1,\alpha_2>0$. Then, there exists $C=C(s)>0$ so that for any $N \in \mathbb{N}$ we can decompose $f\in\cB^s_\infty$ into $f=f^{1,N}+f^{2,N}$ with
\begin{align*}
\|f^{1,N}\|_{\mathcal{B}^{s+\alpha_1}_\infty}\leqslant C 2^{N\alpha_1}\|f\|_{\mathcal{B}^s_\infty},\quad \|f^{2,N}\|_{\mathcal{B}^{s-\alpha_2}_\infty}\leqslant C 2^{-N\alpha_2}\|f\|_{\mathcal{B}^s_\infty}.
\end{align*}
\end{lemma}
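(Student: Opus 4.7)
The plan is to split $f$ into low and high Littlewood--Paley frequencies around the scale $2^N$, using the inhomogeneous blocks $\{\Delta_j\}_{j\geq -1}$ and truncation $S_N=\sum_{j\leq N}\Delta_j$ introduced at the beginning of Appendix~\ref{app:besov}. Namely, I would set
\begin{equation*}
	f^{1,N} \coloneqq S_N f = \sum_{j\leq N}\Delta_j f,\qquad f^{2,N} \coloneqq f-S_N f = \sum_{j> N}\Delta_j f,
\end{equation*}
so that trivially $f = f^{1,N}+f^{2,N}$. The two estimates then reduce to showing that each summand has its LP spectrum essentially supported on the expected frequencies.

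The key observation is the standard almost-orthogonality relation $\Delta_j\Delta_k=0$ whenever $|j-k|\geq 2$. Applied to $f^{1,N}$, this gives $\Delta_j f^{1,N}=0$ for $j\geq N+2$ and $\Delta_j f^{1,N}=\sum_{k:|k-j|\leq 1,\, k\leq N}\Delta_k f$ otherwise, so that
\begin{equation*}
	\|\Delta_j f^{1,N}\|_{L^\infty_x}\lesssim \mathbbm{1}_{\{j\leq N+1\}}\max_{|k-j|\leq 1}\|\Delta_k f\|_{L^\infty_x}.
\end{equation*}
Multiplying by $2^{j(s+\alpha_1)}$ and using that $2^{js}\|\Delta_k f\|_{L^\infty_x}\leq 2^{|s|}\,2^{ks}\|\Delta_k f\|_{L^\infty_x}\leq 2^{|s|}\|f\|_{\mathcal{B}^s_\infty}$ for $|k-j|\leq 1$, the bound $2^{j\alpha_1}\leq 2^{\alpha_1}\cdot 2^{N\alpha_1}$ on the relevant range of $j$ yields the first desired inequality after taking the supremum over $j$.

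The symmetric argument gives the second inequality: $\Delta_j f^{2,N}=0$ for $j\leq N-1$, while for $j\geq N$ one has $\|\Delta_j f^{2,N}\|_{L^\infty_x}\lesssim \max_{|k-j|\leq 1}\|\Delta_k f\|_{L^\infty_x}$; multiplying by $2^{j(s-\alpha_2)}$ and using $2^{-j\alpha_2}\leq 2^{\alpha_2}\cdot 2^{-N\alpha_2}$ on that range finishes the proof. There is no real obstacle here; the only subtlety is to absorb the $2^{\alpha_1}$ and $2^{\alpha_2}$ factors (and the $2^{|s|}$ coming from shifting the frequency index by one) into a single multiplicative constant, which the statement permits.
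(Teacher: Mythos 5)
Your proposal is correct and follows essentially the same route as the paper's proof: both split $f$ via the Littlewood--Paley truncation $S_N$ and use the almost-orthogonality $\Delta_j\Delta_k=0$ for $|j-k|\geq 2$ to localize the spectrum of each piece before summing the geometric factors. Your version is in fact slightly more careful about the boundary index $j=N+1$ and the absorption of the constants, which the paper glosses over.
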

\begin{proof}
Let $f^{1,N}=\sum_{i \leqslant N} \Delta_i f$ and $f^{2,N}=\sum_{i>N}\Delta_i f$. Then
\begin{align*}
\|f^{1,N}\|_{\mathcal{B}^{s+\alpha_1}_{\infty}}
& =\sup_{j \in \mathbb{N}} 2^{j(s+\alpha_1)} \|\Delta_j \sum_{i\leqslant N} \Delta_i f\|_{L^\infty_x}\\
&\leqslant \sup_{j \leqslant N} 2^{j(s+\alpha_1)} \sum_{|i-j|\leqslant 1} \|\Delta_i f\|_{L^\infty_x}
\leqslant C 2^{(N+1)\alpha_1} \|f\|_{\mathcal{B}^s_\infty}.
\end{align*}
We omit the proof for $f^{2,N}$ as it follows by the same argument.
\end{proof}

Besov spaces behave nicely under convolution; the next statement is taken from \cite[Thm. 2.2]{KuhnSchilling}.

\begin{lemma}[Convolution]\label{lem:Besovconvolution}
Let $s_1,s_2 \in \mathbb{R}$ and $p,p_1,p_2,q,q_1,q_2 \in [1,\infty]$. Assume that
\begin{align*}
\frac{1}{q}\leqslant \frac{1}{q_1}+\frac{1}{q_2},\quad 1+\frac{1}{p}=\frac{1}{p_1}+\frac{1}{p_2}.
\end{align*}
Then there exists a constant $C>0$, depending on the above parameters, such that for all $f\in \mathcal{B}^{s_1}_{p_1,q_1}$ and $g\in \mathcal{B}^{s_2}_{p_2,q_2}$ it holds that
\begin{equation}\label{eq:Besovconvolution}
\|f\ast g\|_{\mathcal{B}^{s_1+s_2}_{p,q}}\leqslant C \|f\|_{\mathcal{B}^{s_1}_{p_1,q_1}}\|g\|_{\mathcal{B}^{s_2}_{p_2,q_2}}.
\end{equation}
\end{lemma}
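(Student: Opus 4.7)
The plan is to combine the Littlewood–Paley characterization
\begin{align*}
\|h\|_{\cB^s_{p,q}} \asymp \Big\|\big(2^{js}\|\Delta_j h\|_{L^p}\big)_{j\geq -1}\Big\|_{\ell^q}
\end{align*}
with Young's convolution inequality, applied block by block. First I would use that convolution commutes with the Littlewood–Paley projectors, so that, after expanding $g=\sum_{\ell}\Delta_\ell g$, one has $\Delta_j(f\ast g)=\sum_\ell \Delta_j f\ast\Delta_\ell g$. The crucial Fourier-support observation is that $\widehat{\Delta_j f\ast\Delta_\ell g}=\widehat{\Delta_j f}\,\widehat{\Delta_\ell g}$ is supported in the intersection of the two LP annuli; for $j,\ell\geq 0$ this intersection is empty unless $|j-\ell|\leq 2$. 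Hence for each $j$ the sum collapses to a uniformly bounded number of terms.

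Next I would apply Young's inequality, which is precisely where the relation $1+1/p=1/p_1+1/p_2$ enters:
\begin{align*}
\|\Delta_j f\ast\Delta_\ell g\|_{L^p}\leq\|\Delta_j f\|_{L^{p_1}}\|\Delta_\ell g\|_{L^{p_2}}.
\end{align*}
Setting $a_j\coloneqq 2^{js_1}\|\Delta_j f\|_{L^{p_1}}$ and $b_j\coloneqq 2^{js_2}\|\Delta_j g\|_{L^{p_2}}$, and exploiting $2^{js_2}\asymp 2^{\ell s_2}$ when $|j-\ell|\leq 2$, this yields
\begin{align*}
2^{j(s_1+s_2)}\|\Delta_j(f\ast g)\|_{L^p}\lesssim a_j\,\tilde b_j,\qquad \tilde b_j\coloneqq\sum_{|\ell-j|\leq 2}b_\ell.
\end{align*}

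Since $\|\tilde b\|_{\ell^{q_2}}\lesssim\|b\|_{\ell^{q_2}}$ and setting $1/q_\ast\coloneqq 1/q_1+1/q_2$, discrete H\"older gives
\begin{align*}
\Big\|(a_j\tilde b_j)_j\Big\|_{\ell^{q_\ast}}\leq\|a\|_{\ell^{q_1}}\|\tilde b\|_{\ell^{q_2}}\lesssim\|f\|_{\cB^{s_1}_{p_1,q_1}}\|g\|_{\cB^{s_2}_{p_2,q_2}}.
\end{align*}
The hypothesis $1/q\leq 1/q_1+1/q_2=1/q_\ast$ translates into $q\geq q_\ast$, so the embedding $\ell^{q_\ast}\hookrightarrow\ell^q$ delivers the bound \eqref{eq:Besovconvolution}.

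The main nuisance I expect concerns the low-frequency block $\Delta_{-1}$, whose Fourier support is a ball near the origin rather than an annulus. This forces separate treatment of mixed terms $\Delta_{-1}f\ast\Delta_\ell g$ and $\Delta_k f\ast\Delta_{-1}g$: the ball–annulus intersection is empty as soon as $\ell$ (respectively $k$) exceeds a fixed threshold, so only finitely many such cross-terms survive. Each can be absorbed into the constant via Young's inequality combined with the trivial bound $\|\Delta_{-1}h\|_{L^{p'}}\lesssim\|h\|_{\cB^{\sigma}_{p',q'}}$, valid for any $\sigma\in\R$, $q'\in[1,\infty]$. This is the only place where careful bookkeeping is required; the high-frequency argument above is the heart of the estimate.
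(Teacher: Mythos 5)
The paper does not actually prove this lemma; it is imported verbatim from K\"uhn--Schilling~[Thm.~2.2], so there is no in-paper argument to compare against. Your proposal is the standard direct proof and is essentially the argument of that reference: convolution commutes with the Littlewood--Paley projectors; the Fourier support of $\Delta_j f\ast\Delta_\ell g$ lies in the \emph{intersection} of the two annuli (unlike for products, where it lies in the sumset --- this is precisely why no paraproduct analysis is needed and the regularities simply add), so the double sum is quasi-diagonal; Young's inequality accounts for the Lebesgue exponents; and discrete H\"older together with $\ell^{q_\ast}\hookrightarrow\ell^{q}$ for $q\geq q_\ast$ accounts for the summability index, which is exactly where the hypothesis $1/q\leq 1/q_1+1/q_2$ enters. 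Your handling of the $\Delta_{-1}$ block is also correct. Two minor points worth stating explicitly: (i) when $1/q_1+1/q_2>1$ one has $q_\ast<1$, so $\ell^{q_\ast}$ is only quasi-normed; the generalized H\"older inequality and the embedding into $\ell^q$ still hold verbatim, but this deserves a word. (ii) To justify, at the level of tempered distributions, both the interchange $\Delta_j(f\ast g)=\sum_\ell \Delta_j f\ast\Delta_\ell g$ and the vanishing of off-diagonal terms, the clean device is to insert a fattened block $\tilde\Delta_\ell$ with $\tilde\Delta_\ell\Delta_\ell=\Delta_\ell$ and write $\Delta_j f\ast\Delta_\ell g=(\Delta_j f\ast\tilde\psi_\ell)\ast\Delta_\ell g$, the first factor vanishing once the slightly enlarged annuli are disjoint. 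Neither point affects the correctness of your argument.
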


We need some useful relations between Besov norms and Wasserstein inequalities.
In the next statement, for $r\in [1,\infty)$, we denote by $\mathbb{W}_r(\mu,\nu)$ the $r$-Wasserstein distance between two probability measures $\mu$ and $\nu$; see \cite[Chap. 6]{Villani2009} for more details. Whenever a probability measure $\mu$ on $\R^d$ is absolutely continuous w.r.t. the Lebesgue measure $\cL^d$ and its density $\dd \mu/\dd \cL^d$ belongs to $L^q_x$, with a slight abuse we identify $\mu$ with its density and simply write $\mu\in \mathcal{P}(\R^d)\cap L^q_x$.

\begin{lemma}\label{lem:wasserstein_negBesov}
For any $r\in [1,\infty)$ and any $\mu,\nu\in\mathcal{P}(\mathbb{R}^d)\cap L^\infty_x$ it holds that
\begin{equation}\label{eq:ineq_wasserstein_negBesov}
	\| \mu-\nu\|_{\cB^{-1}_{r}} \lesssim (\|\mu\|_{L^\infty_x}^{\frac{1}{r'}} + \|\nu\|_{L^\infty_x}^{\frac{1}{r'}}) \mathbb{W}_{r}(\mu,\nu).
\end{equation}
\end{lemma}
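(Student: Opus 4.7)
The plan is to combine the duality characterization of Besov norms provided by Lemma~\ref{lem:dualspace} with a Hajłasz-type pointwise inequality involving the Hardy--Littlewood maximal function; this converts the problem into a standard Kantorovich duality bound for the $\mathbb{W}_r$-distance. First, applying the static version of Lemma~\ref{lem:dualspace} (taking the time interval to be a singleton, or applying the standard duality between $\cB^{-1}_{r,\infty}$ and its norming space $\cB^1_{r',1}$, cf.~\cite[Prop.~2.76]{BaDaCh}), I would obtain
\[
\|\mu-\nu\|_{\cB^{-1}_r} \;\lesssim\; \sup_{\phi}\, |\langle \mu-\nu,\phi\rangle|,
\]
where the supremum ranges over $\phi \in \cC^\infty_c(\R^d)$ with $\|\phi\|_{\cB^1_{r',1}}\leq 1$. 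For any such $\phi$, Bernstein's inequality on the Littlewood--Paley blocks gives $\|\nabla \phi\|_{L^{r'}} \leq \sum_{j\geq -1} \|\nabla \Delta_j \phi\|_{L^{r'}} \lesssim \sum_j 2^j \|\Delta_j \phi\|_{L^{r'}} = \|\phi\|_{\cB^1_{r',1}} \leq 1$.

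Next, for any coupling $\gamma\in\Pi(\mu,\nu)$ with marginals $X\sim\mu$, $Y\sim\nu$, I would rewrite $\langle \mu-\nu,\phi\rangle = \EE[\phi(X)-\phi(Y)]$ and invoke the classical Hajłasz pointwise estimate: for $\phi\in W^{1,r'}_{\mathrm{loc}}$ and almost every $x,y\in\R^d$,
\[
|\phi(x)-\phi(y)| \;\leq\; C|x-y|\bigl(M|\nabla\phi|(x) + M|\nabla\phi|(y)\bigr),
\]
where $M$ denotes the (centered) Hardy--Littlewood maximal operator.

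Feeding this into H\"older's inequality on the probability space, with conjugate exponents $(r,r')$, and then taking the infimum over couplings yields
\[
\EE|\phi(X)-\phi(Y)| \;\leq\; C\,\mathbb{W}_r(\mu,\nu)\Bigl( \|M|\nabla\phi|\|_{L^{r'}(\mu)} + \|M|\nabla\phi|\|_{L^{r'}(\nu)} \Bigr).
\]
Using $\|M|\nabla\phi|\|_{L^{r'}(\mu)}^{r'} = \int (M|\nabla\phi|)^{r'}\mathrm{d}\mu \leq \|\mu\|_{L^\infty_x} \|M|\nabla\phi|\|_{L^{r'}}^{r'}$, and similarly for $\nu$, I would then invoke the Hardy--Littlewood maximal inequality (valid for $r'\in(1,\infty]$) to estimate $\|M|\nabla\phi|\|_{L^{r'}} \lesssim \|\nabla\phi\|_{L^{r'}} \lesssim 1$. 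This yields the bound $|\langle \mu-\nu,\phi\rangle| \lesssim \mathbb{W}_r(\mu,\nu)\bigl(\|\mu\|_{L^\infty_x}^{1/r'} + \|\nu\|_{L^\infty_x}^{1/r'}\bigr)$, from which \eqref{eq:ineq_wasserstein_negBesov} follows by taking the supremum over $\phi$. The border case $r=1$ (so $r'=\infty$) is simpler, as one directly bounds $M|\nabla\phi|\leq \|\nabla\phi\|_{L^\infty}\lesssim 1$ almost everywhere; by convention $\|\mu\|_{L^\infty_x}^{0} + \|\nu\|_{L^\infty_x}^0 = 2$, and the statement reduces essentially to the Kantorovich--Rubinstein duality.

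The main technical point to verify will be the Hajłasz pointwise bound, which is completely standard (see e.g.~Hajłasz~1996) but should be recalled; the Bernstein step and the duality step are routine given the preliminaries already established in the paper.
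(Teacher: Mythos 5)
Your proposal is correct and follows essentially the same route as the paper: the heart of both arguments is the claim that $|\langle \mu-\nu,g\rangle|\lesssim \|\nabla g\|_{L^{r'}_x}(\|\mu\|_{L^\infty_x}^{1/r'}+\|\nu\|_{L^\infty_x}^{1/r'})\,\mathbb{W}_r(\mu,\nu)$ for $g\in W^{1,r'}$, proved exactly as you do via Haj\l asz's pointwise inequality, H\"older on an optimal coupling, and $L^{r'}$-boundedness of the maximal operator (with the $r=1$, $r'=\infty$ case handled separately). The only difference is cosmetic: the paper reduces to this claim block by block, dualizing $\|\Delta_j(\mu-\nu)\|_{L^r_x}$ against $L^{r'}_x$ and using $\|\nabla(\psi_j\ast f)\|_{L^{r'}_x}\lesssim 2^j\|f\|_{L^{r'}_x}$, whereas you dualize the full $\cB^{-1}_{r,\infty}$-norm against the norming space $\cB^1_{r',1}$ and use Bernstein's inequality.
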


\begin{proof}
Without loss of generality we can assume $\mathbb{W}_r(\mu,\nu)<\infty$, the other case being trivial.
We show how to estimate $\| \Delta_j (\mu-\nu)\|_{L^r_x}$ for $j\geq 0$, the case $j=-1$ being similar; recall that $\Delta_j f= \psi_j \ast f$ for $\psi_j(x)=2^{jd} \psi(2^j x)$.

Consider first the case $r=1$. Since $\mu-\nu$ is a finite measure and $\psi_j\in L^1_x$, $\psi_j\ast(\mu-\nu)\in L^1_x$ and so by duality
\begin{align*}
	\| \Delta_j (\mu-\nu)\|_{L^1_x}
	= \sup_{\| f\|_{L^\infty_x}\leq 1} |\langle \psi_j\ast(\mu-\nu),f\rangle|
	= \sup_{\| f\|_{L^\infty_x}\leq 1} |\langle \mu-\nu, \psi_j\ast f\rangle|.
\end{align*}
By Kantorovich--Rubinstein duality (cf. \cite[Rem. 6.5]{Villani2009}), for any such $f$ we have
\begin{align*}
	|\langle \mu-\nu, \psi_j\ast f\rangle|
	\leq \mathbb{W}_1(\mu,\nu) \| \nabla(\psi_j\ast f)\|_{L^\infty_x}
	\leq \mathbb{W}_1(\mu,\nu) \| \nabla \psi_j\|_{L^1} \| f\|_{L^\infty_x}
	\sim 2^j\, \mathbb{W}_1(\mu,\nu) ,
\end{align*}
where in the intermediate estimate we used Young's inequality. Therefore
\begin{align*}
	\sup_{j\geq 0} 2^{-j} \| \Delta_j (\mu-\nu)\|_{L^1_x} \lesssim \mathbb{W}_1(\mu,\nu) ,
\end{align*}
which proves the statement for $r=1$, up to running the same estimate for $\Delta_{-1}$.

Consider now the case $r\in (1,\infty)$; as before, we can employ duality to find
\begin{align*}
	\| \Delta_j (\mu-\nu)\|_{L^1_x}
	= \sup_{\| f\|_{L^{r'}_x}\leq 1} |\langle \mu-\nu, \psi_j\ast f\rangle|
\end{align*}
where now
\begin{align*}
	\| \nabla (\psi_j\ast f)\|_{L^{r'}_x} \leq \| \nabla \psi_j\|_{L^1_x} \| f\|_{L^{r'}_x} \sim 2^j.
\end{align*}
Therefore in order to conclude, it suffices (up to taking $g=f\ast \psi_j$) to show that
\begin{equation}\label{eq:wasserstein_claim}
	|\langle \mu-\nu, g\rangle| \lesssim \| \nabla g\|_{L^{r'}_x} (\|\mu\|_{L^\infty_x}^{\frac{1}{r'}} + \|\nu\|_{L^\infty_x}^{\frac{1}{r'}}) \mathbb{W}_{r}(\mu,\nu), \quad \forall\, g\in W^{1,r}.
\end{equation}
The claim follows by readopting the arguments from \cite[Lem.~A.8]{GaHaMa2022}, but let us give a self-contained proof.
Let $\pi\in \mathcal{P}(\R^d\times \R^d)$ be an optimal coupling for $(\mu,\nu)$, then by Hajlasz's inequality (cf. \cite[Lem.~A.3]{CriDeL2008}) we have
\begin{align*}
	|\langle \mu-\nu, g\rangle|
	& \leq \int_{\R^d\times \R^d} |g(x)-g(y)|\, \pi(\dd x,\dd y)\\
	& \lesssim \int_{\R^d\times \R^d} |x-y| (\mathcal{M}\nabla g(x)+\mathcal{M}\nabla g(y))\, \pi(\dd x,\dd y)\\
	& \lesssim \bigg( \int_{\R^d\times \R^d} |x-y|^r\, \pi(\dd x,\dd y) \bigg)^{\frac{1}{r}} \bigg( \int_{\R^d\times \R^d} (\big|\mathcal{M}\nabla g(x)\big|^{r'}+\big|\mathcal{M}\nabla g(y)\big|^{r'})\, \pi(\dd x,\dd y) \bigg)^{\frac{1}{r'}}\\
	& = \mathbb{W}_r(\mu,\nu) \bigg( \int_{\R^d} \big|\mathcal{M}\nabla g(x)\big|^{r'} \mu(\dd x) + \int_{\R^d} \big|\mathcal{M}\nabla g(y)\big|^{r'} \nu(\dd y) \bigg)^{\frac{1}{r'}}\\
	& \lesssim \mathbb{W}_r(\mu,\nu) \| \mathcal{M}\nabla g\|_{L^{r'}_x} (\|\mu\|_{L^\infty_x}^{\frac{1}{r'}} + \|\nu\|_{L^\infty_x}^{\frac{1}{r'}}) ,
\end{align*}
which finally yields \eqref{eq:wasserstein_claim}, since the maximal operator $\mathcal M$ is bounded on $L^{r'}_x$ for $r\in(1,\infty)$.
\end{proof}

\begin{corollary}\label{cor:convol_wasserstein}
	For any $p\in (1,\infty]$, $\alpha\in\R$ it holds that
\begin{equation}\label{eq:convol_wasserstein}
	\| f\ast (\mu-\nu)\|_{\cB^{\alpha-1}_{\infty}} \lesssim \| f\|_{\cB^{\alpha}_{p}} (\|\mu\|_{L^\infty_x}^{\frac{1}{p}} + \|\nu\|_{L^\infty_x}^{\frac{1}{p}}) \mathbb{W}_{p'}(\mu,\nu)
\end{equation}
for all $f\in \cB^{\alpha}_{p}$ and $\mu,\nu\in\mathcal{P}(\R^d)\cap L^\infty_x$.
\end{corollary}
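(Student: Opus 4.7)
The plan is to deduce this corollary by directly combining the two key ingredients that appear just before it: the negative-Besov/Wasserstein estimate in Lemma~\ref{lem:wasserstein_negBesov} and the convolution inequality for Besov spaces in Lemma~\ref{lem:Besovconvolution}. No further machinery should be needed.

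More precisely, I would first apply Lemma~\ref{lem:wasserstein_negBesov} with $r = p' \in [1,\infty)$ (this is exactly the reason why the corollary restricts to $p \in (1,\infty]$, so that $p' \in [1,\infty)$ lies in the range where Lemma~\ref{lem:wasserstein_negBesov} applies). Since $1/r' = 1/(p')' = 1/p$ (with the usual convention $1/\infty = 0$), this gives
\begin{equation*}
\| \mu - \nu \|_{\cB^{-1}_{p'}} \lesssim \bigl( \|\mu\|_{L^\infty_x}^{1/p} + \|\nu\|_{L^\infty_x}^{1/p} \bigr)\, \mathbb{W}_{p'}(\mu,\nu).
\end{equation*}

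Next, I would apply Lemma~\ref{lem:Besovconvolution} with parameters $s_1 = \alpha$, $s_2 = -1$, $p_1 = p$, $p_2 = p'$, $q_1 = q_2 = \infty$, and corresponding target exponents $p_{\mathrm{target}} = \infty$ and $q_{\mathrm{target}} = \infty$. The compatibility conditions $1 + 1/p_{\mathrm{target}} = 1/p_1 + 1/p_2$ and $1/q_{\mathrm{target}} \leq 1/q_1 + 1/q_2$ are both satisfied (in fact they read $1 = 1/p + 1/p'$ and $0 \leq 0$), so we obtain
\begin{equation*}
\| f \ast (\mu - \nu) \|_{\cB^{\alpha-1}_\infty} \lesssim \| f \|_{\cB^\alpha_p} \, \| \mu - \nu \|_{\cB^{-1}_{p'}}.
\end{equation*}
Chaining the two displays yields \eqref{eq:convol_wasserstein}. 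The only point that requires a moment of care is the endpoint $p = \infty$, where $p' = 1$ and $\|\mu\|_{L^\infty_x}^{1/p} + \|\nu\|_{L^\infty_x}^{1/p}$ reduces to the constant $2$; but the estimate from Lemma~\ref{lem:wasserstein_negBesov} is valid at $r = 1$, and the convolution inequality still applies, so no separate treatment is needed. There is no real obstacle here: the statement is essentially a bookkeeping exercise verifying that the Besov/Wasserstein bound for $\mu - \nu$ is compatible with the convolution estimate against $f \in \cB^\alpha_p$.
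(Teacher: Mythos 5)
Your proposal is correct and coincides with the paper's own proof: the paper also deduces the corollary by chaining Lemma~\ref{lem:wasserstein_negBesov} (applied with $r=p'$, so that the exponent $1/r'=1/p$) with the convolution inequality of Lemma~\ref{lem:Besovconvolution} for the choice $s_1=\alpha$, $s_2=-1$, $p_1=p$, $p_2=p'$, $q_1=q_2=\infty$. Your verification of the compatibility conditions and of the endpoint $p=\infty$ is accurate.
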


\begin{proof}
	It follows from combining inequalities \eqref{eq:ineq_wasserstein_negBesov} and \eqref{eq:Besovconvolution}, for the choice $s_1=\alpha$, $s_2=-1$, $q_1=q_2=\infty$, $p_1=p$ and $p_2=p'$.
\end{proof}

We conclude this appendix by recalling some facts about the heat kernel semigroup $\{G_t\}_{t\geq 0}$ defined by \eqref{Gaussiansemigroup}.

\begin{lemma}\label{A.3}
Let $\gamma \in \mathbb{R}$ 
and $p \in [1,\infty]$. Then for any $f \in \mathcal{B}_p^\gamma$ it holds that:
\begin{enumerate} [label=(\alph*)]
    \item $\sup_{t>0}\|G_tf\|_{\mathcal{B}_p^\gamma}\leq \|f\|_{\mathcal{B}_p^\gamma}$; \label{A.3.2}
    \item $\| G_t f - f\|_{\cB^{\gamma-\varepsilon}_p}\lesssim t^{\varepsilon/2} \| f\|_{\cB^{\gamma}_p}$ for all $t\in [0,1]$ and $\varepsilon\in [0,1]$; \label{A.3.3}
    \item $\|G_tf\|_{\mathcal{C}^\beta_b}\lesssim \|f\|_{\mathcal{B}_p^\gamma} (1+t^{(\gamma-d/p-\beta)/2})$ for all $t>0$, provided that $\gamma-d/p<\beta$. \label{A.3.4}
\end{enumerate}
\end{lemma}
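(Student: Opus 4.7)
The plan is to use the Littlewood--Paley (LP) characterization of Besov spaces together with the fact that, since $G_t$ is a convolution operator, it commutes with every LP block $\Delta_j$ and satisfies $\|g_t\ast h\|_{L^p_x}\leq \|h\|_{L^p_x}$ by Young's inequality (as $\|g_t\|_{L^1_x}=1$).

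For \ref{A.3.2}: apply $\Delta_j$ to $G_tf$ and use commutativity: $\Delta_j G_tf = g_t\ast \Delta_j f$. Young's inequality gives $\|\Delta_j G_tf\|_{L^p_x}\leq \|\Delta_j f\|_{L^p_x}$. Multiplying by $2^{j\gamma}$ and taking the supremum over $j\geq -1$ yields the claim.

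For \ref{A.3.3}: the key ingredient is the two-sided LP estimate
\begin{equation*}
\|\Delta_j(G_tf-f)\|_{L^p_x}\lesssim \min\{1,\,t\,2^{2j}\}\,\|\Delta_j f\|_{L^p_x},\qquad j\geq 0.
\end{equation*}
The bound by $1$ is just \ref{A.3.2}; for the other, since $\Delta_j f$ has Fourier support in the annulus $\{|\xi|\sim 2^j\}$, one may write $G_t\Delta_jf-\Delta_jf=\tfrac12\int_0^t\Delta G_s\Delta_jf\,\dd s$ and use Bernstein's inequality $\|\Delta \Delta_j f\|_{L^p_x}\lesssim 2^{2j}\|\Delta_j f\|_{L^p_x}$ together with \ref{A.3.2} applied to $G_s$. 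Since $\min\{1,t\,2^{2j}\}\leq (t\,2^{2j})^{\varepsilon/2}$ for any $\varepsilon\in[0,1]$, one gets $2^{j(\gamma-\varepsilon)}\|\Delta_j(G_tf-f)\|_{L^p_x}\lesssim t^{\varepsilon/2}\,2^{j\gamma}\|\Delta_j f\|_{L^p_x}$. For the low-frequency block $\Delta_{-1}$, the same argument works (or simply use $\|\Delta_{-1}(G_tf-f)\|_{L^p_x}\lesssim t\,\|\Delta_{-1}f\|_{L^p_x}$ via $\Delta \phi\in L^1_x$ and Young), which is bounded by $t^{\varepsilon/2}$ for $t\leq 1$. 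Taking the supremum in $j$ yields \ref{A.3.3}.

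For \ref{A.3.4}: by the Besov embedding \eqref{eq:embedding2}, $\|g\|_{\cB^{\gamma-d/p}_\infty}\lesssim\|g\|_{\cB^\gamma_p}$, so setting $\tilde\gamma\coloneqq\gamma-d/p<\beta$, it suffices to prove
\begin{equation*}
\|G_t g\|_{\mathcal{C}^\beta_b}\lesssim \|g\|_{\cB^{\tilde\gamma}_\infty}\bigl(1+t^{(\tilde\gamma-\beta)/2}\bigr).
\end{equation*}
For $\beta\in\R_+\setminus\N$, the $\mathcal{C}^\beta_b$ norm is equivalent to $\|\cdot\|_{\cB^\beta_\infty}$, so we estimate $2^{j\beta}\|\Delta_j G_tg\|_{L^\infty_x}$ using the heat-semigroup smoothing on LP blocks: since $\widehat{\Delta_j G_t g}=e^{-t|\xi|^2/2}\widehat{\Delta_j g}$ is supported in $\{|\xi|\sim 2^j\}$, a standard Fourier multiplier/Bernstein estimate gives $\|\Delta_j G_t g\|_{L^\infty_x}\lesssim e^{-c\,t\,2^{2j}}\|\Delta_j g\|_{L^\infty_x}$ for $j\geq 0$ and some $c>0$. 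Hence
\begin{equation*}
2^{j\beta}\|\Delta_j G_t g\|_{L^\infty_x}\lesssim 2^{j(\beta-\tilde\gamma)} e^{-c\,t\,2^{2j}}\|g\|_{\cB^{\tilde\gamma}_\infty},
\end{equation*}
and the right-hand side is uniformly bounded by $(1+t^{(\tilde\gamma-\beta)/2})\|g\|_{\cB^{\tilde\gamma}_\infty}$ (distinguishing $t\leq 1$ and $t\geq 1$). For integer $\beta$, one derives the $\mathcal{C}^\beta_b$-bound from the same estimate for $\beta'>\beta$ non-integer via the embedding \eqref{eq:embeddingepsilon}, combined with the trivial $L^\infty_x$ bound for low frequencies.

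The main technical obstacle is the treatment of the low-frequency block $\Delta_{-1}$ in \ref{A.3.4} when $\tilde\gamma<0$: here $g$ is only a distribution and the naive bound $\|\Delta_{-1}G_tg\|_{L^\infty_x}\leq \|g\|_{L^\infty_x}$ is unavailable. This is handled by writing $\Delta_{-1}G_t g=g\ast(\phi\ast g_t)$ with $\phi$ the fixed low-frequency cutoff, and using the duality $\cB^{\tilde\gamma}_\infty\times\cB^{-\tilde\gamma}_{1,1}\to\R$ together with the uniform bound $\|\phi\ast g_t\|_{\cB^{-\tilde\gamma}_{1,1}}\lesssim 1$ (since $\phi\in\mathcal{S}(\R^d)$ and $\|g_t\|_{L^1_x}=1$).
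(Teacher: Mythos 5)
Your proof is correct but takes a genuinely different route from the paper's. You argue throughout at the level of Littlewood--Paley blocks: part \ref{A.3.2} via Young's inequality blockwise (the paper simply cites \cite{Atetal}); part \ref{A.3.3} via the standard bound $\|\Delta_j(G_tf-f)\|_{L^p_x}\lesssim\min\{1,\,t\,2^{2j}\}\|\Delta_jf\|_{L^p_x}$ obtained from $G_th-h=\tfrac12\int_0^t\Delta G_sh\,\dd s$ and Bernstein's inequality; part \ref{A.3.4} via the heat-kernel decay $\|\Delta_jG_tg\|_{L^\infty_x}\lesssim e^{-ct4^j}\|\Delta_jg\|_{L^\infty_x}$, with a duality argument for the low-frequency block. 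The paper instead recycles its own auxiliary lemmas: for \ref{A.3.3} it writes $G_tf-f=(g_t-\delta_0)\ast f$, bounds $\|g_t-\delta_0\|_{\cB^{-1}_1}\lesssim\mathbb{W}_1(g_t,\delta_0)\lesssim t^{1/2}$ via Lemma \ref{lem:wasserstein_negBesov}, applies the convolution inequality of Lemma \ref{lem:Besovconvolution} to get the endpoint $\varepsilon=1$, and interpolates with $\varepsilon=0$; for \ref{A.3.4} it uses $\|G_tf\|_{\cC^\beta_b}\lesssim\|f\|_{\cB^{\tilde\gamma}_\infty}\|g_t\|_{\cB^{\beta-\tilde\gamma}_{1,1}}$ together with the Gaussian scaling $g_t=t^{-d/2}g_1(t^{-1/2}\cdot)$ and Lemma \ref{lem:besov_scaling}. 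Your argument is more self-contained (no Wasserstein input) at the price of more Fourier-analytic bookkeeping; the paper's is shorter because the heavy lifting is delegated to lemmas already proved for other purposes. One small imprecision in your part \ref{A.3.4}: for integer $\beta$, passing through a non-integer $\beta'>\beta$ yields the exponent $(\tilde\gamma-\beta')/2$, which for small $t$ is strictly worse than the claimed $(\tilde\gamma-\beta)/2$. To recover the stated rate, bound $\|G_tg\|_{\cC^\beta_b}$ by $\|G_tg\|_{\cB^\beta_{\infty,1}}$ and use the summability $\sum_{j\geq0}2^{j(\beta-\tilde\gamma)}e^{-ct4^j}\lesssim t^{(\tilde\gamma-\beta)/2}$, which the exponential decay you already established provides.
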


\begin{proof}
	For point \ref{A.3.2} we refer to \cite[Lem.~A.3]{Atetal}.
	Concerning \ref{A.3.3}, note that $G_t f - f = (g_t-\delta_0)\ast f$, where the Gaussian density satisfies $\mathbb{W}_1(g_t,\delta_0)\lesssim t^{1/2}$. Combining this bound with Lemma \ref{lem:wasserstein_negBesov} (for $r=1$) and Lemma \ref{lem:Besovconvolution}, we deduce that
	\begin{align*}
		\| G_t f - f\|_{\cB^{\gamma-1}_p}
		= \| f \ast (g_t-\delta_0)\|_{\cB^{\gamma-1}_p}
		\lesssim \| f\|_{\cB^\gamma_p} \| g_t-\delta\|_{\cB^{-1}_1}
		\lesssim \| f\|_{\cB^\gamma_p} t^{1/2}
	\end{align*}
proving the claim for $\varepsilon=1$; the case $\varepsilon=0$ follows from \ref{A.3.2}.
The case $\varepsilon\in (0,1)$ then follows by interpolation of the previous ones, cf. \cite[Thm.~2.80]{BaDaCh}.

To prove \ref{A.3.4}, note that we can reduce to $p=\infty$ by the embedding $\cB^\gamma_p\hookrightarrow \cB^{\gamma-d/p}_\infty$. Then by Besov embeddings and Lemma \ref{lem:Besovconvolution}, we have $\| G_t f\|_{\cC^\beta_b} \lesssim \| G_t f\|_{\cB^\beta_{\infty,1}} \lesssim \| f\|_{\cB^\gamma_\infty} \| g_t\|_{\cB^{\beta-\gamma}_{1,1}}$, so that we only need to show that 
\begin{align*}
	\| g_t\|_{\cB^{\beta-\gamma}_{1,1}}\lesssim (1+ t^{-\frac{\beta-\gamma}{2}}) \quad\forall\, t >0.
\end{align*}
But since $g_t(x)=t^{-d/2}  g_1(t^{-1/2} x)$, this is nothing else than a scaling estimate like the ones already shown in Lemma \ref{lem:besov_scaling}.
\end{proof}

\section{Auxiliary results for fBm}\label{app:fBm}

We collect here some results we need on fBm in the regime $H\in (0,1/2)$; we need to introduce some notation. Recall that fBm admits the representation \eqref{eq:fBmrepresentation} associated to the Volterra kernel $K_H$ given in \eqref{eq:defKH}; with a slight abuse, we also denote by $K_H$ the associated operator, so that $(K_H f)_t = \int_0^t K_H(t,s) f(s) \dd s$.
Let us fix $T\in (0,+\infty)$; as shown in \cite{NualartOuknine}, $K_H$ is invertible on $K_H(L^2([0,T]))$, so that we can denote by $K_H^{-1}$ its inverse. It is given by the formula (cf. \cite[eq.~(12)]{NualartOuknine})
\begin{equation}\label{eq:NualartOuknine}
		K^{-1}_H f = s^{\frac12 -H} D_{0+}^{\frac12-H} s^{H-\frac12} D^{2H}_{0+} f ,
\end{equation}
where for $\alpha\in (0,1)$, $D_{0+}^\alpha$ denotes the fractional Riemann--Liouville derivative given by
\begin{equation}\label{eq:fractional_derivative}
	(D_{0+}^\alpha f)_t = c_\alpha  \bigg(t^{-\alpha} f(t) + \alpha\int_0^t \frac{f(t)-f(s)}{(t-s)^{1+\alpha}}\, \dd s \bigg), \quad \forall\, t\in (0,T],
\end{equation}
for $c_\alpha\coloneqq 1/\Gamma(1-\alpha)$. Lemma \ref{lem:relation-sobolev-cameron-martin} below, which is closely related to \cite[Prop.~C.1]{GaleatiGerencser}, provides connections between the Sobolev regularity of $f$ and the integrability of $K^{-1}_H f$. 
To this end we recall that for $\beta\in (1/2,1)$, $H^\beta_0([0,T])$ is the subspace of the fractional Sobolev space $H^\beta([0,T])$ given by functions $f$ such that $f(0)=0$, see \eqref{eq:defHs0space}.

\begin{lemma}\label{lem:relation-sobolev-cameron-martin}
	Let $H\in (0,1/2)$, $\delta\in (0,1/2-H)$  and $f\in H^{H+1/2+\delta}_0([0,T])$. Then $f\in K_H(L^2([0,T]))$ and there exists $\vep=\vep(\delta,H)>0$ such that $K_H^{-1}f\in L^{2+\vep}([0,T])$ with
	\begin{equation}\label{eq:relation-sobolev-cameron-martin}
	\| K_H^{-1} f\|_{L^{2+\vep}_{[0,T]}} \lesssim \| f\|_{H^{H+1/2+\delta}_{[0,T]}}.%\| f\|_{H^{H+1/2+\delta}_0([0,T])}.
	\end{equation}
\end{lemma}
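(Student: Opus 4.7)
The plan is to prove the estimate for smooth $f$ with $f(0)=0$ and then conclude by density. Set $\beta := H + 1/2 + \delta \in (1/2, 1)$ and recall the factorization $K_H^{-1} f = s^{1/2-H} D^{1/2-H}_{0+}\bigl(s^{H-1/2}\, D^{2H}_{0+} f\bigr)$ from \eqref{eq:NualartOuknine}. Since $\beta > 1/2$, the Sobolev embedding $H^\beta_0([0,T]) \hookrightarrow C^{H+\delta}([0,T])$ together with $f(0) = 0$ yields the Morrey-type bound $|f(t)| \lesssim t^{H+\delta} \|f\|_{H^\beta_0}$; this quantitative vanishing at zero is what absorbs the singular weights appearing in the composition.

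First I would establish pointwise and $L^2$ estimates on $g := D^{2H}_{0+} f$ by splitting the defining formula \eqref{eq:fractional_derivative} into the boundary term $c_{2H}\, t^{-2H} f(t)$ and the nonlocal term $2H c_{2H} \int_0^t (f(t) - f(s))(t-s)^{-1-2H}\, ds$. The first is pointwise controlled by $t^{\delta - H}\|f\|_{H^\beta_0}$, which belongs to $L^{2+\varepsilon}([0,T])$ for $\varepsilon>0$ small since $H - \delta < 1/2$. For the second, a Cauchy--Schwarz in $s$ against the weight $(t-s)^{-(1+2\beta)/2}$ gives, using $\beta > 2H$,
\begin{equation*}
	\Big| \int_0^t \frac{f(t)-f(s)}{(t-s)^{1+2H}}\, ds \Big|^2 \lesssim t^{2(\beta - 2H)} \int_0^t \frac{|f(t)-f(s)|^2}{(t-s)^{1+2\beta}}\, ds,
\end{equation*}
and integrating in $t$ yields $\|g\|_{L^2([0,T])} \lesssim \|f\|_{H^\beta_0}$ (in fact $g$ lies in a Sobolev-type space of regularity $\beta - 2H = 1/2 + \delta - H$).

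Next I would multiply by $s^{H-1/2}$, apply $D^{1/2-H}_{0+}$, and finally multiply by $s^{1/2-H}$. Each step is handled by weighted $L^p$ inequalities: the singular weight $s^{H-1/2}$ (only integrable to powers $q < 2/(1-2H)$) is compensated by the vanishing of $g$ near zero inherited from $f(0) = 0$; the fractional derivative $D^{1/2-H}_{0+}$ loses exactly $1/2 - H$ units of regularity, leaving us still above the $L^2$ threshold thanks to the extra $\delta$; and the outer weight $s^{1/2-H}$ vanishes at zero, absorbing any residual singularity. A one-dimensional Sobolev embedding at the end upgrades the estimate from $L^2$ to $L^{2+\varepsilon}$ for some $\varepsilon = \varepsilon(H,\delta) > 0$, with $\varepsilon \to 0$ as $\delta \to 0$. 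In particular, $f \in K_H(L^2([0,T]))$ because $K_H^{-1} f \in L^{2+\varepsilon} \subset L^2$ on the finite interval.

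The main obstacle is the careful bookkeeping of the interplay between the power weights $s^{\pm(1/2-H)}$ and the fractional derivatives near $s = 0$. An alternative streamlining, which I would invoke if the direct computation becomes unwieldy, is to first write $D^{2H}_{0+} f = I^{1-2H}_{0+} f'$ for smooth $f$ with $f(0)=0$, reorganize $K_H^{-1} f$ as an integral operator acting on $f'$, and then use Hardy-type inequalities together with mapping properties of Riemann--Liouville operators on weighted $L^p$ spaces. The density step is standard: smooth functions vanishing at $0$ are dense in $H^\beta_0([0,T])$, and the composition defining $K_H^{-1}$ is continuous on this class.
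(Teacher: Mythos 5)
Your setup is sound and you start from the same factorization $K_H^{-1}f = s^{1/2-H}D_{0+}^{1/2-H}\bigl(s^{H-1/2}D_{0+}^{2H}f\bigr)$ as the paper, and your treatment of $g=D_{0+}^{2H}f$ (Morrey bound for the boundary term, Cauchy--Schwarz for the nonlocal term) is correct as far as it goes. But there is a genuine gap at the central step. Your Cauchy--Schwarz argument only yields $\|g\|_{L^2}\lesssim \|f\|_{H^\beta}$; the parenthetical claim that $g$ actually lies in a Sobolev space of order $\beta-2H=1/2-H+\delta$ is never proved, and it is precisely this positive regularity that must survive the remaining composition. Without it, the plan collapses at the end: a one-dimensional Sobolev embedding upgrades $H^{s}$ into $L^{2+\vep}$ for $s>0$, but it cannot upgrade $L^2$ into $L^{2+\vep}$, so if all you carry forward is an $L^2$ bound on $g$ you cannot conclude. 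The paper obtains the needed regularity of $D_{0+}^{2H}f$ not by direct estimation but via the identification $H_0^{H+1/2+\delta}\hookrightarrow I_{0+}^{H+1/2+\delta/2}(L^2)$ (Decreusefond) and the composition rule $D_{0+}^{2H}I_{0+}^{a}=I_{0+}^{a-2H}$, which places $D_{0+}^{2H}f$ in $H^{1/2-H+\delta/4}$.

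The second, larger omission is the operator $\Gamma_H g= s^{1/2-H}D_{0+}^{1/2-H}\bigl(s^{H-1/2}g\bigr)$ itself. Your proposal reduces this to the slogan that ``each step is handled by weighted $L^p$ inequalities,'' but the interplay between the singular weight $s^{H-1/2}$ and the nonlocal operator $D_{0+}^{1/2-H}$ is exactly where the work lies, and no estimate is actually performed. The paper handles it by writing $\Gamma_H g = D_{0+}^{\alpha}g+\tilde\Gamma_\alpha g$ with $\alpha=1/2-H$, where the commutator-type remainder $\tilde\Gamma_\alpha g(t)=c_\alpha t^{\alpha}\int_0^t\frac{t^{-\alpha}-s^{-\alpha}}{(t-s)^{1+\alpha}}g(s)\,\dd s$ is controlled by the substitution $s=tu$, the integrability of the profile $u\mapsto(1-u^{-\alpha})(1-u)^{-1-\alpha}$ in $L^1\cap L^2$, and H\"older's inequality against the $L^q$ norm of $g$ supplied by Sobolev embedding; the first piece $D_{0+}^{\alpha}g$ is handled by the mapping $H^{\alpha+\delta/4}\to H^{\delta/4}\hookrightarrow L^{2+\vep}$. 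Some concrete decomposition of this kind (or a Hardy-inequality argument of the sort you mention as an alternative) is indispensable; as written, the proposal identifies the right obstacles but does not overcome them.
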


\begin{proof}
	The fact that $f\in K_H(L^2([0,T]))$ follows from \cite[Prop.~C.1]{GaleatiGerencser}, so we only need to show the estimate \eqref{eq:relation-sobolev-cameron-martin}.
	We may rewrite formula \eqref{eq:NualartOuknine} as
	\begin{align*}
		K^{-1}_H f = \Gamma_H D^{2H}_{0+} f, \quad \text{ for } \quad \Gamma_H g \coloneqq s^{\frac12 -H} D_{0+}^{\frac12-H} s^{H-\frac12} g.
	\end{align*}
	Since $H^{H+1/2+\delta}_0 \hookrightarrow I_{0+}^{H+1/2+\delta/2}(L^2)$ (see e.g. \cite[Prop. 5]{Decreusefond}), we have $f = I_{0+}^{H+1/2+\delta/2} \tilde{f}$ for some $\tilde{f}\in L^2([0,T])$.
Then by \cite[Thm 2.4 and 2.5]{SaKiMa1993}, $D^{2H}_{0+} f = I_{0+}^{1/2-H+\delta/2} \tilde{f}$; again by \cite{Decreusefond}, it follows that $D^{2H}_{0+} f\in H^{1/2-H+\delta/4}([0,T])$.
	 Hence our task reduces to show that $\Gamma_H$ maps $H^{1/2-H+\delta/4}([0,T])$ into $L^{2+\vep}([0,T])$.
	Set $\alpha=1/2-H$, so by the definition \eqref{eq:fractional_derivative}, it holds that
	\begin{align*}
		(s^{\alpha} D_{0+}^{\alpha} s^{-\alpha} g)_t
		= (D_{0+}^\alpha g)_t + c_\alpha t^\alpha \int_0^t \frac{t^{-\alpha}-s^{-\alpha}}{(t-s)^{\alpha+1}} g(s)\, \dd s
		\eqqcolon \big( D_{0+}^\alpha g + \tilde \Gamma_\alpha g\big)_t.
	\end{align*}
	The operator $D^\alpha_{0+}$ maps $H^{\alpha+\delta/4}$ into $H^{\delta/4}$, and consequently in $L^{2+\vep}$ for suitable $\vep=\vep(\delta)>0$ by Sobolev embedding. To handle $\tilde\Gamma_\alpha$, we do the change of variables $s=tu$ to write it as
	\begin{align*}
		(\tilde \Gamma_\alpha g)_t
		= c_\alpha t^{-\alpha} \int_0^1 \frac{1-u^{-\alpha}}{(1-u)^{1+\alpha}}\, g(tu) \, \dd u
		\eqqcolon  t^{-\alpha} \int_0^1 F_\alpha(u) g(tu)\, \dd u .
	\end{align*}
	The function $F_\alpha$ is only unbounded at the points $u=0$ and $u=1$, where it behaves asymptotically respectively as $-u^{-\alpha}$ and $(1-u)^{-\alpha}$. In particular, it belongs to $L^1([0,1])\cap L^2([0,1])$. By Sobolev embedding, $g\in H^{\alpha+\delta/4}([0,T])\hookrightarrow L^q([0,T])$ for $1/q = 1/2 - \alpha+\delta/4$, so that H\"older's inequality yields
	\begin{align*}
		|(\tilde \Gamma_\alpha g)_t|
		\leq t^{-\alpha} \| F_\alpha\|_{L^{q'}_{[0,1]}} \| g(t\,\cdot)\|_{L^q_{[0,1]}}
		\lesssim t^{-\alpha -\frac{1}{q}} \| g\|_{L^q_{[0,T]} } \lesssim t^{\frac{\delta}{4}-\frac12} \| g\|_{H^{\alpha+\delta/4}_{[0,T]}}.
	\end{align*}
	The last pointwise bound implies that, for any $\vep>0$ such that $(2+\vep)(1/2-\delta/4)<1$, it holds
	\begin{align*}
		\| \tilde\Gamma_\alpha g\|_{L^{2+\vep}_{[0,T]}} \lesssim \| g\|_{H^{\alpha+\delta/4}_{[0,T]}},
	\end{align*}
	yielding the conclusion.	
\end{proof}

\begin{lemma}\label{lem:kernel-fbm}
	Let $H\in (0,1/2)$ and recall the Volterra kernel $K_H(t,s)$ associated to fBm from \eqref{eq:defKH}.
	Then there exists a constant $C_H>0$ such that
	\begin{equation*}
		\sigma_H (t-s)^{H-\frac12} \leq K_H(t,s) \leq C_H \Big( (t-s)^{H-\frac12} + s^{H-\frac12} \Big), \quad \forall\, s<t.
	\end{equation*}
\end{lemma}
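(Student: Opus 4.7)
\medskip

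\noindent\textbf{Proof proposal for Lemma~\ref{lem:kernel-fbm}.}

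The plan is to perform a scaling reduction and then handle the upper and lower bounds separately; the lower bound is where something non-trivial happens. Setting $y = t/s \geq 1$ and changing variables $u = r/s$ in the defining integral, a direct computation shows
\[
\int_s^t (r-s)^{H-\frac12} r^{H-\frac32}\, \dd r
= s^{2H-1}\, J(y), \quad J(y) \coloneqq \int_1^y (u-1)^{H-\frac12} u^{H-\frac32}\, \dd u,
\]
so that the kernel can be rewritten in the compact form
\[
K_H(t,s) = \sigma_H\, s^{H-\frac12}\!\left[ y^{H-\frac12} (y-1)^{H-\frac12} + \big(\tfrac{1}{2}-H\big) J(y) \right].
\]
Since $H+\tfrac12>0$ and $1-2H>0$, the integral converges as $y\to\infty$ to the Beta value $B(H+\tfrac12,1-2H)<\infty$, so $J(y)\leq J(\infty)\eqqcolon C_H'$ is uniformly bounded.

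For the upper bound, I would simply observe that $y^{H-1/2}\leq 1$ (since $y\geq 1$ and $H-1/2<0$), hence $\sigma_H y^{H-1/2}(y-1)^{H-1/2} s^{H-1/2} \leq \sigma_H (s(y-1))^{H-1/2} = \sigma_H (t-s)^{H-1/2}$, while the second contribution is $\leq \sigma_H(\tfrac12-H) C_H' s^{H-1/2}$. Adding the two gives the claimed upper bound.

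For the lower bound, factor out $(t-s)^{H-1/2} = s^{H-1/2}(y-1)^{H-1/2}$ to write
\[
K_H(t,s) = \sigma_H (t-s)^{H-\frac12}\, F(y), \qquad F(y) \coloneqq y^{H-\frac12} + \big(\tfrac12-H\big) (y-1)^{\frac12-H} J(y);
\]
the target inequality reduces to $F(y)\geq 1$ for all $y\geq 1$. Clearly $F(1)=1$, so it suffices to show $F'(y)\geq 0$ on $(1,\infty)$. Differentiating with $a\coloneqq H-\tfrac12<0$, the term $a y^{a-1}$ coming from $y^{a}$ is exactly cancelled by the contribution $-a y^{a-1}$ arising from $J'(y) = (y-1)^{a} y^{a-1}$ acted upon by the prefactor $(y-1)^{-a}$; the remaining piece is
\[
F'(y) = a^2 (y-1)^{-a-1} J(y) = \big(\tfrac12-H\big)^2 (y-1)^{-H-\frac12} J(y) \geq 0,
\]
as $J(y)\geq 0$. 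Hence $F\geq F(1)=1$ and the lower bound $K_H(t,s)\geq \sigma_H(t-s)^{H-1/2}$ follows.

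The main (minor) obstacle is the precise cancellation leading to the sign of $F'(y)$; everything else is routine. Note that the same computation gives $F$ monotone non-decreasing, which is slightly stronger than what is needed.
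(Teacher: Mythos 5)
Your proof is correct. For the upper bound you follow essentially the same route as the paper: the change of variables $u=r/s$ (the paper uses $r=s(1+u)$, which is the same substitution shifted by one) reduces the integral term to $s^{H-\frac12}$ times a convergent Beta-type integral, and the first term is dominated by $(t-s)^{H-\frac12}$ because $(s/t)^{\frac12-H}\leq 1$. The genuine difference is in the lower bound: the paper does not prove it at all, but cites it from an external reference (Prop.~B.2(ii) of the work of Butkovsky--L\^e--Mytnik), whereas you give a short self-contained argument. Your reduction $K_H(t,s)=\sigma_H(t-s)^{H-\frac12}F(y)$ with $y=t/s$, followed by the observation that the $a y^{a-1}$ terms cancel exactly in $F'(y)$ so that $F'(y)=(\tfrac12-H)^2(y-1)^{-H-\frac12}J(y)\geq 0$ and hence $F\geq F(1)=1$, checks out (note $J(y)\sim (y-1)^{H+\frac12}/(H+\tfrac12)$ near $y=1$, so $F'$ stays bounded there and the fundamental theorem of calculus applies). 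What your approach buys is a fully self-contained lemma, and in fact the slightly stronger statement that $y\mapsto K_H(sy,s)/(sy-s)^{H-\frac12}$ is non-decreasing; the cost is the small additional computation with the exact cancellation, which you have carried out correctly.
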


\begin{proof}
	The first inequality is proved in \cite[Prop. B.2(ii)]{ButLeMyt}, so we only need to show the second one. Since $H<1/2$ and $s<t$, it holds
	\begin{align*}
		\Big(\frac{s}{t}\Big)^{\frac12-H} (t-s)^{H-\frac12} \leq (t-s)^{H-\frac12};
	\end{align*}
	by the change of variables $r=s (1+u) $, it holds
	\begin{align*}
		s^{\frac12-H} \int_s^t (r-s)^{H-\frac12} r^{H-\frac32}\, \dd r
		& =  s^{H-\frac12} \int_0^{t/s-1} u^{H-\frac12} (1+u)^{H-\frac32}\, \dd u\\
		& \leq s^{H-\frac12} \int_0^{+\infty} u^{H-\frac12} (1+u)^{H-\frac32}\, \dd u
	\end{align*}
	where the last integral is finite. Inserting these estimates in formula \eqref{eq:defKH} for $K_H(t,s)$ yields the conclusion.
\end{proof}

\section{Proof of the shifted deterministic sewing lemma}\label{app:sewing}

\begin{proof}[Proof of Lemma~\ref{lem:deterministic_shifted_sewing}]
We start by proving \eqref{eq:deterministic_shifted_conclusion_1}; we point out that this conclusion only relies on Assumption \eqref{eq:deterministic_shifted_assumption_2}.
To this end, for any $n\geq 1$, consider the dyadic sequence $t^n_i = s+(t-s)i/2^n$, $i=0,\ldots,2^n$ and define
\begin{align*}
	S_n = \sum_{i=0}^{2^n-1} A_{t^n_i,t^n_{i+1}}
\end{align*}
so that by assumption $A_{s,t}= S_0$, $\cA_t-\cA_s= \lim_{n\to\infty} S_n$ and
\begin{align*}
	\cA_t-\cA_s - A_{s,t} = \sum_{n=0}^\infty S_{n+1}-S_n
	= \sum_{n=0}^\infty \sum_{i=0}^{2^{n}-1} \delta A_{t^{n+1}_{2i}, t^{n+1}_{2i+1}, t^{n+1}_{2i+2}}.
\end{align*}
Define $t^n_{-1}\coloneqq s-2^{-n}(t-s) \geq \cS$ and observe that $t^{n+1}_{2i}-(t^{n+1}_{2i+2}-t^{n+1}_{2i})= t^n_{i-1}$ for all $i\geq 0$; moreover by construction $(t^{n+1}_{2i}, t^{n+1}_{2i+1}, t^{n+1}_{2i+2})\in \DDminus{\cS}{\cT}$.
By Assumption \eqref{eq:deterministic_shifted_assumption_2} and triangular inequality, it holds
\begin{align*}
	\Big\| \sum_{i=0}^{2^{n}-1} \delta A_{t^{n+1}_{2i}, t^{n+1}_{2i+1}, t^{n+1}_{2i+2}} \Big\|_E
	\leq \sum_{i=0}^{2^{n}-1} w(t^n_{i-1},t^n_{i+1})^{\alpha_2} (t^n_{i+1} - t^n_{i})^{\beta_2}
	= 2^{-n \beta} (t-s)^{\beta_2} \sum_{i=0}^{2^{n}-1} w(t^n_{i-1},t^n_{i+1})^{\alpha_2}.
\end{align*}
Observing that $\{t^n_{2j}\}_{j=0}^{2^{n-1}}$ and $\{t^n_{2j-1}\}_{j=0}^{2^{n-1}}$ form two disjoint partitions respectively of subintervals of $[s-(t-s),t]$, we can use the superadditivity of $w$ and Jensen's inequality to find
\begin{align*}
	\sum_{j=0}^{2^{n-1}-1} w(t^n_{2j},t^n_{2j+2})^{\alpha_2}
	\leq \bigg(\sum_{j=0}^{2^{n-1}-1} w(t^n_{2j},t^n_{2j+2}) \bigg)^{\alpha_2} 2^{(n-1)(1-\alpha_2)}
	\leq w(s-(t-s),t)^{\alpha_2}\, 2^{n(1-\alpha)},
\end{align*}
similarly for the odd sequence.
Combining everything, we arrive at
\begin{align*}
	\|\cA_t-\cA_s - A_{s,t}\|_E
	\leq w(s-(t-s),t)^{\alpha_2} (t-s)^{\beta_2}\, 2^{\alpha_2} \sum_{n=0}^\infty 2^{-n(\alpha_2+\beta_2-1)}
\end{align*}
where the last series is finite since $\alpha_2+\beta_2>1$; this yields \eqref{eq:deterministic_shifted_conclusion_1}.

Now consider any $(s,t)\in \simp{S}{T}$ and define $t_n=s+2^{-n}(t-s)$ for $n\geq 0$. Observe that $t_{n+1}-(t_n-t_{n+1})= s\geq \cS$ and $t_n-t_{n+1}= 2^{-n-1} (t-s)$, therefore we can apply \eqref{eq:deterministic_shifted_assumption_1} and \eqref{eq:deterministic_shifted_conclusion_1} to find
\begin{align*}
	\| \cA_t-\cA_s\|_E
	& \leq \sum_{n=0}^\infty \| \cA_{t_{n}}-\cA_{t_{n+1}}\|_E
	\leq \sum_{n=0}^\infty \| \cA_{t_{n}}-\cA_{t_{n+1}}-A_{t_{n+1},t_n}\|_E + \sum_{n=0}^\infty \| A_{t_{n+1},t_n}\|_E\\
	& \leq C_1 \sum_{n=0}^\infty w_2(s,t_n)^{\alpha_2} (2^{-(n+1)}(t-s))^{\beta_2} + \sum_{n=0}^\infty w_1(s,t_n)^{\alpha_1} (2^{-(n+1)}(t-s))^{\beta_1}\\
	& \leq w_2(s,t)^\alpha (t-s)^{\beta_2} C_1 \sum_{n=1}^\infty 2^{-n\beta_2} + w_1(s,t)^{\alpha_1} (t-s)^{\beta_1} C_1 \sum_{n=1}^\infty 2^{-n\beta_1}
\end{align*}
which yields \eqref{eq:deterministic_shifted_conclusion_2} since $\beta_1,\, \beta_2>0$.
\end{proof}

\section{Representation of Gaussian-Volterra bridges}\label{app:bridge}

In the following we prove that the process $P^{T,y}$ given by \eqref{eq:defBridgeP} is a bridge from $0$ to $y$ with length $T$ over the Gaussian-Volterra process $\Volt$ defined in \eqref{eq:Xprocess}. Such representations were initially proposed in \cite[Thm. 3.1]{BaudoinCoutin}, under assumptions that would however not include the fractional Brownian motion with $H>1/2$. Here, we follow closely the proofs of Lemma 4.1 and Corollary 4.2 in \cite{LiPanloupSieber} which are given for the Riemann-Liouville process, and adapt it to more general kernels. We recall that $K$ satisfies the following:
\begin{itemize}
\item $\int_{[0,T]} K(t,s)^2 \, \dd s<\infty$ for all $t\in [0,T]$;
%\item $\int_0^t K(t,s) \, ds<\infty$ for any $t \in [0,T]$;
\item $K(t,s)= 0$ for $s>t$;
\item $K(T,s) \neq 0$ for almost all $s\in [0,T]$.
\end{itemize}

Let us recall the notations from Section~\ref{sec:Gaussianbridges}: the Gaussian process with kernel $K$ is defined in~\eqref{eq:Xprocess} by $\Volt_{t} = \int_{0}^t K(t,s)\, \dd W_{s}$, while the process $P^{T,y}$ defined in~\eqref{eq:defBridgeP} is given for $t\in [0,T]$ by
\begin{align*}
P_t^{T,y}=y \int_{0}^t \frac{K(t,s) K(T,s)}{\smallint_{0}^T K(T,u)^2\, \dd u} \, \dd s
+ \int_0^t K(t,s) \, \dd W_s - \int_0^t \frac{K(T,s)}{\smallint_s^T K(T,u)^2 \, \dd u} \int_s^t K(t,r) K(T,r) \, \dd r \, \dd W_s.
\end{align*}

\begin{proposition}\label{prop:bridgerep}
The law of the Brownian motion $W$ conditioned by the event $\Volt_{T}=y$ is given by
\begin{align*}
\law \left((W_{t})_{t\in [0,T]} \vert \Volt_{T} = y\right) = \law \left( (Y^{T,y}_t)_{t\in [0,T]}\right),
\end{align*}
where $Y^{T,y}$ is the semimartingale defined in \eqref{eq:semimartingale-underlying-bridge}.
Moreover, the process $(P_{t}^{T,y})_{t\in [0,T]}$ given by $P_{t}^{T,y} = \int_{0}^t K(t,s)\, \dd Y^{T,y}_{s}$ is a bridge from $0$ to $y$ with length $T$ over $\Volt$, that is
\begin{equation*}
\law\big((P^{T,y}_t)_{t \in [0,T]}\big)=\law\left(\left(\int_0^t K(t,s)\, \dd W_s\right)_{t \in [0,T]}  \bigg\vert \int_0^T K(T,s)\, \dd W_s=y\right).
\end{equation*}
\end{proposition}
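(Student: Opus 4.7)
\medskip

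\noindent\textbf{Proof plan.} The plan is to identify the unconditional law of the explicitly constructed semimartingale $Y^{T,y}$ with the conditional law of $W$ given $\Volt_T=y$, and then transfer this identification through the Wiener map $w\mapsto \bigl(\int_0^\cdot K(\cdot,s)\,\dd w_s\bigr)$ to obtain the bridge representation. Both $Y^{T,y}$ and $W$ conditioned on $\Volt_T=y$ are Gaussian processes: the former because $Y^{T,y}$ depends linearly on $W$ plus a deterministic term, and the latter because the pair $\bigl((W_t)_{t\in[0,T]},\Volt_T\bigr)$ is jointly Gaussian. Consequently, matching means and covariances will suffice.

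First I would verify that $V^{T,y}$ defined by \eqref{eq:semimartingale-underlying-bridge} is a.s. integrable in time, so that $Y^{T,y}$ is a well-defined continuous semimartingale on $[0,T]$. With the notation $h(s)\coloneqq\int_s^T K(T,u)^2\,\dd u$ and $M_s\coloneqq\int_0^s K(T,u)\,\dd W_u$, the key observation is that $h'(s)=-K(T,s)^2$ and that the hypothesis $K(T,\cdot)\ne 0$ a.e. implies $h(0)>0$, so every denominator is well-defined on $[0,T)$; integrability up to $T$ is controlled using Itô isometry for the martingale part of $V^{T,y}$ combined with the identity $\int_0^t K(T,s)^2/h(s)^2\,\dd s = 1/h(t)-1/h(0)$. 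Next, I would compute the mean
\[
\EE[Y^{T,y}_t]=\frac{y}{h(0)}\int_0^t K(T,s)\,\dd s,
\]
and show this equals the Gaussian regression $\EE[W_t\mid \Volt_T=y]=\tfrac{\mathrm{Cov}(W_t,\Volt_T)}{\mathrm{Var}(\Volt_T)}\,y$. For the covariance, writing $Y^{T,y}_t-\EE[Y^{T,y}_t]=W_t-\int_0^t K(T,r)\bigl(\int_0^r \tfrac{K(T,s)}{h(s)}\dd W_s\bigr)\dd r$, an application of stochastic Fubini and Itô isometry gives $\mathrm{Cov}(Y^{T,y}_s,Y^{T,y}_t)$ as an explicit $L^2$-inner product. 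Using $h'=-K(T,\cdot)^2$, integration by parts in the inner integrals should reduce this to
\[
s\wedge t-\frac{\bigl(\int_0^s K(T,u)\,\dd u\bigr)\bigl(\int_0^t K(T,u)\,\dd u\bigr)}{h(0)},
\]
which is exactly the conditional covariance $\mathrm{Cov}(W_s,W_t\mid \Volt_T)$ obtained from the standard Gaussian regression formula.

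Having established $\law\bigl((Y^{T,y}_t)_{t\in[0,T]}\bigr)=\law\bigl((W_t)_{t\in[0,T]}\mid \Volt_T=y\bigr)$, I turn to the bridge statement. Substituting the decomposition $\dd Y^{T,y}_s=V^{T,y}_s\,\dd s+\dd W_s$ into $\int_0^t K(t,s)\,\dd Y^{T,y}_s$ and then inserting the explicit expression for $V^{T,y}$, I would apply stochastic Fubini to exchange the order of integration in the double stochastic integral
$\int_0^t K(t,r)K(T,r)\bigl(\int_0^r \tfrac{K(T,s)}{h(s)}\,\dd W_s\bigr)\dd r$
and arrive exactly at formula \eqref{eq:defBridgeP} for $P^{T,y}_t$. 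Because the Wiener-type map $w\mapsto \bigl(\int_0^\cdot K(\cdot,s)\,\dd w_s\bigr)_{t\in[0,T]}$ is a measurable functional on path space (defined up to a null set under any Gaussian law for which the integrals make sense), pushing the above equality in law through this map gives the conditional law identity for $\Volt$, namely that $P^{T,y}$ is a bridge from $0$ to $y$ of length $T$ over $\Volt$.

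The main technical obstacle is the covariance identification for $Y^{T,y}$: the calculation produces a double integral whose inner antiderivative must be recognized via $h'(s)=-K(T,s)^2$, and one has to be careful about the behavior of $1/h(s)$ near $s=T$. Once this is handled (using the local integrability of $V^{T,y}$ and elementary bounds on $h$), the rest of the argument is linear algebra on Gaussian spaces combined with stochastic Fubini.
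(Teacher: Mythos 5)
Your proposal is correct, but it takes a genuinely different route from the paper. The paper's proof is constructive: it time-changes the martingale $\Volt^T_t=\int_0^t K(T,s)\,\dd W_s$ by its quadratic variation $\tau(t)=\int_0^t K(T,s)^2\,\dd s$ to get a Brownian motion $\widetilde W$, recovers $W$ from $\Volt^T$ via the map $\Phi(Z)=\int_0^\cdot K(T,s)^{-1}\,\dd Z_s$, replaces $\widetilde W$ by the classical Brownian bridge $\beta^y$ on $[0,\tau(T)]$, solves the resulting time-changed linear SDE by variation of constants, and identifies $\Phi(\beta^y\circ\tau)$ with $Y^{T,y}$ --- so the drift $V^{T,y}$ is \emph{derived} rather than verified. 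You instead take $Y^{T,y}$ as given and \emph{verify} that its Gaussian mean and covariance match the regression formulas for $\law(W\,\vert\,\Volt_T=y)$; I checked that your integration by parts based on $h'(s)=-K(T,s)^2$ does collapse the covariance to $s\wedge t-h(0)^{-1}\bigl(\int_0^s K(T,u)\,\dd u\bigr)\bigl(\int_0^t K(T,u)\,\dd u\bigr)$, which is the correct conditional covariance, and since both processes are Gaussian this suffices. Your route is more elementary (second-moment computations only) and sidesteps the time-change machinery, at the price of giving no insight into where $V^{T,y}$ comes from. Two points you should not gloss over in a full write-up, both of which the paper also has to handle: (i) integrability of $V^{T,y}$ up to $t=T$ is delicate (the bound $\EE|V^{T,y}_t|\lesssim K(T,t)h(t)^{-1/2}$ need not be integrable for a general kernel), which is why the paper works on $[0,t]$ for $t<T$ where $\inf_{s\le t}h(s)>0$; and (ii) the final push-forward through $w\mapsto\int_0^\cdot K(\cdot,s)\,\dd w_s$ requires this functional to be well defined under the law of $Y^{T,y}$, i.e.\ $K(t,\cdot)V^{T,y}\in L^1([0,t])$ together with the chain rule for Lebesgue--Stieltjes integrals, which is exactly the role of the class $\mathcal{I}(Z)$ in the paper's proof.
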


\begin{proof}
Let us introduce the martingale $\Volt^T$ and its quadratic variation $\tau$ as
\begin{align*}
\Volt^T_{t} \coloneqq \int_{0}^t K(T,s)\, \dd W_{s}, \quad \tau(t) \coloneqq \int_{0}^t K(T,s)^2\, \dd s, \quad  t\in [0,T].
\end{align*}
From the assumption on $K$, $\tau$ is a bijection from $[0,T]$ to $[0,\tau(T)]$. Hence we also introduce the Brownian motion
\begin{align*}
\widetilde{W}_{t} \coloneqq \Volt^T_{\tau^{-1}(t)} = \int_{0}^{\tau^{-1}(t)} K(T,s)\, \dd W_{s},\quad t\in [0, \tau(T)].
\end{align*}
Let $Z=A+M$ be a continuous semimartingale, with $M$ the continuous martingale part and $A$ the finite variation process, assumed here to be absolutely continuous almost surely. Denote by $\mathcal{I}(Z)$ the class of functions (we will not need to integrate processes) $f$ such that 
 $\int_{0}^T |f(s)|\, \big|\frac{\dd A_{s}}{\dd s}\big|\, \dd s$ and $\EE\int_{0}^T |f(s)|^2\, \dd \langle Z\rangle_{s} <\infty$. A function $f\in \mathcal{I}(Z)$ can be integrated against $Z$ and the chain rule for Lebesgue-Stieltjes integrals yields $\int_{0}^T f(s)\, \dd Z_{s} = \int_{0}^T f(s)\, \frac{\dd A_{s}}{\dd s} \, \dd s + \int_{0}^T f(s)\, \dd M_{s}$. 
As in \cite{LiPanloupSieber}, define a linear map $\Phi$ which takes any semimartingale $Z$ such that $\frac{1}{K(T,\cdot)} \in \mathcal{I}( Z)$
 and maps it to
\begin{align*}
\Phi(Z)_{t} \coloneqq \int_{0}^t \frac{1}{K(T,s)}\, \dd Z_{s},\quad t\in [0,T].
\end{align*}

With these notations, observe that $(W,\Volt^T) = \big(\Phi(\widetilde{W}\circ \tau), \widetilde{W}\circ \tau\big)$, so that
\begin{align}\label{eq:lawbridge}
\law \left((W_{t})_{t\in [0,T]} \vert \Volt^T_{T} = y\right) &= \law \left( (\Phi(\widetilde{W}\circ \tau)_{t})_{t\in [0,T]} \vert \widetilde{W}\circ \tau(T) = y\right) \nonumber\\
& = \law \left((\Phi(\beta^{y}\circ \tau)_{t})_{t\in [0,T]}\right),
\end{align}
where $\beta^{y}$ is a Brownian bridge conditioned to hit $y$ at time $\tau(T)$. One such bridge is classically given as the solution of the following SDE:
\begin{equation*}%\label{eq:bridgeSDE}
\beta^y_{t} = \int_{0}^t \frac{y-\beta^y_{s}}{\tau(T)-s}\, \dd s + \widetilde W_{t}, \quad t\in [0,\tau(T)].
\end{equation*}
Evaluating this process at time $\tau(t)$ and performing the change of variable $s \equiv \tau(s)$ yields, for $t\in [0,T]$,
\begin{align*}
\beta^y_{\tau(t)} &= \int_{0}^{\tau(t)} \frac{y-\beta^y_{s}}{\tau(T)-s}\, \dd s + \int_{0}^t K(T,s)\, \dd W_{s}\\
&= \int_{0}^{t} \frac{y-\beta^y_{\tau(s)}}{\tau(T)-\tau(s)}\, \tau'(s)\, \dd s + \int_{0}^t K(T,s)\, \dd W_{s}\\
&= \int_{0}^{t} \frac{y-\beta^y_{\tau(s)}}{\int_{s}^T K(T,r)^2\, \dd r}\, K(T,s)^2\, \dd s + \int_{0}^t K(T,s)\, \dd W_{s}. %=: \alpha^y_{t}+ \int_{0}^t K(T,s)\, \dd W_{s}. 
\end{align*}
The linear equation satisfied by $\beta^y\circ \tau$ can be solved by considering first the equation
\begin{align*}
z'(t) + \frac{K(T,t)^2}{\int_{t}^T K(T,r)^2\, \dd r} z(t)= 0.
\end{align*}
Since
\[ \frac{K(T,t)^2}{\int_{t}^T K(T,r)^2\, \dd r} = \left(-\log \int_{t}^T K(T,r)^2\, \dd r\right)',\]
this yields
\[z(t) = z(0)\, \frac{\int_{t}^T K(T,r)^2\, \dd r}{\int_{0}^T K(T,r)^2\, \dd r}.\]
Now the method of variation of constants gives
\begin{align*}
(\beta^y\circ \tau)_{t} = \int_{t}^T K(T,r)^2\, \dd r \Bigg(y \int_{0}^t \frac{K(T,s)^2}{ \left(\int_{s}^T K(T,r)^2\, \dd r\right)^2}\, \dd s + \int_{0}^t \frac{K(T,s)}{\int_s^T K(T,r)^2\, \dd r}\, \dd W_{s}\Bigg).
\end{align*}
Note that for any $t<T$, $\inf_{s\in [0,t]} \int_s^T K(T,r)^2\, \dd r>0$, thus $s\mapsto \frac{K(T,s)}{\int_s^T K(T,r)^2\, \dd r} \in L^2([0,t])$. The semimartingale decomposition of $\beta^y\circ \tau$ is
\begin{align*}
\dd (\beta^y\circ \tau)_{t} &= K(T,t)^2 \left( -y \int_{0}^t \frac{K(T,s)^2}{ \big(\int_{s}^T K(T,r)^2\, \dd r\big)^2} \dd s + \frac{y}{\int_{t}^T K(T,r)^2\, \dd r} - \int_{0}^t \frac{K(T,s)}{\int_{s}^T K(T,r)^2\, \dd r} \dd W_{s} \right) \dd t \\
&\quad + K(T,t)\, \dd W_{t}.
\end{align*}
Using
\begin{align*}
\int_0^t \frac{K(T,s)^2}{(\int_s^T K(T,r)^2\,  \dd r)^2} \dd s
	= \int_0^t \frac{\dd}{\dd s} \Big(\frac{1}{\int_s^T K(T,r)^2\,  \dd r} \Big) \dd s
	= \frac{1}{\int_t^T K(T,s)^2 \, \dd s} - \frac{1}{\int_0^T K(T,s)^2\,  \dd s},
\end{align*}
it follows that
\begin{align*}
\dd (\beta^y\circ \tau)_{t} &= K(T,t)^2 \left( \frac{y}{\int_{0}^T K(T,r)^2\, \dd r} - \int_{0}^t \frac{K(T,u)}{\int_{u}^T K(T,r)^2\, \dd r} \dd W_{u} \right) \dd t+ K(T,t)\, \dd W_{t}\\
&\eqqcolon  a^{T,y}_{t} \dd t+ K(T,t)\, \dd W_{t}.
\end{align*}
Now for $t\in (0,T)$, recall that $\inf_{u\in [0,t]} \int_u^T K(T,r)^2\, \dd r>0$. So using that $s\mapsto \frac{1}{K(T,s)} a^{T,y}_{s}$ is integrable on $[0,t]$, the chain rule on $[0,t]$ for Lebesgue-Stieltjes integrals gives
\begin{align*}
\Phi(\beta^y\circ \tau)_{t} &= \int_{0}^t \frac{1}{K(T,s)}\, \dd (\beta^y\circ \tau)_{s} \\
&= \int_{0}^t K(T,s) \left( \frac{y}{\int_{0}^T K(T,r)^2\, \dd r} - \int_{0}^s \frac{K(T,u)}{\int_{u}^T K(T,r)^2\, \dd r} \dd W_{u} \right) \dd s + W_{t}.
\end{align*}
Using again the chain rule, one gets that for $t<T$
\begin{align*}
\int_{0}^t K(t,s)\, \dd \Phi(\beta^y\circ \tau)_{s} &= \int_{0}^t  K(t,s) K(T,s) \left( \frac{y}{\int_{0}^T K(T,r)^2\, \dd r} - \int_{0}^s \frac{K(T,u)}{\int_{u}^T K(T,r)^2\, \dd r} \dd W_{u} \right) \dd s\\
&\quad  + \int_{0}^t K(t,s)\, \dd W_{s}.
\end{align*}
After applying the stochastic Fubini Theorem, one recognises that 
\begin{align*}
\int_{0}^t K(t,s)\, \dd \Phi(\beta^y\circ \tau)_{s} = P^{T,y}_{t}.
\end{align*}
By \eqref{eq:lawbridge}, $\law \left((\Phi(\beta^{y}\circ \tau)_{t})_{t\in [0,T]}\right) = \law \left((W_{t})_{t\in [0,T]} \vert \Volt^T_{T} = y\right)$. Thus
\begin{align*}
\law \left((P^{T,y}_{t})_{t\in [0,T]}\right) = \law \bigg( \Big(\int_{0}^t K(t,s)\, \dd W_{s}\Big)_{t\in [0,T]}\, \bigg\vert\, \Volt^T_{T} = y\bigg). \qquad\qquad \qedhere
\end{align*}
\end{proof}

\end{appendices}

\bibliographystyle{amsplainhyper_m}%{plainnat}%{abbrv}
\bibliography{skew}

\providecommand{\bysame}{\leavevmode\hbox to3em{\hrulefill}\thinspace}
\begin{thebibliography}{10}

\bibitem{LukasThesis}
Lukas Anzeletti, \emph{{Stochastic differential equations with singular drift
  and (fractional) Brownian noise}}, Ph.D. thesis, Universit{\'e} Paris-Saclay,
  2023, \url{https://theses.hal.science/tel-04576910/}.

\bibitem{AnRiTa}
Lukas Anzeletti, Alexandre Richard, and Etienne Tanr\'e, \emph{Regularisation
  by fractional noise for one-dimensional differential equations with
  distributional drift}, Electron. J. Probab. \textbf{28} (2023), Paper No.
  135, 49, \href{http://dx.doi.org/10.1214/23-ejp1010}{\path{doi}}.
  \MR{4664455}

\bibitem{Atetal}
Siva Athreya, Oleg Butkovsky, Khoa L\^e, and Leonid Mytnik,
  \emph{Well-posedness of stochastic heat equation with distributional drift
  and skew stochastic heat equation}, Comm. Pure Appl. Math. \textbf{77}
  (2024), no.~5, 2708--2777,
  \href{http://dx.doi.org/10.1002/cpa.22157}{\path{doi}}. \MR{4720220}

\bibitem{BaDaCh}
Hajer Bahouri, Jean-Yves Chemin, and Rapha\"{e}l Danchin, \emph{Fourier
  analysis and nonlinear partial differential equations}, Grundlehren der
  Mathematischen Wissenschaften [Fundamental Principles of Mathematical
  Sciences], vol. 343, Springer, Heidelberg, 2011,
  \href{http://dx.doi.org/10.1007/978-3-642-16830-7}{\path{doi}}. \MR{2768550}

\bibitem{BailleulCatellierDelarue}
Isma\"{e}l Bailleul, R\'{e}mi Catellier, and Fran\c{c}ois Delarue,
  \emph{Solving mean field rough differential equations}, Electron. J. Probab.
  \textbf{25} (2020), Paper No. 21, 51,
  \href{http://dx.doi.org/10.1214/19-ejp409}{\path{doi}}. \MR{4073682}

\bibitem{BailleulCatellierDelarue2}
\bysame, \emph{Propagation of chaos for mean field rough differential
  equations}, Ann. Probab. \textbf{49} (2021), no.~2, 944--996,
  \href{http://dx.doi.org/10.1214/20-aop1465}{\path{doi}}. \MR{4255135}

\bibitem{BaudoinCoutin}
Fabrice Baudoin and Laure Coutin, \emph{Volterra bridges and applications},
  Markov Process. Related Fields \textbf{13} (2007), no.~3, 587--596.
  \MR{2357390}

\bibitem{BNOT}
Fabrice Baudoin, Eulalia Nualart, Cheng Ouyang, and Samy Tindel, \emph{On
  probability laws of solutions to differential systems driven by a fractional
  {B}rownian motion}, Ann. Probab. \textbf{44} (2016), no.~4, 2554--2590,
  \href{http://dx.doi.org/10.1214/15-AOP1028}{\path{doi}}. \MR{3531675}

\bibitem{tindel}
Mireia Besal\'{u}, Arturo Kohatsu-Higa, and Samy Tindel, \emph{Gaussian-type
  lower bounds for the density of solutions of {SDE}s driven by fractional
  {B}rownian motions}, Ann. Probab. \textbf{44} (2016), no.~1, 399--443,
  \href{http://dx.doi.org/10.1214/14-AOP977}{\path{doi}}. \MR{3456342}

\bibitem{BlanchetDolbeaultPerthame}
Adrien Blanchet, Jean Dolbeault, and Beno\^{i}t Perthame, \emph{Two-dimensional
  {K}eller-{S}egel model: optimal critical mass and qualitative properties of
  the solutions}, Electron. J. Differential Equations (2006), no.~44, 1--33.
  \MR{2226917}

\bibitem{Bogachev}
Vladimir~I. Bogachev, \emph{Gaussian measures}, Mathematical Surveys and
  Monographs, vol.~62, American Mathematical Society, Providence, RI, 1998,
  \href{http://dx.doi.org/10.1090/surv/062}{\path{doi}}. \MR{1642391}

\bibitem{ButhenhoffSonmez}
Maximilian Buthenhoff and Ercan S{\"o}nmez, \emph{{Gaussian-type density
  estimates for mixed SDEs driven by correlated fractional Brownian motions}},
  2025, \url{https://arxiv.org/abs/2503.03685}.

\bibitem{ButGal2023}
Oleg Butkovsky and Samuel Gallay, \emph{{Weak existence for SDEs with singular
  drifts and fractional Brownian or L\'evy noise beyond the subcritical
  regime}}, 2023, \url{https://arxiv.org/abs/2311.12013}.

\bibitem{ButLeMyt}
Oleg Butkovsky, Khoa L{\^e}, and Leonid Mytnik, \emph{Stochastic equations with
  singular drift driven by fractional {B}rownian motion}, To appear in Prob.
  Math. Phys. (2023), \url{https://arxiv.org/abs/2302.11937}.

\bibitem{butkovsky2024weak}
Oleg Butkovsky and Leonid Mytnik, \emph{Weak uniqueness for singular stochastic
  equations}, 2024, \url{https://arxiv.org/abs/2405.13780}.

\bibitem{CassFriz}
Thomas Cass and Peter Friz, \emph{Densities for rough differential equations
  under {H}\"ormander's condition}, Ann. of Math. (2) \textbf{171} (2010),
  no.~3, 2115--2141,
  \href{http://dx.doi.org/10.4007/annals.2010.171.2115}{\path{doi}}.
  \MR{2680405}

\bibitem{CatellierDuboscq}
R\'emi Catellier and Romain Duboscq, \emph{Regularization by noise for rough
  differential equations driven by {G}aussian rough paths}, Ann. Probab.
  \textbf{53} (2025), no.~1, 79--139,
  \href{http://dx.doi.org/10.1214/24-aop1701}{\path{doi}}. \MR{4852889}

\bibitem{CatellierGubinelli}
R\'{e}mi Catellier and Massimiliano Gubinelli, \emph{Averaging along irregular
  curves and regularisation of {ODE}s}, Stochastic Process. Appl. \textbf{126}
  (2016), no.~8, 2323--2366,
  \href{http://dx.doi.org/10.1016/j.spa.2016.02.002}{\path{doi}}. \MR{3505229}

\bibitem{PECDR_2020}
Paul-Eric Chaudru~de Raynal, \emph{Strong well posedness of {M}c{K}ean-{V}lasov
  stochastic differential equations with {H}\"older drift}, Stochastic Process.
  Appl. \textbf{130} (2020), no.~1, 79--107,
  \href{http://dx.doi.org/10.1016/j.spa.2019.01.006}{\path{doi}}. \MR{4035024}

\bibitem{Menozzietal}
Paul-Eric Chaudru~de Raynal, Jean-Fran\c{c}ois Jabir, and St\'ephane Menozzi,
  \emph{Multidimensional stable driven {M}c{K}ean--{V}lasov {SDE}s with
  distributional interaction kernel: a regularization by noise perspective},
  Stoch. Partial Differ. Equ. Anal. Comput. \textbf{13} (2025), no.~1,
  367--420, \href{http://dx.doi.org/10.1007/s40072-024-00332-1}{\path{doi}}.
  \MR{4872111}

\bibitem{CoghiDeuschelFrizMaurelli}
Michele Coghi, Jean-Dominique Deuschel, Peter~K. Friz, and Mario Maurelli,
  \emph{Pathwise {M}c{K}ean-{V}lasov theory with additive noise}, Ann. Appl.
  Probab. \textbf{30} (2020), no.~5, 2355--2392,
  \href{http://dx.doi.org/10.1214/20-AAP1560}{\path{doi}}. \MR{4149531}

\bibitem{CriDeL2008}
Gianluca Crippa and Camillo De~Lellis, \emph{Estimates and regularity results
  for the {D}i{P}erna-{L}ions flow}, J. Reine Angew. Math. \textbf{616} (2008),
  15--46, \href{http://dx.doi.org/10.1515/CRELLE.2008.016}{\path{doi}}.
  \MR{2369485}

\bibitem{DareiotisGerencser}
Konstantinos Dareiotis and M\'at\'e Gerencs\'er, \emph{Path-by-path
  regularisation through multiplicative noise in rough, {Y}oung, and ordinary
  differential equations}, Ann. Probab. \textbf{52} (2024), no.~5, 1864--1902,
  \href{http://dx.doi.org/10.1214/24-aop1686}{\path{doi}}. \MR{4791421}

\bibitem{dareiotis2024regularisation}
Konstantinos Dareiotis, M{\'a}t{\'e} Gerencs{\'e}r, Khoa L{\^e}, and Chengcheng
  Ling, \emph{Regularisation by {G}aussian rough path lifts of fractional
  {B}rownian motions}, 2024, \url{https://arxiv.org/abs/2412.01645}.

\bibitem{Decreusefond}
Laurent Decreusefond, \emph{Stochastic integration with respect to {V}olterra
  processes}, Ann. Inst. H. Poincar\'{e} Probab. Statist. \textbf{41} (2005),
  no.~2, 123--149,
  \href{http://dx.doi.org/10.1016/j.anihpb.2004.03.004}{\path{doi}}.
  \MR{2124078}

\bibitem{H2G2}
Eleonora Di~Nezza, Giampiero Palatucci, and Enrico Valdinoci,
  \emph{Hitchhiker's guide to the fractional {S}obolev spaces}, Bull. Sci.
  Math. \textbf{136} (2012), no.~5, 521--573,
  \href{http://dx.doi.org/10.1016/j.bulsci.2011.12.004}{\path{doi}}.
  \MR{2944369}

\bibitem{Flandoli}
Franco Flandoli, \emph{Random perturbation of {PDE}s and fluid dynamic models},
  Lectures from the 40th Probability Summer School held in Saint-Flour, 2010,
  \'Ecole d'\'Et\'e{} de Probabilit\'es de Saint-Flour. [Saint-Flour
  Probability Summer School], Lecture Notes in Mathematics, vol. 2015,
  Springer, Heidelberg, 2011,
  \href{http://dx.doi.org/10.1007/978-3-642-18231-0}{\path{doi}}. \MR{2796837}

\bibitem{FournierJourdain}
Nicolas Fournier and Benjamin Jourdain, \emph{Stochastic particle approximation
  of the {K}eller-{S}egel equation and two-dimensional generalization of
  {B}essel processes}, Ann. Appl. Probab. \textbf{27} (2017), no.~5,
  2807--2861, \href{http://dx.doi.org/10.1214/16-AAP1267}{\path{doi}}.
  \MR{3719947}

\bibitem{FrizVictoir}
Peter~K. Friz and Nicolas~B. Victoir, \emph{Multidimensional stochastic
  processes as rough paths}, Cambridge Studies in Advanced Mathematics, vol.
  120, Cambridge University Press, Cambridge, 2010,
  \href{http://dx.doi.org/10.1017/CBO9780511845079}{\path{doi}}. \MR{2604669}

\bibitem{GaleatiGerencser}
Lucio Galeati and M\'at\'e Gerencs\'er, \emph{Solution theory of fractional
  {SDE}s in complete subcritical regimes}, Forum Math. Sigma \textbf{13}
  (2025), Paper No. e12,
  \href{http://dx.doi.org/10.1017/fms.2024.136}{\path{doi}}. \MR{4854429}

\bibitem{GaHaMa2022}
Lucio Galeati, Fabian~A. Harang, and Avi Mayorcas, \emph{Distribution dependent
  {SDE}s driven by additive continuous noise}, Electron. J. Probab. \textbf{27}
  (2022), Paper No. 37, 38,
  \href{http://dx.doi.org/10.1214/22-ejp756}{\path{doi}}. \MR{4388460}

\bibitem{GHM}
\bysame, \emph{Distribution dependent {SDE}s driven by additive fractional
  {B}rownian motion}, Probab. Theory Related Fields \textbf{185} (2023),
  no.~1-2, 251--309,
  \href{http://dx.doi.org/10.1007/s00440-022-01145-w}{\path{doi}}. \MR{4528970}

\bibitem{galeati2024quantitative}
Lucio Galeati, Khoa L\^e, and Avi Mayorcas, \emph{Quantitative {P}ropagation of
  {C}haos for {S}ingular {I}nteracting {P}article {S}ystems {D}riven by
  {F}ractional {B}rownian {M}otion}, 2024,
  \url{https://arxiv.org/abs/2403.05454}.

\bibitem{GasbarraEtAl}
Dario Gasbarra, Tommi Sottinen, and Esko Valkeila, \emph{Gaussian {B}ridges},
  Stochastic analysis and applications, Abel Symp., vol.~2, Springer, Berlin,
  2007, pp.~361--382,
  \href{http://dx.doi.org/10.1007/978-3-540-70847-6\_15}{\path{doi}}.
  \MR{2397795}

\bibitem{Gerencser}
M\'at\'e Gerencs\'er, \emph{Regularisation by regular noise}, Stoch. Partial
  Differ. Equ. Anal. Comput. \textbf{11} (2023), no.~2, 714--729,
  \href{http://dx.doi.org/10.1007/s40072-022-00242-0}{\path{doi}}. \MR{4588621}

\bibitem{HairerPillai}
Martin Hairer and Natesh~S. Pillai, \emph{Regularity of laws and ergodicity of
  hypoelliptic {SDE}s driven by rough paths}, Ann. Probab. \textbf{41} (2013),
  no.~4, 2544--2598, \href{http://dx.doi.org/10.1214/12-AOP777}{\path{doi}}.
  \MR{3112925}

\bibitem{Han}
Yi~Han, \emph{{Smoothness of the density for McKean-Vlasov SDEs with measurable
  kernel}}, 2022, \url{https://arxiv.org/abs/2208.02771}.

\bibitem{Han2025}
Yi~Han, \emph{Solving {M}c{K}ean--{V}lasov {SDE}s via relative entropy}, Ann.
  Appl. Probab. \textbf{35} (2025), no.~2, 858--897,
  \href{http://dx.doi.org/10.1214/24-aap2129}{\path{doi}}. \MR{4897749}

\bibitem{HaoRocknerZhang}
Zimo Hao, Michael R{\"o}ckner, and Xicheng Zhang, \emph{{Distribution-flow
  dependent SDEs driven by (fractional) Brownian motion and Navier-Stokes
  equations}}, 2024, \url{https://arxiv.org/abs/2405.19034}.

\bibitem{HaoZhang2023}
Zimo Hao and Xicheng Zhang, \emph{{SDE}s with supercritical distributional
  drifts}, 2024, \url{https://arxiv.org/abs/2312.11145}.

\bibitem{Hytonen2016}
Tuomas Hyt\"{o}nen, Jan van Neerven, Mark Veraar, and Lutz Weis, \emph{Analysis
  in {B}anach spaces. {V}ol. {I}. {M}artingales and {L}ittlewood-{P}aley
  theory}, Ergebnisse der Mathematik und ihrer Grenzgebiete. 3. Folge. A Series
  of Modern Surveys in Mathematics [Results in Mathematics and Related Areas.
  3rd Series. A Series of Modern Surveys in Mathematics], vol.~63, Springer,
  Cham, 2016, \href{http://dx.doi.org/10.1007/978-3-319-48520-1}{\path{doi}}.
  \MR{3617205}

\bibitem{KuhnSchilling}
Franziska K\"{u}hn and Ren\'{e}~L. Schilling, \emph{Convolution inequalities
  for {B}esov and {T}riebel-{L}izorkin spaces, and applications to convolution
  semigroups}, Studia Math. \textbf{262} (2022), no.~1, 93--119,
  \href{http://dx.doi.org/10.4064/sm210127-23-3}{\path{doi}}. \MR{4339468}

\bibitem{Le}
Khoa L\^{e}, \emph{A stochastic sewing lemma and applications}, Electron. J.
  Probab. \textbf{25} (2020), Paper No. 38, 55,
  \href{http://dx.doi.org/10.1214/20-ejp442}{\path{doi}}. \MR{4089788}

\bibitem{Leoni2017}
Giovanni Leoni, \emph{A first course in {S}obolev spaces}, second ed., Graduate
  Studies in Mathematics, vol. 181, American Mathematical Society, Providence,
  RI, 2017, \href{http://dx.doi.org/10.1090/gsm/181}{\path{doi}}. \MR{3726909}

\bibitem{LiPanloupSieber}
Xue-Mei Li, Fabien Panloup, and Julian Sieber, \emph{On the (non)stationary
  density of fractional-driven stochastic differential equations}, Ann. Probab.
  \textbf{51} (2023), no.~6, 2056--2085,
  \href{http://dx.doi.org/10.1214/23-aop1638}{\path{doi}}. \MR{4666290}

\bibitem{MatsudaMayorcas}
Toyomu Matsuda and Avi Mayorcas, \emph{Pathwise {U}niqueness for
  {M}ultiplicative {Y}oung and {R}ough {D}ifferential {E}quations {D}riven by
  {F}ractional {B}rownian {M}otion}, 2023,
  \url{https://arxiv.org/abs/2312.06473}.

\bibitem{MatPer2024}
Toyomu Matsuda and Nicolas Perkowski, \emph{An extension of the stochastic
  sewing lemma and applications to fractional stochastic calculus}, Forum Math.
  Sigma \textbf{12} (2024), Paper No. e52, 52,
  \href{http://dx.doi.org/10.1017/fms.2024.32}{\path{doi}}. \MR{4730255}

\bibitem{NourdinViens}
Ivan Nourdin and Frederi~G. Viens, \emph{Density formula and concentration
  inequalities with {M}alliavin calculus}, Electron. J. Probab. \textbf{14}
  (2009), no. 78, 2287--2309,
  \href{http://dx.doi.org/10.1214/EJP.v14-707}{\path{doi}}. \MR{2556018}

\bibitem{NualartOuknine}
David Nualart and Youssef Ouknine, \emph{Regularization of differential
  equations by fractional noise}, Stochastic Process. Appl. \textbf{102}
  (2002), no.~1, 103--116,
  \href{http://dx.doi.org/10.1016/S0304-4149(02)00155-2}{\path{doi}}.
  \MR{1934157}

\bibitem{Oliv}
Christian Olivera and Ciprian~A. Tudor, \emph{Existence and {B}esov regularity
  of the density for a class of {SDE}s with {V}olterra noise}, C. R. Math.
  Acad. Sci. Paris \textbf{357} (2019), no.~7, 636--645,
  \href{http://dx.doi.org/10.1016/j.crma.2019.06.012}{\path{doi}}. \MR{3998291}

\bibitem{Pascucci_Rondelli_2025}
Andrea Pascucci and Alessio Rondelli, \emph{Mc{K}ean-{V}lasov stochastic
  equations with {H}\"older coefficients}, Stochastic Process. Appl.
  \textbf{182} (2025), Paper No. 104564, 5,
  \href{http://dx.doi.org/10.1016/j.spa.2025.104564}{\path{doi}}. \MR{4848654}

\bibitem{pascucci2024existenceuniquenessresultsstrongly}
Andrea Pascucci, Alessio Rondelli, and Alexander~Yu Veretennikov,
  \emph{Existence and uniqueness results for strongly degenerate
  {M}c{K}ean-{V}lasov equations with rough coefficients}, 2024,
  \url{https://arxiv.org/abs/2409.14451}.

\bibitem{PerkowskiVZ}
Nicolas Perkowski and Willem van Zuijlen, \emph{Quantitative heat-kernel
  estimates for diffusions with distributional drift}, Potential Anal.
  \textbf{59} (2023), no.~2, 731--752,
  \href{http://dx.doi.org/10.1007/s11118-021-09984-3}{\path{doi}}. \MR{4625000}

\bibitem{Pitt}
Loren~D. Pitt, \emph{Local {T}imes for {G}aussian {V}ector {F}ields}, Indiana
  Univ. Math. J. \textbf{27} (1978), no.~2, 309--330,
  \href{http://dx.doi.org/10.1512/iumj.1978.27.27024}{\path{doi}}. \MR{471055}

\bibitem{RocknerZhang}
Michael R\"ockner and Xicheng Zhang, \emph{Well-posedness of distribution
  dependent {SDE}s with singular drifts}, Bernoulli \textbf{27} (2021), no.~2,
  1131--1158, \href{http://dx.doi.org/10.3150/20-bej1268}{\path{doi}}.
  \MR{4255229}

\bibitem{Romito}
Marco Romito, \emph{A simple method for the existence of a density for
  stochastic evolutions with rough coefficients}, Electron. J. Probab.
  \textbf{23} (2018), Paper no. 113, 43,
  \href{http://dx.doi.org/10.1214/18-EJP242}{\path{doi}}. \MR{3885546}

\bibitem{SaKiMa1993}
Stefan~G. Samko, Anatoly~A. Kilbas, and Oleg~I. Marichev, \emph{Fractional
  integrals and derivatives}, Theory and applications, Gordon and Breach
  Science Publishers, Yverdon, 1993. \MR{1347689}

\bibitem{Sznitman}
Alain-Sol Sznitman, \emph{Topics in propagation of chaos}, \'{E}cole
  d'\'{E}t\'{e} de {P}robabilit\'{e}s de {S}aint-{F}lour {XIX}---1989, Lecture
  Notes in Math., vol. 1464, Springer, Berlin, 1991, pp.~165--251,
  \href{http://dx.doi.org/10.1007/BFb0085169}{\path{doi}}. \MR{1108185}

\bibitem{Triebel}
Hans Triebel, \emph{Theory of function spaces}, Monographs in Mathematics,
  vol.~78, Birkh\"{a}user Verlag, Basel, 1983,
  \href{http://dx.doi.org/10.1007/978-3-0346-0416-1}{\path{doi}}. \MR{781540}

\bibitem{Veretennikov2024}
Alexander Veretennikov, \emph{On weak existence of solutions of degenerate
  {M}c{K}ean-{V}lasov equations}, Stoch. Dyn. \textbf{24} (2024), no.~5, Paper
  No. 2450032, 15,
  \href{http://dx.doi.org/10.1142/S0219493724500321}{\path{doi}}. \MR{4832751}

\bibitem{Villani2009}
C\'edric Villani, \emph{Optimal transport}, Old and new, Grundlehren der
  mathematischen Wissenschaften [Fundamental Principles of Mathematical
  Sciences], vol. 338, Springer-Verlag, Berlin, 2009,
  \href{http://dx.doi.org/10.1007/978-3-540-71050-9}{\path{doi}}. \MR{2459454}

\bibitem{ZhangZhao}
Xicheng Zhang and Guohuan Zhao, \emph{{Heat kernel and ergodicity of SDEs with
  distributional drifts}}, 2018, \url{https://arxiv.org/abs/1710.10537}.

\end{thebibliography}
\end{document}